\def\mf#1{\mathfrak{#1}}
\def\mc#1{\mathcal{#1}}
\def\mb#1{\mathbb{#1}}
\def\tx#1{\textrm{#1}}
\def\tr{\tx{tr}\,}
\def\R{\mathbb{R}}
\def\C{\mathbb{C}}
\def\Q{\mathbb{Q}}
\def\A{\mathbb{A}}
\def\Z{\mathbb{Z}}
\def\N{\mathbb{N}}
\def\F{\mathbb{F}}
\def\lmod{\setminus}
\def\ol#1{\overline{#1}}
\def\ul#1{\underline{#1}}
\def\hat{\widehat}
\def\rw{\rightarrow}
\def\lw{\leftarrow}
\def\lrw{\longrightarrow}
\def\llw{\longleftarrow}
\def\hrw{\hookrightarrow}
\def\lw{\leftarrow}
\def\<{\langle}
\def\>{\rangle}
\newenvironment{mytitle}
{\begin{center}\large\sc}
{\end{center}}
\theoremstyle{definition}
\newtheorem{thm}{Theorem}[subsection]
\newtheorem{lem}[thm]{Lemma}
\newtheorem{pro}[thm]{Proposition}
\newtheorem{cor}[thm]{Corollary}
\newtheorem{fct}[thm]{Fact}
\newtheorem{cnd}[thm]{Condition}
\newtheorem{rem}[thm]{Remark}
\newtheorem{exa}[thm]{Example}
\newcommand{\qedwhite}{\hfill\ensuremath{\Box}}
\numberwithin{equation}{section}
\begin{document}

\begin{mytitle}
Global rigid inner forms vs isocrystals
\end{mytitle}

\begin{center} Tasho Kaletha and Olivier Ta\"ibi \end{center}

\begin{abstract}
We compare the cohomology of the global Galois gerbes constructed in \cite{KotBG} and \cite{KalGRI}, respectively, and give applications to the theory of endoscopy.
\end{abstract}
{\let\thefootnote\relax\footnotetext{T.K. is supported in part by NSF grant DMS-1801687 and a Sloan Fellowship.}}

% =================================================
% Section
% =================================================

\section{Introduction}

The statement of the refined local and global Arthur-Langlands conjectures for non-quasi-split reductive groups involves the cohomology of certain Galois gerbes \cite{KalSimons}, where the notion of a Galois gerbe is that of \cite{LR87}. In summary, every such group $G$ is an inner form of its quasi-split form $G^*$, but it was observed by Vogan \cite{Vog93} that this relationship does not suffice for the normalization of various objects involved in the statement of the local Langlands conjecture.
The cohomology of a local gerbe is used to provide the necessary additional
data.
The cohomology of a global gerbe is used to organize the local data at all
places coherently, so that the local conjecture can be used in global
applications.

The gerbes constructed in \cite{KotBG} can be used for this purpose. However, not all local inner forms can be treated when $G^*$ does not have connected center, and not all global inner forms can be treated when $G^*$ does not satisfy the Hasse principle. We shall refer to the formulation of the local and global conjectures involving the gerbes of \cite{KotBG} as the \emph{isocrystal version}. The gerbes constructed in \cite{KalRI} and \cite{KalGRI} can be used without these technical hypotheses on $G^*$, but they are at the moment available only in characteristic zero. We shall refer to the resulting local and global conjectures as the \emph{rigid version}.

Assume first that the ground field $F$ is a finite extension of $\Q_p$ and that $G^*$ has connected center. Then both the isocrystal and the rigid version of the refined local Langlands conjecture are available for $G$. It was shown in \cite{KalRIBG} that these two versions are equivalent. Moreover, it was shown that the validity of the isocrystal version for all connected reductive groups with connected center is equivalent to the validity of the rigid version for all connected reductive groups without assumption on the center. These results were derived from a comparison theorem for the cohomology of the two local gerbes.

The current paper provides a comparison theorem for the cohomology of the two
global gerbes. We give two applications to this comparison. First, when the ground field $F$ is a finite extension
of $\Q$ and $G^*$ has connected center and satisfies the Hasse principle, so
that both the isocrystal version and the rigid version of the global
multiplicity formula are available, it is natural to ask if these two versions are equivalent. A formal argument, based on the canonicity of global transfer
factors, gives an affirmative answer, but sheds no light on the relation between the normalized local pairings at each place of $F$. Our cohomological result allows for this finer comparison.

Second, we generalize \cite[Proposition 4.4.1]{KalGRI}, which states that the product of local normalized transfer factors equal the canonical adelic transfer factor. In \cite{KalGRI} this was proved under the assumption that there exists a pair of related $F$-points in the group and its endoscopic group. While this assumption was also made in \cite{LS87}, where transfer factors were originally defined, it was later dropped in \cite{KS99} and replaced with the weaker assumption on the existence of an $F$-point in the endoscopic group that is related to an $\A$-point in the group. We use the results of the current paper to show that \cite[Proposition 4.4.1]{KalGRI} is valid under this weaker hypothesis.

We hope that the comparison of the cohomology of the two global gerbes will be useful beyond these applications, in light of Scholze's recent conjecture \cite[Conjecture 9.5]{ScholzeICM18} on the existence of a Weil cohomology theory for varieties over $\bar\F_p$ valued in the category of representations of the global gerbe of \cite{KotBG}.

Before we outline the comparison theorem in the global case, let us review it in the local case. Let $F$ be a finite extension of $\Q_p$ and $\Gamma$ the absolute Galois group of $F$. The local gerbe of \cite{KotBG}, which we shall denote by $\mc{E}^\tx{iso}$ here, is bound by the pro-torus $\mb{T}^\tx{iso}$ whose character module is the trivial $\Gamma$-module $\Q$. The local gerbe of $\cite{KalRI}$, which we shall denote by $\mc{E}^\tx{rig}$ here, is bound by the pro-finite algebraic group $P^\tx{rig}$ whose character module is the group of smooth functions $\Gamma \to \Q/\Z$, endowed with the obvious action of $\Gamma$. The map $X^*(\mb{T}^\tx{iso}) \to X^*(P^\tx{rig})$ sending $q \in \Q$ to the constant function with value $q$ provides a homomorphism $P^\tx{rig} \to \mb{T}^\tx{iso}$ defined over $F$. One proves that the push-out of $\mc{E}^\tx{rig}$ along this homomorphism equals $\mc{E}^\tx{iso}$. The resulting map of gerbes $\mc{E}^\tx{rig} \to \mc{E}^\tx{iso}$ induces a map between their cohomology. For example, when $T$ is an algebraic torus defined over $F$, we obtain a homomorphism of abelian groups $H^1(\mc{E}^\tx{iso},T) \to H^1(\mc{E}^\tx{rig},T)$. Both of these abelian groups have a description in terms of linear algebra. In the first case, we have the functorial isomorphism $H^1(\mc{E}^\tx{iso},T) \to X_*(T)_\Gamma$, where $X_*(T)$ is the co-character module of $T$. In the second case, we have the functorial isomorphism $H^1(\mc{E}^\tx{rig},T) \to \frac{X_*(T) \otimes \Q}{IX_*(T)}[\tx{tor}]$, where $I \subset \Z[\Gamma]$ is the augmentation ideal, and $[\tx{tor}]$ refers to the torsion subgroup. Let $E/F$ be any finite Galois extension splitting $T$. Let $N^\natural$ denote the renormalized norm map $[E:F]^{-1}\sum_{\sigma \in \Gamma_{E/F}}\sigma : X_*(T) \otimes \Q \to X_*(T) \otimes \Q$. Then the map $X_*(T)_\Gamma \to \frac{X_*(T) \otimes \Q}{IX_*(T)}[\tx{tor}]$ given by $y \mapsto y- N^\natural(y)$ makes the following diagram commute
\[ \xymatrix{
H^1(\mc{E}^\tx{iso},T)\ar[r]\ar[d]&X_*(T)_\Gamma\ar[d]\\
H^1(\mc{E}^\tx{rig},T)\ar[r]&\frac{X_*(T)\otimes\Q}{IX_*(T)}[\tx{tor}]
}
\]

We now come to the global case treated in this paper. Let $F$ be a finite extension of $\Q$. The global gerbe of \cite{KotBG}, which we shall denote by $\mc{E}^\tx{iso}$ here, is bound by a pro-torus $\mb{T}^\tx{iso}$, while the global gerbe of \cite{KalGRI}, which we shall denote by $\mc{E}^\tx{rig}$ here, is bound by a pro-finite algebraic group $P^\tx{rig}$. The description of the character modules is more technical and we will not discuss it in the introduction. Unlike in the local case, we do not know of a natural map $P^\tx{rig} \to \mb{T}^\tx{iso}$. In fact, there is good reason to believe that one cannot expect a natural map like that to exist. The comparison of the cohomology of the two gerbes $\mc{E}^\tx{iso}$ and $\mc{E}^\tx{rig}$ proceeds via an intermediary. We define a new pro-torus $\mb{T}^\tx{mid}$ and natural maps $\mb{T}^\tx{iso} \to \mb{T}^\tx{mid} \lw P^\tx{rig}$. We prove that the classes in $H^2(\Gamma,\mb{T}^\tx{iso})$ and $H^2(\Gamma,P^\tx{rig})$ of the gerbes $\mc{E}^\tx{iso}$ and $\mc{E}^\tx{rig}$ meet in $H^2(\Gamma,\mb{T}^\tx{mid})$. This leads to a gerbe $\mc{E}^\tx{mid}$ bound by $\mb{T}^\tx{mid}$ and equipped with homomorphisms $\mc{E}^\tx{iso} \to \mc{E}^\tx{mid} \lw \mc{E}^\tx{rig}$. We then prove that, for every algebraic torus $T$ defined over $F$, the two diagrams
\[ \xymatrix{
  H^1(\mc{E}^\tx{mid},T)\ar[r]\ar[d]&\tx{Hom}_F(\mb{T}^\tx{mid},T)\ar[d]&H^1(\mc{E}^\tx{mid},T)\ar[r]\ar[d]&\tx{Hom}_F(\mb{T}^\tx{mid},T)\ar[d]\\
  H^1(\mc{E}^\tx{iso},T)\ar[r]&\tx{Hom}_F(\mb{T}^\tx{iso},T)&H^1(\mc{E}^\tx{rig},T)\ar[r]&\tx{Hom}_F(P^\tx{rig},T)
}
\]
are Cartesian and the vertical arrows in the left diagram are surjective.

An analogous discussion holds locally at each place $v$ of $F$: There are maps of gerbes $\mc{E}^\tx{iso}_v \to \mc{E}^\tx{mid}_v \lw \mc{E}^\tx{rig}_v$ over $F_v$ that are compatible with the analogous global maps via localization maps $\mc{E}^*_v \to \mc{E}^* \times_F F_v$. The Cartesian square relating $\mc{E}^\tx{iso}_v$ to $\mc{E}^\tx{mid}_v$ shows that there is a functorial isomorphism from $H^1(\mc{E}^\tx{mid}_v,T)$ to the group $\{(\lambda,\mu)|\lambda \in X_*(T)_\Gamma,\mu \in X_*(T)\otimes\Q,N^\natural(\lambda)=N^\natural(\mu)\}$.

Recalling the comparison map $\mc{E}^\tx{rig}_v \to \mc{E}^\tx{iso}_v$ constructed in \cite{KalRIBG} and reviewed above, we now obtain the following triangle
\begin{equation} \label{eq:locnoncom}
\xymatrix{
&\mc{E}^\tx{mid}_v&\\
\mc{E}^\tx{rig}_v\ar[ur]\ar[rr]&&\mc{E}^\tx{iso}_v\ar[ul].
}\end{equation}
This triangle does \emph{not} commute. In order to relate the global comparison results of this paper, which concern (via the localization maps) the two diagonal arrows, to the local comparison results of \cite{KalRIBG}, which concern the bottom horizontal arrow, we need to understand the failure of commutativity.

We construct a canonical splitting $\mc{E}^\tx{mid}_v \to \mc{E}^\tx{iso}_v$ of the map $\mc{E}^\tx{iso}_v \to \mc{E}^\tx{mid}_v$ and show that composing this splitting with $\mc{E}^\tx{rig}_v \to \mc{E}^\tx{mid}_v$ equals the bottom horizontal map in \eqref{eq:locnoncom}, i.e. the comparison map $\mc{E}^\tx{rig}_v \to \mc{E}^\tx{iso}_v$ of \cite{KalRIBG}. The non-commutativity of the above triangle is then encoded in the difference between the left diagonal map $\mc{E}^\tx{rig}_v \to \mc{E}^\tx{mid}_v$ and
the composition
\[ \mc{E}^\tx{rig}_v \to \mc{E}^\tx{mid}_v \to \mc{E}^\tx{iso}_v \to \mc{E}^\tx{mid}_v. \]
We show that the difference between the two homomorphisms $H^1(\mc{E}^\tx{mid}_v,T) \to H^1(\mc{E}^\tx{rig}_v,T)$ induced by these two maps $\mc{E}^\tx{rig}_v \to \mc{E}^\tx{mid}_v$ is given on the linear algebraic side by the map that sends $(\lambda,\mu)$ to $\mu-N^\natural(\mu) \in \frac{X_*(T)\otimes\Q}{IX_*(T)}[\tx{tor}]$.

These cohomological results allow us to compare the two isocrystal and rigid versions of the multiplicity conjecture for discrete automorphic representations. More precisely, let $G^*$ be a quasi-split connected reductive group defined over $F$ and let $G$ be an inner form of $G^*$. Assuming the existence of the global Langlands group $L_F$, as well as the validity of the rigid version of the refined local Langlands correspondence, we constructed in \cite[\S4.5]{KalGRI} a pairing between the group $\mc{S}_\varphi$ associated to a discrete generic global parameter $\varphi : L_F \to {^LG}$ and the adelic $L$-packet $\Pi_\varphi(G)$. This pairing is an ingredient in the conjectural multiplicity formula \cite[(12.3)]{Kot84}. Its construction uses the cohomology of $\mc{E}^\tx{rig}$, but the result is independent of the cohomology classes used.

Assuming that $G^*$ has connected center and satisfies the Hasse principle, another such pairing can be constructed if one assumes the isocrystal version of the refined local Langlands correspondence and uses the cohomology of $\mc{E}^\tx{iso}$. This construction does not yet appear in the literature.
It is fairly analogous to that in \cite[\S4.5]{KalGRI} and we give the details
in Section \ref{sub:mult_formula_BG}.

As an application of our cohomological results, we show that the two
constructions -- using $\mc{E}^\tx{rig}$ and $\mc{E}^\tx{iso}$, respectively --
produce the same pairing between $\mc{S}_\varphi$ and $\Pi_\varphi(G)$.
More precisely, given an inner twist $\psi : G^* \to G$ we fix a cocycle
$z^\tx{mid} \in Z^1(\mc{E}^\tx{mid},G^*)$ that lifts the cocycle $\sigma \mapsto
\psi^{-1}\sigma(\psi)$ and use it to produce cocycles $z^\tx{iso} \in
Z^1(\mc{E}^\tx{iso},G^*)$ and $z^\tx{mid} \in Z^1(\mc{E}^\tx{mid},G^*)$.
The two global pairings are constructed as products of local pairings, each
normalized by the localization $z^\tx{iso}_v$ and $z^\tx{rig}_v$, respectively.
At each place $v$, the local pairings do depend on the choice of $z^\tx{mid}$,
but the resulting global pairings do not.
Even though conjectural, the local pairings are related by an explicit non-conjectural factor that is a result of the normalized character identities the pairings are required to satisfy. This follows from the local comparison results of \cite{KalRIBG}. However, due to the non-commutativity of \eqref{eq:locnoncom} the local comparison map $H^1(\mc{E}^\tx{iso}_v,G^*) \to H^1(\mc{E}^\tx{rig}_v,G^*)$ does not map $[z^\tx{iso}_v]$ to $[z^\tx{rig}_v]$. Thus the local comparison results of \cite{KalRIBG} need to be supplemented with the quantification of the non-commutativity of \eqref{eq:locnoncom} discussed above. Combining these results, we obtain an explicit factor relating the two local pairings at a given place $v$. The global comparison results of this paper imply that the product over all $v$ of these factors equals $1$ and therefore the two global pairings are equal.

Alongside this comparison result, we introduce in this paper a simplification of the construction of the global gerbe $\mc{E}^\tx{rig}$. In \cite{KalGRI} this construction involved choosing a sequence $(E_i,S_i,\dot S_i)$, where $E_i$ is an exhaustive tower of finite Galois extensions of $F$, $S_i$ is an exhaustive tower of finite sets of places of $F$, and $\dot S_i$ is a set of lifts of $S_i$ to places of $E_i$. Each triple $(E_i,S_i,\dot S_i)$ was required to satisfy a list of four conditions \cite[Conditions 3.3.1]{KalGRI}. In this paper we show that the resulting gerbe depends only on the set $\dot V$ of lifts to $\bar F$ of the places of $F$ that is defined by $\dot V=\varprojlim \dot S_i$.
That is, $\mc{E}^\tx{rig}$ is independent of the choices of $E_i$ and $S_i$.
Furthermore, we show that \cite[Conditions 3.3.1]{KalGRI} for each triple $(E_i,S_i,\dot S_i)$ are equivalent to one simple condition on $\dot V$, namely Condition \ref{cnd:dense} stating that $\bigcup_{v \in V} \Gamma_{\dot v}$ is dense in $\Gamma$.

\textbf{Acknowledgement:} T.K. wishes to thank ENS Lyon for the hospitality and excellent working conditions during the special program on the Geometrization of the Langlands Program in 2018, where the essential part of this work was completed.
This program was funded by Labex Milyon, ANR Project PerCoLaTor
ANR-14-CE25-0002-01 and ERC Advanced Grant GeoLocLang 742608.

\tableofcontents

% =================================================
% Section
% =================================================

\section{Definition of some local and global Galois modules}

In this section we shall define some modules for the Galois group of a finite Galois extension of a ground field $F$ that is either a number field or a local field. Taking colimits over all finite extensions of $F$ we shall obtain modules for the absolute Galois group of a number field or a local field. These will be the character modules of $\mb{T}^\tx{iso}$, $\mb{T}^\tx{mid}$, and $P^\tx{rig}$. We shall also discuss the transition maps with respect to which we take these colimits -- we call these \emph{inflation maps}. We shall also discuss \emph{localization} maps, which relate the global Galois modules to their local conterparts.

% -------------------------------------------------
% Subsection
% -------------------------------------------------

\subsection{The local modules} \label{sub:locmod}

Let $F$ be a local field, $E/F$ a finite Galois extension, $N$ natural number. We define the following $\Gamma_{E/F}$-modules:

\begin{enumerate}
  \item $M_E^\tx{iso}:=\Z$.
  \item $M^\tx{mid}_{E,N}$ consists of maps $f : \Gamma_{E/F} \to \frac{1}{N}\Z$ satisfying $\sum_\sigma f(\sigma) \in \Z$.
  \item $M^\tx{rig}_{E,N}$ consists of maps $f : \Gamma_{E/F} \to \frac{1}{N}\Z/\Z$ satisfying $\sum_\sigma f(\sigma)=0$.
\end{enumerate}

The module $M_E^\tx{iso}$ is the module $X$ of \cite[\S 5]{KotBG}, while
$M^\tx{rig}_{E,N}$ is the module $X^*(u_{E/F,N})$ of \cite[\S3.1]{KalRI}.

We define $\Gamma_{E/F}$-equivariant maps
\begin{equation} \label{eq:loccompmap}
M_E^\tx{iso} \stackrel{c^\tx{iso}}{\llw} M_{E,N}^\tx{mid} \stackrel{c^\tx{rig}}{\lrw} M_{E,N}^\tx{rig}
\end{equation}
via the formulas
\[ f^\tx{iso} = \sum_\sigma f^\tx{mid}(\sigma),\qquad f^\tx{rig}(\sigma)=-f^\tx{mid}(\sigma) \mod \Z. \]

\begin{fct} \label{fct:loccompsurj}
The maps $c^\tx{iso}$ and $c^\tx{rig}$ are surjective. The kernel of $c^\tx{rig}$ is induced.
\end{fct}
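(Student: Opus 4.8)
The plan is simply to unwind the definitions: both surjectivity statements follow by producing explicit preimages, and the structure of $\ker c^\tx{rig}$ becomes transparent once it is identified with a module of integer-valued functions on $\Gamma_{E/F}$.

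First I would handle $c^\tx{iso}$. Given $n \in M_E^\tx{iso} = \Z$, the function on $\Gamma_{E/F}$ that takes the value $n$ at the identity and $0$ elsewhere lies in $M^\tx{mid}_{E,N}$, since its values lie in $\Z \subseteq \frac{1}{N}\Z$ and its total sum is $n \in \Z$; and it is sent by $c^\tx{iso}$ to $n$. Hence $c^\tx{iso}$ is surjective. Next, for $c^\tx{rig}$, given $g \in M^\tx{rig}_{E,N}$ I would pick, for each $\sigma \in \Gamma_{E/F}$, the representative $\tilde g(\sigma) \in \frac{1}{N}\Z \cap [0,1)$ of $g(\sigma) \in \frac{1}{N}\Z/\Z$. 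Then $\sum_\sigma \tilde g(\sigma) \in \frac{1}{N}\Z$ reduces modulo $\Z$ to $\sum_\sigma g(\sigma) = 0$, so it is an integer; therefore $f := -\tilde g$ lies in $M^\tx{mid}_{E,N}$, and $c^\tx{rig}(f)(\sigma) = -f(\sigma) \bmod \Z = \tilde g(\sigma) \bmod \Z = g(\sigma)$, so $c^\tx{rig}$ is surjective.

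Finally, I would observe that $f \in M^\tx{mid}_{E,N}$ lies in $\ker c^\tx{rig}$ exactly when $f(\sigma) \in \Z$ for all $\sigma$, and that for such $f$ the integrality of $\sum_\sigma f(\sigma)$ is automatic. Thus $\ker c^\tx{rig}$ is precisely the module $\mathrm{Maps}(\Gamma_{E/F}, \Z)$ of all $\Z$-valued functions on $\Gamma_{E/F}$, equipped with the $\Gamma_{E/F}$-action inherited from $M^\tx{mid}_{E,N}$, which is the translation action in the argument. This action permutes the indicator functions $\{\delta_\tau : \tau \in \Gamma_{E/F}\}$, so $\mathrm{Maps}(\Gamma_{E/F},\Z)$ is free of rank one over $\Z[\Gamma_{E/F}]$; in particular it is an induced module.

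I do not anticipate a real obstacle. The only point deserving a moment's care is to pin down the $\Gamma_{E/F}$-action on $M^\tx{mid}_{E,N}$, so as to recognize $\ker c^\tx{rig}$ as the standard (co)induced module over the finite group $\Gamma_{E/F}$; the remainder is routine checking that the elements constructed above obey the two constraints — values in $\frac{1}{N}\Z$ and integral total sum — defining $M^\tx{mid}_{E,N}$.
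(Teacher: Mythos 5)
Your proof is correct and is just the direct verification that the paper dismisses with ``Immediate'': explicit preimages $n\delta_e$ for $c^{\tx{iso}}$ and a lift of representatives for $c^{\tx{rig}}$, plus the identification $\ker c^{\tx{rig}}=\Z[\Gamma_{E/F}]$ with the translation action, which is free of rank one and hence induced. No issues.
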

\begin{proof}
Immediate.
\end{proof}

If $N$ is divisible by $[E:F]$, there is a canonical splitting of $c^\tx{iso}$ defined by
\begin{equation} \label{eq:siso}
s^\tx{iso} : M_E^\tx{iso} \to M_{E,N}^\tx{mid},\quad s^\tx{iso}(f^\tx{iso})=f^\tx{mid},\quad f^\tx{mid}(\sigma) =
[E:F]^{-1} f^\tx{iso}.
\end{equation}
The image of $s^\tx{iso}$ is precisely $(M_{E,N}^\tx{mid})^\Gamma$.

% -------------------------------------------------
% Subsection
% -------------------------------------------------

\subsection{Local inflation maps} \label{sub:locinf}

We continue with the notation of \S\ref{sub:locmod}. Let $K/F$ be another finite Galois extension with $E \subset K$, $M$ a natural number divisible by $N$.

We define three maps, which we refer to as inflation maps:
\begin{enumerate}
  \item $M_E^\tx{iso} \to M_K^\tx{iso}$, given by multiplication by $[K:E]$.
  \item $M_{E,N}^\tx{mid} \to M_{K,M}^\tx{mid}$, given by $f^{\tx{mid},K}(\sigma)=f^{\tx{mid},E}(\sigma)$.
  \item $M_{E,N}^\tx{rig} \to M_{K,M}^\tx{rig}$, also given by $f^{\tx{rig},K}(\sigma)=f^{\tx{rig},E}(\sigma)$.
\end{enumerate}

These inflation maps fit into the commutative diagram
\[ \xymatrix{
  M_E^\tx{iso}\ar[d]&M_{E,N}^\tx{mid}\ar[l]_{c^\tx{iso}_{E,N}}\ar[r]^{c^\tx{rig}_{E,N}}\ar[d]&M_{E,N}^\tx{rig}\ar[d]\\
  M_K^\tx{iso}&M_{K,M}^\tx{mid}\ar[l]^{c^\tx{iso}_{K,M}}\ar[r]_{c^\tx{rig}_{K,M}}&M_{K,M}^\tx{rig}
}
\]

Using these inflation maps we can take in each case the colimit over all finite Galois extensions $E/F$ and all natural numbers $N$ and obtain the following:

\begin{align*}
  M^\tx{iso} := \varinjlim M^\tx{iso}_E &= \begin{cases}
    \Q & \text{ if } F \text{ is non-Archimedean,} \\
    \Z & \text{ if } F \simeq \C,\\
    \frac{1}{2}\Z & \text{ if } F=\R
  \end{cases} \\
  M^\tx{mid} := \varinjlim M^\tx{mid}_{E,N} &= \begin{cases}
    \Q[\Gamma] & \text{ if } F \text{ is non-Archimedean,} \\
    \Z[\Gamma] = \Z & \text{ if } F \simeq \C, \\
    \{ f : \Gamma \to \Q | \sum_{\sigma \in \Gamma} f(\sigma) \in \Z \} &
      \text{ if } F=\R
  \end{cases} \\
  M^\tx{rig} := \varinjlim M^\tx{rig}_{E,N} &= \begin{cases}
    \Q/\Z[\Gamma] & \text{ if } F \text{ is non-Archimedean,} \\
    0 & \text{ if } F \simeq \C, \\
    \{ f : \Gamma \to \Q/\Z | \sum_{\sigma \in \Gamma} f(\sigma) = 0 \} &
      \text{ if } F=\R
  \end{cases}
\end{align*}

Here $R[\Gamma]$ denotes the set of smooth functions $\Gamma \to R$.

The local comparison maps splice together to maps
\[ M^\tx{iso} \stackrel{c^\tx{iso}}{\llw} M^\tx{mid} \stackrel{c^\tx{rig}}{\lrw}
M^\tx{rig}, \]
the left being given by integrating over $\Gamma$ with respect to the normalized
Haar measure, and the right being induced by the \emph{negative} of the natural
projection $\Q \to \Q/\Z$.
The map $c^\tx{iso}$ has a canonical splitting $s^\tx{iso}$ whose image consists
of constant functions $\Gamma \to \Q$ in the non-Archimedean case (resp.\
$\Gamma \to \Z$ in the complex case, resp.\ $\Gamma \to \frac{1}{2}\Z$ in the
real case).
In the non-Archimedean case the composition $c^\tx{rig}\circ s^\tx{iso}$ equals
the map $X^*(\phi)$, where $\phi$ is the map defined in \cite[(3.13)]{KalRIBG},
as we see by dualizing Lemma 3.1 loc.\ cit.
In the Archimedean case we take this equality as the definition of $\phi$.

% -------------------------------------------------
% Subsection
% -------------------------------------------------

\subsection{A discussion of $M^{\tx{mid},\vee}_{E,N}$} \label{sub:discmmidloc}

We now describe the $\Gamma_{E/F}$-module $M^{\tx{mid},\vee}_{E,N}=\tx{Hom}_\Z(M^\tx{mid}_{E,N},\Z)$ and record some of its properties.

The obvious inclusions $\Z[\Gamma_{E/F}] \to M_{E,N}^\tx{mid} \to N^{-1}\Z[\Gamma_{E/F}]$ fit into the exact sequences
\begin{equation} \label{eq:locmides1}
0 \to M_{E,N}^\tx{mid} \to N^{-1}\Z[\Gamma_{E/F}]
\stackrel{\sum_\sigma}{\lrw} N^{-1}\Z/\Z \to 0
\end{equation}
and
\begin{equation} \label{eq:locmides2}
0 \to \Z[\Gamma_{E/F}] \to M_{E,N}^\tx{mid} \to N^{-1}\Z/\Z[\Gamma_{E/F}]_0
\to 0.
\end{equation}

We can identify $\Z[\Gamma_{E/F}]$ with its own dual via the pairing $(x,y)
\mapsto \sum_\sigma x(\sigma)y(\sigma)$.
Then $N\Z[\Gamma_{E/F}]$ dualizes to $N^{-1}\Z[\Gamma_{E/F}]$, the
inclusion $\Z[\Gamma_{E/F}] \to N^{-1}\Z[\Gamma_{E/F}]$ dualizes to the
inclusion $N\Z[\Gamma_{E/F}] \to \Z[\Gamma_{E/F}]$.
For a finite Galois extension $K$ of $F$ containing $E$, the inflation map
$\Z[\Gamma_{E/F}] \to \Z[\Gamma_{K/F}]$ dualizes to the map $\Z[\Gamma_{K/F}]
\to \Z[\Gamma_{E/F}]$ given by summing over $\Gamma_{K/E}$-cosets.

The inclusions $\Z[\Gamma_{E/F}] \to M_{E,N}^\tx{mid} \to N^{-1}\Z[\Gamma_{E/F}]$ dualize to the inclusions $N\Z[\Gamma_{E/F}] \to M^{\tx{mid},\vee}_{E,N} \to \Z[\Gamma_{E/F}]$ and describe $M^{\tx{mid},\vee}_{E,N}$ as the submodule of $\Z[\Gamma_{E/F}]$ given by $N\Z[\Gamma_{E/F}]+\Z$, where $\Z=\Z[\Gamma_{E/F}]^\Gamma$ is the subgroup consisting of constant functions. Note that $\Z[\Gamma_{E/F}]^\Gamma$ coincides with $[M^{\tx{mid},\vee}_{E,N}]^\Gamma$.

In terms of this description of $M^{\tx{mid},\vee}_{E,N}$ the exact sequences dual to \eqref{eq:locmides1} and \eqref{eq:locmides2} are described as follows. The dual of \eqref{eq:locmides1} is
\begin{equation} \label{eq:locmides3}
0 \to N\Z[\Gamma_{E/F}] \to M^{\tx{mid},\vee}_{E,N} \to \Z/N\Z \to 0,
\end{equation}
with the map $M^{\tx{mid},\vee}_{E,N} \to \Z/N\Z$ given by the natural projection on $\Z$ and trivial on $N\Z[\Gamma_{E/F}]$. The dual of \eqref{eq:locmides2} is
\begin{equation} \label{eq:locmides4}
0 \to M^{\tx{mid},\vee}_{E,N} \to \Z[\Gamma_{E/F}] \to \frac{\Z/N\Z[\Gamma_{E/F}]}{\Z/N\Z} \to 0,
\end{equation}
where now the last map is the natural projection.

The map $c^\tx{iso} : M_{E,N}^\tx{mid} \to \Z$ defined in \eqref{eq:loccompmap}
dualizes to the inclusion map $\Z = [M^{\tx{mid},\vee}_{E,N}]^\Gamma \to
M^{\tx{mid},\vee}_{E,N}$.
If $[E:F]$ divides $N$ then its splitting $s^\tx{iso} : \Z \to M_{E,N}^\tx{mid}$
defined in \eqref{eq:siso} dualizes to $M^{\tx{mid},\vee}_{E,N} \to \Z$ given by
$y \mapsto [E:F]^{-1}\sum_\sigma y(\sigma)$.

The inflation map $M_{E,N}^\tx{mid} \to M_{K,M}^\tx{mid}$ dualizes to the map sending $y^K \in M_{K,M}^{\tx{mid},\vee} \subset \Z[\Gamma_{K/F}]$ to $y^E \in M_{E,N}^{\tx{mid},\vee} \subset \Z[\Gamma_{E/F}]$ given by $y^E(\sigma) = \sum_{\tau \mapsto \sigma} y^K(\tau)$.

% -------------------------------------------------
% Subsection
% -------------------------------------------------

\subsection{The global modules} \label{sub:globmod}

Let $F$ be a number field, $E/F$ finite Galois extension, $S$ a (finite or infinite) set of places of $F$, $\dot S_E$ a set of lifts of the places in $S$ to places of $E$. We assume that $(S,\dot S_E)$ satisfies \cite[Conditions 3.3.1]{KalGRI}. We define the following $\Gamma_{E/F}$-modules:

\begin{enumerate}
	\item $M_{E,S}^\tx{iso}:=\Z[S_E]_0$ consists of finitely supported functions $f : S_E \to \Z$ satisfying $\sum_w f(w)=0$.
	\item $M_{E,\dot S_E}^\tx{mid}$ consists of finitely supported functions $f : \Gamma_{E/F} \times S_E \to \frac{1}{[E:F]}\Z$ satisfying $\sum_w f(\sigma,w)=0$, $\sum_\sigma f(\sigma,w) \in \Z$, $\sigma^{-1}w \notin \dot S_E \Rightarrow f(\sigma,w)=0$.
	\item $M_{E,\dot S_E}^\tx{rig}$ consists of finitely supported functions $f : \Gamma_{E/F} \times S_E \to \frac{1}{[E:F]}\Z/\Z$ satisfying $\sum_w f(\sigma,w)=0$, $\sum_\sigma f(\sigma,w)=0$, $\sigma^{-1}w \notin \dot S_E \Rightarrow f(\sigma,w)=0$.
\end{enumerate}

We shall write $f^\tx{iso}$ or $f^{\tx{iso},E}$ in the first case if we want to be more precise, and use the analogous notation in the other two cases.

The module $M_{E,S}^\tx{iso}$ was defined by Tate \cite{Tate66}, and later by
Kottwitz in \cite[\S 6]{KotBG}, where it was denoted by $X_3$.
The module $M_{E,\dot S_E}^\tx{rig}$ was defined in \cite{KalGRI}, where it was
denoted by $M_{E,\dot S_E,[E:F]}$.

We define $\Gamma_{E/F}$-equivariant maps
\begin{equation} \label{eq:globcompmap}
M_{E,S}^\tx{iso} \stackrel{c^\tx{iso}}{\llw} M_{E,\dot S_E}^\tx{mid} \stackrel{c^\tx{rig}}{\lrw} M_{E,\dot S_E}^\tx{rig}
\end{equation}
by the formulas
\[ f^\tx{iso}(w)=\sum_\sigma f^\tx{mid}(\sigma,w),\qquad f^\tx{rig}(\sigma,w) =
-f^\tx{mid}(\sigma,w) \mod \Z. \]

\begin{pro} \label{pro:globcrigsurj}
  The map $c^\tx{rig}$ is surjective.
\end{pro}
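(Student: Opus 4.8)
The plan is to prove surjectivity of $c^\tx{rig} : M_{E,\dot S_E}^\tx{mid} \to M_{E,\dot S_E}^\tx{rig}$ by lifting an arbitrary element of the target, one support point at a time, and then repairing the two linear constraints $\sum_w f(\sigma,w) = 0$ and $\sum_\sigma f(\sigma,w) \in \Z$ using elements supported in the standard diagonal copy of $\Z[\Gamma_{E/F}]$ (analogous to the short exact sequence \eqref{eq:locmides2} in the local discussion). First I would take $g = g^\tx{rig} \in M_{E,\dot S_E}^\tx{rig}$, choose for each pair $(\sigma,w)$ in its finite support a representative $\tilde g(\sigma,w) \in \frac{1}{[E:F]}\Z$ of $g(\sigma,w) \in \frac{1}{[E:F]}\Z/\Z$, and set $\tilde g(\sigma,w)=0$ outside the support; this $\tilde g$ automatically satisfies the vanishing condition $\sigma^{-1}w \notin \dot S_E \Rightarrow \tilde g(\sigma,w)=0$ since $g$ does, and it maps to $g$ under $c^\tx{rig}$ (up to the sign in the formula $f^\tx{rig}(\sigma,w) = -f^\tx{mid}(\sigma,w) \bmod \Z$, which only changes $\tilde g$ to $-\tilde g$). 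What may fail is that $\tilde g$ need not lie in $M_{E,\dot S_E}^\tx{mid}$: the sums $\sum_w \tilde g(\sigma,w)$ and $\sum_\sigma \tilde g(\sigma,w)$ are only guaranteed to be integers, not to satisfy the precise normalizations in the definition.

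The correction step is the heart of the argument. Since $g^\tx{rig}$ satisfies $\sum_w g(\sigma,w) = 0$ in $\frac{1}{[E:F]}\Z/\Z$, we get $\sum_w \tilde g(\sigma,w) \in \Z$ for every $\sigma$; similarly $\sum_\sigma \tilde g(\sigma,w) \in \Z$ for every $w$. So the defect of $\tilde g$ is measured by an integer-valued function. Fix the base point $w_0 \in S_E$ lying over a chosen place $v_0 \in S$ (with $v_0 \in \dot S_E$, i.e.\ $w_0 \in \dot S_E$; this is exactly the kind of distinguished place provided by \cite[Conditions 3.3.1]{KalGRI}), and I would subtract off a correction term of the form $h(\sigma,w) = a(\sigma)\bigl(\delta_{w,\sigma w_0} - \delta_{w,w_0}\bigr)$ for suitable integers $a(\sigma)$, plus possibly a term concentrated at $(\sigma, w)$ pairs with $w, \sigma^{-1}w \in \dot S_E$ to fix the $\sum_\sigma$ constraint. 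Each such $h$ lies in $M_{E,\dot S_E}^\tx{mid}$ (it is integer-valued, finitely supported, the $\sigma w_0$ and $w_0$ terms cancel under $\sum_w$, and $\sigma^{-1}(\sigma w_0) = w_0 \in \dot S_E$ while $\sigma^{-1} w_0 \in \dot S_E$ is where the density/covering hypothesis is used) and maps to $0$ under $c^\tx{rig}$ because it is integer-valued. Choosing $a(\sigma) = \sum_w \tilde g(\sigma, w)$ and iterating the two types of correction — first killing all the $\sum_w$-defects, then the $\sum_\sigma$-defects, checking the latter correction does not reintroduce a $\sum_w$-defect — yields an element of $M_{E,\dot S_E}^\tx{mid}$ mapping onto $g^\tx{rig}$.

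The main obstacle I anticipate is bookkeeping with the two simultaneous linear constraints: after correcting the row sums $\sum_w$, the naive fix for the column sums $\sum_\sigma$ could disturb the row sums again, so the correcting cocycles must be chosen in the common kernel, i.e.\ supported on configurations like $\{w_0, \sigma w_0, \tau^{-1} w_0, \dots\}$ that are balanced in both directions and stay inside $\dot S_E$ under the relevant translates. This is precisely where \cite[Conditions 3.3.1]{KalGRI} (equivalently Condition \ref{cnd:dense}) enters, guaranteeing enough places in $S$ with all $\Gamma_{E/F}$-translates of their chosen lifts landing in $\dot S_E$, so that the support conditions $\sigma^{-1} w \notin \dot S_E \Rightarrow f(\sigma,w) = 0$ never obstruct the construction. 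Once the correction terms are set up carefully, the verification that the result lies in $M_{E,\dot S_E}^\tx{mid}$ and maps to $g^\tx{rig}$ is routine, mirroring the local Fact \ref{fct:loccompsurj}; alternatively, one can package the whole argument as a diagram chase using the global analogues of the exact sequences \eqref{eq:locmides1} and \eqref{eq:locmides2} together with vanishing of an appropriate $H^1$ of an induced module.
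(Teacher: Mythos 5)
Your overall strategy --- lift pointwise, then repair the row sums $\sum_w$ by adjusting at one distinguished point per row where the support condition holds --- is the same as the paper's, but one step of your execution genuinely fails. The correction term $h(\sigma,w)=a(\sigma)\bigl(\delta_{w,\sigma w_0}-\delta_{w,w_0}\bigr)$ with a \emph{fixed} $w_0\in\dot S_E$ does not lie in $M^\tx{mid}_{E,\dot S_E}$: its value at $(\sigma,w_0)$ is $-a(\sigma)$, yet $\sigma^{-1}w_0\in\dot S_E$ forces $\sigma^{-1}w_0=w_0$ (since $\dot S_E$ contains exactly one lift of $v_0$), so the support condition $\sigma^{-1}w\notin\dot S_E\Rightarrow f(\sigma,w)=0$ is violated for every $\sigma$ outside the decomposition group at $w_0$. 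The density condition does not rescue this: it provides, for each $\sigma$, \emph{some} place whose chosen lift is fixed by $\sigma$, not a single place working for all $\sigma$ at once (such a place need not exist, e.g.\ when $\Gamma_{E/F}$ is noncyclic). The repair is to let the correction site depend on $\sigma$: choose $w_\sigma$ with $\sigma^{-1}w_\sigma\in\dot S_E$ (take $w_\sigma=\sigma u$ for any $u\in\dot S_E$; this needs only $S\neq\emptyset$ and no density hypothesis at all) and put the entire defect of row $\sigma$ at $(\sigma,w_\sigma)$. That is exactly the paper's proof: lift arbitrarily at all $(\sigma,w)$ with $\sigma^{-1}w\in\dot S_E$ and $w\neq w_\sigma$, set the lift to $0$ off the support, and define $f^\tx{mid}(\sigma,w_\sigma)$ to be minus the sum of the other entries of that row.

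Your anticipated ``main obstacle'' --- interference between the two linear constraints --- is not actually there, because the column-sum condition in the definition of $M^\tx{mid}_{E,\dot S_E}$ is only $\sum_\sigma f(\sigma,w)\in\Z$, not $=0$. Any function congruent to $-f^\tx{rig}$ pointwise modulo $\Z$ satisfies it automatically, since $\sum_\sigma f^\tx{rig}(\sigma,w)=0$ in $\Q/\Z$; and the row correction preserves this congruence, because $\sum_w f^\tx{rig}(\sigma,w)=0$ in $\Q/\Z$ implies the corrected value at $(\sigma,w_\sigma)$ is still a lift of $-f^\tx{rig}(\sigma,w_\sigma)$. So the entire second correction step, and the iteration checking it does not disturb the first, is unnecessary.
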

\begin{proof}
  We can assume that $S \neq \emptyset$.
  Let $f^\tx{rig} \in M^\tx{rig}_{E, \dot{S}_E}$.
  For each $\sigma \in \Gamma_{E/F}$ choose $w_{\sigma} \in S_E$ such that
  $\sigma^{-1} w_{\sigma} \in \dot{S}_E$.
  Define $f^\tx{mid}$ as follows:
  \begin{enumerate}
    \item For $(\sigma, w)$ such that $\sigma^{-1} w \not\in \dot{S}_E$,
      $f^\tx{mid}(\sigma, w) = 0$.
    \item For $(\sigma, w)$ such that $\sigma^{-1} w \in \dot{S}_E$ but $w \neq
      w_{\sigma}$, choose an arbitrary lift $f^\tx{mid}(\sigma, w) \in \Q$ of
      $- f^\tx{rig}(\sigma, w) \in \Q/\Z$.
    \item Finally for $\sigma \in \Gamma_{E/F}$ let $f^\tx{mid}(\sigma,
      w_{\sigma}) = - \sum_{w \in S_E \smallsetminus \{ w_\sigma \}}
      f^\tx{mid}(\sigma, w) \in \Q$.
  \end{enumerate}
  Then $f^\tx{mid} \in M^\tx{mid}_{E, \dot{S}_E}$ and $c^\tx{rig}(f^\tx{mid}) =
  f^\tx{rig}$.
\end{proof}

\begin{fct} \label{fct:globcrigind}
The kernel of $c^\tx{rig}$ is an induced $\Gamma_{E/F}$-module.
\end{fct}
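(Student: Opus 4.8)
The plan is to identify the kernel of $c^\tx{rig} : M_{E,\dot S_E}^\tx{mid} \to M_{E,\dot S_E}^\tx{rig}$ explicitly and exhibit it as a permutation module, i.e.\ a direct sum of modules of the form $\Z[\Gamma_{E/F}/\Gamma_w]$ for various subgroups, which is the standard meaning of ``induced'' in this context. Unwinding the definitions, $f^\tx{mid} \in \ker(c^\tx{rig})$ means $f^\tx{mid}(\sigma,w) \in \Z$ for all $(\sigma,w)$, together with the support condition $\sigma^{-1}w \notin \dot S_E \Rightarrow f^\tx{mid}(\sigma,w) = 0$ and the two summation conditions $\sum_w f^\tx{mid}(\sigma,w) = 0$ and $\sum_\sigma f^\tx{mid}(\sigma,w) \in \Z$ (the latter being automatic once the values are integral). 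So $\ker(c^\tx{rig})$ is the module of finitely supported functions $f : \Gamma_{E/F} \times S_E \to \Z$, supported on pairs with $\sigma^{-1}w \in \dot S_E$, with $\sum_w f(\sigma,w) = 0$ for each $\sigma$.

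First I would reparametrize. For a fixed $\sigma$, the condition $\sigma^{-1}w \in \dot S_E$ picks out the single place $w = w_\sigma := \sigma(\dot v)$ lying over $v$ in the ``distinguished'' position, as $v$ ranges over $S$ — more precisely, the set of admissible $w$ for a given $\sigma$ is $\{\sigma \dot v : \dot v \in \dot S_E\}$, which is in bijection with $S$. Thus $\ker(c^\tx{rig}) = \bigoplus_{v \in S} K_v$, where $K_v$ consists of finitely supported $f : \Gamma_{E/F} \to \Z$ (thinking of $f(\sigma) = f^\tx{mid}(\sigma, \sigma\dot v)$) — wait, that is not quite right because the constraint $\sum_w f^\tx{mid}(\sigma, w) = 0$ couples different $v$'s through the same $\sigma$. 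Let me instead group by place of $F$: writing $S = \bigsqcup v$, the sum $\sum_{w \in S_E} f^\tx{mid}(\sigma, w)$ over \emph{all} $w$ is what must vanish, not the sum over $w$ above a single $v$. So the correct decomposition is: $\ker(c^\tx{rig})$ is the $\Gamma_{E/F}$-submodule of $\bigoplus_{\sigma}\Z$ — rather, of $\Z[\Gamma_{E/F} \times \dot S_E]$ via the bijection $(\sigma, \dot v) \mapsto (\sigma, \sigma\dot v)$ — cut out by $\sum_{\dot v \in \dot S_E} f(\sigma, \dot v) = 0$ for each $\sigma$. In other words, $\ker(c^\tx{rig}) \cong \Z[\Gamma_{E/F}] \otimes_\Z \Z[\dot S_E]_0$ where $\Gamma_{E/F}$ acts on the first factor by left translation and trivially on the second, and $\Z[\dot S_E]_0$ is the degree-zero part of the free abelian group on the finite set $\dot S_E$. (Here one uses that $\dot S_E$ is finite when $S$ is finite; for infinite $S$ one takes the colimit of the finite-$S$ case, and a colimit of induced modules along the natural inclusions is induced.)

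Then I would conclude: $\Z[\Gamma_{E/F}] \otimes_\Z A$ with trivial action on $A$ is an induced (permutation) module for any free abelian group $A$, being $\bigoplus_{\text{basis of }A}\Z[\Gamma_{E/F}]$; and $\Z[\dot S_E]_0$ is free of rank $|\dot S_E| - 1$. Hence $\ker(c^\tx{rig})$ is a finite (or countable) direct sum of copies of $\Z[\Gamma_{E/F}]$, which is exactly the assertion that it is an induced $\Gamma_{E/F}$-module. The one genuine point requiring care — the ``main obstacle,'' though it is minor — is verifying the bijection $(\sigma,\dot v)\mapsto (\sigma,\sigma\dot v)$ is $\Gamma_{E/F}$-equivariant for the \emph{diagonal} action on the target versus the left-translation-on-first-factor action on the source, and tracking that the summation condition transforms correctly; this is a routine check but is where the structure of the argument genuinely lives, and it mirrors the local statement in Fact \ref{fct:loccompsurj} whose kernel ``$\Z[\Gamma_{E/F}]_0$'' is the $S = \{v\}$ special case stripped of the place index. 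I would also remark that this gives an alternative, more conceptual proof of Proposition \ref{pro:globcrigsurj}, since an induced module has vanishing $H^1$, but since surjectivity is already proved I would not belabor this.
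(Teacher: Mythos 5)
Your proof is correct and follows essentially the same route as the paper: the change of variables $\phi(\sigma,w)=f(\sigma,\sigma w)$ converts the diagonal action into left translation on the first factor alone, after which the kernel is visibly $\Z[\Gamma_{E/F}]\otimes_\Z \Z[\dot S_E]_0 \cong \tx{Ind}_{\{1\}}^{\Gamma_{E/F}}\Z[S]_0$, a direct sum of copies of the regular representation. The only cosmetic difference is that the paper states this in one line without the intermediate false starts; also note that $\Z[S]_0$ is free abelian for any $S$ (finite or not), so the colimit remark is unnecessary.
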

\begin{proof}
After making the change of variables $\phi(\sigma,w)=f(\sigma,\sigma w)$ we see that this kernel is given by the set of functions $\phi : \Gamma_{E/F} \times S_E \to \Z$ satisfying $\sum_w \phi(\sigma,w)=0$ and $w \notin \dot S_E\Rightarrow \phi(\sigma,w)=0$, with $\Gamma_{E/F}$ acting by left translation on the first factor.
This $\Gamma_{E/F}$-module is isomorphic to
$\tx{Ind}_{\{1\}}^{\Gamma_{E/F}}\Z[S]_0$.
\end{proof}

For a $\Z[\Gamma_{E/F}]$-module $Y$ denote $I_{E/F}(Y) = \sum_{\sigma \in
\Gamma_{E/F}} (\sigma-1)(Y)$.

\begin{lem} \label{lem:repr_Sdot_supp}
	Assume that for any $\sigma \in \Gamma_{E/F}$ there exists $w \in \dot{S}_E$
	such that $\sigma w = w$. For any $\Z[\Gamma_{E/F}]$-module $Y$ we have
	$Y[S_E]_0 = Y[\dot{S}_E]_0 + I_{E/F}(Y[S_E]_0)$.
\end{lem}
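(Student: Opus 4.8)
The inclusion $\supseteq$ is immediate, since $Y[\dot S_E]_0$ and $I_{E/F}(Y[S_E]_0)$ are both submodules of $Y[S_E]_0$; the content is the reverse inclusion. Writing $I := I_{E/F}(Y[S_E]_0)$, the plan is to show first that every generator $y \cdot [w]$ of $Y[S_E]$ (with $y \in Y$ and $w \in S_E$) is congruent modulo $I$ to an element of $Y[\dot S_E]$, of total mass $y$ but not in general of degree zero, and then to recover the degree-zero condition by summing over the terms of a degree-zero element.

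For the first step, fix $w \in S_E$ lying over a place $v$ of $F$, let $\dot v \in \dot S_E$ be the chosen lift of $v$, and pick $\sigma_0 \in \Gamma_{E/F}$ with $\sigma_0 \dot v = w$ (one may take $\sigma_0 = 1$ if $w = \dot v$). The hypothesis supplies $u \in \dot S_E$ with $\sigma_0 u = u$. I would then apply $\sigma_0 - 1$ to the auxiliary element $a := \sigma_0^{-1}(y)\big([\dot v] - [u]\big)$, which lies in $Y[S_E]_0$ precisely because its two coefficients cancel; using $\sigma_0\big(\sigma_0^{-1}(y)\big) = y$ and $\sigma_0[u] = [u]$, one computes
\[ (\sigma_0 - 1)(a) \;=\; y\,[w] \;-\; \sigma_0^{-1}(y)\,[\dot v] \;+\; \big(\sigma_0^{-1}(y) - y\big)[u] \;\in\; I. \]
Hence $y\,[w] \equiv \phi_{w,y} \pmod I$, where $\phi_{w,y} := \sigma_0^{-1}(y)\,[\dot v] + \big(y - \sigma_0^{-1}(y)\big)[u]$ is supported on $\dot S_E$ and has total mass $y$.

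For the second step, take any $\xi = \sum_i y_i\,[w_i] \in Y[S_E]_0$, so that $\sum_i y_i = 0$. Summing the congruences above gives $\xi \equiv \sum_i \phi_{w_i, y_i} \pmod I$; the right-hand side is supported on $\dot S_E$ and has total mass $\sum_i y_i = 0$, so it lies in $Y[\dot S_E]_0$. Therefore $\xi \in Y[\dot S_E]_0 + I$, which is the desired inclusion (the degenerate case $S = \emptyset$ being trivial).

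I expect the only genuine subtlety to lie in the first step. A single generator $y\,[w]$ cannot in general be pushed into $Y[\dot S_E]_0$ modulo $I$, since its natural representative carries total mass $y \neq 0$; it is precisely the hypothesis---that every $\sigma \in \Gamma_{E/F}$ fixes some point of $\dot S_E$---that furnishes, for the chosen $\sigma_0$, the extra point $u \in \dot S_E$ needed to balance coefficients so that $a$ has degree zero and $(\sigma_0 - 1)(a)$ genuinely lands in $I = I_{E/F}(Y[S_E]_0)$ rather than merely in $I_{E/F}(Y[S_E])$. The correct degree is then restored only globally, from the degree-zero constraint on $\xi$ itself.
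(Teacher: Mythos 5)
Your proof is correct and follows essentially the same route as the paper's: for each $w$ outside $\dot S_E$ you subtract an element of the form $(\sigma-1)\bigl(y'([\dot v]-[u])\bigr)$ with $u\in\dot S_E$ fixed by $\sigma$, which is exactly the paper's correction term $(\sigma_w-1)(f(w)\delta_w - f(w)\delta_{\dot v_w})$ up to replacing $\sigma_w$ by its inverse. The only cosmetic difference is that you argue generator by generator and restore the degree-zero condition at the end, whereas the paper corrects the whole degree-zero function $f$ in one step.
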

\begin{proof}
  Let $f : S_E \rightarrow Y$ be such that $\sum_{w \in S_E} f(w) = 0$.
  For each $w \in S_E \smallsetminus \dot S_E$ choose $\sigma_w \in
  \Gamma_{E/F}$ such that $\sigma_w w \in \dot S_E$ and $\dot{v}_w \in
  \dot{S}_E$ such that $\sigma_w \dot{v}_w = \dot{v}_w$.
  Then $f + \sum_{w \in S_E \smallsetminus \dot{S}_E} (\sigma_w - 1) (f(w)
  \delta_w - f(w) \delta_{\dot{v}_w})$ is supported on $\dot{S}_E$.
\end{proof}

\begin{pro} \label{pro:globcisosplit}
	Assume that for any $\sigma \in \Gamma_{E/F}$ there exists $w \in \dot{S}_E$
	such that $\sigma w = w$. The morphism $c^\tx{iso}$ of
	$\Z[\Gamma_{E/F}]$-modules splits.
\end{pro}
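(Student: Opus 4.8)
The plan is to produce an explicit $\Gamma_{E/F}$-equivariant section $s^\tx{iso} : M_{E,S}^\tx{iso} \to M_{E,\dot S_E}^\tx{mid}$ of $c^\tx{iso}$, by first constructing any set-theoretic (or non-equivariant) section and then averaging it against the Galois action. Concretely, a natural first attempt is to send $f^\tx{iso} \in \Z[S_E]_0$ to the function $f^\tx{mid}(\sigma,w) = [E:F]^{-1} f^\tx{iso}(\sigma w)$ when $\sigma w \in \dot S_E$ (equivalently $\sigma^{-1}(\sigma w)$... one must be careful with the support convention $\sigma^{-1}w \notin \dot S_E \Rightarrow f^\tx{mid}(\sigma,w)=0$, so the support condition should read $w \in \sigma\dot S_E$, i.e.\ $\sigma^{-1}w\in\dot S_E$). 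This lands in $\frac{1}{[E:F]}\Z$ and is supported correctly, but it will generally fail the condition $\sum_\sigma f^\tx{mid}(\sigma,w) \in \Z$ and the condition $\sum_w f^\tx{mid}(\sigma,w) = 0$, so it needs correction.

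The cleaner route is to use the decomposition supplied by Lemma \ref{lem:repr_Sdot_supp}, whose hypothesis is exactly the standing assumption of this proposition. Apply it with $Y = \Z$: every $f^\tx{iso}\in\Z[S_E]_0$ can be written $f^\tx{iso} = g + h$ with $g \in \Z[\dot S_E]_0$ and $h \in I_{E/F}(\Z[S_E]_0)$. For the piece $g$ supported on $\dot S_E$ one defines $s^\tx{iso}(g)(\sigma,w)$ to be $[E:F]^{-1}g(w)$ for all $\sigma$ — this is supported on $\{(\sigma,w): \sigma^{-1}w\in\dot S_E\}$ only after the change of variables used in Fact \ref{fct:globcrigind}, so in fact it is more transparent to do the entire construction in the coordinates $\phi(\sigma,w)=f^\tx{mid}(\sigma,\sigma w)$, where $M^\tx{mid}_{E,\dot S_E}$ becomes the functions $\phi:\Gamma_{E/F}\times S_E\to\frac1{[E:F]}\Z$ supported on $S_E\to\dot S_E$ in the second variable, with $\sum_w\phi(\sigma,w)=0$ and $\sum_\sigma\phi(\sigma,\sigma w)\in\Z$, and $\Gamma$ acting by left translation on the first factor. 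In these coordinates $c^\tx{iso}$ becomes $\phi\mapsto(w\mapsto\sum_\sigma\phi(\sigma,\sigma^{-1}w))$ — one must recompute this carefully. The key point is that the obstruction $\sum_\sigma f^\tx{mid}(\sigma,w)\in\Z$ is automatic once the fibrewise values lie in $[E:F]^{-1}\Z$ and there are $[E:F]$ summands each equal to $[E:F]^{-1}\cdot(\text{integer})$, provided the integers summed are constant in $\sigma$; this is why the $g$-piece must be averaged, i.e.\ given the same value $[E:F]^{-1}g(\cdot)$ for every $\sigma$.

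The main obstacle is the piece $h \in I_{E/F}(\Z[S_E]_0)$: one must lift elements of the augmentation submodule to $M^\tx{mid}_{E,\dot S_E}$ in a $\Gamma$-equivariant way. Here I would use that $I_{E/F}(\Z[S_E]_0)$ is spanned by elements $(\sigma_0-1)(\delta_{w_0}-\delta_{w_1})$ and define the lift of such a generator to be $(\sigma_0\cdot-1)$ applied to a chosen fixed lift of $\delta_{w_0}-\delta_{w_1}$ in $M^\tx{mid}_{E,\dot S_E}$; because we hit it with $(\sigma_0-1)$ the $\sum_\sigma$-condition automatically gives an integer (indeed $0$), so no further constraint is imposed, and equivariance can then be arranged by the standard device of choosing the fixed lifts compatibly and extending $\Z[\Gamma_{E/F}]$-linearly on the induced module $I_{E/F}(\cdot)$ viewed via Fact \ref{fct:globcrigind}'s identification with an induced module — more precisely, the kernel of $c^\tx{rig}$ is induced, hence cohomologically trivial, and $c^\tx{rig}$ identifies $M^\tx{mid}_{E,\dot S_E}/(\text{induced})$ with $M^\tx{rig}_{E,\dot S_E}$, but what we actually need is splitting of $c^\tx{iso}$, so the argument is instead: build $s^\tx{iso}$ on the submodule $\Z[\dot S_E]_0$ by the explicit averaged formula above, check it is $\Gamma$-equivariant and a section there, then extend to all of $\Z[S_E]_0 = \Z[\dot S_E]_0 + I_{E/F}(\Z[S_E]_0)$ by additivity using that the $I_{E/F}$-part maps into $\ker(c^\tx{iso})$ under any lift and $\ker(c^\tx{iso})$ carries a $\Gamma$-equivariant splitting of its own since it too is (after the change of variables) an induced module, hence the combined map is well-defined and equivariant. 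I expect the bookkeeping with the two support conventions (the $\sigma^{-1}w\in\dot S_E$ condition versus the left-translation coordinates) to be where all the care is needed; the underlying homological input is just that induced modules split off.
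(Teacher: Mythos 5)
Your instinct to use Lemma \ref{lem:repr_Sdot_supp} and an averaged explicit formula is the right one, but the architecture of the argument has a genuine gap: you try to build the section \emph{directly on the module} $M^\tx{iso}_{E,S}$, element by element, and this cannot work as written. First, the decomposition $f^\tx{iso}=g+h$ with $g\in\Z[\dot S_E]_0$ and $h\in I_{E/F}(\Z[S_E]_0)$ is neither unique nor compatible with the Galois action ($\Z[\dot S_E]_0$ is not a $\Gamma_{E/F}$-submodule), so ``define on each piece and extend by additivity'' is not well-defined and ``check equivariance on $\Z[\dot S_E]_0$'' has no meaning. Second, even on the $g$-piece the averaged formula is not a section: applying $c^\tx{iso}$ to the proposed lift of $g$ gives $w\mapsto [E:F]^{-1}\sum_{\sigma:\sigma^{-1}w\in\dot S_E} g(\sigma^{-1}w)=[E:F]^{-1}N_{E/F}(g)(w)$, and for $w=\dot v$ this is $[E:F]^{-1}[E_{\dot v}:F_v]\,g(\dot v)\neq g(\dot v)$ in general. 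The formula only recovers elements of the form $[E:F]^{-1}N_{E/F}(\dot f)$. Third, your fallback that $\ker(c^\tx{iso})$ is induced is unjustified and should be expected to be false: Fact \ref{fct:globcrigind} identifies $\ker(c^\tx{rig})$ (integer-valued functions) with an induced module, but the extra condition $\sum_\sigma f(\sigma,w)=0$ defining $\ker(c^\tx{iso})$ does not become a fibrewise condition under the change of variables $\phi(\sigma,w)=f(\sigma,\sigma w)$. Indeed, were $\ker(c^\tx{iso})$ cohomologically trivial, the splitting would follow at once from the standard obstruction argument and Proposition \ref{pro:h1gmvan} would be vacuous at finite level; the paper avoids exactly this.

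The missing idea is a reduction that turns the second defect into the proof. A $\Gamma_{E/F}$-equivariant splitting of $c^\tx{iso}$ is the same thing as a $\Gamma$-\emph{invariant} element of $\tx{Hom}_\Z(M^\tx{iso}_{E,S},M^\tx{mid}_{E,\dot S_E})$ lifting the identity, so it suffices to prove that $\tx{Hom}(X,M^\tx{mid}_{E,\dot S_E})^{\Gamma}\to\tx{Hom}(X,M^\tx{iso}_{E,S})^{\Gamma}$ is surjective for $X$ a finitely generated free $\Gamma$-module, i.e.\ to lift only \emph{invariant} elements $f^\tx{iso}\in Y[S_E]_0^{\Gamma}$ with $Y=\tx{Hom}_\Z(X,\Z)$. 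For such $f^\tx{iso}$, apply $N^\natural=[E:F]^{-1}N_{E/F}$ to the decomposition of Lemma \ref{lem:repr_Sdot_supp}: the $I_{E/F}$-part is killed and one gets $f^\tx{iso}=N^\natural(f^\tx{iso})=[E:F]^{-1}N_{E/F}(\dot f)$ with $\dot f\in Y[\dot S_E]_0$. Now the formula $f^\tx{mid}(\sigma,w)=[E:F]^{-1}\sigma(\dot f(\sigma^{-1}w))$ for $\sigma^{-1}w\in\dot S_E$ (and $0$ otherwise) is a genuine preimage in $(Y\otimes M^\tx{mid}_{E,\dot S_E})^\Gamma$, precisely because the computation that defeated your version now returns $[E:F]^{-1}N_{E/F}(\dot f)=f^\tx{iso}$. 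Taking $X=M^\tx{iso}_{E,S}$ and lifting the identity finishes the proof; no statement about $\ker(c^\tx{iso})$ is needed.
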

\begin{proof}
  It is enough to show that for any $\Gamma_{E/F}$-module $X$ that is a finitely
  generated free abelian group, the map $\tx{Hom}(X, M^\tx{mid}_{E,
  \dot{S}_E})^{\Gamma_{E/F}} \rightarrow \tx{Hom}(X,M^\tx{iso}_{E,
  S_E})^{\Gamma_{E/F}}$ induced by $c^\tx{iso}$ is surjective, for then we can
  take $X=M^\tx{iso}_{E, S_E}$ and lift the identity map.
  Writing $Y = \tx{Hom}_\Z(X,\Z)$ we have $\tx{Hom}(X,M^\tx{iso}_{E, S_E})
  =Y[S_E]_0^{\Gamma_{E/F}}$.
  Let $f^\tx{iso} \in Y[S_E]_0^{\Gamma_{E/F}}$.
  By Lemma \ref{lem:repr_Sdot_supp} we can write $f^\tx{iso} = [E:F]^{-1}
  N_{E/F}(\dot{f})$ where $\dot{f} \in Y[\dot{S}_E]_0$.
  Define
  \[ f^\tx{mid}(\sigma, w) =
  \begin{cases}
    [E:F]^{-1} \sigma(\dot{f}(\sigma^{-1} w)) & \text{ if } \sigma^{-1} w \in
      \dot{S}_E \\
    0 & \text{ otherwise.}
  \end{cases} \]
\end{proof}

\begin{cor} \label{cor:globcisosurj}
Under the assumption of the proposition the map $c^\tx{iso}$ is
surjective.
\end{cor}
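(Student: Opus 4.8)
The plan is simply to deduce surjectivity of $c^\tx{iso}$ from the splitting established in Proposition \ref{pro:globcisosplit}. Indeed, if $s : M^\tx{iso}_{E,S_E} \to M^\tx{mid}_{E,\dot S_E}$ is a $\Z[\Gamma_{E/F}]$-module homomorphism with $c^\tx{iso}\circ s = \tx{id}$, then for any $f^\tx{iso} \in M^\tx{iso}_{E,S_E}$ we have $c^\tx{iso}(s(f^\tx{iso})) = f^\tx{iso}$, exhibiting $f^\tx{iso}$ as a value of $c^\tx{iso}$. Since this works for an arbitrary element of the target, $c^\tx{iso}$ is surjective. (One may note that the proof of Proposition \ref{pro:globcisosplit} already writes down an explicit preimage: given $f^\tx{iso} \in M^\tx{iso}_{E,S_E}$, apply Lemma \ref{lem:repr_Sdot_supp} to write $f^\tx{iso} = [E:F]^{-1} N_{E/F}(\dot f)$ with $\dot f \in Y[\dot S_E]_0$, and then the displayed formula for $f^\tx{mid}$ gives an element of $M^\tx{mid}_{E,\dot S_E}$ mapping to $f^\tx{iso}$ under $c^\tx{iso}$.)

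There is essentially no obstacle here: the corollary is an immediate formal consequence of the proposition, since the existence of a section of a morphism of abelian groups (or of modules) forces that morphism to be surjective. The only thing worth double-checking is that the hypothesis is the same — both statements assume that every $\sigma \in \Gamma_{E/F}$ fixes some $w \in \dot S_E$ — which is exactly why the corollary is phrased "under the assumption of the proposition." So the one-line argument is: a split epimorphism is in particular an epimorphism.
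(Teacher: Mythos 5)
Your argument is correct and is exactly the paper's (implicit) reasoning: the corollary is stated without proof precisely because a morphism admitting a section is automatically surjective, and the hypothesis is carried over verbatim from Proposition \ref{pro:globcisosplit}. Nothing further is needed.
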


% -------------------------------------------------
% Subsection
% -------------------------------------------------

\subsection{Global inflation maps} \label{sub:globinf}

We keep the notation of \S\ref{sub:globmod}.
Let $K/F$ be a finite Galois extension with $E \subset K$, $S'$ a set of places
of $F$ containing $S$, $\dot S'_K$ a set of lifts of $S'$ to places of $K$ such
that for each $v \in S$ with lift $\dot v \in \dot S'_K$, the image of $\dot v$
in $S_E$ lies in $\dot S_E$.

We define three inflation maps. First assume $S=S'$.

\begin{enumerate}
  \item $\Z[S_E]_0 \to \Z[S'_K]_0$ by $f^{\tx{iso},K}(u)=[K_u:E_w]f^{\tx{iso},E}(w)$, where $w \in S_E$ is the unique place under $u \in S_K$.
  \item $M_{E,\dot S_E}^\tx{mid} \to M_{K,\dot S_K}^\tx{mid}$ by $f^{\tx{mid},K}(\sigma,u)=f^{\tx{mid},E}(\sigma,w)$ provided $\sigma^{-1}u \in \dot S_K$, and $f^{\tx{mid},K}(\sigma,u)=0$ otherwise.
  \item $M_{E,\dot S_E}^\tx{rig} \to M_{K,\dot S_K}^\tx{rig}$ by $f^{\tx{rig},K}(\sigma,u)=f^{\tx{rig},E}(\sigma,w)$ provided $\sigma^{-1}u \in \dot S_K$, and $f^{\tx{rig},K}(\sigma,u)=0$ otherwise.
\end{enumerate}

We now drop the assumption $S=S'$ and extend all maps defined above from $S_K$ to $S'_K$ by zero outside of $S_K$.

These maps are well-defined and fit in the following commutative diagram
\[ \xymatrix{
  M_{E,S}^\tx{iso}\ar[d]&M_{E,\dot S_E}^\tx{mid}\ar[l]_{c^\tx{iso}_{E,S}}\ar[r]^{c^\tx{rig}_{E,S}}\ar[d]&M_{E,\dot S_E}^\tx{rig}\ar[d]\\
  M_{K,S'}^\tx{iso}&M_{K,\dot S'_K}^\tx{mid}\ar[l]^{c^\tx{iso}_{K,S'}}\ar[r]_{c^\tx{rig}_{K,S'}}&M_{K,\dot S'_K}^\tx{rig}\\
}
\]
The commutativity of the right square is immediate. The commutativity of the left square follows from the condition that if $(\sigma,w)$ is in the support of $f^E$ then $\sigma^{-1}w \in \dot S_E$.

% -------------------------------------------------
% Subsection
% -------------------------------------------------

\subsection{Localization maps} \label{sub:locmap}

Continue with the notation of \S\ref{sub:globmod}. Fix $w \in \dot S_E$. For each of the three global modules we define localization maps $\tx{loc}_w : f \mapsto f_w$ as follows:

\begin{enumerate}
	\item $\tx{loc}_w : M_{E,S}^\tx{iso} \to M_{E_w}^\tx{iso}$, defined by $f_w := f(w)$.
	\item $\tx{loc}_w : M_{E,\dot S_E}^\tx{mid} \to M_{E_w,[E:F]}^\tx{mid}$, defined by $f_w(\sigma) := f(\sigma,w)$.
	\item $\tx{loc}_w : M_{E,\dot S_E}^\tx{rig} \to M_{E_w,[E:F]}^\tx{rig}$,
    defined by $f_w(\sigma) := f(\sigma,w)$.
\end{enumerate}

These maps fit into the following commutative diagram
\[ \xymatrix{
	M_{E,S}^\tx{iso}\ar[d]&M_{E,\dot S_E}^\tx{mid}\ar[l]\ar[r]\ar[d]&M_{E,\dot S_E}^\tx{rig}\ar[d]\\
	M_{E_w}^\tx{iso}&M_{E_w,[E:F]}^\tx{mid}\ar[l]\ar[r]&M_{E_w,[E:F]}^\tx{rig}
}
\]
The commutativity of the right square is immediate, while that of the left is implied by the support condition and the assumption $w \in \dot S_E$.

\begin{fct}
  The localization maps are compatible with the local and global inflation maps,
  i.e.\ in the setting of \S \ref{sub:globinf}, for $? \in \{ \tx{iso},
  \tx{mid}, \tx{rig} \}$ and $w \in \dot S'_K \cap S_K$, the following diagram
  commutes.
  \[ \xymatrix{
    M_{E,\dot S_E}^\tx{?} \ar[r]\ar[d] & M_{K,\dot S'_K}^\tx{?} \ar[d] \\
    M_{E_w,[E:F]}^\tx{?} \ar[r] & M_{K_w,[K:F]}^\tx{?}
  } \]
\end{fct}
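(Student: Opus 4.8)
The plan is to verify the square by a direct diagram chase, handled separately in each of the three cases $? \in \{\tx{iso},\tx{mid},\tx{rig}\}$, simply by unwinding the definitions of the localization maps of \S\ref{sub:locmap} and of the local and global inflation maps of \S\ref{sub:locinf} and \S\ref{sub:globinf}. I would first fix notation: let $v \in S$ be the place of $F$ below $w$ and let $w_E \in S_E$ be the image of $w$; by the hypothesis imposed on $\dot S'_K$ in \S\ref{sub:globinf} (a lift in $\dot S'_K$ of a place of $S$ maps into $\dot S_E$) we have $w_E \in \dot S_E$, so $\tx{loc}_{w_E}$ is defined on the left vertical arrow. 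Next I would reduce the $\tx{rig}$ case to the $\tx{mid}$ case: by Proposition \ref{pro:globcrigsurj} the map $c^\tx{rig} : M_{E,\dot S_E}^\tx{mid} \to M_{E,\dot S_E}^\tx{rig}$ is surjective, and $c^\tx{rig}$ commutes with localization and with local and global inflation (the commutative diagrams of \S\ref{sub:locinf}, \S\ref{sub:globinf} and \S\ref{sub:locmap}), so applying $c^\tx{rig}$ to the commuting $\tx{mid}$ square evaluated on a lift of a given $f^\tx{rig}$ yields the commuting $\tx{rig}$ square. The $\tx{iso}$ case is then immediate: for $f \in M_{E,S}^\tx{iso}$, going right then down gives $f^{\tx{iso},K}(w) = [K_w : E_{w_E}]\,f(w_E)$, while going down then right gives $f(w_E) \in M_{E_{w_E}}^\tx{iso} = \Z$ followed by multiplication by $[K_w : E_{w_E}]$, and the two agree.

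For the $\tx{mid}$ case the only point requiring care is that on both sides the support conditions isolate the decomposition group $\Gamma_{K_w/F_v} \subset \Gamma_{K/F}$ of $w$. Since $w \in \dot S'_K$ and $\dot S'_K$ contains a single lift of $v$, for $\sigma \in \Gamma_{K/F}$ one has $\sigma^{-1}w \in \dot S'_K$ if and only if $\sigma$ stabilizes $w$, i.e.\ $\sigma \in \Gamma_{K_w/F_v}$. So for $f \in M_{E,\dot S_E}^\tx{mid}$ the global inflation satisfies $f^{\tx{mid},K}(\sigma,w) = 0$ for $\sigma \notin \Gamma_{K_w/F_v}$ and $f^{\tx{mid},K}(\sigma,w) = f^{\tx{mid},E}(\bar\sigma,w_E)$ for $\sigma \in \Gamma_{K_w/F_v}$, with $\bar\sigma$ the image of $\sigma$ in $\Gamma_{E/F}$; localizing at $w$ gives the function $\sigma \mapsto f^{\tx{mid},E}(\bar\sigma, w_E)$ on $\Gamma_{K_w/F_v}$. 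Going the other way, localizing $f$ at $w_E$ gives $\tau \mapsto f^{\tx{mid},E}(\tau, w_E)$ on $\Gamma_{E_{w_E}/F_v}$ (supported there because of the support condition on $M_{E,\dot S_E}^\tx{mid}$ from \S\ref{sub:globmod}), and applying the local inflation map of \S\ref{sub:locinf} gives $\sigma \mapsto f^{\tx{mid},E}(\sigma|_{E_{w_E}}, w_E)$ on $\Gamma_{K_w/F_v}$. The two composites coincide because the restriction $\Gamma_{K_w/F_v} \to \Gamma_{E_{w_E}/F_v}$ appearing in the local inflation map is the restriction of $\Gamma_{K/F} \to \Gamma_{E/F}$ (a $K/F$-automorphism stabilizing $w$ stabilizes $w_E$), so $\sigma|_{E_{w_E}} = \bar\sigma$.

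I expect the only real obstacle to be bookkeeping: keeping track of which decomposition subgroup of which Galois group each function is regarded as living on, and checking that after localization the ``provided $\sigma^{-1}u \in \dot S'_K$'' clauses in the global inflation maps, together with the support conditions of \S\ref{sub:globmod}, restrict all functions to the decomposition group of $w$, where the local and global inflation recipes literally agree. Once organized this way, the verification is routine, consistent with the other elementary facts of this section.
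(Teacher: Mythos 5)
Your verification is correct and is exactly the routine unwinding of definitions that the paper dismisses with the one-word proof ``Immediate'': the key points (that $w_E\in\dot S_E$ so the left localization is defined, that the support conditions confine everything to the decomposition group $\Gamma_{K_w/F_v}$, and that restriction $\Gamma_{K_w/F_v}\to\Gamma_{E_{w_E}/F_v}$ is induced by $\Gamma_{K/F}\to\Gamma_{E/F}$) are all handled properly. The reduction of the $\tx{rig}$ case to the $\tx{mid}$ case via surjectivity of $c^{\tx{rig}}$ is valid, though a direct check identical to the $\tx{mid}$ case would be equally short.
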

\begin{proof}
Immediate.
\end{proof}

% =================================================
% Section
% =================================================

\section{Cohomology}

% -------------------------------------------------
% Subsection
% -------------------------------------------------

\subsection{Preliminary discussion} \label{sub:prelim}

Let $F$ be a local or global field of characteristic zero.
Assume given an inverse system $(D_n)_{n \in \N}$ of diagonalizable groups
defined over $F$, with surjective transition maps, and an inverse system of
classes in $H^2(\Gamma,D_n)$.
Let $D=\varprojlim D_n$. We endow $D_n(\bar F)$ with the discrete topology and
$D(\bar F) = \varprojlim D_n(\bar F)$ with the inverse limit topology.
Then \cite[Theorem 3.5.8]{Weibel94} gives the exact sequences
\[ 1 \to R^1 \varprojlim H^0(\Gamma,D_n) \to H^1(\Gamma,D) \to \varprojlim
H^1(\Gamma,D_n) \to 1 \]
and
\[ 1 \to R^1 \varprojlim H^1(\Gamma,D_n) \to H^2(\Gamma,D) \to \varprojlim
H^2(\Gamma,D_n) \to 1. \]
If $R^1 \varprojlim H^1(\Gamma,D_n)$ vanishes, the inverse system of classes in $H^2(\Gamma,D_n)$ gives an element of $H^2(\Gamma,D)$.

Assume now that we have a class $\xi \in H^2(\Gamma,D)$ and let
\[ 1 \to D \to \mc{E} \to \Gamma \to 1 \]
be an extension belonging to the corresponding isomorphism class.
Following Kottwitz we define for any affine algebraic group $G$ defined over $F$
the set $Z^1_\tx{alg}(\mc{E},G)$ to be the set of those continuous 1-cocycles
$\mc{E} \to G(\bar F)$ whose restriction to $D$ factors as the projection $D \to
D_n$ for some $n$ followed by an algebraic homomorphism $D_n \to G$. In general this homomorphism is only defined over $\bar F$, but its $G(\bar F)$-conjugacy class is invariant under $\Gamma$.
Further, for any central algebraic subgroup $Z \subset G$ we define $Z^1(D \to
\mc{E},Z \to G)$ to consist of those elements of $Z^1_\tx{alg}(\mc{E},G)$ whose
restriction to $D$ takes image in $Z$. In that case the resulting homomorphism $D \to Z$ is defined over $F$.
Finally, we set $Z^1_\tx{bas}(\mc{E},G) = Z^1(D \to \mc{E},Z(G) \to G)$.
We also define the corresponding cohomology sets
\[ H^1(D \to \mc{E},Z \to G) \subset H^1_\tx{bas}(\mc{E},G) \subset
H^1_\tx{alg}(\mc{E},G) \]
to be the quotients by the action of $G(\bar F)$ by coboundaries, i.e.\ $g$
sends $z \in Z^1_\tx{alg}(\mc{E},G)$ to $e \mapsto g^{-1}z(e)\sigma_e(g)$, where
$\sigma_e \in \Gamma$ is the image of $e \in \mc{E}$.

A priori the set $H^1_\tx{alg}(\mc{E},G)$ depends on the choice of the particular extension $\mc{E}$ in its isomorphism class. Indeed, if $\mc{E}'$ is another extension in the same class, then choosing an isomorphism of extensions $i : \mc{E}' \to \mc{E}$ provides an isomorphism $H^1_\tx{alg}(\mc{E},G) \to H^1_\tx{alg}(\mc{E}',G)$ which depends only on the $D$-conjugacy class of $i$. The $D$-conjugacy classes of isomorphisms $\mc{E}' \to \mc{E}$ are parameterized by $H^1(\Gamma,D)$. This group acts on $H^1_\tx{alg}(\mc{E},G)$ by $\alpha \in Z^1(\Gamma,D)$, $z \in Z^1_\tx{alg}(\mc{E},G)$, $(\alpha \cdot z)(e) = z(\alpha(\sigma_e) \cdot e)$, and replacing $i$ by $\alpha \cdot i$ composes the isomorphism $H^1_\tx{alg}(\mc{E},G) \to H^1_\tx{alg}(\mc{E}',G)$ with the action of $\alpha$.

It is thus clear that when $H^1(\Gamma,D)=1$ the set $H^1_\tx{alg}(\mc{E},G)$ is independent of the choice of extension $\mc{E}$ in its isomorphism class. In fact, the weaker condition $\varprojlim H^1(\Gamma,D_n)=1$ turns out to be sufficient. Indeed, by assumption for any $z \in Z^1_\tx{alg}(\mc{E},G)$ the restriction $z|_D$ factors through the projection $D \to D_n$ for some $n$ and therefore $z(\alpha(e) \cdot e)=z_n(\alpha_n(e)) \cdot z(e)$, where $z_n : D_n \to G$ composed with $D \to D_n$ equals $z|_D$, and $\alpha_n \in H^1(\Gamma,D_n)$ is the image of $\alpha$.

Assume now that $D'_n$ is another inverse system of diagonalizable groups
defined over $F$ with surjective transition maps and that we are given
homomorphisms $D'_n \to D_n$ compatible with the transition maps.
These splice to a homomorphism $D' \to D$, where $D'=\varprojlim D$.
Assume that we are given a class $\xi' \in H^2(\Gamma,D')$ and let $\mc{E}'$ be
the corresponding extension.
If $\xi'$ maps to $\xi$ under the homomorphism $D' \to D$ then there exists a
homomorphism of extensions $\mc{E}' \to \mc{E}$.
This homomorphism induces a map $H^1_\tx{alg}(\mc{E}',G) \to
H^1_\tx{alg}(\mc{E},G)$.
As above, this map is well-defined if $\varprojlim H^1(\Gamma,D_n)=1$.

We have thus seen that the vanishing of $R^i\varprojlim H^1(\Gamma,D_n)$ for $i=0,1$ has desirable consequences. A sufficient condition for the vanishing of both of these is the following: For any $n$ there exists $m>n$ such that the map $H^1(\Gamma,D_m) \to H^1(\Gamma,D_n)$ is zero.

\begin{fct} \label{fct:infres}
  We have the inflation-restriction exact sequence of pointed sets (abelian
  groups if $G$ is abelian)
  \[ 1 \to H^1(\Gamma,G) \to H^1(D \to \mc{E},Z \to G) \to \tx{Hom}_F(D,Z) \to
  H^2(\Gamma,G), \]
  where $H^2(\Gamma,G)$ is considered only when $G$ is abelian and in this case
  the last arrow is $\phi \mapsto \phi \circ \xi$.
\end{fct}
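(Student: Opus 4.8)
The plan is to construct the inflation--restriction sequence directly from the extension $1 \to D \to \mc{E} \to \Gamma \to 1$, mimicking the classical low-degree exact sequence of a group extension but keeping careful track of the algebraicity conditions built into the definition of $Z^1(D \to \mc{E}, Z \to G)$. First I would define the map $H^1(\Gamma,G) \to H^1(D \to \mc{E}, Z \to G)$ by pulling back a cocycle of $\Gamma$ along the projection $\mc{E} \to \Gamma$; such a pullback is trivial on $D$, hence tautologically lands in $Z^1(D \to \mc{E}, Z \to G)$ (the associated homomorphism $D \to Z$ being trivial). Injectivity of this map on cohomology is immediate: a coboundary in $G(\bar F)$ trivializing the pullback must, when restricted to $D$, be $D$-invariant, hence $\Gamma$-equivariant modulo the $G(\bar F)$-action, so it descends to a coboundary for $\Gamma$. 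Exactness at $H^1(\Gamma,G)$ -- i.e.\ that the image consists exactly of the classes whose associated homomorphism $D \to Z$ vanishes -- follows because if $z \in Z^1(D \to \mc{E}, Z \to G)$ is trivial on $D$, it factors through $\mc{E}/D = \Gamma$.

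Next I would define the second map $H^1(D \to \mc{E}, Z \to G) \to \tx{Hom}_F(D, Z)$ by restriction to $D$: as recorded in the preliminary discussion, the restriction of any $z \in Z^1(D \to \mc{E}, Z \to G)$ to $D$ is an algebraic homomorphism $D \to Z$ defined over $F$ (after factoring through some $D_n$), and this is visibly unchanged by the $G(\bar F)$-action since $Z$ is central and $D$ acts trivially through $\Gamma$. Exactness at the middle term is the statement just discussed: $z|_D$ is trivial iff $z$ is inflated from $\Gamma$. For the last map $\tx{Hom}_F(D, Z) \to H^2(\Gamma, G)$ (with $G$ abelian), I would send a homomorphism $\phi : D \to Z$ to the pushout of the class $\xi \in H^2(\Gamma, D)$ along $\phi$, i.e.\ $\phi_*\xi = \phi \circ \xi$; concretely, choosing a set-theoretic section $\Gamma \to \mc{E}$ one gets a $2$-cocycle of $\Gamma$ valued in $D(\bar F)$ representing $\xi$, and composing with $\phi$ gives the desired class in $H^2(\Gamma, Z) \to H^2(\Gamma, G)$. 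Exactness at $\tx{Hom}_F(D,Z)$ amounts to: $\phi$ lifts to some $z \in Z^1(D \to \mc{E}, Z \to G)$ with $z|_D = \phi$ precisely when $\phi_*\xi = 0$, which is the standard obstruction-theoretic statement for extending a homomorphism on the kernel to a cocycle on the total group.

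The main obstacle is the continuity/algebraicity bookkeeping rather than the abstract diagram chase. Specifically, one must check that every map in the sequence respects the condition that cocycles restricted to $D$ factor through a finite-level quotient $D_n$ via an algebraic homomorphism: for the inflation map this is automatic, but for the surjectivity-type exactness at $\tx{Hom}_F(D,Z)$ one needs that a homomorphism $\phi : D \to Z$ (which by definition of $\tx{Hom}_F(D, \cdot)$ factors through some $D_n$) whose pushout of $\xi$ vanishes can be lifted to a \emph{continuous} cocycle $\mc{E} \to G(\bar F)$ that is algebraic on $D_n$. Here I would use that the vanishing of $\phi_*\xi$ in $H^2(\Gamma, Z)$ produces a continuous $1$-cochain $\mc{E} \to Z(\bar F) \subset G(\bar F)$ whose coboundary matches the relevant $2$-cocycle, and that this cochain, after adjustment by the chosen section, is algebraic on $D_n$ by construction since $\phi$ is. The remaining verifications -- that the pushout class $\phi_*\xi$ depends only on $\phi$, that composing the middle map with the last gives zero -- are routine once the section is fixed. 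One subtlety worth flagging: the sequence is only a sequence of pointed sets past $H^1(\Gamma, G)$ when $G$ is nonabelian, so "exactness" at the $\tx{Hom}$ term is to be read in the pointed-set sense (fiber over the base point), and the final arrow to $H^2(\Gamma, G)$ is only defined, and the sequence only extends, in the abelian case; I would state this explicitly to match the hypothesis in the Fact.
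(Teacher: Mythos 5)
Your argument is correct and is exactly the standard diagram chase the paper has in mind; the paper states this as a \emph{Fact} with no written proof, treating it as the routine low-degree exact sequence attached to the extension $1 \to D \to \mc{E} \to \Gamma \to 1$ (restriction to $D$ is literally invariant under coboundaries because $Z$ is central and $D$ maps to $1$ in $\Gamma$, so the fiber over the trivial homomorphism consists of cocycles genuinely trivial on $D$, i.e.\ inflated ones, and the obstruction to lifting $\phi$ is the pushforward $\phi\circ\xi$). Your attention to the bookkeeping — a continuous section of $\mc{E}\to\Gamma$, the fact that $\phi$ already factors algebraically through some $D_n$ so the lifted cochain $z(d\,s(\sigma))=\phi(d)a(\sigma)$ lies in $Z^1(D\to\mc{E},Z\to G)$, and the pointed-set reading of exactness with the last arrow only defined for abelian $G$ — covers precisely the points one would need to spell out.
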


\begin{fct} \label{fct:iso_hom_D}
For any torus $T$ with co-character module $Y$ we have the isomorphism
\begin{equation} \label{eq:iso_hom_D}
  (Y \otimes X^*(D))^{\Gamma} \to
  \tx{Hom}_\Z(\tx{Hom}_\Z(Y,\Z),X^*(D)) \to \tx{Hom}_F(D,T),
\end{equation}
where the second map is obvious and the first sends $y \otimes a$ to $f_{y\otimes a}$ defined by $f_{y\otimes a}(\varphi)=\varphi(y)\cdot a$. 
\end{fct}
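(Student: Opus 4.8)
The plan is to realize the composite as the standard anti-equivalence between diagonalizable $F$-groups and $\Gamma$-modules, after first reducing to a single diagonalizable group.

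\emph{Reduction to finite level.} Recall from the setup that $D=\varprojlim D_n$ with $X^*(D)=\varinjlim X^*(D_n)$. Since $T$ is of finite type (hence a compact object among affine $F$-group schemes), every $F$-homomorphism $D\to T$ factors through some $D_n$, so $\tx{Hom}_F(D,T)=\varinjlim_n\tx{Hom}_F(D_n,T)$. Likewise $Y\otimes X^*(D)=\varinjlim_n(Y\otimes X^*(D_n))$ as $\Gamma$-modules, and because $\Gamma$ is profinite and these are smooth (discrete) $\Gamma$-modules, taking $\Gamma$-invariants commutes with this filtered colimit: a class in $(Y\otimes X^*(D))^\Gamma$ is represented by an element of $Y\otimes X^*(D_n)$ fixed by an open subgroup $U$, and since $\Gamma/U$ is finite one may enlarge $n$ so that the representative becomes fixed by all of $\Gamma$. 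Hence it suffices to prove the statement with $D$ replaced by a diagonalizable $F$-group $D'$ whose character module $X^*(D')$ is a finitely generated $\Gamma$-module.

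\emph{The finite-level statement.} Because $Y=X_*(T)$ is a finitely generated free abelian group, for any abelian group $M$ the canonical map $Y\otimes_\Z M\to\tx{Hom}_\Z(\tx{Hom}_\Z(Y,\Z),M)$, $y\otimes a\mapsto(\varphi\mapsto\varphi(y)a)$, is an isomorphism; when $M$ carries a $\Gamma$-action --- here $M=X^*(D')$, so that $\tx{Hom}_\Z(Y,\Z)=X^*(T)$ --- this isomorphism is $\Gamma$-equivariant, and passing to $\Gamma$-invariants yields $(Y\otimes X^*(D'))^\Gamma\xrightarrow{\ \sim\ }\tx{Hom}_\Z(X^*(T),X^*(D'))^\Gamma=\tx{Hom}_{\Z[\Gamma]}(X^*(T),X^*(D'))$. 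On the other hand, the anti-equivalence of categories between diagonalizable algebraic groups over $F$ and finitely generated $\Gamma$-modules, applied to $D'$ and to $T$ (the latter with character module $X^*(T)$), identifies $\tx{Hom}_F(D',T)$ with $\tx{Hom}_{\Z[\Gamma]}(X^*(T),X^*(D'))$ via pullback on characters. Composing the two identifications gives the asserted isomorphism, and tracing an element $y\otimes a$ through them produces exactly the homomorphism $D'\to T$ dual to $\varphi\mapsto\varphi(y)a$, matching the formula in the statement.

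A remark on reading the displayed chain, and on where the only care is needed: the first arrow lands in the subgroup $\tx{Hom}_\Z(X^*(T),X^*(D))^\Gamma$ of $\Gamma$-fixed homomorphisms and is an isomorphism onto it, and it is precisely on this subgroup that the ``obvious'' second arrow --- turning a $\Gamma$-equivariant map of character modules into an $F$-rational homomorphism of groups --- is defined; concretely one checks $\sigma(f_{y\otimes a})=f_{\sigma(y\otimes a)}$, so that $\Gamma$-invariant tensors do produce $F$-rational homomorphisms. No step is a genuine obstacle: the two points that repay attention are the commutation of $\Gamma$-invariants with the colimit over $n$ and the bookkeeping of the two a priori distinct $\Gamma$-actions, both handled as above.
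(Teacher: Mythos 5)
Your proof is correct. The paper states this as a \emph{Fact} with no proof at all (it is regarded as immediate from Cartier duality for diagonalizable groups), and your argument — reduce to a finite level $D_n$ using that $T$ is of finite type and that $\Gamma$-invariants of discrete modules commute with filtered colimits, then combine the isomorphism $Y\otimes M\cong\tx{Hom}_\Z(X^*(T),M)$ for $Y$ finitely generated free with the anti-equivalence between diagonalizable $F$-groups and $\Gamma$-modules — is exactly the standard verification the authors have in mind, including the correct observation that the first arrow is an isomorphism onto the $\Gamma$-equivariant homomorphisms, which is where the second arrow is defined.
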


\begin{fct} \label{fct:cart}
  Let $\mc{E}' \to \mc{E}$ be a morphism of extensions of $\Gamma$ as considered
  above.
  For any algebraic group $G$ and any central algebraic subgroup $Z$ the square
  \[ \xymatrix{
    H^1(D \rightarrow \mc{E}, Z \rightarrow G) \ar[r] \ar[d] &
      \tx{Hom}_F(D, Z) \ar[d] \\
    H^1(D' \rightarrow \mc{E}', Z \rightarrow G) \ar[r] & \tx{Hom}_F(D', Z)
    } \]
  is Cartesian.
\end{fct}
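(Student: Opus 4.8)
The plan is to deduce the Cartesian property directly from the inflation–restriction sequence of Fact \ref{fct:infres}, applied to both $\mc{E}$ and $\mc{E}'$, together with the functoriality of that sequence in the morphism of extensions $\mc{E}' \to \mc{E}$. First I would record that a morphism of extensions $j : \mc{E}' \to \mc{E}$ lying over the homomorphism $\iota : D' \to D$ induces a commutative ladder
\[ \xymatrix{
1 \ar[r] & H^1(\Gamma,G) \ar[r]\ar@{=}[d] & H^1(D \to \mc{E}, Z \to G) \ar[r]\ar[d] & \tx{Hom}_F(D,Z) \ar[r]\ar[d]^{\circ\,\iota} & H^2(\Gamma,G) \ar@{=}[d] \\
1 \ar[r] & H^1(\Gamma,G) \ar[r] & H^1(D' \to \mc{E}', Z \to G) \ar[r] & \tx{Hom}_F(D',Z) \ar[r] & H^2(\Gamma,G)
} \]
in which the outer vertical maps are the identity. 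One then needs to check that the right-hand square (the one actually appearing in the statement) is Cartesian, i.e.\ that the natural map from $H^1(D \to \mc{E}, Z \to G)$ to the fibre product $H^1(D' \to \mc{E}', Z\to G)\times_{\tx{Hom}_F(D',Z)}\tx{Hom}_F(D,Z)$ is a bijection.

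Injectivity is the easy direction: if two classes in $H^1(D\to\mc{E},Z\to G)$ have the same image in $H^1(D'\to\mc{E}',Z\to G)$ (and a fortiori the same image in $\tx{Hom}_F(D,Z)$, since that map factors through it), then by the top row their difference — in the pointed-set sense, after twisting — comes from $H^1(\Gamma,G)$, and the equality of images in $H^1(D'\to\mc{E}',Z\to G)$ forces that $H^1(\Gamma,G)$-class to already be trivial in the target, hence (the left vertical being the identity) trivial. Surjectivity is the substantive point: given $z' \in Z^1(D'\to\mc{E}',Z\to G)$ and $\phi \in \tx{Hom}_F(D,Z)$ with $\phi\circ\iota = [z'|_{D'}]$, I want to produce $z \in Z^1(D\to\mc{E},Z\to G)$ restricting to $\phi$ on $D$ and mapping to $[z']$. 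The key construction is: pick a set-theoretic section of $j : \mc{E}' \to \mc{E}$ compatible with the chosen factorizations through some $D_n$, $D'_n$; use $\phi$ to define $z$ on $D \subset \mc{E}$; and extend over $\mc{E}$ by choosing values on coset representatives, using $z'$ pulled back along the section as a starting point and correcting by a cochain valued in $G$ to fix the cocycle condition. The obstruction to such a correction existing lands in $H^2(\Gamma,G)$ and is precisely the difference $\partial\phi - \partial(\phi\circ\iota)$ under the connecting maps, which vanishes because $\phi\circ\iota = [z'|_{D'}]$ maps to $0$ in $H^2(\Gamma,G)$ (it lifts to $z'$) and the square of connecting maps in the ladder commutes.

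I expect the main obstacle to be making the last extension/correction step precise: one must carefully track that the $1$-cochain on $\mc{E}$ being adjusted restricts to the prescribed homomorphism $\phi$ on all of $D$ (not just on $D_n$), that it is continuous and algebraic on $D$ in the sense of \S\ref{sub:prelim}, and that its class maps to exactly $[z']$ rather than to a twist of it. A clean way to organize this, avoiding cocycle bookkeeping, is cohomological: view $H^1(D\to\mc{E},Z\to G)$ for fixed $\phi$ as a torsor under $H^1(\Gamma,G)$ whenever it is nonempty, observe via the ladder that nonemptiness is governed by the vanishing of $\partial\phi \in H^2(\Gamma,G)$, and that the same criterion with $\phi$ replaced by $\phi\circ\iota$ governs $H^1(D'\to\mc{E}',Z\to G)$ over $\phi\circ\iota$; then the compatibility of the two $H^1(\Gamma,G)$-torsor structures via the identity map on $H^1(\Gamma,G)$ (left vertical arrow) yields both surjectivity and injectivity at once. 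The hypotheses needed — existence of the morphism $\mc{E}'\to\mc{E}$, which by \S\ref{sub:prelim} requires $\xi'\mapsto\xi$ and the relevant $R^i\varprojlim$ vanishing for well-definedness — are exactly those already assumed in the statement, so no further input is required.
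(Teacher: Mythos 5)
Your strategy---run the inflation--restriction sequence of Fact \ref{fct:infres} for both $\mc{E}$ and $\mc{E}'$ and chase the resulting ladder---works when $G$ is abelian, but the Fact is asserted for an arbitrary algebraic group $G$, and there your argument has a genuine gap. Both of your key mechanisms are unavailable in the non-abelian case. First, the obstruction space $H^2(\Gamma,G)$ you use for surjectivity does not exist: Fact \ref{fct:infres} appends the term $H^2(\Gamma,G)$ \emph{only when $G$ is abelian}, and for non-abelian $G$ no exactness at $\tx{Hom}_F(D,Z)$ is available in this form, so ``the obstruction to the correction lands in $H^2(\Gamma,G)$'' has no meaning. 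Second, the fibre of $H^1(D\to\mc{E},Z\to G)\to\tx{Hom}_F(D,Z)$ over a fixed $\phi$ is not a torsor under $H^1(\Gamma,G)$: if $z,z_0$ lie in the same fibre, the ratio $c(e)=z(e)z_0(e)^{-1}$ descends to a cocycle of $\Gamma$ valued in the \emph{inner twist} $G_{z_0}$, and the fibre is only an image of $H^1(\Gamma,G_{z_0})$; so the ``compatibility of the two $H^1(\Gamma,G)$-torsor structures'' you appeal to for both injectivity and surjectivity is not literally there. (A smaller slip: the map $H^1(D\to\mc{E},Z\to G)\to\tx{Hom}_F(D,Z)$ does not factor through $H^1(D'\to\mc{E}',Z\to G)$---only its composite to $\tx{Hom}_F(D',Z)$ does---though this is harmless since the fibre product imposes both equalities anyway.)

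The paper's proof sidesteps all of this with a direct gluing of cocycles, based on the group-theoretic fact that $\mc{E}$ is generated by $D$ and the image of $\mc{E}'$, whose intersection is the image of $D'$. Given $(z',\phi)$ with $\phi\circ\iota=z'|_{D'}$, the formula $z(d\cdot j(e')):=\phi(d)\,z'(e')$ is well defined (the two prescriptions agree on the overlap precisely because of the compatibility hypothesis) and is already a $1$-cocycle, using only that $\phi$ is $\Gamma$-equivariant and takes values in the central subgroup $Z$; no section, no correcting cochain, and no obstruction class appear. Injectivity is the same observation run backwards: if $z_1,z_2$ restrict to the same $\phi$ on $D$ and their pullbacks to $\mc{E}'$ differ by the coboundary of some $g\in G(\bar F)$, then $e\mapsto g\,z_2(e)\,\sigma_e(g)^{-1}$ agrees with $z_1$ on $D$ (by centrality of $Z$) and on $j(\mc{E}')$, hence on all of $\mc{E}$. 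Your ``hands-on'' construction is in fact within reach of this argument---the point you are missing is that the extension requires no correction at all, so the non-existent $H^2(\Gamma,G)$ is never needed.
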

\begin{proof}
  This follows directly from the fact that $\mc{E}$ is generated by $D$ and the
  image of $\mc{E}'$ which have intersection the image of $D'$.
\end{proof}

% -------------------------------------------------
% Subsection
% -------------------------------------------------

\subsection{Definition of $\mb{T}^\tx{iso}$, $\mb{T}^\tx{mid}$, and $P^\tx{rig}$ in the local case}
\label{sub:def_pro_loc}

Let $F$ be a local field of characteristic zero, $E/F$ a finite Galois
extension, $N$ a natural number.

Let $\mb{T}_E^\tx{iso}$ and $\mb{T}_{E,N}^\tx{mid}$ be the tori with character modules $M_E^\tx{iso}$ and $M_{E,N}^\tx{mid}$. Let $P_{E,N}^\tx{rig}$ be the finite multiplicative group with character module $M_{E,N}^\tx{rig}$. The torus $\mb{T}^\tx{iso}_E$ is simply $\mb{G}_m$. The finite multiplicative group $P_{E,N}^\tx{rig}$ was defined in \cite[\S3.1]{KalRI}, where it was denoted by $u_{E/F,N}$.

Let $\mb{T}^\tx{iso}$ and $\mb{T}^\tx{mid}$ be the pro-tori obtained as inverse limits of the systems $\mb{T}^\tx{iso}_E$ and $\mb{T}^\tx{mid}_{E,N}$ respectively, where the transition maps are induced by the inflation maps defined in \S\ref{sub:locinf}. Let $P^\tx{rig}$ be the pro-finite multiplicative group  obtained as the inverse limit of $P^\tx{rig}_{E,N}$ in the same manner. The pro-torus $\mb{T}^\tx{iso}$ was denoted by $\mb{D}$ in \cite[\S3]{Kot85}, while the group $P^\tx{rig}$ was denoted by $u$ in \cite[\S3.1]{KalRI}.

% -------------------------------------------------
% Subsection
% -------------------------------------------------

\subsection{Definition of $\mb{T}^\tx{iso}$, $\mb{T}^\tx{mid}_{\dot V}$, and
$P^\tx{rig}_{\dot V}$ in the global case}
\label{sub:def_pro_glob}

Let $F$ be a global field, $E/F$ a finite Galois extension, $S$ a finite set of
places of $S$, $\dot S_E \subset S_E$ a set of lifts for the elements of $S$.
Let $\mb{T}_{E,S}^\tx{iso}$ and $\mb{T}_{E,\dot S_E}^\tx{mid}$ be the tori over
$F$ with character modules $M_{E,S}^\tx{iso}$ and $M_{E,\dot S_E}^\tx{mid}$.
Let $P_{E,\dot S_E}^\tx{rig}$ be the finite multiplicative group with character
module $M_{E,\dot S_E}^\tx{rig}$.
In \cite{KalGRI} this was denoted by $P_{E, \dot S_E, [E:F]}$.
Note that in \cite{KalGRI} $P_{E, \dot{S}_E}$ was used to denote $\varprojlim_N
P_{E, \dot{S}_E, N}$ where $P_{E, \dot{S}_E, [E:F]}$ is the finite
multiplicative group denoted by $P^\tx{rig}_{E, \dot{S}_E}$ in the present
paper.
Since for comparison with $?^\tx{mid}$ we usually impose that this integer $N$
equal $[E:F]$ in the present paper, we hope that this will not cause confusion.

We now choose as in \cite[\S 3.3, p.\ 306]{KalGRI} an exhaustive tower $(E_i)_{i
\geq 0}$ of finite Galois extensions of $F$, exhaustive tower of finite sets of
places of $F$, $\dot{S}_i \subset S_{i, E_i}$ a choice of lifts of $S_i$ to
places of $E_i$ so that $\dot{S}_{i+1} \subset (\dot{S}_i)_{E_{i+1}}$ and each
$(E_i/F, S_i, \dot{S}_i)$ satisfies \cite[Conditions 3.3.1]{KalGRI}.
Let $\dot V$ be the set of places of $\bar F$ defined as the inverse limit of
the sets $\dot S_i$.
It is natural to ask if it is possible to formulate a condition on $\dot V$ that
is equivalent to the fact that it arises as an inverse limit of a sequence
$(E_i,S_i,\dot S_i)$ all of whose terms satisfy \cite[Conditions 3.3.1]{KalGRI}.
This is indeed possible.
For $v \in V$ denote by $\dot{v} \in \dot{V}$ its unique lift, and by
$\Gamma_{\dot v}$ the stabilizer of $\dot v$ in $\Gamma$, i.e.\ the
decomposition subgroup at $\dot v$.

\begin{cnd} \label{cnd:dense}
  $\bigcup_{v \in V} \Gamma_{\dot v}$ is dense in $\Gamma$.	
\end{cnd}

\begin{lem}
  Let $(E_i)_{i \geq 0}$ be an exhaustive tower of finite Galois extensions of
  $F$ as in \cite[\S 3.3, p.\ 306]{KalGRI}.
  \begin{enumerate}
    \item Let  $(S_i)_{i \geq 0}$ be an exhaustive tower of finite sets of
      places of $F$, $\dot{S}_i \subset S_{i, E_i}$ a choice of lifts of $S_i$
      to places of $E_i$ so that $\dot{S}_{i+1} \subset (\dot{S}_i)_{E_{i+1}}$
      and each $(E_i/F, S_i, \dot{S}_i)$ satisfies \cite[Conditions
      3.3.1]{KalGRI}.
      Let $\dot V$ be the set of places of $\bar F$ defined as the inverse limit
      of the sets $\dot S_i$.
      Then Condition \ref{cnd:dense} holds for $\dot V$.
    \item If $\dot V$ satisfies Condition \ref{cnd:dense} we can choose a finite
      increasing sequence $(S_i)_{i \geq 0}$ of subsets of $V$ such that letting
      $\dot S_i$ be the intersection of $(S_i)_{E_i}$ with the image of $\dot V$
      in $V_{E_i}$, the tower $(E_i, S_i, \dot S_i)_{i \geq 0}$ satisfies
      \cite[Conditions 3.3.1]{KalGRI}.
  \end{enumerate}
\end{lem}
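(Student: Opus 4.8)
The plan is to prove both statements by reducing everything to Condition \ref{cnd:dense} and then analyzing what the four conditions of \cite[Conditions 3.3.1]{KalGRI} amount to once the tower $(E_i)$ is fixed. Recall that those conditions say (roughly): (i) $S_i$ contains all archimedean places and all places ramifying in $E_i$; (ii) $S_i$ is large enough that every class in $\tx{Cl}(E_i)$ (or the relevant cohomology group) is represented by an ideal supported on $(\dot S_i)_{E_i}$, equivalently $S_i$ generates the class group; (iii) for every $\sigma\in\Gamma_{E_i/F}$ there exists $w\in\dot S_i$ fixed by $\sigma$ — this is exactly the local hypothesis appearing in Lemma \ref{lem:repr_Sdot_supp} and Proposition \ref{pro:globcisosplit}; and (iv) a compatibility of the lifts $\dot S_{i+1}\subset(\dot S_i)_{E_{i+1}}$. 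The content of the Lemma is that the only condition that survives passage to the inverse limit $\dot V$ is a reformulation of (iii).

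For part (1): Given such a tower, I would show Condition \ref{cnd:dense} holds. Fix a basic open subset of $\Gamma$, i.e.\ a coset $g\Gamma_{K/F}$ for some finite Galois $K/F$; enlarging $K$ we may assume $K=E_i$ for some $i$ in the exhaustive tower. Condition (iii) for $(E_i,S_i,\dot S_i)$ gives, for the element $\sigma = g|_{E_i}\in\Gamma_{E_i/F}$, a place $w\in\dot S_i$ with $\sigma w = w$. Let $v\in S_i\subset V$ be the place below $w$ and $\dot v\in\dot V$ the distinguished lift, which maps to $w$ in $V_{E_i}$. Then any lift of $\sigma$ to $\Gamma$ that lies in $\Gamma_{\dot v}$ — and such lifts exist because the decomposition group $\Gamma_{\dot v}$ surjects onto the decomposition group of $w$ in $\Gamma_{E_i/F}$, which contains $\sigma$ — lies in $g\Gamma_{E_i/F}$. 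Hence $g\Gamma_{E_i/F}\cap\bigcup_{v\in V}\Gamma_{\dot v}\neq\emptyset$, proving density. The only mild point to check carefully is the surjectivity $\Gamma_{\dot v}\twoheadrightarrow (\Gamma_{E_i/F})_w$, which is standard.

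For part (2): Assume Condition \ref{cnd:dense}. I would build the sequence $(S_i)$ inductively. At stage $i$, I need $S_i\subset V$ finite containing $S_{i-1}$ such that $(E_i,S_i,\dot S_i)$ satisfies all four conditions, where $\dot S_i$ is forced to be $(S_i)_{E_i}\cap(\text{image of }\dot V)$. Conditions (i), (ii), (iv) only require $S_i$ to be sufficiently large (contain archimedean and ramified places, generate a class group, contain $S_{i-1}$) and are automatically compatible with the forced choice of $\dot S_i$; these are handled by a finite enlargement. The substantive point is (iii): I need that for every $\sigma\in\Gamma_{E_i/F}$ some $w\in\dot S_i$ is fixed by $\sigma$. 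By Condition \ref{cnd:dense}, for each $\sigma$ choose $\gamma\in\bigcup_{v\in V}\Gamma_{\dot v}$ in the open coset of $\Gamma$ consisting of elements restricting to $\sigma$ on $E_i$; so $\gamma\in\Gamma_{\dot v}$ for some $v\in V$, and then the image $w$ of $\dot v$ in $V_{E_i}$ is fixed by $\gamma|_{E_i}=\sigma$. Since $\Gamma_{E_i/F}$ is finite, finitely many such $v$ suffice; throw all of them into $S_i$. I expect the main obstacle — really the only non-bookkeeping step — to be verifying that one can simultaneously satisfy the "largeness" conditions (i),(ii),(iv) and the "fixed-point" condition (iii) with a single finite $S_i$ while respecting the rigid constraint that $\dot S_i$ is determined by $\dot V$; but since each condition is preserved under enlarging $S_i$ (and hence $\dot S_i$ correspondingly), taking the union of the finitely many places demanded by each condition does the job, and the nesting $\dot S_{i+1}\subset(\dot S_i)_{E_{i+1}}$ is automatic from the definition of $\dot S_i$ via $\dot V$.
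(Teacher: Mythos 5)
Your proof is correct and follows essentially the same route as the paper: part (1) comes from the fact that the image of $\Gamma_{\dot v}$ in $\Gamma_{E_i/F}$ is the decomposition group $\Gamma_{E_{i,\dot v}/F_v}$, so the third of \cite[Conditions 3.3.1]{KalGRI} gives surjectivity onto each finite quotient and hence density, and part (2) reverses this using finiteness of $\Gamma_{E_i/F}$ and the fact that all conditions are stable under enlarging $S_i$. (Only a notational quibble: the basic open sets of $\Gamma$ are cosets of $\Gamma_{\bar F/E_i}$, not of the finite quotient $\Gamma_{E_i/F}$.)
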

\begin{proof}
  If $(E_i/F, S_i, \dot S_i)$ satisfies \cite[Conditions 3.3.1]{KalGRI} then for
  any $i$ the image of $\bigcup_{v \in V} \Gamma_{\dot v}$ in $\Gamma_{E_i/F}$
  contains $\bigcup_{v \in S_i} \Gamma_{E_{i, \dot v} / F_v} = \Gamma_{E_i/F}$
  by the third point of \cite[Conditions 3.3.1]{KalGRI}.
  Since $\Gamma_{\bar F/E_i}$ is a basis of neighbourhoods of $1$ in
  $\Gamma$, this means that $\bigcup_{v \in V} \Gamma_{\dot v}$ is dense in
  $\Gamma$.

  The proof of the converse is similar: since all $\Gamma_{E_i/F}$ are finite
  any sufficiently large $S_i$ works.
\end{proof}

In particular, this shows that sets $\dot V$ that satisfy Condition
\ref{cnd:dense} do exist.
This condition is however not automatic. Furthermore, two sets $\dot V$ and $\dot V'$ that both satisfy Condition \ref{cnd:dense} need not be conjugate under $\Gamma$. We illustrate both of these points in the following example.

\begin{exa} \label{exa:bad_dot_V}
  Take $F=\Q$ and let $E/F$ be the extension generated by all roots of the
  polynomial $P = X^3 - X^2 + 1$.
  Then $\Gamma_{E/F} \simeq S_3$ and $E/F$ is ramified only at $23$, in fact
  $A := \Z[1/23][X] /(P)$ is finite \'etale over $\Z[1/23]$: $P' = 3X^2-2X =
  (3X-2)X$, $X$ is obviously invertible in $A$ and $(9X^2-3X-2)(3X-2) = 27P -
  23$.
  Modulo $23$ we have $P(-1/3)=0$ and $P'(-1/3) \neq 0$ and so $P$ has a root in
  $\Q_{23}$.
  In particular all decomposition subgroups of $\Gamma_{E/F}$ are Abelian.
  Fix an isomorphism $\Gamma_{E/F} \simeq S_3$.
  One can choose $\dot V$ such that every decomposition group is either trivial, or 
  generated by $(1 2)$, or generated by $(1 2 3)$, and thus $(2 3)$ does not belong to any decomposition group.

  Using the same extension $E$, we can give an example of two sets $\dot V$ and
  $\dot V'$ both satisfying Condition \ref{cnd:dense} but which are not in the
  same $\Gamma$-orbit.
  Namely, choose two places $v_1, v_2$ of $F$ such that the decomposition groups
  in $\Gamma_{E/F}$ both have order two.
  Then we can choose $\dot V_E$ and $\dot V_E'$ such that $\Gamma_{E_{\dot
  v_1}/F_{v_1}} = \Gamma_{E_{\dot v_1'}/F_{v_1}} = \Gamma_{E_{\dot
  v_2}/F_{v_2}}$ but $\Gamma_{E_{\dot v_2'}/F_{v_2}} \neq \Gamma_{E_{\dot
  v_2}/F_{v_2}}$, so that even after conjugating by $\Gamma_{E/F}$ we cannot
  have $\Gamma_{E_{\dot v_i}/F_{v_i}} = \Gamma_{E_{\dot v_i'}/F_{v_i}}$ for
  $i=1,2$ simultaneously.
\end{exa}

For the rest of the paper we fix $\dot V$ satisfying Condition \ref{cnd:dense}.

Let $\mb{T}^\tx{iso}$ be the pro-torus over $F$ obtained as the inverse limits
of $\mb{T}^\tx{iso}_{E, S}$ over all pairs $(E,S)$ as above.
In the other two cases the result depends on $\dot V$.
For each finite Galois extension $E/F$ and each finite set of places $S$ of $F$
we let $\dot S_E = \{ \dot v |_E | v \in S\}$.
Consider the pro-torus $\mb{T}^\tx{mid}_{\dot V} = \varprojlim_{E,S}
\mb{T}^\tx{mid}_{E, \dot S_E}$ and the pro-finite group scheme $P^\tx{rig}_{\dot
V} = \varprojlim_{E,S} P^\tx{rig}_{E, \dot S_E}$.
Note that $M^\tx{mid}_{\dot V} = X^*(\mb{T}^\tx{mid}_{\dot V}) =
\varinjlim_{E,S} M^\tx{mid}_{E, \dot S_E}$ is identified with the
$\Gamma$-module of functions $\phi : \Gamma \times V \to \Q$ continuous in the
first variable and with finite support in the second variable such that for any
$\sigma \in \Gamma$, $\sum_{v \in V} \phi(\sigma, v) = 0$ and for any
Archimedean place $v \in V$, $\sum_{\tau \in \Gamma_{\dot v}} \phi(\sigma \tau,
v) \in \Z$.
This identification is obtained by mapping $f \in M^\tx{mid}_{E, \dot S_E}$ to
$\phi$ defined by $\phi(\sigma, v) = f(\sigma, \sigma \dot v)$.
This description is similar to \cite[Lemma 3.4.1]{KalGRI}.

The set of lifts $\dot V$ being fixed, for $v \in V$ we simply denote $\Gamma_v
= \Gamma_{\dot v}$.
Denote $\ol{F_v} = \varinjlim_E E_{\dot v}$ where we take the limit over all
finite extensions of $F$ in $\ol{F}$.
This is an algebraic closure of $F_v$, strictly smaller than the completion of
$\ol{F}$ for $\dot v$ is $v$ is non-Archimedean.
It is easy to check that the localization maps defined in Section
\ref{sub:locmap} induce localization maps at infinite level
\[  \mb{T}^\tx{iso}_v \rightarrow (\mb{T}^\tx{iso})_{F_v}, \ \ \mb{T}^\tx{mid}_v
\rightarrow (\mb{T}^\tx{mid}_{\dot V})_{F_v} \text{ and } P^\tx{rig}_v
\rightarrow (P^\tx{rig}_{\dot V})_{F_v} \]
where $\mb{T}^\tx{iso}_v$ (resp.\ $\mb{T}^\tx{mid}_v$, $P^\tx{rig}_v$) denotes
the pro-torus (resp.\ pro-torus, pro-finite groupe scheme) over $F_v$ defined in
Section \ref{sub:def_pro_loc} for the local field $F_v$ together with its
algebraic closure $\ol{F_v}$, and a subscript $F_v$ denotes base change from $F$
to $F_v$.
We will denote these three localization maps by $\tx{loc}_v$.

% -------------------------------------------------
% Subsection
% -------------------------------------------------

\subsection{The maps $\mb{T}^\tx{iso} \to \mb{T}^\tx{mid} \lw P^\tx{rig}$}

The maps $c^\tx{rig}$ and $c^\tx{iso}$ defined in the local case in
\S\ref{sub:locmod} and in the global case in \S\ref{sub:globmod} splice together
to define Cartier dual maps $c_\tx{iso} : \mb{T}^\tx{iso} \to \mb{T}^\tx{mid}$
and $c_\tx{rig} : P^\tx{rig}_{\dot V} \to \mb{T}^\tx{mid}_{\dot V}$.

Let $F$ be a local or global field.

\begin{pro} \label{pro:surj}
Let $T$ be an algebraic torus defined over $F$.
\begin{enumerate}
  \item The map $c_\tx{iso}: \mb{T}^\tx{iso} \to \mb{T}^\tx{mid}_{\dot V}$ is
    injective.
    The homomorphism $\tx{Hom}_F(\mb{T}^\tx{mid}_{\dot V},T) \to
    \tx{Hom}_F(\mb{T}^\tx{iso},T)$ is surjective.
  \item The map $c_\tx{rig} : P^\tx{rig}_{\dot V} \to \mb{T}^\tx{mid}_{\dot V}$
    is injective.
    The homomorphism $\tx{Hom}_F(\mb{T}^\tx{mid}_{\dot V},T) \to
    \tx{Hom}_F(P^\tx{rig}_{\dot V},T)$ is surjective.
\end{enumerate}
\end{pro}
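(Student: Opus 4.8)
The plan is to deduce injectivity of the maps on pro-(finite/torus) groups from surjectivity of the dual maps on character modules, and then deduce the surjectivity of the Hom-groups from the structure of the kernels. For part (1), the map $c_\tx{iso}$ is Cartier dual to $c^\tx{iso} : M^\tx{mid}_{\dot V} \to M^\tx{iso}$, which is surjective: in the local case this is Fact \ref{fct:loccompsurj}, and in the global case this is Corollary \ref{cor:globcisosurj}, whose hypothesis (that every $\sigma \in \Gamma_{E/F}$ fixes some $w \in \dot S_E$) is exactly guaranteed by Condition \ref{cnd:dense}, which we have fixed for the rest of the paper. Surjectivity of a map of character modules dualizes to a closed immersion of diagonalizable groups, hence $c_\tx{iso}$ is injective. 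The same argument handles part (2): $c_\tx{rig}$ is dual to $c^\tx{rig} : M^\tx{mid}_{\dot V} \to M^\tx{rig}_{\dot V}$, which is surjective by Fact \ref{fct:loccompsurj} locally and Proposition \ref{pro:globcrigsurj} globally, so $c_\tx{rig}$ is injective. All of this is stable under the inverse limits defining the pro-objects since surjectivity of character modules is preserved under colimits.

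For the surjectivity of the Hom-groups, I would argue that it suffices to work at each finite level $(E,S,N)$ and then pass to the limit, because $\tx{Hom}_F(-,T)$ turns the inverse limits $\mb{T}^\tx{iso} = \varprojlim \mb{T}^\tx{iso}_{E,S}$, etc., into colimits, and a colimit of surjections is a surjection. At finite level, using Fact \ref{fct:iso_hom_D} we identify $\tx{Hom}_F(\mb{T}^\tx{mid}_{E,\dot S_E},T)$ with $(Y \otimes M^\tx{mid}_{E,\dot S_E})^\Gamma$ and similarly for the other two, where $Y = X_*(T)$. Under this identification the restriction maps become the maps induced by $c^\tx{iso}$ and $c^\tx{rig}$ on $(Y \otimes -)^\Gamma$. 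So I must show that $(Y \otimes M^\tx{mid}_{E,\dot S_E})^\Gamma \to (Y \otimes M^\tx{iso}_{E,S})^\Gamma$ and $(Y \otimes M^\tx{mid}_{E,\dot S_E})^\Gamma \to (Y \otimes M^\tx{rig}_{E,\dot S_E})^\Gamma$ are surjective.

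For the $c^\tx{iso}$ statement I would use the splitting: Proposition \ref{pro:globcisosplit} (whose hypothesis is again Condition \ref{cnd:dense}, or locally the splitting $s^\tx{iso}$ of \eqref{eq:siso} when $[E:F] \mid N$) gives a $\Gamma$-equivariant section of $c^\tx{iso}$, which after tensoring with $Y$ and taking $\Gamma$-invariants gives an honest section of the map on invariants, hence surjectivity. For the $c^\tx{rig}$ statement, the kernel of $c^\tx{rig}$ is an induced $\Gamma$-module (Fact \ref{fct:loccompsurj} locally, Fact \ref{fct:globcrigind} globally), so $Y \otimes \ker(c^\tx{rig})$ is again induced (the tensor product of an induced module with any module is induced), hence acyclic, so $H^1(\Gamma, Y \otimes \ker c^\tx{rig}) = 0$; the long exact cohomology sequence attached to $0 \to Y \otimes \ker c^\tx{rig} \to Y \otimes M^\tx{mid}_{E,\dot S_E} \to Y \otimes M^\tx{rig}_{E,\dot S_E} \to 0$ (exact since $c^\tx{rig}$ is surjective and $Y$ is $\Z$-free) then shows that the map on $\Gamma$-invariants is surjective. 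Passing to the colimit over $(E,S,N)$ finishes both parts. The main obstacle is bookkeeping: one must check that the finite-level splittings and induced-kernel structures are compatible with the inflation transition maps so that they genuinely assemble in the colimit, and that the hypotheses of Propositions \ref{pro:globcisosplit} and \ref{pro:globcrigsurj} hold for the cofinal system of $(E, \dot S_E)$ coming from the fixed $\dot V$ — both of which follow from Condition \ref{cnd:dense} together with the compatibility of localization with inflation.
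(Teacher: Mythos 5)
Your proposal is correct and follows essentially the same route as the paper: injectivity from the surjectivity of the dual maps on character modules, surjectivity of the "iso" Hom-map from the splitting of $c^\tx{iso}$ at finite level followed by a colimit, and surjectivity of the "rig" Hom-map from the fact that $\ker(c^\tx{rig})$ is induced. The only cosmetic difference is that for the last step the paper corrects a non-equivariant $\Z$-linear lift by showing the cocycle $\sigma \mapsto \ddot f - \sigma(\ddot f)$ valued in $\tx{Hom}_\Z(X^*(T),\ker c^\tx{rig})$ is a coboundary, which is the same vanishing of $H^1$ for an induced module that you invoke via the long exact sequence.
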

\begin{proof}
In the local case the injectivity claims follow from Fact \ref{fct:loccompsurj}, while in the global case they follow from Corollary \ref{cor:globcisosurj} and Proposition \ref{pro:globcrigsurj}.

We prove the surjectivity of $\tx{Hom}_F(\mb{T}^\tx{mid}_{\dot V},T) \to \tx{Hom}_F(\mb{T}^\tx{iso},T)$.
In the local case, it follows immediately from the existence of the splitting
\eqref{eq:siso}.
In the global case, Proposition \ref{pro:globcisosplit} implies that
$\tx{Hom}_F(\mb{T}^\tx{mid}_{E,\dot S_E},T) \to
\tx{Hom}_F(\mb{T}^\tx{iso}_{E,S},T)$ is surjective for any $E, \dot S_E$. The surjectivity of $\tx{Hom}_F(\mb{T}^\tx{mid}_{\dot V},T) \to \tx{Hom}_F(\mb{T}^\tx{iso},T)$ follows by taking
the colimit over $E,\dot S_E$.

We prove the surjectivity of $\tx{Hom}_F(\mb{T}^\tx{mid}_{\dot V},T) \to
\tx{Hom}_F(P^\tx{rig}_{\dot V},T)$. Consider first the global case. Let $X=X^*(T)$. We claim that every $\Z[\Gamma]$-homomorphism $f : X \to M^\tx{rig}_{E,\dot S_E}$ lifts to a homomorphism $\dot f : X \to M^\tx{mid}_{E,\dot S_E}$. Since $X$ is $\Z$-free we can choose a lift $\ddot f : X \to M^\tx{mid}_{E,\dot S_E}$ that is a homomorphism of $\Z$-modules, but not necessarily $\Gamma$-equivariant.
Then $\sigma \mapsto \ddot f - \sigma(\ddot f)$ is a 1-cocycle of $\Gamma$ in
$\tx{Hom}_\Z(X,\tx{Ker}(c^\tx{rig}_{E,\dot S_E}))$.
By Fact \ref{fct:globcrigind} and \cite[Chap. IX,\S3,Prop]{SerLF} this is an
induced $\Gamma$-module, so $\sigma \mapsto \ddot f - \sigma(\ddot f)$ is a
coboundary, implying that there exists a $\Gamma$-equivariant lift $\dot f$ of
$f$.
This completes the proof in the global case. The proof in the local case is the
same, but now based on Fact \ref{fct:loccompsurj} in place of Fact
\ref{fct:globcrigind}.
\end{proof}

\begin{pro} \label{pro:ciso_splits_infty}
  In the global case, the map $c_\tx{iso} : \mb{T}^\tx{iso} \to
  \mb{T}^\tx{mid}_{\dot V}$ splits.
\end{pro}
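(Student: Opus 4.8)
The plan is to reduce the statement to the finite-level splitting already established in Proposition \ref{pro:globcisosplit}, together with a uniformization argument that lets us pass to the limit. First I would observe that, dually, what we must produce is a $\Gamma$-equivariant retraction $M^\tx{mid}_{\dot V} \to M^\tx{iso}$ of the inclusion $c^\tx{iso} : M^\tx{mid}_{\dot V} \to M^\tx{iso}$; equivalently, by Cartier duality, a homomorphism $\mb{T}^\tx{mid}_{\dot V} \to \mb{T}^\tx{iso}$ over $F$ splicing the maps $c_\tx{iso}$. Since $\mb{T}^\tx{iso}$ is a pro-torus, giving such a homomorphism amounts to giving a compatible system of elements of $\tx{Hom}_F(\mb{T}^\tx{mid}_{\dot V}, \mb{T}^\tx{iso}_{E,S}) = (M^\tx{iso}_{E,S} \otimes M^\tx{mid,\vee}_{\dot V})^\Gamma$ as $(E,S)$ grows, each restricting to the identity on $\mb{T}^\tx{iso}_{E,S}$ under $c_\tx{iso}$. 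So the task is to build such a system coherently.

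The key point is that for each fixed $(E,S)$, the triple $(E/F, S, \dot S_E)$ with $\dot S_E = \{\dot v|_E \mid v \in S\}$ satisfies the hypothesis of Proposition \ref{pro:globcisosplit}: Condition \ref{cnd:dense} guarantees that $\bigcup_{v\in V}\Gamma_{\dot v}$ is dense in $\Gamma$, hence surjects onto $\Gamma_{E/F}$, which (after enlarging $S$ so that every element of the finite group $\Gamma_{E/F}$ is hit, as in the Lemma following Condition \ref{cnd:dense}) means that for every $\sigma \in \Gamma_{E/F}$ there is $w \in \dot S_E$ fixed by $\sigma$. Thus Proposition \ref{pro:globcisosplit} applies and $c^\tx{iso}_{E,\dot S_E} : M^\tx{mid}_{E,\dot S_E} \to M^\tx{iso}_{E,S}$ splits as a map of $\Z[\Gamma_{E/F}]$-modules; indeed the proof there exhibits an explicit splitting $s_{E,S}$ via the formula $s^\tx{mid}(\sigma, w) = [E:F]^{-1}\sigma(\dot f(\sigma^{-1}w))$ for $\sigma^{-1}w \in \dot S_E$ and $0$ otherwise, after writing the target element as $[E:F]^{-1}N_{E/F}(\dot f)$ with $\dot f$ supported on $\dot S_E$ (using Lemma \ref{lem:repr_Sdot_supp}). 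Since $M^\tx{iso}_{E,S}$ is a free $\Z$-module, dualizing gives $\tx{Hom}_F(\mb{T}^\tx{mid}_{E,\dot S_E}, T) \twoheadrightarrow \tx{Hom}_F(\mb{T}^\tx{iso}_{E,S}, T)$, which is the content already used in Proposition \ref{pro:surj}, but here I want the splitting map itself rather than just surjectivity.

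The main obstacle — and the reason this needs its own proposition rather than being subsumed in Proposition \ref{pro:surj} — is \emph{coherence of the splittings across the tower}: I must choose the $s_{E,S}$ so that they commute with the inflation maps of \S\ref{sub:globinf}, so as to glue to a map of pro-tori. The explicit formula above is not visibly functorial in $(E,S)$ because it depends on an ad hoc decomposition $f^\tx{iso} = [E:F]^{-1}N_{E/F}(\dot f)$ that need not be compatible with inflation. Two routes are available. The cleaner one: rather than gluing pre-chosen finite-level splittings, apply the $R^1\varprojlim$ formalism — the system of abelian groups $\{\tx{Hom}_F(\mb{T}^\tx{iso}_{E,S}, \mb{T}^\tx{iso}_{E',S'})\}$ is irrelevant; instead use that the inclusion $\mb{T}^\tx{iso} \hookrightarrow \mb{T}^\tx{mid}_{\dot V}$ of pro-tori over $F$ splits iff the extension $0 \to M^\tx{mid}_{\dot V} \to M^\tx{iso} \to Q \to 0$ of $\Gamma$-modules, where $Q$ is the cokernel, splits, and this last is equivalent to $\tx{Ext}^1_{\Z[\Gamma]}(M^\tx{iso}, M^\tx{mid}_{\dot V}) = 0$, or more simply to surjectivity of $\tx{Hom}_\Gamma(M^\tx{iso}, M^\tx{mid}_{\dot V}) \to \tx{Hom}_\Gamma(M^\tx{iso}, M^\tx{iso})$. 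Since $M^\tx{iso} = \varinjlim M^\tx{iso}_{E,S}$ with each $M^\tx{iso}_{E,S}$ finitely generated and $\Z$-free, this $\tx{Hom}$-surjectivity can be checked on each $M^\tx{iso}_{E,S}$, and there it follows from Proposition \ref{pro:globcisosplit} applied with $X = M^\tx{iso}_{E,S}$ — no compatibility across $(E,S)$ is needed because we only need to lift a \emph{single} map (the inclusion of the colimit term), and any lift on a cofinal system of finite pieces automatically assembles. The alternative, if one insists on a concrete splitting: note that for $K/F$ containing $E$ the inflation $M^\tx{mid}_{E,\dot S_E} \to M^\tx{mid}_{K,\dot S_K}$ and the norm-averaging formulas \emph{are} compatible because $N_{K/F} = N_{K/E} \circ N_{E/F}$ and the support-shifting in Lemma \ref{lem:repr_Sdot_supp} can be carried out using the \emph{same} fixed points $\dot v_w$ at all levels; one then checks directly that the displayed formula for $s^\tx{mid}$ intertwines inflation, which is a routine but slightly tedious verification. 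Either way the conclusion is a homomorphism $\mb{T}^\tx{mid}_{\dot V} \to \mb{T}^\tx{iso}$ over $F$ splitting $c_\tx{iso}$, completing the proof.
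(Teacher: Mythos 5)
You correctly isolate the crux --- the finite-level splittings of Proposition \ref{pro:globcisosplit} must be chosen compatibly with the inflation maps --- but neither of your two routes actually resolves it. In your ``cleaner'' route, the reduction to surjectivity of $\tx{Hom}_\Gamma(M^\tx{iso}, M^\tx{mid}_{\dot V}) \to \tx{Hom}_\Gamma(M^\tx{iso}, M^\tx{iso})$ is fine, but the claim that this ``can be checked on each $M^\tx{iso}_{E,S}$'' is false: since $M^\tx{iso}=\varinjlim M^\tx{iso}_{E,S}$, both $\tx{Hom}$ groups are \emph{inverse} limits over $(E,S)$, and surjectivity of a map of inverse limits does not follow from termwise surjectivity. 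Lifting the single element $\tx{id}_{M^\tx{iso}}$ means choosing, for every $(E,S)$, a lift of the inclusion $M^\tx{iso}_{E,S}\hookrightarrow M^\tx{iso}$ through $c^\tx{iso}$, and these lifts must agree under precomposition with inflation; the set of lifts at a given level is a torsor under $\tx{Hom}_\Gamma(M^\tx{iso}_{E,S},\ker c^\tx{iso})$, an infinite group with no obvious Mittag--Leffler property, so ``any lift on a cofinal system automatically assembles'' is precisely the obstruction you set out to avoid, not a tautology.

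Your alternative route is the right idea --- it is what the paper does --- but ``carry out the support-shifting using the same fixed points $\dot v_w$ at all levels'' does not parse as stated, because the sets of places and the groups $\Gamma_{E_i/F}$ change from level to level, and a place $w\in V_{E_{i+1}}$ can fall out of (the preimage of) $\dot V$ at several different stages of the tower, requiring different correcting elements at each stage. The actual argument has to organize the choices inductively: one fixes representatives $R_{k,v}$ of $\Gamma_{E_k/E_{k-1}}/\Gamma_{E_{k,\dot v}/E_{k-1,\dot v}}$ once and for all, defines operators $\pi_{i,k}$ that move the support into $\{w\in V_{E_i} : w_{E_k}\in\dot V_{E_k}\}$ one step of the tower at a time, sets $\pi_i=\pi_{i,i}\circ\cdots\circ\pi_{i,0}$, and then verifies $j\circ\pi_{i+1}=\pi_i\circ j$ and hence $\pi_{i+1}(p(f))(w)=[E_{i+1}:E_i]\,\pi_i(f)(w_{E_i})$, which is exactly the identity needed for $s_{i+1}(p(f))$ to be the inflation of $s_i(f)$. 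That verification is the entire content of the proposition; your proposal defers it rather than supplying it.
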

\begin{proof}
  We seek a compatible family of splittings of $c_\tx{iso} :
  \mb{T}^\tx{iso}_{E_i} \to \mb{T}^\tx{mid}_{E_i, \dot V_{E_i}}$.
  As we saw in the proof of Proposition \ref{pro:globcisosplit}, giving such a
  splitting is equivalent to giving, for any torus $T$ defined over $F$ and
  split by $E_i$, a splitting $s_i$ of
  \[ (Y \otimes M^\tx{mid}_{E_i, \dot V_{E_i}})^\Gamma =
  \tx{Hom}(\mb{T}^\tx{mid}_{E_i, \dot V_{E_i}}, T) \to \tx{Hom}(\mb{T}^\tx{iso},
  T) = (Y[V_{E_i}]_0)^\Gamma, \]
  where $Y = X_*(T)$, which is functorial in $T$.

  It is convenient to let $E_{-1} = F$.
  For $k \geq 0$ and $v \in V$ choose $R_{k,v} \subset \Gamma_{\ol{F}/E_{k-1}}$
  representing $\Gamma_{E_k/E_{k-1}} / \Gamma_{E_{k, \dot v} / E_{k-1, \dot
  v}}$.
  For $w \in V_{E_k} \smallsetminus \dot{V}_{E_k}$ such that $w_{E_{k-1}} \in
  \dot{V}_{E_{k-1}}$, let $r(k, w) \in R_{k, w_F}$ be the element such that
  $r(k, w)^{-1} w \in \dot{V}_{E_k}$, and choose $\dot{v}(k, w) \in \dot{V}$
  such that the image of $r(k, w)$ in $\Gamma_{E_k / E_{k-1}}$ belongs to the
  decomposition subgroup for $\dot{v}(k, w)_{E_k}$.
  For $i \geq k \geq 0$ denote $\dot{V}_{i,k} = \{ w \in V_{E_i} | w_{E_k} \in
  \dot{V}_{E_k} \}$.
  For $f \in Y[\dot{V}_{i, k-1}]$ let
  \[ \pi_{i,k}(f) = f + \sum_{w \in \dot{V}_{i, k-1} \smallsetminus \dot{V}_{i,
  k}} (r(k, w) - 1)( f(w) \delta_w - f(w) \delta_{\dot{v}(k, w)_{E_i}}). \]
  It is clear that $\pi_{i,k}(f)$ is supported on $\dot{V}_{i,k}$ and that
  $\pi_{i,k}(f)-f \in I(Y[V_{E_i}]_0)$.
  Define $\pi_i := \pi_{i,i} \circ \pi_{i,i-1} \circ \dots \circ \pi_{i,0} :
  Y[V_{E_i}] \to Y[\dot{V}_{E_i}]$.

  As in \cite[\S 8.3]{KotBG} we denote $p : Y[S_{E_i}] \to Y[S_{E_{i+1}}]$ for
  the inflation map and $j : Y[S_{E_{j+1}}] \to Y[S_{E_j}]$ defined by $j(f)(w)
  = \sum_{u | w} f(u)$, i.e.\ $j(\delta_u) = \delta_{u_{E_i}}$.
  They are both $\Gamma_{E_{i+1}/F}$-equivariant and satisfy $j \circ p =
  [E_{i+1} : E_i]$.
  It is easy to check that for $f \in Y[\dot{V}_{i+1,k-1}]$ we have
  \[ j(\pi_{i+1,k}(f)) = \begin{cases}
    \pi_{i,k}(j(f)) & \text{ if } k \leq i \\
    j(f) & \text{ if } k = i+1
  \end{cases} \]
  and thus $j \circ \pi_{i+1} = \pi_i \circ j$.
  Now $\pi_{i+1} p(f)$ is supported on $\dot{V}_{E_{i+1}}$ and satisfies $j
  \pi_{i+1} p (f) = [E_{i+1}:E_i] \pi_i(f)$, and so for $w \in V_{E_{i+1}}$ we
  have
  \begin{equation} \label{eq:split_infty}
    \pi_{i+1} p(f)(w) = [E_{i+1}:E_i] \pi_i(f)(w_{E_i}).
  \end{equation}
  Now we can resume the proof of Proposition \ref{pro:globcisosplit} with
  $\dot{f} = \pi_i(f)$ for $f \in (Y[V_{E_i}]_0)^\Gamma)$, defining $s_i(f) \in
  (Y \otimes M^\tx{mid}_{E_i, \dot{V}_{E_i}})^\Gamma$ by
  \[ s_i(f)(\sigma, w) = \begin{cases}
      [E_i:F]^{-1} \sigma(\pi_i(f)(\sigma^{-1} w)) & \text{ if } \sigma^{-1} w
      \in \dot{V}_{E_i} \\
      0 & \text{ otherwise}
  \end{cases} \]
  for $\sigma \in \Gamma_{E_i/F}$ and $w \in V_{E_i}$.
  Now \eqref{eq:split_infty} implies that $s_{i+1}(p(f)) \in (Y \otimes
  M^\tx{mid}_{E_{i+1}, \dot{V}_{E_{i+1}}})^\Gamma$ is the inflation of
  $s_i(f) \in (Y \otimes M^\tx{mid}_{E_i, \dot{V}_{E_i}})^\Gamma$.
\end{proof}

\begin{rem} \label{rem:no_can_rig_iso}
  Composing such a splitting $\mb{T}^\tx{mid}_{\dot V} \to \mb{T}^\tx{iso}$ with
  $c_\tx{rig}$, we obtain a map $P^\tx{rig}_{\dot V} \to \mb{T}^\tx{iso}$.
  Unfortunately, this splitting is not canonical, since we had to
  choose sets of representatives $R_{k,v}$.
  Therefore we cannot use it to compare $?^\tx{iso}$ and $?^\tx{rig}$ directly,
  as in the local case, which is why $?^\tx{mid}$ was introduced. 
\end{rem}

% -------------------------------------------------
% Subsection
% -------------------------------------------------

\subsection{Review of the Tate-Nakayama isomorphism} \label{sub:tniso}

Let $E/F$ be a Galois extension of local fields of characteristic zero, $T$ an
algebraic torus defined over $F$ and split over $E$, $Y=X_*(T)$.
We have the Tate-Nakayama isomorphism $\hat H^i(\Gamma_{E/F},Y) \to \hat H^{i+2}(\Gamma_{E/F},T(E))$ defined by cup product against the fundamental class in $H^2(\Gamma_{E/F},E^\times)$. Combining with the inflation $H^i(\Gamma_{E/F},T(E)) \to H^i(\Gamma,T)$ we obtain the isomorphism $\hat H^{-1}(\Gamma_{E/F},Y) \to H^1(\Gamma,T)$ and the inclusion $\hat H^0(\Gamma_{E/F},Y) \to H^2(\Gamma,T)$.

Given a finite multiplicative group $Z$ defined over $F$ and split over $E$, we let $A=X^*(Z)$ and $A^\vee=\tx{Hom}(A,\Q/\Z)$, and then have the injective map $\hat H^{-1}(\Gamma_{E/F},A^\vee) \to H^2(\Gamma,Z)$ denoted by $\Theta_{E,v}$ in \cite[\S3.2]{KalGRI}.

If $1 \to Z \to T \to \bar T \to 1$ is an exact sequence of diagonalizable
groups defined over $F$ and split over $E$, where $Z$ is finite and $T$ and
$\bar T$ are tori, then these maps fit in the following commutative diagram,
which is the local analog of Lemma \cite[Lemma 3.2.5]{KalGRI} whose proof is
easier and shall be omitted:
\begin{equation} \label{eq:tnzdiagloc}
\xymatrix{
\hat H^{-1}(\Gamma_{E/F},Y)\ar[r]^\cong_{\tx{TN}}\ar[d]&H^1(\Gamma_{E/F},T(E))\ar[r]^\cong\ar[d]&H^1(\Gamma_S,T(\bar F))\ar[d]\\
\hat H^{-1}(\Gamma_{E/F},\bar Y)\ar[r]^\cong_{\tx{TN}}\ar[d]&H^1(\Gamma_{E/F},\bar T(E))\ar[r]^\cong&H^1(\Gamma_S,\bar T(\bar F))\ar[d]\\
\hat H^{-1}(\Gamma_{E/F},A^\vee)\ar[rr]^{\Theta_{E}}\ar[d]&&H^2(\Gamma_S,Z(\bar F))\ar[d]\\
\hat H^{0}(\Gamma_{E/F},Y)\ar[r]^-\cong_-{-\tx{TN}}\ar[d]&H^2(\Gamma_{E/F},T(E))\ar@{^(->}[r]\ar[d]&H^2(\Gamma_S,T(\bar F))\ar[d]\\
\hat H^{0}(\Gamma_{E/F},\bar Y)\ar[r]^-\cong_-{-\tx{TN}}&H^2(\Gamma_{E/F},\bar T(E))\ar@{^(->}[r]&H^2(\Gamma_S,\bar T(\bar F))\\^{}
}
\end{equation}

Let $E/F$ be a Galois extension of number fields, $S$ a finite set of places of
$F$ containing all Archimedean places and all finite places ramifying in $E/F$
and such that every ideal class of $E$ contains an ideal with support in $S$
(i.e.\ \cite[Conditions 3.1.1]{KalGRI}).
Given an algebraic torus $T$ defined over $F$ and split over $E$ we let $Y=X_*(T)$ and $Y[S_E]_0=Y \otimes \Z[S_E]_0$. We have the Tate-Nakayama isomorphism $\hat H^i(\Gamma_{E/F},Y[S_E]_0) \to \hat H^{i+2}(\Gamma_{E/F},T(O_{E,S}))$ defined by cup product against the fundamental class in $H^2(\Gamma_{E/F},\tx{Hom}_\Z(\Z[S_E]_0,O_{E,S}^\times))$ defined in \cite{Tate66}.
Combining with the inflation $H^i(\Gamma_{E/F},T(O_{E,S})) \to
H^i(\Gamma_S,T(O_S))$ we obtain the isomorphism
$\hat H^{-1}(\Gamma_{E/F},Y) \to H^1(\Gamma_S,T(O_S))$ and the inclusion $\hat
H^0(\Gamma_{E/F},Y) \to H^2(\Gamma_S,T(O_S))$, see \cite[Lemma 3.1.9]{KalGRI}.

Given a finite multiplicative group $Z$ defined over $F$ and split over $E$, we let $A=X^*(Z)$ and $A^\vee=\tx{Hom}(A,\Q/\Z)$, and then have the injective map $\hat H^{-1}(\Gamma_{E/F},A^\vee[S_E]_0) \to H^2(\Gamma,Z)$ denoted by $\Theta_{E,S}$ in \cite[\S3.2]{KalGRI}.

If $1 \to Z \to T \to \bar T \to 1$ is an exact sequence of diagonalizable groups defined over $F$ and split over $E$, where $Z$ is finite and $T$ and $\bar T$ are tori, and the order of $Z$ is an $S$-unit, then these maps fit in the following commutative diagram according to Lemma \cite[Lemma 3.2.5]{KalGRI}:

\begin{equation} \label{eq:tnzdiagglob}
\xymatrix{
\hat H^{-1}(\Gamma_{E/F},Y[S_E]_0)\ar[r]^\cong_{\tx{TN}}\ar[d]&H^1(\Gamma_{E/F},T(O_{E,S}))\ar[r]^\cong\ar[d]&H^1(\Gamma_S,T(O_S))\ar[d]\\
\hat H^{-1}(\Gamma_{E/F},\bar Y[S_E]_0)\ar[r]^\cong_{\tx{TN}}\ar[d]&H^1(\Gamma_{E/F},\bar T(O_{E,S}))\ar[r]^\cong&H^1(\Gamma_S,\bar T(O_S))\ar[d]\\
\hat H^{-1}(\Gamma_{E/F},A^\vee[S_E]_0)\ar[rr]^{\Theta_{E,S}}\ar[d]&&H^2(\Gamma_S,Z(O_S))\ar[d]\\
\hat H^{0}(\Gamma_{E/F},Y[S_E]_0)\ar[r]^\cong_{-\tx{TN}}\ar[d]&H^2(\Gamma_{E/F},T(O_{E,S}))\ar@{^(->}[r]\ar[d]&H^2(\Gamma_S,T(O_S))\ar[d]\\
\hat H^{0}(\Gamma_{E/F},\bar Y[S_E]_0)\ar[r]^\cong_{-\tx{TN}}&H^2(\Gamma_{E/F},\bar T(O_{E,S}))\ar@{^(->}[r]&H^2(\Gamma_S,\bar T(O_S))\\^{}
}
\end{equation}

Consider now a morphism $Z \to T$ from a finite multiplicative group $Z$ to a torus $T$, not assumed injective.
It induces a homomorphism $H^2(\Gamma,Z) \to H^2(\Gamma,T)$ in the local case, and $H^2(\Gamma_S,Z(O_S)) \to H^2(\Gamma_S,T(O_S))$ in the global case. We shall now define a homomorphism $H^{-1}_T(\Gamma_{E/F},A^\vee) \to H^0_T(\Gamma_{E/F},Y)$ in the local case, and $H^{-1}_T(\Gamma_{E/F},A^\vee[S_E]_0) \to
H^0_T(\Gamma_{E/F},Y[S_E]_0)$ in the global case, that intertwines the respective Tate-Nakayama homomorphisms.

We first consider the local case.
Let $\bar T$ be the cokernel of $Z \to T$, an algebraic torus that is a quotient
of $T$.
We consider the complex $X^*(\bar T) \to X^*(T) \to X^*(Z)$.
Given $\lambda : X^*(Z) \to \Q/\Z$ we compose to obtain $\lambda_T : X^*(T) \to
\Q/\Z$.
Since $X^*(T)$ is a free $\Z$-module there exists a lift $\dot\lambda_T : X^*(T)
\to \Q$ whose restriction to $X^*(\bar T)$ necessarily takes image in $\Z$ and
thus is an element of $X_*(\bar T)$, well-defined modulo $X_*(T)$.
Consider $N\dot\lambda_T = \sum_{\sigma \in \Gamma_{E/F}} \sigma(\dot\lambda_T)
\in X_*(\bar T)^\Gamma$.
If we assume that $N\lambda=0$ then we see that $N\dot\lambda_T$ belongs to the
sublattice $Y = X_*(T) \subset X_*(\bar T)$.
This defines a map $H^{-1}_T(\Gamma_{E/F},A^\vee) \to H^0_T(\Gamma_{E/F},Y)$.

In the global case, the definition of $H^{-1}_T(\Gamma_{E/F},A^\vee[S_E]_0) \to
H^0_T(\Gamma_{E/F},Y[S_E]_0)$ is analogous.
Now instead of $\lambda : A \to \Q/\Z$ we have $\lambda : A \times S_E \to
\Q/\Z$ that is a homomorphism in the first variable, and with $\sum_w
\lambda(a,w)=0$.
We can choose a lift $\dot\lambda_T : X^*(T) \times S_E \to \Q$ which is also a
homomorphism in the first variable and with $\sum_w \dot\lambda(x,w)=0$, i.e.\
$\dot \lambda_T \in \Q Y[S_E]_0$.
Again $\dot \lambda_T$ is well-defined modulo $Y[S_E]_0$, and if $N\lambda=0$
then $N\dot\lambda_T \in Y[S_E]_0$.

The fact that these maps are compatible with the Tate-Nakayama homomorphisms is
proved as follows.
Define $Z' \subset T$ to be the image of $Z$ and write $Z \to T$ as the composition of the surjective homomorphism $Z \to Z'$ and the injective homomorphism $Z' \to T$. For $Z \to Z'$ one applies the functoriality of the Tate-Nakayama homomorphism for finite multiplicative groups, and for $Z' \to T$ one uses \cite[Lemma 3.2.5]{KalGRI} and its local analog.

% -------------------------------------------------
% Subsection
% -------------------------------------------------

\subsection{The local gerbes $\mc{E}^\tx{iso}$, $\mc{E}^\tx{mid}$, $\mc{E}^\tx{rig}$}
\label{sub:loc_gerbes}

Let $F$ be a local field of characteristic zero, $E/F$ a finite Galois
extension, $N$ a natural number.

There is a canonical element $\xi^\tx{iso}_E \in H^2(\Gamma,\mb{T}^\tx{iso}_E(\bar
F))$: it is the element that, under the identification $\mb{T}^\tx{iso}_E =
\mb{G}_m$ and the invariant map of local class field theory $H^2(\Gamma,\bar
F^\times) \xrightarrow{\sim} \Q/\Z$ corresponds to $[E:F]^{-1}$, i.e.\ the
inflation of the canonical class in $H^2(\Gamma_{E/F}, E^{\times})$.

There is also a canonical element $\xi^\tx{rig}_{E,N} \in
H^2(\Gamma,P^\tx{rig}_{E,N})$.
It is obtained by taking $-1 \in \Z$, using the identification $\hat\Z =
H^2(\Gamma,u)$ of \cite[Theorem 3.1]{KalRI}, and mapping this class under the
map $u \to u_{E/F,N}$.

\begin{pro} \label{pro:loc_can_agree}
  The images of the canonical classes $\xi^\tx{iso}_E$ and $\xi^\tx{rig}_{E,N}$
  under
  \[ H^2(\Gamma,\mb{T}^\tx{iso}_{E}(\bar F)) \rw
  H^2(\Gamma,\mb{T}^\tx{mid}_{E,N}(\bar F)) \lw
  H^2(\Gamma,P^\tx{rig}_{E,N}(\bar F))\]
  are equal.
  We denote this common image by $\xi^\tx{mid}_{E,N}$.
\end{pro}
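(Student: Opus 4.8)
\emph{Strategy.} The plan is to compute both push‑forwards inside $H^2(\Gamma,\mb T^\tx{mid}_{E,N})$ by means of the Tate--Nakayama isomorphism and to see that they land on the same element of a small cyclic group. Write $Y:=M^{\tx{mid},\vee}_{E,N}=X_*(\mb T^\tx{mid}_{E,N})$; by \S\ref{sub:discmmidloc} this is the submodule $N\Z[\Gamma_{E/F}]+\Z\cdot\mathbf 1$ of $\Z[\Gamma_{E/F}]$, with $\mathbf 1$ the constant function $1$, and $Y^{\Gamma_{E/F}}=\Z\cdot\mathbf 1$. A one‑line computation gives $N_{E/F}(Y)=\gcd(N,[E:F])\cdot\Z\cdot\mathbf 1$, so $\hat H^0(\Gamma_{E/F},Y)$ is cyclic of order $\gcd(N,[E:F])$, generated by the class of $\mathbf 1$. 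Since \S\ref{sub:tniso} furnishes an \emph{injection} $\tx{TN}\colon\hat H^0(\Gamma_{E/F},Y)\hookrightarrow H^2(\Gamma,\mb T^\tx{mid}_{E,N})$, it suffices to show that both $(c_\tx{iso})_*\xi^\tx{iso}_E$ and $(c_\tx{rig})_*\xi^\tx{rig}_{E,N}$ lie in its range and there represent the same class, where $c_\tx{iso},c_\tx{rig}$ are the morphisms Cartier‑dual to $c^\tx{iso},c^\tx{rig}$ of \eqref{eq:loccompmap}.

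\emph{The isocrystal side.} By definition $\xi^\tx{iso}_E$ is the inflation of the fundamental class in $H^2(\Gamma_{E/F},E^\times)$, i.e.\ $\xi^\tx{iso}_E=\tx{TN}(1)$ for the generator $1$ of $\hat H^0(\Gamma_{E/F},\Z)=\Z/[E:F]\Z=\hat H^0(\Gamma_{E/F},X_*(\mb G_m))$. Tate--Nakayama is functorial for the morphism of $E$‑split tori $c_\tx{iso}\colon\mb T^\tx{iso}_E=\mb G_m\to\mb T^\tx{mid}_{E,N}$, and $c_{\tx{iso},*}\colon\Z\to Y$ sends $1$ to $\mathbf 1$ by \S\ref{sub:discmmidloc}; hence $(c_\tx{iso})_*\xi^\tx{iso}_E=\tx{TN}(\mathbf 1)$.

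\emph{The rigid side.} Dualizing \eqref{eq:locmides2} places $c_\tx{rig}$, up to the inversion automorphism of $P^\tx{rig}_{E,N}$, in a short exact sequence $1\to P^\tx{rig}_{E,N}\to\mb T^\tx{mid}_{E,N}\xrightarrow{p}\mb T\to1$ with $\mb T=\tx{Res}_{E/F}\mb G_m$ (so $X_*(\mb T)=\Z[\Gamma_{E/F}]$). Two remarks clarify things: $H^1(\Gamma,\mb T)=H^1(\Gamma_E,\mb G_m)=1$ by Hilbert 90, so $(c_\tx{rig})_*$ is injective on $H^2$; and $p\circ c_\tx{iso}$ is, up to sign, the diagonal $\mb G_m\to\tx{Res}_{E/F}\mb G_m$, whence $p_*\big((c_\tx{iso})_*\xi^\tx{iso}_E\big)=\tx{res}_{E/F}(\xi^\tx{iso}_E)=0$ in $\tx{Br}(E)$ (restriction to $E$ multiplies invariants by $[E:F]$), so $(c_\tx{iso})_*\xi^\tx{iso}_E$ does lie in the range of $(c_\tx{rig})_*$. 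Next I would recall from \cite[\S3.1, Thm.~3.1]{KalRI} that $\xi^\tx{rig}_{E,N}$ lies in the image of $\Theta_E$, say $\xi^\tx{rig}_{E,N}=\Theta_E(\bar c_N)$ with $\bar c_N\in\hat H^{-1}(\Gamma_{E/F},A^\vee)$, $A=M^\tx{rig}_{E,N}$, $A^\vee=\tx{Hom}(A,\Q/\Z)$, and that $\bar c_N$ is built from the local fundamental class of $E/F$. Feeding $\bar c_N$ into the commutative diagram \eqref{eq:tnzdiagloc} (the local analogue of \cite[Lemma 3.2.5]{KalGRI}) for the exact sequence above yields $(c_\tx{rig})_*\xi^\tx{rig}_{E,N}=\pm\tx{TN}(\partial(\bar c_N))$, where $\partial\colon\hat H^{-1}(\Gamma_{E/F},A^\vee)\to\hat H^0(\Gamma_{E/F},Y)$ is the homomorphism built at the end of \S\ref{sub:tniso}. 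Unwinding that construction, $\partial(\bar c_N)$ is represented by $N_{E/F}(\dot\lambda)$ for any lift $\dot\lambda\in Y\otimes\Q=\Q[\Gamma_{E/F}]$ of (up to sign) the composite $M^\tx{mid}_{E,N}\xrightarrow{c^\tx{rig}}M^\tx{rig}_{E,N}\xrightarrow{\bar c_N}\Q/\Z$; since $N_{E/F}$ lands in the line $\Q\cdot\mathbf 1$ and $c^\tx{rig}$ kills the constant function, this is $m\cdot\mathbf 1$ with $m=\dot\lambda(\mathbf 1)\in\Z$, and the normalization of $\bar c_N$ forces $m\equiv\pm1\pmod{\gcd(N,[E:F])}$. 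Comparing with the isocrystal side and invoking the injectivity of $\tx{TN}$ gives the desired equality. (For $F\simeq\C$ everything vanishes; for $F=\R$, $\Gamma_{E/F}$ has order $\le2$ and the same argument applies with Tate--Nakayama over $\R$. As a check, composing the common class with a retraction $\mb T^\tx{mid}_{E,N}\to\mb G_m$ of $c_\tx{iso}$ --- available when $[E:F]\mid N$ via \eqref{eq:siso} --- returns $\xi^\tx{iso}_E$ on both sides, consistently with $\phi_*\xi^\tx{rig}_{E,N}=\xi^\tx{iso}_E$ from \cite{KalRIBG} together with $\phi=(s^\tx{iso})^\vee\circ c_\tx{rig}$.)

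\emph{Expected main difficulty.} The hard part will be the penultimate claim: reconciling the normalization of the rigid canonical class $\xi^\tx{rig}_{E,N}$ in \cite{KalRI} --- in particular the sign $-1$ entering its definition --- with the normalization of the local fundamental class that defines $\xi^\tx{iso}_E$, so that $\partial(\bar c_N)$ comes out to be \emph{exactly} the class of $\mathbf 1$ and not some other generator of $\hat H^0(\Gamma_{E/F},Y)$. Carrying this out forces one to keep careful track of the sign conventions in $\Theta_E$, in the Tate--Nakayama isomorphism (cf.\ the sign in \eqref{eq:tnzdiagloc}), and in $c^\tx{rig}$; everything else is formal.
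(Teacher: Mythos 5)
Your strategy --- push both classes into $H^2(\Gamma,\mb{T}^\tx{mid}_{E,N})$, identify their Tate--Nakayama preimages in $\hat H^0(\Gamma_{E/F},M^{\tx{mid},\vee}_{E,N})$, and compare them there --- is exactly the paper's, down to the use of the dual exact sequences of \S\ref{sub:discmmidloc} and the diagram \eqref{eq:tnzdiagloc}. The isocrystal half is complete and agrees with the paper: $c_\tx{iso}$ dualizes to the inclusion $\Z\to M^{\tx{mid},\vee}_{E,N}$, so $(c_\tx{iso})_*\xi^\tx{iso}_E=\tx{TN}(\mathbf 1)$.

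The gap is on the rigid side, and it is precisely the point you flag at the end as the ``expected main difficulty'': you conclude only that $m\equiv\pm1\pmod{\gcd(N,[E:F])}$, which does not yield the asserted equality (the group $\hat H^0(\Gamma_{E/F},M^{\tx{mid},\vee}_{E,N})$ is cyclic of order $\gcd(N,[E:F])$, typically $>2$, so the two candidate classes are genuinely distinct), and you never make the class $\bar c_N$ explicit. The paper closes this in three concrete steps. First, the Tate--Nakayama preimage of $\xi^\tx{rig}_{E,N}$ under $\Theta_{E}$ is identified on the nose: it is the class of the delta function $\delta_e$ at the identity in $\frac{\Z/N\Z[\Gamma_{E/F}]}{\Z/N\Z}$, the $\Q/\Z$-dual of $M^\tx{rig}_{E,N}$. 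Second, the connecting map attached to \eqref{eq:locmides4} is computed at the cochain level: $\delta_e$ lifts to the $(-1)$-cochain $\delta_e\in\Z[\Gamma_{E/F}]$, whose $\Gamma_{E/F}$-norm is exactly the constant function $\mathbf 1$ --- the same element produced on the isocrystal side. Third, the two signs you worry about cancel against each other: the surjection in \eqref{eq:locmides2} is the \emph{negative} of $c^\tx{rig}$, and the Tate--Nakayama isomorphism anticommutes between degrees $-1$ and $0$ (the $-\tx{TN}$ in \eqref{eq:tnzdiagloc}). Without these observations your argument establishes the proposition only up to a sign, which is not the statement being proved; with them, the rest of your write-up goes through.
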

\begin{proof}
We use Tate-Nakayama duality to describe the canonical elements.
For any algebraic torus $T$ defined over $F$ and split over $E$ we have the
isomorphism $\hat H^0(\Gamma_{E/F},X_*(T)) \to H^2(\Gamma_{E/F},T(E))$ of
\cite{Tate66}, which is cup-product with the canonical class, and the inclusion
$H^2(\Gamma_{E/F},T(E)) \to H^2(\Gamma,T(\bar F))$.
We apply this with $T=\mb{T}^\tx{iso}_E$ and $T=\mb{T}^\tx{mid}_{E,N}$. For a finite multiplicative group $D$ we write $X_*(D)=\tx{Hom}_\Z(X^*(D),\Q/\Z)$. Then we have the injective homomorphism $\hat H^{-1}(\Gamma_{E/F},X_*(D)) \to H^2(\Gamma,D)$ denoted by $\Theta_{E,v}$ in \cite[\S3.2]{KalGRI}. We apply this with $D=P^\tx{rig}_{E,N}$.

We now have the following elements:
\begin{enumerate}
  \item $1 \in \Z$, representing an element of $\Z/[E:F]\Z=\hat H^0(\Gamma_{E/F},\Z)$.
  \item The constant function with value $1$ in $M^{\tx{mid},\vee}_{E,N}$, representing an element of $\hat H^0(\Gamma_{E/F},M^{\tx{mid},\vee}_{E,N})$.
  \item The function $\delta_e \in \frac{\Z/N\Z[\Gamma_{E/F}]}{\Z/N\Z}$ representing an element of $\hat H^{-1}(\Gamma_{E/F},-)$.
\end{enumerate}
The image of the first element is the canonical class $\xi^\tx{iso}_E \in
H^2(\Gamma,\mb{T}_E^\tx{iso})$, while the image of the third element is the
canonical class $\xi^\tx{rig}_{E,N} \in H^2(\Gamma,P^\tx{rig}_{E,N})$.

It is clear that the maps $\Z \to M^{\tx{mid},\vee}_{E,N} \to \Z$ identify the
the first two elements. As for the second and third element, we consider the
exact sequence \eqref{eq:locmides4} that is dual to \eqref{eq:locmides2} and see
that the (-1)-cocycle $\delta_e$ in $\frac{\Z/N\Z[\Gamma_{E/F}]}{\Z/N\Z}$ lifts
to the (-1)-cochain $\delta_e$ in $\Z[\Gamma_{E/F}]$, whose differential, i.e.
$\Gamma_{E/F}$-norm, is the constant function $1$ in $M^{\tx{mid},\vee}_{E,N}$.
Since the map $M^{\tx{mid},\vee}_{E,N} \to N^{-1}\Z/\Z[\Gamma_{E/F}]_0$ in
\eqref{eq:locmides2} is the negative of $c^\tx{rig}$, the claim now follows from
the functoriality of the Tate-Nakayama isomorphism and its anticommutativity
between degrees $-1$ and $0$, i.e.\ the discussion of \S\ref{sub:tniso}.
\end{proof}

Recall from \cite[Theorem 3.5.8]{Weibel94} that the finiteness of
all $H^1(\Gamma,\mb{T}^\tx{mid}_{E,N})$ implies that
$H^2(\Gamma,\mb{T}^\tx{mid}) = \varprojlim H^2(\Gamma,\mb{T}^\tx{mid}_{E,N})$,
and similarly for $\mb{T}^\tx{iso}$ and $P^\tx{rig}$.
We therefore have a unique class $\xi^\tx{iso} \in H^2(\Gamma, \mb{T}^\tx{iso})$
(resp.\ $\xi^\tx{rig} \in H^2(\Gamma, P^\tx{rig})$) lifting
$(\xi^\tx{iso}_E)_{E}$ (resp.\ $(\xi^\tx{rig}_{E,N})_{E,N}$).

\begin{cor} \label{cor:loc_can_agree}
\ \\[-24pt]\begin{enumerate}
  \item Let $K/E/F$ be a tower of finite Galois extensions and $N|M$ natural
    numbers.
    The inflation map $H^2(\Gamma,\mb{T}^\tx{mid}_{K,M}(\bar F)) \to
    H^2(\Gamma,\mb{T}^\tx{mid}_{E,N}(\bar F))$ maps $\xi^\tx{mid}_{K,M}$ to
    $\xi^\tx{mid}_{E,N}$ and therefore yields a canonical class $\xi^\tx{mid} \in
    H^2(\Gamma,\mb{T}^\tx{mid})$.
  \item The images of the canonical classes $\xi^\tx{iso}$ and $\xi^\tx{rig}$
    under
    \[ H^2(\Gamma,\mb{T}^\tx{iso}(\bar F)) \rw H^2(\Gamma,\mb{T}^\tx{mid}(\bar
      F)) \lw H^2(\Gamma,P^\tx{rig}(\bar F))\]
    are equal to $\xi^\tx{mid}$.
\end{enumerate}
\end{cor}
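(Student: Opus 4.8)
The plan is to reduce everything to Proposition \ref{pro:loc_can_agree}, which already establishes the corresponding statement at each finite level $(E,N)$, together with the compatibility of the classes $\xi^\tx{iso}_E$, $\xi^\tx{rig}_{E,N}$, $\xi^\tx{mid}_{E,N}$ with the inflation maps. For part (1), I would first recall that by Proposition \ref{pro:loc_can_agree} the class $\xi^\tx{mid}_{E,N}$ is the common image of $\xi^\tx{iso}_E$ and $\xi^\tx{rig}_{E,N}$. Then it suffices to check that $\xi^\tx{iso}_E$ is compatible with the inflation maps $\mb{T}^\tx{iso}_K \to \mb{T}^\tx{iso}_E$ (multiplication by $[K:E]$) and that $\xi^\tx{rig}_{K,M}$ is compatible with $P^\tx{rig}_{K,M} \to P^\tx{rig}_{E,N}$: both of these follow directly from the definitions, since $\xi^\tx{iso}_E$ is by construction the inflation of the canonical class of $H^2(\Gamma_{E/F},E^\times)$ and the canonical classes are compatible under inflation of local fields (via multiplication by $[K:E]$ on the invariant), and $\xi^\tx{rig}_{E,N}$ is by construction the image of a single class in $H^2(\Gamma,u)$ under $u \to u_{E/F,N}$, which is manifestly compatible with the transition maps $u_{K/F,M} \to u_{E/F,N}$. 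The commutativity of the inflation diagram for $c^\tx{iso}$ and $c^\tx{rig}$ at each level (stated in \S\ref{sub:locinf}) then forces the inflation map to send $\xi^\tx{mid}_{K,M}$ to $\xi^\tx{mid}_{E,N}$. Hence the compatible system $(\xi^\tx{mid}_{E,N})_{E,N}$ defines, via the $R^1\varprojlim$ vanishing recalled just before the corollary (finiteness of $H^1(\Gamma,\mb{T}^\tx{mid}_{E,N})$, which holds since these are tori over a local field), a unique class $\xi^\tx{mid} \in H^2(\Gamma,\mb{T}^\tx{mid})$.

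For part (2), I would argue as follows. The maps $\mb{T}^\tx{iso} \to \mb{T}^\tx{mid} \lw P^\tx{rig}$ are the inverse limits of the level maps $\mb{T}^\tx{iso}_E \to \mb{T}^\tx{mid}_{E,N} \lw P^\tx{rig}_{E,N}$, and the induced maps on $H^2(\Gamma,-)$ are the inverse limits of the level maps on cohomology, because $H^2(\Gamma,-)$ commutes with these inverse limits (again by the $R^1\varprojlim$ vanishing). The images of $\xi^\tx{iso}$ and $\xi^\tx{rig}$ under these maps are therefore the unique classes lifting the compatible systems of images of $\xi^\tx{iso}_E$ and $\xi^\tx{rig}_{E,N}$; by Proposition \ref{pro:loc_can_agree} these images are $\xi^\tx{mid}_{E,N}$ at each level, so by the uniqueness of the lift both images coincide with $\xi^\tx{mid}$.

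I do not expect a serious obstacle here: the corollary is essentially a formal "pass to the limit" consequence of Proposition \ref{pro:loc_can_agree} plus the bookkeeping of inflation maps. The one point requiring a little care is the verification that the three systems of canonical classes are genuinely compatible under the inflation maps of \S\ref{sub:locinf} — in particular matching the multiplication-by-$[K:E]$ normalization on the $\tx{iso}$ side against the unnormalized inflation on the $\tx{mid}$ and $\tx{rig}$ sides — but this is exactly the content of the commutative inflation diagrams already recorded in \S\ref{sub:locinf} together with the standard normalization of local fundamental classes, so it amounts to unwinding definitions rather than new work.
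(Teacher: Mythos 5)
Your proposal is correct and follows essentially the same route as the paper, whose proof is a one-line reduction to Proposition \ref{pro:loc_can_agree} together with the compatibility of the canonical classes for the tori $\mb{T}^\tx{iso}_E$ under inflation; you simply spell out the bookkeeping (and additionally verify the compatibility on the $\tx{rig}$ side, which is harmless but not needed once the $\tx{iso}$ side is known). The passage to the limit via the vanishing of $R^1\varprojlim$ and the identification $H^2(\Gamma,\mb{T}^\tx{mid})=\varprojlim H^2(\Gamma,\mb{T}^\tx{mid}_{E,N})$ is exactly as in the paper.
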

\begin{proof}
  Both points follow from Proposition \ref{pro:loc_can_agree} and the
  compatibility of the canonical classes for the tori $\mb{T}^\tx{iso}_E$.
\end{proof}

\begin{lem} \label{lem:h1lmvan}
  We have $R^i \varprojlim H^1(\Gamma,\mb{T}^\tx{mid}_{E,N})=0$ for $i=0,1$.
\end{lem}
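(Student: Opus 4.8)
The plan is to verify the sufficient condition recorded at the end of \S\ref{sub:prelim}: for every pair $(E,N)$ I will produce a pair $(K,M)$ with $E\subset K$ and $N\mid M$ such that the transition map $H^1(\Gamma,\mb{T}^\tx{mid}_{K,M})\to H^1(\Gamma,\mb{T}^\tx{mid}_{E,N})$ vanishes; running this along a cofinal sequence in the directed set of pairs $(E,N)$ (which is countable, so that \cite[Theorem 3.5.8]{Weibel94} applies) then gives $R^i\varprojlim H^1(\Gamma,\mb{T}^\tx{mid}_{E,N})=0$ for $i=0,1$. The key observation is that one need not enlarge $E$ at all: I will keep $E$ fixed and take $M=[E:F]\cdot N$. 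The benefit of this choice is that all the cohomology then lives over the single finite group $\Gamma_{E/F}$, so that only the naturality of the Tate--Nakayama isomorphism over that fixed group is needed, with no change-of-splitting-field bookkeeping.

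To compute $H^1$, recall that $\mb{T}^\tx{mid}_{E,N}$ is split over $E$, so by the Tate--Nakayama isomorphism of \S\ref{sub:tniso} we have $H^1(\Gamma,\mb{T}^\tx{mid}_{E,N})\cong\hat H^{-1}(\Gamma_{E/F},M^{\tx{mid},\vee}_{E,N})$, naturally among tori split over $E$. The left-hand term $N\Z[\Gamma_{E/F}]$ of the exact sequence \eqref{eq:locmides3} is induced, hence Tate-acyclic, so \eqref{eq:locmides3} identifies $\hat H^{-1}(\Gamma_{E/F},M^{\tx{mid},\vee}_{E,N})$ with $\hat H^{-1}(\Gamma_{E/F},\Z/N\Z)$, the action on $\Z/N\Z$ being trivial. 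For trivial coefficients $\hat H^{-1}(\Gamma_{E/F},\Z/N\Z)=\ker(N_{\Gamma_{E/F}})/I_{\Gamma_{E/F}}(\Z/N\Z)$ is the $[E:F]$-torsion subgroup $\{x\in\Z/N\Z:[E:F]x=0\}$, since the augmentation ideal acts by zero and the norm $N_{\Gamma_{E/F}}$ is multiplication by $[E:F]$.

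Finally I would identify the transition map and conclude. With $E$ fixed, the inflation map $M^\tx{mid}_{E,N}\to M^\tx{mid}_{E,M}$ of \S\ref{sub:locinf} is the inclusion coming from $\tfrac{1}{N}\Z\subset\tfrac{1}{M}\Z$, so its Cartier dual $\mb{T}^\tx{mid}_{E,M}\to\mb{T}^\tx{mid}_{E,N}$ is dual on cocharacter lattices to the inclusion $M^{\tx{mid},\vee}_{E,M}=M\Z[\Gamma_{E/F}]+\Z\hookrightarrow N\Z[\Gamma_{E/F}]+\Z=M^{\tx{mid},\vee}_{E,N}$. This inclusion extends to a morphism of the exact sequences \eqref{eq:locmides3} for $M$ and for $N$: on the submodules it is the inclusion $M\Z[\Gamma_{E/F}]\hookrightarrow N\Z[\Gamma_{E/F}]$, and on the quotients it is the reduction map $\Z/M\Z\to\Z/N\Z$, because both copies of $M^{\tx{mid},\vee}$ contain the constant function $1$, which maps to $1$ in the respective quotient. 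Hence, under the identifications of the previous paragraph, the transition map on $H^1$ is the map on $[E:F]$-torsion subgroups induced by reduction $\Z/M\Z\to\Z/N\Z$. With $M=[E:F]N$ the $[E:F]$-torsion subgroup of $\Z/[E:F]N\Z$ is $N\Z/[E:F]N\Z$, every element of which maps to $0$ in $\Z/N\Z$; thus the transition map vanishes, which verifies the criterion. The only step demanding genuine care is the identification just made of the morphism of sequences \eqref{eq:locmides3} induced by the transition map; because $E$ is held fixed this amounts to the naturality of Tate--Nakayama over $\Gamma_{E/F}$ applied to an explicit morphism of short exact sequences, so the potential subtlety (the behaviour of Tate--Nakayama under change of splitting field) never enters.
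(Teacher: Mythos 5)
Your proof is correct, and it follows the paper's overall strategy — verify the sufficient condition from \S\ref{sub:prelim} with $K=E$ held fixed and $M=[E:F]N$ — but the mechanism by which you show the transition map $H^1(\Gamma,\mb{T}^\tx{mid}_{E,[E:F]N})\to H^1(\Gamma,\mb{T}^\tx{mid}_{E,N})$ vanishes is genuinely different. The paper never computes $H^1$: it passes to the cokernel $\mb{T}^\circ_{E,N}$ of $c_\tx{iso}:\mb{T}^\tx{iso}_E\to\mb{T}^\tx{mid}_{E,N}$ (injectivity of $H^1(\mb{T}^\tx{mid}_{E,N})\to H^1(\mb{T}^\circ_{E,N})$ coming from Hilbert 90 for $\mb{T}^\tx{iso}_E=\mb{G}_m$), factors the inflation $M^\circ_{E,N}\to M^\circ_{E,[E:F]N}$ as multiplication by $[E:F]$ followed by division by $[E:F]$, and concludes because $H^1$ of a torus split over $E$ is killed by $[E:F]$. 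You instead compute the groups exactly: via Tate--Nakayama and the exact sequence \eqref{eq:locmides3}, whose induced subterm is Tate-acyclic, you identify $H^1(\Gamma,\mb{T}^\tx{mid}_{E,N})$ with the $[E:F]$-torsion of $\Z/N\Z$ and the transition map with reduction $\Z/M\Z\to\Z/N\Z$, which visibly kills the $[E:F]$-torsion when $M=[E:F]N$. Your identifications all check out (the dual of the inflation at fixed $E$ is indeed the inclusion $M\Z[\Gamma_{E/F}]+\Z\hookrightarrow N\Z[\Gamma_{E/F}]+\Z$, inducing reduction on the quotients of \eqref{eq:locmides3}, exactly as recorded in \S\ref{sub:discmmidloc}). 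The paper's route is shorter and needs no Tate--Nakayama input; yours costs a little more bookkeeping but yields the explicit value $H^1(\Gamma,\mb{T}^\tx{mid}_{E,N})\cong\Z/\gcd(N,[E:F])\Z$ as a byproduct, which the paper's argument does not give.
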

\begin{proof}
  It is enough to show that for each $E,N$ we can find $M$ such that the map
  $\mb{T}^\tx{mid}_{E,M} \to \mb{T}^\tx{mid}_{E,N}$ induces the zero map on
  $H^1$.
  Let $\mb{T}^\circ_{E,N}$ be the cokernel of the injective map $c_\tx{iso} :
  \mb{T}^\tx{iso}_E \to \mb{T}^\tx{mid}_{E,N}$, so that $M^\circ_{E,N} :=
  X^*(\mb{T}^\circ_{E,N})$ is the group of maps $f : \Gamma_{E/F} \to
  \frac{1}{N} \Z$ satisfying $\sum_{\sigma} f(\sigma) = 0$.
  The vanishing of $H^1(\Gamma,\mb{T}^\tx{iso}_E)$ implies that
  $H^1(\Gamma,\mb{T}^\tx{mid}_{E,N}) \to H^1(\Gamma,\mb{T}^\circ_{E,N})$ is
  injective, so it is enough to prove the statement with $\mb{T}^\tx{mid}$
  replaced by $\mb{T}^\circ$.
  Choose $M=[E:F]N$.
  Then the inflation map $M^\circ_{E,N} \to M^\circ_{E,M}$ can be factored as
  \[ M^\circ_{E,N} \to M^\circ_{E,N} \to M^\circ_{E,M} \]
  where the first map is multiplication by $[E:F]$ and the second map is
  division by $[E:F]$.
  Since the torus $\mb{T}^\circ_{E,N}$ splits over $E$, its cohomology groups
  are killed by multiplication by $[E:F]$ by Hilbert's Theorem 90.
\end{proof}

We define the gerbes $\mc{E}^\tx{iso}$, $\mc{E}^\tx{mid}$, and $\mc{E}^\tx{rig}$
to be the extensions of $\Gamma$ by $\mb{T}^\tx{iso}$, $\mb{T}^\tx{mid}$, and
$P^\tx{rig}$, respectively, given by the canonical classes $\xi^\tx{iso}$,
$\xi^\tx{mid}$ and $\xi^\tx{rig}$.
According to Corollary \ref{cor:loc_can_agree} there exist maps of gerbes
\begin{equation} \label{eq:maps_gerbes_loc}
  \xymatrix{
1\ar[r]&\mb{T}^\tx{iso}\ar[d]_{c_\tx{iso}}\ar[r]&\mc{E}^\tx{iso}\ar[r]\ar@{.>}[d]^{c_\tx{iso}}&\Gamma\ar[r]\ar@{=}[d]&1\\
1\ar[r]&\mb{T}^\tx{mid}\ar[r]&\mc{E}^\tx{mid}\ar[r]&\Gamma\ar[r]&1\\
1\ar[r]&P^\tx{rig}\ar[u]^{c_\tx{rig}}\ar[r]&\mc{E}^\tx{rig}\ar@{.>}[u]_{c_\tx{rig}}\ar[r]&\Gamma\ar[r]\ar@{=}[u]&1\\
}
\end{equation}
These dotted maps are not unique. In both cases, the set of
$\mb{T}^\tx{mid}$-conjugacy classes of such maps is a torsor under
$H^1(\Gamma,\mb{T}^\tx{mid})$, which by Lemma \ref{lem:h1lmvan} and
\cite[Theorem 3.5.8]{Weibel94} equals $R^1 \varprojlim
H^0(\Gamma,\mb{T}^\tx{mid}_{E,N})$. This group is uncountable by Lemma \ref{lem:r1limuncount}.
 Nonetheless, the discussion of \S\ref{sub:prelim} and Lemma \ref{lem:h1lmvan}
shows that both the set $H^1_\tx{alg}(\mc{E}^\tx{mid},G)$ and the maps
$H^1_\tx{alg}(\mc{E}^\tx{iso},G) \lw H^1_\tx{alg}(\mc{E}^\tx{mid},G) \to
H^1_\tx{alg}(\mc{E}^\tx{rig},G)$ are independent of the choice of
$\mc{E}^\tx{mid}$ within its isomorphism class and of the dotted maps
$c_\tx{iso}$ and $c_\tx{rig}$, and similarly for $H^1(\mb{T}^\tx{mid} \to
\mc{E}^\tx{mid}, Z \to G)$ etc.

Thanks to the canonical splitting $s_\tx{iso} : \mb{T}^\tx{mid} \to
\mb{T}^\tx{iso}$ of $c_\tx{iso}$, which tautologically maps $\xi^\tx{mid}$ to
$\xi^\tx{iso}$, there is also a map of gerbes $s_\tx{iso} : \mc{E}^\tx{mid} \to
\mc{E}^\tx{iso}$, well-defined up to $Z^1(\Gamma, \mb{T}^\tx{iso})$.
As above, this ambiguity disappears when considering $H^1_\tx{alg}$ groups.
If $F$ is non-Archimedean then the composition $s_\tx{iso} \circ c_\tx{rig} :
\mc{E}^\tx{rig} \to \mc{E}^\tx{iso}$ is the morphism (3.13) of
\cite{KalRIBG}.

\begin{lem} \label{lem:r1limuncount}
The groups $H^1(\Gamma,\mb{T}^\tx{mid}) = R^1 \varprojlim H^0(\Gamma,\mb{T}^\tx{mid}_{E,N})$ and $H^1(\Gamma,\mb{T}^\tx{iso}) = R^1\varprojlim H^0(\Gamma,\mb{T}^\tx{iso}_E)$ are uncountable.
\end{lem}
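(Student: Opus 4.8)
The plan is to deduce both statements from the single fact that $H^1(\Gamma,\mb{T}^\tx{iso})$ is uncountable, and to obtain that from the classical observation that a tower of multiplication maps on $\Z$ is maximally far from Mittag--Leffler. We take $F$ non-archimedean. First I would reduce the $\mb{T}^\tx{mid}$ assertion to the $\mb{T}^\tx{iso}$ one: the canonical splitting $s_\tx{iso}\colon\mb{T}^\tx{mid}\to\mb{T}^\tx{iso}$ of $c_\tx{iso}$ (constructed in \S\ref{sub:locmod}, recalled in \S\ref{sub:loc_gerbes}) satisfies $s_\tx{iso}\circ c_\tx{iso}=\tx{id}$, so applying $H^1(\Gamma,-)$ exhibits $H^1(\Gamma,\mb{T}^\tx{iso})$ as a direct summand of $H^1(\Gamma,\mb{T}^\tx{mid})$; equivalently, $\mb{T}^\tx{mid}\cong\mb{T}^\tx{iso}\times\mb{T}^\circ$ with $\mb{T}^\circ$ the cokernel of $c_\tx{iso}$ appearing in the proof of Lemma \ref{lem:h1lmvan}. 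Hence it is enough to show $H^1(\Gamma,\mb{T}^\tx{iso})$ is uncountable.

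Next I would unwind the inverse system. Since $H^1(\Gamma,\mb{T}^\tx{iso}_E)=H^1(\Gamma,\mb{G}_m)=0$ by Hilbert 90, \cite[Theorem 3.5.8]{Weibel94} gives $H^1(\Gamma,\mb{T}^\tx{iso})=R^1\varprojlim_E H^0(\Gamma,\mb{T}^\tx{iso}_E)=R^1\varprojlim_E F^\times$, where for $E\subseteq K$ the transition map $F^\times\to F^\times$ is $x\mapsto x^{[K:E]}$ (Cartier dual to multiplication by $[K:E]$ on character modules). Fix an exhaustive tower $F=E_0\subset E_1\subset\cdots$ of finite Galois extensions; it is cofinal among all such extensions, so $R^1\varprojlim$ may be computed along it, and $n_i:=[E_i:F]\to\infty$ because $[\bar F:F]=\infty$. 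Put $d_i=n_{i+1}/n_i$. The normalized valuation intertwines $x\mapsto x^{d_i}$ on $F^\times$ with multiplication by $d_i$ on $\Z$, and a choice of uniformizer $\pi$ gives a section $k\mapsto\pi^k$ compatible with the transition maps; thus $(\Z,\,\cdot\,d_i)$ is a direct summand of the tower $(F^\times,\,x\mapsto x^{d_i})$, and since $R^1\varprojlim$ is additive it suffices to prove $R^1\varprojlim(\Z,\,\cdot\,d_i)$ is uncountable.

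For this I would use the short exact sequence of $\N$-indexed towers $0\to(\Z,\,\cdot\,d_i)\to(\Z,\tx{id})\to(\Z/n_i\Z,\tx{proj})\to 0$, in which the first map is multiplication by $n_i$ in degree $i$. In the associated six-term $\varprojlim$--$R^1\varprojlim$ exact sequence one has $\varprojlim(\Z,\,\cdot\,d_i)=0$ (because $n_i\to\infty$) and $R^1\varprojlim(\Z,\tx{id})=0$ (the tower is constant, hence Mittag--Leffler), whence $R^1\varprojlim(\Z,\,\cdot\,d_i)\cong\big(\varprojlim_i\Z/n_i\Z\big)/\Z$. The profinite group $\varprojlim_i\Z/n_i\Z=\prod_p\varprojlim_i\Z/p^{v_p(n_i)}\Z$ is uncountable, since $n_i\to\infty$ forces either $v_p(n_i)\to\infty$ for some prime $p$ (producing a factor $\Z_p$, of cardinality $2^{\aleph_0}$) or infinitely many primes to occur; as $\Z$ is countable, $R^1\varprojlim(\Z,\,\cdot\,d_i)$ is uncountable. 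This gives the uncountability of $H^1(\Gamma,\mb{T}^\tx{iso})$, and therefore of $H^1(\Gamma,\mb{T}^\tx{mid})$.

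The only point requiring care is bookkeeping rather than mathematics: one must replace the directed indexing set --- the finite Galois extensions $E/F$, or the pairs $(E,N)$ in the $\mb{T}^\tx{mid}$ case --- by a cofinal chain indexed by $\N$, so that the Milnor $\varprojlim$--$R^1\varprojlim$ six-term sequence and the vanishing of $R^{\ge 2}\varprojlim$ apply; this is routine. All genuine content is the classical fact that multiplication-by-$d$ maps on $\Z$ have uncountable $R^1\varprojlim$.
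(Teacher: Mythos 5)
Your proof is correct, but it takes a different route from the paper's. The paper attacks $\mb{T}^\tx{mid}$ head-on: it applies the Kottwitz homomorphism to each torus $\mb{T}^\tx{mid}_{E,N}$ to pass to the tower of component groups $(M^{\tx{mid},\vee}_{E,N})_I^{\tx{Fr}}$, uses right-exactness of $R^1\varprojlim$ together with the criterion of Milne that a tower of countable groups has uncountable $R^1\varprojlim$ precisely when it fails Mittag--Leffler, and then verifies the failure by showing that the image of $M^{\tx{mid},\vee}_{K,M}$ under the norm map into $\Z$ shrinks to $\{0\}$ as $K$ and $M$ grow; the $\mb{T}^\tx{iso}$ case is dismissed as ``analogous but simpler.'' You instead first reduce $\mb{T}^\tx{mid}$ to $\mb{T}^\tx{iso}$ via the canonical splitting $s_\tx{iso}$ (a reduction the paper does not make, though the splitting is available and compatible with the inflation maps, so this is legitimate), and for $\mb{T}^\tx{iso}$ your use of the normalized valuation is exactly the Kottwitz homomorphism for $\mb{G}_m$, so the two arguments converge there; the remaining difference is that you compute $R^1\varprojlim(\Z,\cdot\,d_i)\cong(\varprojlim_i\Z/n_i\Z)/\Z$ explicitly from the six-term sequence rather than invoking the Mittag--Leffler criterion, which is self-contained and even identifies the group. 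Your restriction to non-Archimedean $F$ is the right reading of the statement (for Archimedean $F$ the towers are eventually constant and the groups vanish, and the paper's own proof also tacitly assumes $F$ non-Archimedean). The only cost of your route is that it proves uncountability of $H^1(\Gamma,\mb{T}^\tx{mid})$ only through the summand coming from $\mb{T}^\tx{iso}$, whereas the paper's computation lives entirely inside $\mb{T}^\tx{mid}$; both establish the lemma as stated.
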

\begin{proof}
We treat the case of $\mb{T}^\tx{mid}$, that of $\mb{T}^\tx{iso}$ being analogous but simpler. Consider the Kottwitz homomorphism \cite[\S7]{Kot97}
\[ 1 \to \mb{T}^\tx{mid}_{E,N}(F)_0 \to \mb{T}^\tx{mid}_{E,N}(F) \to (M_{E,N}^{\tx{mid},\vee})_I^\tx{Fr} \to 0. \]
Using that $R^1\varprojlim$ is right-exact and \cite[Proposition 1.1]{Mil08} our claim is equivalent to the uncountability of $R^1\varprojlim (M_{E,N}^{\tx{mid},\vee})_I^\tx{Fr}$. The latter is a system of countable groups, so we apply \cite[Proposition 1.4]{Mil08} and reduce to showing that this system fails the Mittag-Leffler condition.

We fix a finite Galois extension $E/F$ and let $K$ traverse a co-final sequence of finite Galois extensions of $F$ containing $E$. We also fix $N$ and let $M$ traverse a co-final sequence of multiples of $N$. If the image of the inflation map
\[ (M_{K,M}^{\tx{mid},\vee})_I^\tx{Fr} \to (M_{E,N}^{\tx{mid},\vee})_I^\tx{Fr} \]
stabilizes, so would the image of its composition with the norm map for the action of $\Gamma_{E/F}$. Recall from \S\ref{sub:discmmidloc} that the inflation map is induced by the map $\Z[\Gamma_{K/F}] \to \Z[\Gamma_{E/F}]$ defined to send $y^K$ to $y^E(\sigma)=\sum_{\tau \mapsto \sigma}y^K(\tau)$. Composing this with the norm map $\Z[\Gamma_{E/F}] \to \Z$ we obtain the norm map $\Z[\Gamma_{K/F}] \to \Z$, i.e. the map sending $y^K$ to $\sum_{\tau \in \Gamma_{K/F}} y^K(\tau)$. Thus we are studying whether the image of this map, restricted to $M^{\tx{mid},\vee}_{K,M}=M\Z[\Gamma_{K/F}]+\Z \subset \Z[\Gamma_{K/F}]$, stabilizes. But the norm map sends $M\Z[\Gamma_{K/F}]$ to $M\Z$ and $\Z$ onto $[K:F]\Z$. Thus, as $K$ and $M$ grow the image of $M^{\tx{mid},\vee}_{K,M}$ in $\Z$ shrinks to $\{0\}$.
\end{proof}

% -------------------------------------------------
% Subsection
% -------------------------------------------------

\subsection{Global canonical classes at finite levels}

Let $F$ be a number field, $E/F$ a finite Galois extension, $S$ a finite set of places of $F$, $\dot S_E$ a set of lifts of the places in $S$ to places of $E$. We assume that $(E,\dot S_E)$ satisfies \cite[Conditions 3.3.1]{KalGRI}.

Given a torus $T$ over $F$ split over $E$ with cocharacter module $Y$,
the Tate-Nakayama isomorphism reviewed in \S\ref{sub:tniso} is
\begin{equation} \label{eq:global_TN}
  \hat{H}^{i-2}(\Gamma_{E/F},Y[S_E]_0) \to \hat{H}^i(\Gamma_{E/F},T(O_{E,S})).
\end{equation}
We can apply this to $i=2$ and $Y=\tx{Hom}_\Z(\Z[S_E]_0,\Z)$.
Then the identity element in $Y[S_E]_0=\tx{End}_\Z(\Z[S_E]_0)$  maps to Tate's class $H^2(\Gamma_{E/F},
\mb{T}^\tx{iso}_{E,S}(O_{E,S}))$, which we shall denote by $\xi^\tx{iso}_{E,S}$ (it is denoted by $\alpha_3$ in \cite{Tate66} and
\cite[\S 6]{KotBG}).

We now consider a finite multiplicative group $Z$ with $A=X^*(Z)$ and $|A|$
invertible away from $S$ and have the injection (introduced as $\Theta_{E,S}$ in
\cite[\S 3.2]{KalGRI})
\[ \hat{H}^{-1}(\Gamma_{E/F},A^\vee[S_E]_0) \to H^2(\Gamma_S,Z(O_S)). \]
We have $A^\vee[S_E]_0=\tx{Hom}_\Z(A,\tx{Maps}(S_E,\frac{1}{N}\Z/\Z)_0)$ for any
$N$ multiple of $|A|$.
Assuming that $|A|$ divides $[E:F]$, we have
\[ \tx{Hom}_\Z(A, \tx{Maps}(S_E, \frac{1}{[E:F]}\Z/\Z)_0)^{N_{E/F}} =
\tx{Hom}_\Z(A,M_{E,\dot S_E}^\tx{rig})^\Gamma. \]

We can apply this to $A = M_{E,\dot S_E}^\tx{rig}$, in which case the image of
the identity is the canonical class $\xi^\tx{rig}_{E, \dot S_E, N} \in
H^2(\Gamma_S,P_{E,\dot S_E}^\tx{rig}(O_S))$, denoted by $\xi_{E \dot S_E, N}$ in
\cite[\S 3.3]{KalGRI}.

\begin{pro} \label{pro:glob_can_agree}
  The images of $\xi^\tx{iso}_{E,S}$ and $\xi^\tx{rig}_{E, \dot S_E, N}$ under
  \[ H^2(\Gamma_S,\mb{T}^\tx{iso}_{E,S}(O_S)) \rw
  H^2(\Gamma_S,\mb{T}^\tx{mid}_{E,\dot S_E}(O_S)) \lw
  H^2(\Gamma_S,P^\tx{rig}_{E,\dot S_E}(O_S))\]
  are equal.
\end{pro}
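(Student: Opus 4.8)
The plan is to adapt the proof of the local counterpart Proposition~\ref{pro:loc_can_agree}, replacing the explicit computations of \S\ref{sub:discmmidloc} by their global avatars and feeding the outcome into the commutative diagram~\eqref{eq:tnzdiagglob}. First I would check that the hypotheses of the global Tate--Nakayama review of \S\ref{sub:tniso} hold for all three diagonalizable groups $\mb{T}^\tx{iso}_{E,S}$, $\mb{T}^\tx{mid}_{E,\dot S_E}$ and $P^\tx{rig}_{E,\dot S_E}$; this is exactly what \cite[Conditions 3.3.1]{KalGRI} are designed to guarantee (in particular $S$ satisfies \cite[Conditions 3.1.1]{KalGRI} and the order of $P^\tx{rig}_{E,\dot S_E}$, a power of $[E:F]$, is an $S$-unit). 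Recall that $\xi^\tx{iso}_{E,S}$ is the image of $\tx{id}\in\tx{End}_\Z(\Z[S_E]_0)=X_*(\mb{T}^\tx{iso}_{E,S})\otimes\Z[S_E]_0$ under \eqref{eq:global_TN} with $i=2$, and that $\xi^\tx{rig}_{E,\dot S_E,N}=\Theta_{E,S}(\tx{id}_A)$ with $A=M^\tx{rig}_{E,\dot S_E}$, the element $\tx{id}_A$ representing a class in $\hat H^{-1}(\Gamma_{E/F},A^\vee[S_E]_0)$ because $A$ is killed by $[E:F]$.

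For the isocrystal side, let $X_*(c_\tx{iso}) : X_*(\mb{T}^\tx{iso}_{E,S})\to X_*(\mb{T}^\tx{mid}_{E,\dot S_E})=:M^{\tx{mid},\vee}_{E,\dot S_E}$ be the map induced by $c_\tx{iso}$, i.e.\ the map dual to $c^\tx{iso} : M^\tx{mid}_{E,\dot S_E}\to M^\tx{iso}_{E,S}$, and let $\xi^{\tx{mid},0}$ be the $\Gamma_{E/F}$-invariant element of $M^{\tx{mid},\vee}_{E,\dot S_E}\otimes\Z[S_E]_0$ that is the image of $\tx{id}$ under $X_*(c_\tx{iso})\otimes\tx{id}_{\Z[S_E]_0}$; it is the global analogue of ``the constant function $1$ in $M^{\tx{mid},\vee}_{E,N}$'' of the local proof. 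By functoriality of the Tate--Nakayama isomorphism with respect to the morphism of tori $c_\tx{iso} : \mb{T}^\tx{iso}_{E,S}\to\mb{T}^\tx{mid}_{E,\dot S_E}$, the image of $\xi^\tx{iso}_{E,S}$ under $c_\tx{iso}$ equals the Tate--Nakayama image $\xi^\tx{mid}_{E,\dot S_E}$ of $\xi^{\tx{mid},0}$. To use this concretely I would first establish the global analogue of \S\ref{sub:discmmidloc}: a description of $M^{\tx{mid},\vee}_{E,\dot S_E}$, after the change of variables $\phi(\sigma,w)=f(\sigma,\sigma w)$ used in \S\ref{sub:def_pro_glob} and in \cite[Lemma 3.4.1]{KalGRI}, as an explicit submodule of a finitely generated $\Gamma_{E/F}$-lattice built from $\Z[\Gamma_{E/F}]$ and $\Z[S_E]$, together with the short exact sequences globalizing \eqref{eq:locmides1}, \eqref{eq:locmides2} and \eqref{eq:locmides4}.

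For the rigid side, let $\bar T=\mb{T}^\tx{mid}_{E,\dot S_E}/c_\tx{rig}(P^\tx{rig}_{E,\dot S_E})$, a torus with $X^*(\bar T)=\ker(c^\tx{rig})$, which is an \emph{induced} $\Gamma_{E/F}$-module by Fact~\ref{fct:globcrigind}. Applying $\tx{Hom}_\Z(-,\Z)$ to the short exact sequence $0\to X^*(\bar T)\to M^\tx{mid}_{E,\dot S_E}\to A\to 0$ (the last map being $c^\tx{rig}$) and tensoring with the free module $\Z[S_E]_0$ gives a short exact sequence $0\to M^{\tx{mid},\vee}_{E,\dot S_E}\otimes\Z[S_E]_0\to X_*(\bar T)\otimes\Z[S_E]_0\to A^\vee\otimes\Z[S_E]_0\to 0$ whose middle term is again induced (by the standard isomorphism $\Z[\Gamma_{E/F}]\otimes_\Z M\cong\Z[\Gamma_{E/F}]\otimes_\Z M_\tx{triv}$), hence cohomologically trivial; so its connecting homomorphism $\partial$ is an isomorphism from $\hat H^{-1}(\Gamma_{E/F},A^\vee[S_E]_0)$ onto $\hat H^0(\Gamma_{E/F},M^{\tx{mid},\vee}_{E,\dot S_E}\otimes\Z[S_E]_0)$. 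Using the explicit models of the previous paragraph one checks, exactly as in the local proof, that $\partial(\tx{id}_A)=\pm\xi^{\tx{mid},0}$, the sign ultimately coming from $c^\tx{rig}(f)=-f\bmod\Z$. Now feed the group extension $1\to P^\tx{rig}_{E,\dot S_E}\to\mb{T}^\tx{mid}_{E,\dot S_E}\to\bar T\to 1$ into the diagram~\eqref{eq:tnzdiagglob}: its left column is precisely the long exact sequence of the cocharacter sequence above, so the square relating the third and fourth rows says that $\partial$ is intertwined, via $\Theta_{E,S}$ on the source and \eqref{eq:global_TN} on the target, with the homomorphism $H^2(\Gamma_S,P^\tx{rig}_{E,\dot S_E}(O_S))\to H^2(\Gamma_S,\mb{T}^\tx{mid}_{E,\dot S_E}(O_S))$ induced by $c_\tx{rig}$, up to the minus sign decorating the lower rows of \eqref{eq:tnzdiagglob}. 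This minus sign cancels the one in $\partial(\tx{id}_A)=\pm\xi^{\tx{mid},0}$, so that $c_\tx{rig}$ sends $\xi^\tx{rig}_{E,\dot S_E,N}$ to $\xi^\tx{mid}_{E,\dot S_E}$ as well, which proves the proposition. (This is the same sign bookkeeping as in Proposition~\ref{pro:loc_can_agree}.)

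The step I expect to require the most care is the construction of the explicit model of $M^{\tx{mid},\vee}_{E,\dot S_E}$ and the globalizations of \eqref{eq:locmides1}--\eqref{eq:locmides4}, and then the tracking of the auxiliary tensor factor $\Z[S_E]_0$ and of the signs through the computation $\partial(\tx{id}_A)=\pm\xi^{\tx{mid},0}$. Locally one had $M^\tx{iso}_E=\Z$ and the class in play was simply $1\in\hat H^0(\Gamma_{E/F},\Z)$, whereas here $M^\tx{iso}_{E,S}=\Z[S_E]_0$ is a nontrivial $\Gamma$-lattice, the role of ``$1$'' is played by $\tx{id}\in\tx{End}_\Z(\Z[S_E]_0)$, and one must handle the support condition $\sigma^{-1}w\in\dot S_E$, the two vanishing conditions $\sum_w f=0$ and $\sum_\sigma f\in\Z$, and Tate's fundamental class in $H^2(\Gamma_{E/F},\tx{Hom}_\Z(\Z[S_E]_0,O_{E,S}^\times))$ simultaneously. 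Once this bookkeeping is in place the remainder is a formal consequence of \eqref{eq:tnzdiagglob} and the functoriality of Tate--Nakayama, just as in the local case.
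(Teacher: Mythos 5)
Your proposal is correct and is essentially the proof the paper intends: the paper's own argument is just ``the global analogue of Proposition~\ref{pro:loc_can_agree}, based on \S\ref{sub:tniso}'', and you have correctly fleshed out that skeleton --- functoriality of Tate--Nakayama along $c_\tx{iso}$ for the isocrystal side, the diagram \eqref{eq:tnzdiagglob} applied to $1 \to P^\tx{rig}_{E,\dot S_E} \to \mb{T}^\tx{mid}_{E,\dot S_E} \to \bar T \to 1$ for the rigid side, with the connecting homomorphism computed via the induced middle term supplied by Fact~\ref{fct:globcrigind}, and the sign from $c^\tx{rig}=-(\tx{projection})$ cancelling against the $-\tx{TN}$ in the lower rows. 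The one computation you defer, $\partial(\tx{id}_A)=-\xi^{\tx{mid},0}$, is exactly the global avatar of the $\delta_e$/norm calculation in the local proof and goes through as you describe.
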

\begin{proof}
This is the global analogue of Proposition \ref{pro:loc_can_agree}, and the proof is analogous, again based on the discussion in \S\ref{sub:tniso}.
\end{proof}

% -------------------------------------------------
% Subsection
% -------------------------------------------------

\subsection{Global canonical classes at infinite level}

The canonical classes are compatible under the transition maps in all three
cases.
That is, the transition maps
\begin{enumerate}
	\item $H^2(\Gamma,\mb{T}^\tx{iso}_{K,S'}) \to H^2(\Gamma,\mb{T}^\tx{iso}_{E,S})$,
	\item $H^2(\Gamma,\mb{T}^\tx{mid}_{K,\dot S'_K}) \to H^2(\Gamma,\mb{T}^\tx{mid}_{E,\dot S_E})$, and
	\item $H^2(\Gamma,P^\tx{rig}_{K,\dot S'_K}) \to H^2(\Gamma,P^\tx{rig}_{E,\dot S_E})$
\end{enumerate}
identify the canonical classes.
In the first case the compatibility is \cite[(8.18)]{KotBG}, in the third case
it is \cite[Lemma 3.3.5]{KalGRI}.
The middle case follows by Proposition \ref{pro:glob_can_agree}.

We now want to define a canonical class in each of these three cases.
The first case is relatively straightforward.
We use the exact sequence
\[ 1 \to R^1 \varprojlim H^1(\Gamma,\mb{T}^\tx{iso}_{E,S}) \to
H^2(\Gamma,\mb{T}^\tx{iso}) \to \varprojlim H^2(\Gamma,\mb{T}^\tx{iso}_{E,S})
\to 1.\]
By \cite[Lemma 6.5]{KotBG} and Hilbert's theorem 90 we have
$H^1(\Gamma,\mb{T}^\tx{iso}_{E,S})=0$ and so we have a canonical class
$\xi^\tx{iso} \in H^2(\Gamma,\mb{T}^\tx{iso})$.

For $\mb{T}^\tx{mid}_{\dot V}$ we need the following global analogue of Lemma
\ref{lem:h1lmvan}.

\begin{pro} \label{pro:h1gmvan}
If $K/F$ is a finite Galois extension containing $E$ and s.t. $[E:F]$ divides $[K:E]$, then the map
\[ H^1(\Gamma,\mb{T}^\tx{mid}_{K,\dot S'_K}) \to H^1(\Gamma,\mb{T}^\tx{mid}_{E,\dot S_E}) \]
is zero. For every place $v$ of $F$, the map
\[ H^1(\Gamma_v,\mb{T}^\tx{mid}_{K,\dot S'_K}) \to H^1(\Gamma_v,\mb{T}^\tx{mid}_{E,\dot S_E}) \]
is also zero.
\end{pro}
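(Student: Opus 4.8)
The idea is to transpose the proof of Lemma \ref{lem:h1lmvan}, using the torus $\mb{T}^\tx{iso}_{E,S}$ in place of $\mb{T}^\tx{iso}_E=\mb{G}_m$. Let $\mb{T}^\circ_{E,\dot S_E}$ be the cokernel of the map $c_\tx{iso}:\mb{T}^\tx{iso}_{E,S}\to\mb{T}^\tx{mid}_{E,\dot S_E}$. This map is injective, since its dual $c^\tx{iso}$ is surjective by Corollary \ref{cor:globcisosurj}, whose hypothesis holds because the third point of \cite[Conditions 3.3.1]{KalGRI} forces every $\sigma\in\Gamma_{E/F}$ to fix some $\dot v|_E\in\dot S_E$. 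Hence $M^\circ_{E,\dot S_E}:=X^*(\mb{T}^\circ_{E,\dot S_E})=\tx{Ker}(c^\tx{iso})$ is the group of finitely supported $f:\Gamma_{E/F}\times S_E\to[E:F]^{-1}\Z$ with $\sum_w f(\sigma,w)=0$, $\sum_\sigma f(\sigma,w)=0$, and $f(\sigma,w)=0$ whenever $\sigma^{-1}w\notin\dot S_E$. Since $\Gamma$ acts on this module through $\Gamma_{E/F}$, the torus $\mb{T}^\circ_{E,\dot S_E}$ is split over $E$, so $H^1(\Gamma,\mb{T}^\circ_{E,\dot S_E})$ and, for each place $v$, $H^1(\Gamma_v,\mb{T}^\circ_{E,\dot S_E})$, are killed by $[E:F]$.

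First I would reduce both the global and the local assertion to the vanishing of the transition maps on $H^1$ of $\mb{T}^\circ$. Globally $H^1(\Gamma,\mb{T}^\tx{iso}_{E,S})=0$ by \cite[Lemma 6.5]{KotBG}. Locally $H^1(\Gamma_v,\mb{T}^\tx{iso}_{E,S})=0$ as well: \cite[Conditions 3.3.1]{KalGRI} provide a place $w_0\in S_E$ fixed by $\Gamma_v$ (if $v$ ramifies in $E$ or is Archimedean then $v\in S$ and $w_0=\dot v|_E$ works; otherwise the image of $\Gamma_v$ in $\Gamma_{E/F}$ is procyclic, its Frobenius lies in some $\Gamma_{E_{\dot v'}/F_{v'}}$ with $v'\in S$, hence so does the whole image, and $w_0=\dot v'|_E$ works), so $1\mapsto\delta_{w_0}$ splits $\Z[S_E]\to\Z$ $\Gamma_v$-equivariantly and exhibits $\mb{T}^\tx{iso}_{E,S}$ as a direct factor of the induced torus with character module $\Z[S_E]$. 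Therefore the exact sequence $H^1(\mb{T}^\tx{iso}_{E,S})\to H^1(\mb{T}^\tx{mid}_{E,\dot S_E})\to H^1(\mb{T}^\circ_{E,\dot S_E})$ shows that $H^1(\mb{T}^\tx{mid}_{E,\dot S_E})\to H^1(\mb{T}^\circ_{E,\dot S_E})$ is injective over $\Gamma$ and over every $\Gamma_v$; combined with the commutative square relating the transition maps to the quotient maps $\mb{T}^\tx{mid}\to\mb{T}^\circ$, it suffices to prove that $H^1(\mb{T}^\circ_{K,\dot S'_K})\to H^1(\mb{T}^\circ_{E,\dot S_E})$ is zero.

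This is where the hypothesis $[E:F]\mid[K:E]$ enters. The inflation map $c_\circ:M^\circ_{E,\dot S_E}\to M^\circ_{K,\dot S'_K}$ is the restriction of the inflation map on $M^\tx{mid}$, which sends $f$ to $(\sigma,u)\mapsto f(\sigma,u|_E)$ if $\sigma^{-1}u\in\dot S'_K$ and to $0$ otherwise. I claim $c_\circ=[E:F]\cdot\beta$ with $\beta:=[E:F]^{-1}c_\circ$: indeed $\beta(f)$ takes values in $[E:F]^{-2}\Z\subseteq[K:F]^{-1}\Z$ because $[E:F]^2\mid[K:E][E:F]=[K:F]$; the conditions $\sum_u=0$, $\sum_\sigma=0$ and the support condition are preserved by scaling; and $\beta$ is $\Gamma$-equivariant, being a rational multiple of the $\Gamma$-equivariant map $c_\circ$. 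Dually, the transition map $\mb{T}^\circ_{K,\dot S'_K}\to\mb{T}^\circ_{E,\dot S_E}$ factors through the $[E:F]$-power map of $\mb{T}^\circ_{E,\dot S_E}$, hence induces multiplication by $[E:F]$ on $H^1(\Gamma,\mb{T}^\circ_{E,\dot S_E})$ and on each $H^1(\Gamma_v,\mb{T}^\circ_{E,\dot S_E})$. As these groups are annihilated by $[E:F]$, the transition map vanishes, which finishes the argument.

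The identification of $M^\circ_{E,\dot S_E}$ with $\tx{Ker}(c^\tx{iso})$ and the verification that $\beta$ is well defined are routine; the one delicate point is the local vanishing $H^1(\Gamma_v,\mb{T}^\tx{iso}_{E,S})=0$ for $v\notin S$, which is precisely what forces us to invoke \cite[Conditions 3.3.1]{KalGRI} rather than argue purely formally as in the local Lemma \ref{lem:h1lmvan}.
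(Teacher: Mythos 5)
Your proof is correct and follows essentially the same route as the paper's: pass to the cokernel $\mb{T}^\circ_{E,\dot S_E}$ of $c_\tx{iso}$ using the vanishing of $H^1$ of $\mb{T}^\tx{iso}_{E,S}$, then factor the transition map on $M^\circ$ through multiplication by a multiple of $[E:F]$ and use that $\mb{T}^\circ_{E,\dot S_E}$ splits over $E$. The only differences are cosmetic: the paper factors the inflation as multiplication by $[K:E]$ followed by division by $[K:E]$ (whereas you pull out $[E:F]$ directly), and your careful verification of the local vanishing $H^1(\Gamma_v,\mb{T}^\tx{iso}_{E,S})=0$ via the third point of \cite[Conditions 3.3.1]{KalGRI} is a detail the paper leaves implicit.
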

\begin{proof}
The proofs for $\Gamma$ and $\Gamma_v$ are the same, so we only treat the first case. Let $\mb{T}^\circ_{E,\dot S_E}$ denote the cokernel of the injective morphism
$\mb{T}^\tx{iso}_{E,S} \to \mb{T}^\tx{mid}_{E,\dot S_E}$ dual to $c^\tx{iso}$. The vanishing of $H^1(\Gamma,\mb{T}^\tx{iso}_{E,S})$ implies that $H^1(\Gamma,\mb{T}^\tx{mid}_{E,\dot S_E}) \to H^1(\Gamma,\mb{T}^\circ_{E,\dot S_E})$ is injective. It thus suffices to prove the statement with $\mb{T}^\tx{mid}$ replaced by $\mb{T}^\circ$.

The character module $M^\circ_{E,\dot S_E}$ of $\mb{T}^\circ_{E,\dot S_E}$ is
equal to the kernel of $M^\tx{mid}_{E,\dot S_E} \to M^\tx{iso}_{E,S}$ and hence
is the $\Z[\Gamma_{E/F}]$-module consisting of functions $f : \Gamma_{E/F}
\times S_E \to \frac{1}{[E:F]}\Z$ that satisfy the conditions $\sum_w
f(\sigma,w)=0$ ,$\sum_\sigma f(\sigma,w)=0$, and $\sigma^{-1}w \notin \dot S_E
\Rightarrow f(\sigma,w)=0\}$.

The restriction of the inflation map $f^{\tx{mid},E} \mapsto f^{\tx{mid},K}$ to $M^\circ_{E,\dot S_E}$ factors as the composition
\[ M^\circ_{E,\dot S_E} \to M^\circ_{E,\dot S_E} \to M^\circ_{K,\dot S'_K}, \]
where the first map is just multiplication by $[K:E]$, while the second map is given by $f^{\tx{mid},E} \mapsto [K:E]^{-1}\cdot f^{\tx{mid},K}$. Note that, since $f^{\tx{mid},K}$ takes values in $[E:F]^{-1}\Z$, the function $[K:E]^{-1}\cdot f^{\tx{mid},K}$ takes values in $[K:F]^{-1}\Z$ and is thus a well-defined element of $M^\circ_{K,\dot S'_K}$. On cohomology we obtain the composition
\[ H^1(\Gamma,\mb{T}^\circ_{K,\dot S'_K}) \to H^1(\Gamma,\mb{T}^\circ_{E,\dot S_E}) \to H^1(\Gamma,\mb{T}^\circ_{E,\dot S_E}). \]
The second map is just multiplication by $[K:E]$. Since the torus $\mb{T}^\circ_{E,\dot S_E}$ splits over $E$ the inflation map $H^1(\Gamma_{E/F},\mb{T}^\circ_{E,\dot S_E}) \to H^1(\Gamma,\mb{T}^\circ_{E,\dot S_E})$ is an isomorphism, but its source is killed by multiplication by $[E:F]$.
\end{proof}

\begin{cor} \label{cor:h1gmvan}
The abelian groups $\varprojlim H^1(\Gamma,\mb{T}^\tx{mid}_{E_i,\dot S_i})$, $\varprojlim H^1(\Gamma_v,\mb{T}^\tx{mid}_{E_i,\dot S_i})$, and $R^1 \varprojlim H^1(\Gamma,\mb{T}^\tx{mid}_{E_i,\dot S_i})$ vanish.
\end{cor}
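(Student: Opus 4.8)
The plan is to reduce the statement to Proposition \ref{pro:h1gmvan} by invoking the elementary criterion recorded at the end of \S\ref{sub:prelim}: if $(A_i)$ is an inverse system of abelian groups such that for every $i$ there exists $j > i$ with the transition map $A_j \to A_i$ equal to zero, then $\varprojlim A_i = 0$ and $R^1 \varprojlim A_i = 0$. I would apply this to the systems $A_i = H^1(\Gamma,\mb{T}^\tx{mid}_{E_i,\dot S_i})$ and $A_i = H^1(\Gamma_v,\mb{T}^\tx{mid}_{E_i,\dot S_i})$, so that it suffices to verify the hypothesis of the criterion for each of them.

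The transition map $A_j \to A_i$ in each of these systems is the composite of the successive inflation maps of \S\ref{sub:globinf}; since inflation maps compose, this composite is the inflation map attached to the pair $(E_i/F, S_i, \dot S_i)$ and $(E_j/F, S_j, \dot S_j)$, i.e.\ exactly the map considered in Proposition \ref{pro:h1gmvan} with $E = E_i$ and $K = E_j$. Hence that transition map is zero as soon as $[E_i:F]$ divides $[E_j:E_i]$, both in the global case over $\Gamma$ and, by the second assertion of the proposition, in the local case over $\Gamma_v$. It remains only to check that for each $i$ such an index $j$ exists inside the fixed exhaustive tower $(E_i)$: choose a finite Galois extension $L/F$ with $E_i \subseteq L$ and $[E_i:F] \mid [L:E_i]$ (for instance an $L$ linearly disjoint from $E_i$ over $F$ of the appropriate degree, or a suitable cyclotomic extension of $E_i$); since the tower is exhaustive there is $j > i$ with $L \subseteq E_j$, and then $[E_i:F] \mid [L:E_i] \mid [E_j:E_i]$.

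Combining the two previous paragraphs, the criterion of \S\ref{sub:prelim} yields simultaneously the vanishing of $\varprojlim H^1(\Gamma,\mb{T}^\tx{mid}_{E_i,\dot S_i})$, of $R^1\varprojlim H^1(\Gamma,\mb{T}^\tx{mid}_{E_i,\dot S_i})$, and of $\varprojlim H^1(\Gamma_v,\mb{T}^\tx{mid}_{E_i,\dot S_i})$. I do not expect any genuine obstacle in carrying this out; the only two points deserving a line of care are the composability of the inflation maps, so that Proposition \ref{pro:h1gmvan} applies directly to the (composite) transition maps of the tower, and the trivial arithmetic showing that the divisibility hypothesis $[E_i:F]\mid[E_j:E_i]$ of that proposition can always be arranged within the given exhaustive tower.
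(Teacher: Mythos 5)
Your argument is correct and is exactly the intended deduction: the paper's proof simply says the corollary "follows immediately from Proposition \ref{pro:h1gmvan}", and your write-up supplies the (routine) details — that the transition maps of the tower are the inflation maps of Proposition \ref{pro:h1gmvan}, that the divisibility hypothesis $[E_i:F]\mid[E_j:E_i]$ can always be arranged within an exhaustive tower, and that the eventual-vanishing criterion of \S\ref{sub:prelim} then kills both $\varprojlim$ and $R^1\varprojlim$.
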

\begin{proof}
This follows immediately Proposition \ref{pro:h1gmvan}.
\end{proof}

The vanishing of $R^1 \varprojlim H^1(\Gamma, \mb{T}^\tx{mid}_{E, \dot S_i})$ asserted in Corollary \ref{cor:h1gmvan} implies that the natural map $H^2(\Gamma,\mb{T}^\tx{mid}_{\dot V}) \to \varprojlim H^2(\Gamma,\mb{T}^\tx{mid}_{E_i,\dot S_i})$ is an isomorphism, so we
obtain a unique class $\xi^\tx{mid}_{\dot V} \in H^2(\Gamma,\mb{T}^\tx{mid}_{\dot
V})$ mapping to $(\xi^\tx{mid}_{E_i, \dot{S}_i})_{i \geq 0}$.

The case of $P^\tx{rig}$ is the most delicate, since $R^1\varprojlim H^1(\Gamma,P^\tx{rig}_{E_i,\dot S_i})$ is known not to vanish by \cite[\S6.3]{TaibGRI}. Nonetheless, in \cite[\S 3.5]{KalGRI} a canonical class $\xi^\tx{rig}_{\dot V} \in
H^2(\Gamma,P_{\dot V})$ is constructed that maps to the inverse system
$(\xi^\tx{rig}_{E_i, \dot S_i})_{i \geq 0}$. Its construction is briefly reviewed in the proof of the following lemma.

As in Section \ref{sub:def_pro_glob} we have not recorded the tower $(E_i, S_i,
\dot{S}_i)_i$ in the notation $\xi^\tx{rig}_{\dot V}$ and $\xi^\tx{mid}_{\dot
V}$.
Again the reason is that this choice does not matter, as the following lemma shows.
Note that to be precise one should also choose a co-final sequence $(N_i)_{i
\geq 0}$ as introduced in the proof of \cite[Corollary 3.3.8]{KalGRI}, but it is
clear that increasing $S_i$ or replacing $N_i$ by a multiple yields the same
objects in the inverse limit.

\begin{lem} \label{lem:towerind}
  If two sequences $(E_i,S_i,\dot S_i)$ lead to the same $\dot V$, then they
  lead to the same class $\xi^\tx{rig}_{\dot V} \in H^2(\Gamma, P^\tx{rig}_{\dot
  V})$.
\end{lem}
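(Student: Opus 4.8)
The plan is to reduce the independence statement to the construction of $\xi^\tx{rig}_{\dot V}$ recalled from \cite[\S3.5]{KalGRI} and to exploit the fact, recorded above, that the transition maps identify the finite-level canonical classes $\xi^\tx{rig}_{E_i,\dot S_i}$ regardless of the tower. Concretely, suppose $(E_i,S_i,\dot S_i)_i$ and $(E_i',S_i',\dot S_i')_i$ are two sequences of triples, each satisfying \cite[Conditions 3.3.1]{KalGRI}, whose inverse limits of the sets $\dot S_i$ and $\dot S_i'$ both equal the same $\dot V$. First I would pass to a common refinement: since both towers are exhaustive, one can interleave them to produce a single tower $(E_i'',S_i'',\dot S_i'')_i$, cofinal in both, with $\dot S_i''$ the intersection of $(S_i'')_{E_i''}$ with the image of $\dot V$. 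Because $P^\tx{rig}_{\dot V}$ and the class $\xi^\tx{rig}_{\dot V}$ are defined as data on the pro-object $\varprojlim P^\tx{rig}_{E_i,\dot S_i}$, and this pro-object depends only on the cofinal system, it suffices to compare each of the two original towers with the common refinement; hence we may assume that one tower refines the other.

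Next I would recall the actual construction of $\xi^\tx{rig}_{\dot V}$: because $R^1\varprojlim H^1(\Gamma,P^\tx{rig}_{E_i,\dot S_i})$ does not vanish, one cannot simply invoke \cite[Theorem 3.5.8]{Weibel94}; instead \cite[\S3.5]{KalGRI} produces an explicit lift of the compatible system $(\xi^\tx{rig}_{E_i,\dot S_i})_i$ by choosing compatible $2$-cochains at each level and controlling the $R^1\varprojlim$ ambiguity. The key point is that the resulting class in $H^2(\Gamma,P^\tx{rig}_{\dot V})$ is characterized — not just constructed — by a property that does not reference the tower. I would isolate that characterization from loc.\ cit.: the class $\xi^\tx{rig}_{\dot V}$ is the unique one compatible with the localization maps $\tx{loc}_v : P^\tx{rig}_v \to (P^\tx{rig}_{\dot V})_{F_v}$ in the sense that its localization at each place $v$ is the local canonical class $\xi^\tx{rig}_v$ of \S\ref{sub:loc_gerbes}, together with a normalization forcing uniqueness (in \cite{KalGRI} this is the content of the rigidifying cocycle condition at the distinguished place). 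Since the local objects $P^\tx{rig}_v$ and classes $\xi^\tx{rig}_v$ depend only on $\dot v$, hence only on $\dot V$, and the localization maps $\tx{loc}_v$ at infinite level depend only on $\dot V$ (as noted at the end of Section \ref{sub:def_pro_glob}), both towers yield a class satisfying the same characterizing properties, so the two classes coincide.

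The main obstacle I anticipate is making the previous paragraph honest: the construction in \cite[\S3.5]{KalGRI} does not obviously come packaged with a tower-free uniqueness statement, so I would need to either (a) extract from the proof of \cite[Corollary 3.3.8]{KalGRI} and the surrounding discussion an intrinsic characterization of $\xi^\tx{rig}_{\dot V}$ — most plausibly that it is the unique class whose image in $H^2(\Gamma,P^\tx{rig}_{E_i,\dot S_i})$ is $\xi^\tx{rig}_{E_i,\dot S_i}$ for all $i$ \emph{and} whose localizations are the prescribed local classes — or (b) track the choices in the construction through a refinement of towers and show the resulting cochains differ by an explicit coboundary. Approach (a) is cleaner: given two lifts of the same compatible system, their difference lies in $R^1\varprojlim H^1(\Gamma,P^\tx{rig}_{E_i,\dot S_i})$, and I would argue this difference is killed by imposing the localization normalization, using that the analogous $R^1\varprojlim$ groups locally \emph{do} vanish — indeed $H^1(\Gamma_v,P^\tx{rig}_v)$ is controlled and by Corollary \ref{cor:h1gmvan} the relevant middle-level groups vanish, so the rigidification at the distinguished place pins down the global lift. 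Carrying this out amounts to re-reading \cite[\S3.5]{KalGRI} with the tower treated as auxiliary data; once the intrinsic characterization is in hand the lemma is immediate.
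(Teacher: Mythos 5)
Your approach (a) is exactly what the paper does: $\xi^\tx{rig}_{\dot V}$ is pinned down by the inverse system $(\xi^\tx{rig}_{E_i,\dot S_i})$ together with its image in $H^2(\Gamma,P^\tx{rig}_{\dot V}(\bar\A))$, and that adelic class is assembled from the local canonical classes $\xi^\tx{rig}_v$ via Shapiro maps depending only on $\dot V$, hence is tower-independent. One small correction: the uniqueness is not extracted from Corollary \ref{cor:h1gmvan} (which concerns $\mb{T}^\tx{mid}$, not $P^\tx{rig}$) or a rigidification at a single distinguished place, but from the full adelic class of \cite[\S 3.5]{KalGRI}; with that characterization in hand your common-refinement reduction is unnecessary.
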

\begin{proof}
  In order to obtain the statement about the class $\xi^\tx{rig}_{\dot V}$ we
  need to review its construction given in \cite[\S3.5]{KalGRI}.
  First, an element $x \in H^2(\Gamma,P_{\dot V}(\bar\A))$ is constructed
  from the local canonical classes $\xi^\tx{rig}_v \in
  H^2(\Gamma_v,P^\tx{rig}_v(\bar F_v))$, with the help of Shapiro maps $\dot
  S_v^2 : C^2(\Gamma_v,P(\bar F_v)) \to C^2(\Gamma,P(\bar\A_v))$ (for a suitable
  choice of continuous section $\Gamma_v \backslash \Gamma \rightarrow
  \Gamma$ as in \cite[Appendix B]{KalGRI}).
  These Shapiro maps can be obtained by splicing finite-level Shapiro
  maps $\dot S_v^2 : C^2(\Gamma_v,P_{E_i,\dot S_i}(\bar F_v)) \to
  C^2(\Gamma,P_{E_i,\dot S_i}(\bar\A_v))$.
  However, taking another continuous section yields the same map $H^2(\Gamma_v,
  P(\bar F_v)) \to H^2(\Gamma, P(\bar\A_v))$ (see Lemma B.4 loc.\ cit.).
  From this and the fact that for a given pair $(E,S)$ the projection to $C^2(\Gamma, P^\tx{rig}_v(\ol{\A}))$ of the
  $2$-cocycle $\dot x \in C^2(\Gamma, P^\tx{rig}_{\dot V}(\ol{\A}))$ introduced
  in \cite[\S 3.5]{KalGRI} is trivial
  for any $v \not\in S$ we see that the class $x$ is independent of the chosen
  tower.
  Since $x$ and the inverse system $(\xi^\tx{rig}_{E_i,\dot S_i})$ uniquely
  determine $\xi^\tx{rig}_{\dot V}$, the class $\xi^\tx{rig}_{\dot V}$ is itself
  independent of the chosen tower.
\end{proof}

The argument for $\xi^\tx{mid}_{\dot V}$ is analogous. However, both classes do depend on $\dot V = \varprojlim \dot S_i$.

\begin{rem}
  Let us briefly discuss the choice of $\dot V$.
  It is formal to check that the formation of $\mb{T}^\tx{rig}_{\dot V}$,
  $\mb{T}^\tx{mid}_{\dot V}$ and $\xi^\tx{rig}_{\dot V}$ is functorial in $(F,
  \ol{F}, \dot V)$, i.e.\ any isomorphism between two such triples induces an
  isomorphism between the corresponding objects.
  In particular for $\tau \in \Gamma$, denoting $\dot V' = \tau(\dot V)$ we have
  canonical isomorphisms $\mb{T}^\tx{rig}_{\dot V} \simeq \mb{T}^\tx{rig}_{\dot
  V'}$ etc.

  Consider for a moment the local case, where $F$ is a $p$-adic field. As in the global case the formation of the gerbe $\mc{E}^\tx{rig}$ is
  functorial in $(F, \ol{F})$.
  In particular any $\sigma \in \Gamma$ induces an automorphism of
  $\mc{E}^\tx{rig}$, which stabilizes $P^\tx{rig}$ and acts on the quotient
  $\mc{E}^\tx{rig} / P^\tx{rig} = \Gamma$ by conjugation by $\sigma$, but the
  action on $P^\tx{rig}$ is \emph{not} the obvious one (action of an element of
  $\Gamma$ on $\ol{F}$-points of a scheme).
  This is essentially due to the fact that finite extensions of $F$ in $\ol{F}$
  occur in the definition of $P^\tx{rig}$.
  Thus automorphisms of $\ol{F}$ induce a priori non-trivial automorphisms of
  cohomology groups for $\mc{E}^\tx{rig}$, unlike usual Galois cohomology
  (\cite[\S VII.5 Prop.\ 3]{SerLF}).
  This is reflected by the fact that for a connected reductive group $G$ over
  $F$ and $Z$ a finite central subgroup, the natural action of $\Gamma$ on
  the finite abelian group $\ol{Y}_{+, \tx{tor}}(Z \to G)$ defined in \cite[\S
  4]{KalRI} (see Proposition 5.3 loc.\ cit.) which is the source of the
  Tate-Nakayama isomorphism for $\mc{E}^\tx{rig}$, is not trivial in general.

  The existence of non-trivial automorphisms in the local case has a global
  consequence.
  Consider a global field $F$ and two arbitrary sets of lifts $\dot V$ and $\dot
  V'$ satisfying Condition \ref{cnd:dense}.
  In general there does not seem to be any natural isomorphism between the
  corresponding gerbes $\mc{E}^\tx{rig}_{\dot V}$ and $\mc{E}^\tx{rig}_{\dot
  V'}$.
  Here ``natural'' means at least compatible with localization.
  For example assume that there is a finite place $v_0 \in V$ such that for any
  $v \in V \smallsetminus \{v_0\}$ the two lifts of $v$ in $\dot V$ and $\dot
  V'$ coincide but the two lifts $\dot v_0$ and $\dot v_0'$ do not coincide, say
  with $\dot v_0' = \dot v_0 \circ \tau$ for some $\tau \in \Gamma$.
  Let $G$ be $\tx{Res}_{E/F} \tx{SL}_2$ for some quadratic extension
  $E/F$.
  Let $Z \simeq \tx{Res}_{E/F} \mu_2$ be the center of $G$.
  We have an identification of $H^1(P^\tx{rig}_{\dot V} \to
  \mc{E}^\tx{rig}_{\dot V}, Z \to G)$ with the subset of $\bigoplus_{v \in V}
  H^1(P^\tx{rig}_v \to \mc{E}^\tx{rig}_v, Z \to G)$ consisting of classes
  $(c_v)_v$ such that $c_v$ is trivial for almost all $v$ and the corresponding
  characters $\chi_v : Z(\hat{G}_\tx{sc}) \to \C^\times$ are such that $\prod_v
  \chi_v = 1$.
  Assume that $v_0$ is not split in $E$.
  The natural isomorphism between $H^1(P^\tx{rig}_{\dot v_0} \to
  \mc{E}^\tx{rig}_{\dot v_0}, Z \to G)$ and $H^1(P^\tx{rig}_{\dot v_0'} \to
  \mc{E}^\tx{rig}_{\dot v_0'}, Z \to G)$ maps $\chi_{v_0}$ to $\chi_{v_0} \circ
  \tau$.
  But in general this ruins the product condition $\prod_v \chi_v = 1$: there is
  an isomorphism $Z(\hat{G}_\tx{sc}) \simeq \Z / 2 \times \Z / 2$ such that
  $\Gamma_{E/F}$ exchanges the two factors, so if $\chi_{v_0}(1,1) \neq 1$ then
  the product condition fails.

  We thus see that the dependence of the global gerbe on $\dot V$ is necessitated by the properties of the local gerbe.
\end{rem}

\begin{cor}
  The natural maps $\mb{T}^\tx{iso} \to \mb{T}^\tx{mid}_{\dot V} \lw P_{\dot V}$
  map the canonical classes $\xi^\tx{iso}$ and $\xi^\tx{rig}_{\dot V}$ to
  $\xi^\tx{mid}_{\dot V}$.
\end{cor}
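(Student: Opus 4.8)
The plan is to reduce everything to the finite-level statement of Proposition \ref{pro:glob_can_agree}, exactly as the local Corollary \ref{cor:loc_can_agree} was deduced from Proposition \ref{pro:loc_can_agree}. The only genuinely new feature is that one now works at infinite level, so one must keep track of the $\varprojlim$/$R^1\varprojlim$ terms, and in particular of the fact that $R^1\varprojlim H^1$ fails to vanish for $P^\tx{rig}$. The key observation that makes this harmless is that in both equalities to be proved the relevant class lands in $H^2(\Gamma,\mb{T}^\tx{mid}_{\dot V})$, whose formation \emph{is} compatible with inverse limits.

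For the map $c_\tx{iso}\colon\mb{T}^\tx{iso}\to\mb{T}^\tx{mid}_{\dot V}$: by Corollary \ref{cor:h1gmvan} (the vanishing of $R^1\varprojlim H^1(\Gamma,\mb{T}^\tx{mid}_{E_i,\dot S_i})$) the natural map $H^2(\Gamma,\mb{T}^\tx{mid}_{\dot V})\to\varprojlim_i H^2(\Gamma,\mb{T}^\tx{mid}_{E_i,\dot S_i})$ is an isomorphism, so it suffices to show $c_\tx{iso}(\xi^\tx{iso})$ and $\xi^\tx{mid}_{\dot V}$ have the same image in each $H^2(\Gamma,\mb{T}^\tx{mid}_{E_i,\dot S_i})$. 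By its construction $\xi^\tx{mid}_{\dot V}$ maps to $\xi^\tx{mid}_{E_i,\dot S_i}$. Dualizing the commutative diagram of \S\ref{sub:globinf} shows that $c_\tx{iso}$ on $H^2$ commutes with the transition maps, and $\xi^\tx{iso}$ projects to Tate's class $\xi^\tx{iso}_{E_i,S_i}$ by its defining property (possible since $H^1(\Gamma,\mb{T}^\tx{iso}_{E,S})=0$); hence $c_\tx{iso}(\xi^\tx{iso})$ maps to $c^\tx{iso}_{E_i,S_i}(\xi^\tx{iso}_{E_i,S_i})$, which by Proposition \ref{pro:glob_can_agree} equals $\xi^\tx{mid}_{E_i,\dot S_i}$. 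Therefore $c_\tx{iso}(\xi^\tx{iso})=\xi^\tx{mid}_{\dot V}$.

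For $c_\tx{rig}\colon P^\tx{rig}_{\dot V}\to\mb{T}^\tx{mid}_{\dot V}$ I would run the same argument, with one extra remark. Although $H^2(\Gamma,P^\tx{rig}_{\dot V})$ does \emph{not} inject into $\varprojlim_i H^2(\Gamma,P^\tx{rig}_{E_i,\dot S_i})$ — since $R^1\varprojlim H^1(\Gamma,P^\tx{rig}_{E_i,\dot S_i})\neq 0$ by \cite[\S6.3]{TaibGRI} — the \emph{target} $H^2(\Gamma,\mb{T}^\tx{mid}_{\dot V})$ of $c_\tx{rig}$ still injects into the inverse limit by Corollary \ref{cor:h1gmvan}, so it again suffices to compare images at finite level. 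There one uses: that $\xi^\tx{rig}_{\dot V}$ projects to $\xi^\tx{rig}_{E_i,\dot S_i}$ (a property of the construction in \cite[\S3.5]{KalGRI} recalled before Lemma \ref{lem:towerind}); that $c_\tx{rig}$ on $H^2$ is compatible with the transition maps (dualizing \S\ref{sub:globinf}); and Proposition \ref{pro:glob_can_agree}, giving $c^\tx{rig}_{E_i,\dot S_i}(\xi^\tx{rig}_{E_i,\dot S_i})=\xi^\tx{mid}_{E_i,\dot S_i}$. Hence $c_\tx{rig}(\xi^\tx{rig}_{\dot V})$ and $\xi^\tx{mid}_{\dot V}$ have the same image in $\varprojlim_i H^2(\Gamma,\mb{T}^\tx{mid}_{E_i,\dot S_i})$, hence are equal.

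I do not expect a serious obstacle here: this corollary is essentially the infinite-level repackaging of Proposition \ref{pro:glob_can_agree}. The one point requiring care is the bookkeeping of the various groups in which the finite-level canonical classes live — $H^2(\Gamma_{E/F},-(O_{E,S}))$, $H^2(\Gamma_S,-(O_S))$, and $H^2(\Gamma,-)$ — and the verification that the inflation maps connecting them commute with $c^\tx{iso}$ and $c^\tx{rig}$; this is routine functoriality but should be made explicit so that the invocation of Proposition \ref{pro:glob_can_agree} is legitimate. In contrast with the delicate definition of $\xi^\tx{rig}_{\dot V}$ itself, which genuinely involves the adelic class $x$, no input of that kind is needed for the present equality.
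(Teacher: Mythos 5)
Your proof is correct and is essentially the paper's own argument: the paper simply says the statement "can be checked on finite levels, where it is the content of Proposition \ref{pro:glob_can_agree}," relying implicitly on the isomorphism $H^2(\Gamma,\mb{T}^\tx{mid}_{\dot V})\xrightarrow{\sim}\varprojlim_i H^2(\Gamma,\mb{T}^\tx{mid}_{E_i,\dot S_i})$ established just beforehand via Corollary \ref{cor:h1gmvan}. Your additional remarks — that only injectivity into the inverse limit for the \emph{target} $\mb{T}^\tx{mid}_{\dot V}$ is needed, so the non-vanishing of $R^1\varprojlim H^1(\Gamma,P^\tx{rig}_{E_i,\dot S_i})$ is irrelevant here — are accurate elaborations of what the paper leaves implicit.
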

\begin{proof}
This can be checked on finite levels, where it is the content of Proposition \ref{pro:glob_can_agree}.
\end{proof}

% -------------------------------------------------
% Subsection
% -------------------------------------------------

\subsection{The global gerbes $\mc{E}^\tx{iso}$, $\mc{E}^\tx{mid}$, $\mc{E}^\tx{rig}$}
\label{sub:glob_gerbes}

The choice of a $2$-cocycle in the canonical class $\xi^\tx{iso}$ (resp.\
$\xi^\tx{mid}_{\dot V}$, $\xi^\tx{rig}_{\dot V}$) gives an extension
$\mc{E}^\tx{iso}$ (resp.\ $\mc{E}^\tx{mid}_{\dot V}$, $\mc{E}^\tx{rig}_{\dot
V}$) of $\Gamma$ by $\mb{T}^\tx{iso}(\bar{F})$ (resp.\ $\mb{T}^\tx{mid}_{\dot
V}(\bar{F})$, $P^\tx{rig}_{\dot V}(\bar{F})$).
Using these, we define functors $H^1_\tx{alg}(\mb{T}^\tx{iso} \to
\mc{E}^\tx{iso},Z \to G)$, $H^1_\tx{alg}(\mb{T}^\tx{mid}_{\dot V} \to
\mc{E}^\tx{mid}_{\dot V},Z \to G)$, and $H^1(P_{\dot V} \to
\mc{E}^\tx{rig}_{\dot V},Z \to G)$, where $G$ is a linear algebraic group defined over $F$ and $Z \subset G$ is a central diagonalizable group. Note that in the first two cases replacing $Z$ by the central torus $T=Z^\circ$ has no effect.

By the discussion in Section \ref{sub:prelim} these functors are well-defined,
independently of the choice of $2$-cocycles:
\begin{enumerate}
  \item For $\mc{E}^\tx{rig}_{\dot V}$ we use the vanishing of
    $H^1(\Gamma,P^\tx{rig}_{\dot V})$.
  \item For $\mc{E}^\tx{iso}$, we use the vanishing of
    $H^1(\Gamma,\mb{T}^\tx{iso}_{E,S})$ for all finite levels.
  \item For $\mc{E}^\tx{mid}_{\dot V}$, we use the eventual vanishing of
    $H^1(\Gamma,\mb{T}^\tx{mid}_{E,\dot S_E})$ at finite levels (Proposition
    \ref{pro:h1gmvan}).
\end{enumerate}

Next we fix morphisms of extensions $\mc{E}^\tx{iso} \to \mc{E}^\tx{mid}_{\dot
V}$ and $\mc{E}^\tx{rig}_{\dot V} \to \mc{E}^\tx{mid}_{\dot V}$ extending the
morphisms $\mb{T}^\tx{iso} \to \mb{T}^\tx{mid}_{\dot V}$ and $P^\tx{rig}_{\dot
V} \to \mb{T}^\tx{mid}_{\dot V}$.
These exist by Proposition \ref{pro:glob_can_agree}.
For an affine algebraic group $G$ defined over $F$ and a central subgroup $Z
\subset G$ we have the cohomology pointed sets defined in Section
\ref{sub:prelim}, and comparison maps between them induced by $c^\tx{iso}$ and $c^\tx{rig}$
\begin{equation} \label{eq:maps_H1}
  H^1_\tx{alg}(\mb{T}^\tx{iso} \to \mc{E}^\tx{iso},Z \to G) \lw
  H^1_\tx{alg}(\mb{T}^\tx{mid}_{\dot V} \to \mc{E}^\tx{mid}_{\dot V}, Z \to G)
  \rw H^1(P^\tx{rig}_{\dot V} \to \mc{E}^\tx{rig}_{\dot V}, Z \to G).
\end{equation}

The morphisms of extensions $\mc{E}^\tx{iso} \to \mc{E}^\tx{mid}_{\dot V}$ and
$\mc{E}^\tx{rig}_{\dot V} \to \mc{E}^\tx{mid}_{\dot V}$ are well-defined only up
to multiplication by $H^1(\Gamma, \mb{T}^\tx{mid}_{\dot V})$.
According to Corollary \ref{cor:h1gmvan} this group equals $R^1 \varprojlim
H^0(\Gamma, \mb{T}^\tx{mid}_{\dot V})$ and thus all maps $H^1(\Gamma,
\mb{T}^\tx{mid}_{\dot V}) \to H^1(\Gamma, \mb{T}^\tx{mid}_{E_i,\dot S_i})$
vanish.
It follows that the maps on cohomology \eqref{eq:maps_H1} are independent of the
morphisms of extensions used to define them.

% -------------------------------------------------
% Subsection
% -------------------------------------------------

\subsection{The relationship between the cohomology of $\mc{E}^\tx{iso}$, $\mc{E}^\tx{mid}$, and $\mc{E}^\tx{rig}$}

In this subsection $F$ is either local or global. We omit the subscript $\dot V$ when $F$ is global in order to state the following result uniformly.

\begin{cor} \label{cor:cart}
  Let $G$ be an algebraic group and $T \subset G$ a central torus.
  Then the squares
  \[ \xymatrix{
      H^1(\mb{T}^\tx{mid} \to \mc{E}^\tx{mid}, T \to G) \ar[r]
      \ar[d] & \tx{Hom}_F(\mb{T}^\tx{mid},T) \ar[d] \\
    H^1(\mb{T}^\tx{iso} \to \mc{E}^\tx{iso},T \to G) \ar[r] &
      \tx{Hom}_F(\mb{T}^\tx{iso},T)
  } \]
  and
  \[ \xymatrix{
      H^1(\mb{T}^\tx{mid} \to \mc{E}^\tx{mid}, T \to G) \ar[r]
      \ar[d] & \tx{Hom}_F(\mb{T}^\tx{mid}, T) \ar[d] \\
      H^1(P^\tx{rig} \to \mc{E}^\tx{rig}, T \to G) \ar[r] &
      \tx{Hom}_F(P^\tx{rig}, T)
  } \]
  are Cartesian and the vertical arrows are surjective.
\end{cor}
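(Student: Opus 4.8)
The plan is to deduce this corollary from the abstract Cartesian-square result of Fact \ref{fct:cart}, together with the surjectivity statements already established in Proposition \ref{pro:surj}. Fact \ref{fct:cart} applies to any morphism of extensions of $\Gamma$ arising in the setup of \S\ref{sub:prelim}, and produces exactly a Cartesian square of the shape
\[ \xymatrix{
  H^1(D' \to \mc{E}', Z \to G) \ar[r]\ar[d] & \tx{Hom}_F(D',Z)\ar[d] \\
  H^1(D \to \mc{E}, Z \to G) \ar[r] & \tx{Hom}_F(D,Z)
} \]
whenever $\mc{E}' \to \mc{E}$ is such a morphism. So the first step is to recall from \S\ref{sub:loc_gerbes} (local case) and \S\ref{sub:glob_gerbes} (global case) that we have fixed morphisms of extensions $\mc{E}^\tx{iso} \to \mc{E}^\tx{mid}$ and $\mc{E}^\tx{rig} \to \mc{E}^\tx{mid}$ lying over the Cartier-dual maps $c_\tx{iso} : \mb{T}^\tx{iso} \to \mb{T}^\tx{mid}$ and $c_\tx{rig} : P^\tx{rig} \to \mb{T}^\tx{mid}$; their existence is guaranteed by Corollary \ref{cor:loc_can_agree} (resp.\ Proposition \ref{pro:glob_can_agree}, and the infinite-level version following Corollary \ref{cor:h1gmvan}). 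Applying Fact \ref{fct:cart} to each of these two morphisms, with the torus $T$ playing the role of the central subgroup $Z$, yields immediately that both squares in the statement are Cartesian. One small point to check is that replacing the diagonalizable central subgroup $Z$ by its identity component $T$ causes no change in the relevant $H^1$ groups in the $\tx{iso}$ and $\tx{mid}$ cases — this is already noted in \S\ref{sub:glob_gerbes} — so there is no loss in stating the result with a central torus $T$ throughout.

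**The vertical surjectivity.** It remains to prove that the left vertical arrows are surjective. The cleanest route is to use the inflation-restriction sequence of Fact \ref{fct:infres}, which for the gerbe $\mc{E}^\tx{mid}$ bound by $D = \mb{T}^\tx{mid}$ reads
\[ 1 \to H^1(\Gamma,G) \to H^1(\mb{T}^\tx{mid} \to \mc{E}^\tx{mid}, T \to G) \to \tx{Hom}_F(\mb{T}^\tx{mid},T) \xrightarrow{\ \cup\, \xi^\tx{mid}\ } H^2(\Gamma,G), \]
and the analogous sequences for $\mc{E}^\tx{iso}$ and $\mc{E}^\tx{rig}$. Since $G$ is not assumed abelian the final term is only a pointed set, but the sequences are compatible via the morphisms of extensions. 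Given a class in $H^1(\mb{T}^\tx{iso} \to \mc{E}^\tx{iso}, T \to G)$, it has an image $\phi \in \tx{Hom}_F(\mb{T}^\tx{iso},T)$ with $\phi \circ \xi^\tx{iso}$ trivial in $H^2(\Gamma,G)$. By Proposition \ref{pro:surj}(1) the restriction map $\tx{Hom}_F(\mb{T}^\tx{mid},T) \to \tx{Hom}_F(\mb{T}^\tx{iso},T)$ is surjective, so choose a lift $\tilde\phi$. Because $\xi^\tx{iso}$ is the image of $\xi^\tx{mid}$ under $c_\tx{iso}$ (Corollary \ref{cor:loc_can_agree}, resp.\ the global analogue), we have $\tilde\phi \circ \xi^\tx{mid} = \phi \circ \xi^\tx{iso}$, which is trivial; hence $\tilde\phi$ lifts to a class in $H^1(\mb{T}^\tx{mid} \to \mc{E}^\tx{mid}, T \to G)$. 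Now the Cartesian property of the first square, which we have already established, upgrades this: the original $\tx{iso}$-class and the chosen $\tx{mid}$-lift of $\tilde\phi$ agree in $\tx{Hom}_F(\mb{T}^\tx{iso},T)$, so by the fibre-product description they come from a common class in $H^1(\mb{T}^\tx{mid} \to \mc{E}^\tx{mid}, T \to G)$, which is therefore the desired preimage. The same argument, using Proposition \ref{pro:surj}(2) in place of (1), handles the $\tx{rig}$ square.

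**Anticipated obstacle.** The genuinely non-formal inputs are all black-boxed by earlier results, so the main thing to be careful about is the non-abelian bookkeeping in the surjectivity argument: the inflation-restriction sequence of Fact \ref{fct:infres} is a sequence of pointed sets, so "the image of a class maps to the basepoint" must be used in the precise exactness sense, and one should verify that the Cartesian square lets one promote an equality in $\tx{Hom}_F$ plus liftability of the homomorphism into an actual common lift in $H^1$ — this is exactly what "Cartesian" provides, but it is worth spelling out that the fibre product $H^1(\mc{E}^\tx{iso}) \times_{\tx{Hom}_F(\mb{T}^\tx{iso},T)} \tx{Hom}_F(\mb{T}^\tx{mid},T)$ is identified with $H^1(\mc{E}^\tx{mid})$. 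A secondary point is purely cosmetic: in the global case the objects carry subscripts $\dot V$, and one should note at the start (as the subsection preamble does) that these are suppressed only for uniformity of notation, so that Fact \ref{fct:cart} and Proposition \ref{pro:surj} apply verbatim.
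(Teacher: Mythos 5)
Your proof is correct and follows the same route as the paper: both squares are Cartesian by Fact \ref{fct:cart}, and the surjectivity comes from Proposition \ref{pro:surj}. One remark on your surjectivity argument: the middle step, in which you verify that the lift $\tilde\phi$ of $\phi$ satisfies $\tilde\phi\circ\xi^\tx{mid}$ trivial in $H^2(\Gamma,G)$ and hence lifts to $H^1(\mb{T}^\tx{mid}\to\mc{E}^\tx{mid},T\to G)$, is both unnecessary and not actually licensed by Fact \ref{fct:infres} when $G$ is non-abelian (the paper only defines the obstruction map $\phi\mapsto\phi\circ\xi$ into $H^2(\Gamma,G)$ for abelian $G$). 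It is also redundant: once the right vertical arrow is known to be surjective, the surjectivity of the left vertical arrow is purely formal from the Cartesian property --- given a class $c$ in the lower-left corner with image $\phi$ in the lower-right corner, any lift $\tilde\phi$ of $\phi$ produces the compatible pair $(c,\tilde\phi)$ in the fibre product, which \emph{is} the upper-left corner. Your final paragraph in fact carries out exactly this step, so deleting the inflation-restriction detour leaves a complete and fully non-abelian-safe argument, which is the one the paper intends.
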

\begin{proof}
  This follows from Fact \ref{fct:cart} and Proposition \ref{pro:surj}.
\end{proof}

\begin{rem}
  In the first square, the same is true even if $T$ is a diagonalizable central
  subgroup of $G$, since replacing $T$ by $T^\circ$ does not change any of the
  four corners of the square.
  In the second square however, the surjectivity does \emph{not} remain true
  when $T$ is disconnected.
  In particular, if $Z=T$ is finite, then the top left corner becomes
  $H^1(\Gamma, G)$ and the top right corner becomes zero, while the bottom left
  corner is usually strictly larger than $H^1(\Gamma, G)$, and the bottom right
  corner is always larger than zero.
\end{rem}

\begin{rem} \label{rem:splitting_H1}
  In the first square, we even have a splitting
  \[ H^1(\mb{T}^\tx{mid} \to \mc{E}^\tx{mid}, T \to G) \simeq
    H^1(\mb{T}^\tx{iso} \to \mc{E}^\tx{iso}, T \to G) \times
  \tx{Hom}(\mb{T}^\tx{mid} / \mb{T}^\tx{iso}, T) \]
  by the splitting \eqref{eq:siso} in the local case, and by Proposition
  \ref{pro:ciso_splits_infty} in the global case.
  In the global case, the splitting is not canonical.
\end{rem}

% -------------------------------------------------
% Subsection
% -------------------------------------------------

\subsection{Localization}

\begin{fct}
  The image of the local canonical class under the map
  $H^2(\Gamma_v,\mb{T}^\tx{mid}_v) \to H^2(\Gamma_v,\mb{T}^\tx{mid}_{\dot V})$
  induced by $\tx{loc}_v$ coincides with the image of the global canonical class
  under the restriction map $H^2(\Gamma,\mb{T}^\tx{mid}_{\dot V}) \to
  H^2(\Gamma_v,\mb{T}^\tx{mid}_{\dot V})$.
\end{fct}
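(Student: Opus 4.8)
The plan is to reduce the identity to the analogous compatibility of Tate's class at finite levels. First, by Proposition~\ref{pro:h1gmvan} and Corollary~\ref{cor:h1gmvan} (the $\Gamma_v$-part of Proposition~\ref{pro:h1gmvan} shows that the transition maps on $H^1(\Gamma_v,\mb{T}^\tx{mid}_{E_i,\dot S_i})$ are eventually zero, so that $\varprojlim_i$ and $R^1\varprojlim_i$ of this system both vanish), the natural maps
\[ H^2(\Gamma,\mb{T}^\tx{mid}_{\dot V}) \to \varprojlim_i H^2(\Gamma,\mb{T}^\tx{mid}_{E_i,\dot S_i}) \quad\text{and}\quad H^2(\Gamma_v,\mb{T}^\tx{mid}_{\dot V}) \to \varprojlim_i H^2(\Gamma_v,\mb{T}^\tx{mid}_{E_i,\dot S_i}) \]
are injective. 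The restriction map along $\Gamma_v \hookrightarrow \Gamma$ and the localization maps $\tx{loc}_v$ are both compatible with the transition maps of these inverse systems -- for $\tx{loc}_v$ this is the compatibility with inflation recorded at the end of \S\ref{sub:locmap}. It therefore suffices to prove, for each finite level $E = E_i$ and $w = \dot v|_E$, that the images of $\xi^\tx{mid}_{E_w,[E:F]}$ under $\tx{loc}_v$ and of $\xi^\tx{mid}_{E,\dot S_E}$ under restriction to $\Gamma_v$ agree in $H^2(\Gamma_v,\mb{T}^\tx{mid}_{E,\dot S_E})$.

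Next I would strip off the $\tx{mid}$ level. By Proposition~\ref{pro:glob_can_agree}, $\xi^\tx{mid}_{E,\dot S_E}$ is the image of Tate's class $\xi^\tx{iso}_{E,S}$ under the map induced by $c_\tx{iso} : \mb{T}^\tx{iso}_{E,S} \to \mb{T}^\tx{mid}_{E,\dot S_E}$, and by Proposition~\ref{pro:loc_can_agree}, $\xi^\tx{mid}_{E_w,[E:F]}$ is the image of the local canonical class $\xi^\tx{iso}_{E_w}$ under $c_\tx{iso}$. Dualizing the left-hand commutative square of \S\ref{sub:locmap} shows that the localization map of tori commutes with $c_\tx{iso}$, while restriction commutes with $c_\tx{iso}$ by functoriality. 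Hence the two classes in question are obtained by applying $c_\tx{iso}$ to, respectively, $\tx{loc}_{v,*}(\xi^\tx{iso}_{E_w})$ and $\tx{res}_v(\xi^\tx{iso}_{E,S})$ in $H^2(\Gamma_v,\mb{T}^\tx{iso}_{E,S})$, so it is enough that these two coincide. This last equality is the classical fact that the localizations of Tate's class are the local fundamental classes; see \cite{Tate66} and \cite[\S 8]{KotBG}.

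I expect the only step requiring genuine input beyond diagram-chasing to be the reduction to finite levels, which rests on the vanishing assertions of Proposition~\ref{pro:h1gmvan}. Alternatively, one could run the same reduction through $P^\tx{rig}_{\dot V}$ and the rigid canonical classes, using Lemma~\ref{lem:towerind} and the construction of $\xi^\tx{rig}_{\dot V}$ from the local classes $\xi^\tx{rig}_v$ in \cite[\S 3.5]{KalGRI}; the route through $\mb{T}^\tx{iso}$ taken here is preferable because for Tate's class the analogous local--global compatibility is already available in the literature.
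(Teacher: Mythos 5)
Your proposal is correct. The paper's own proof is a one-line citation of Propositions \ref{pro:glob_can_agree} and \ref{pro:loc_can_agree} together with \cite[Corollary 3.3.8]{KalGRI}: it transfers the known local--global compatibility of the \emph{rigid} canonical classes $\xi^\tx{rig}_v$, $\xi^\tx{rig}_{\dot V}$ to the middle gerbe, whereas you transfer the compatibility of the \emph{isocrystal} classes (Tate's class $\alpha_3$ versus the local fundamental classes, as in \cite{Tate66} and \cite[\S 8]{KotBG}). Since both $c_\tx{iso}$ and $c_\tx{rig}$ commute with localization and with restriction, and both outer canonical classes map to $\xi^\tx{mid}$ at each finite level by the two cited propositions, either route is legitimate; yours has the mild advantage of resting on Tate's classical compatibility rather than on the more delicate construction of $\xi^\tx{rig}_{\dot V}$ in \cite[\S 3.5]{KalGRI}, and you correctly note this trade-off yourself. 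Your explicit reduction to finite levels --- injectivity of $H^2(\Gamma_v,\mb{T}^\tx{mid}_{\dot V}) \to \varprojlim_i H^2(\Gamma_v,\mb{T}^\tx{mid}_{E_i,\dot S_i})$ via the pro-vanishing of the $H^1(\Gamma_v,-)$ system from Proposition \ref{pro:h1gmvan}, plus the compatibility of $\tx{loc}_v$ with inflation from \S\ref{sub:locmap} --- is left implicit in the paper but is exactly the right justification, so spelling it out is a genuine improvement in completeness rather than a detour.
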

\begin{proof}
This follows from Propositions \ref{pro:glob_can_agree},
\ref{pro:loc_can_agree}, and \cite[Corollary 3.3.8]{KalGRI}.
\end{proof}

This implies the existence of the dotted arrow in the commutative diagram
\[ \xymatrix{
  1 \ar[r] & \mb{T}_v^\tx{mid}(\bar F_v) \ar[r] \ar[d] & \mc{E}^\tx{mid}_v
    \ar[r] \ar@{.>}[d] & \Gamma_v \ar[r] \ar@{=}[d] & 1 \\
  1 \ar[r] & \mb{T}^\tx{mid}_{\dot V}(\bar F_v) \ar[r] & \Box_2\ar[r] &
    \Gamma_v \ar[r] \ar@{=}[d] & 1 \\
  1 \ar[r] & \mb{T}^\tx{mid}_{\dot V}(\bar F) \ar[r] \ar@{=}[d] \ar@{^(->}[u] &
    \Box_1\ar[r] \ar[d] \ar[u] & \Gamma_v \ar[r] \ar@{^(->}[d] & 1 \\
  1 \ar[r] & \mb{T}^\tx{mid}_{\dot V}(\bar F) \ar[r] & \mc{E}^\tx{mid}_{\dot V}
    \ar[r] & \Gamma \ar[r] & 1 \\
}
\]
From this diagram we obtain a localization map
\[ H^1_\tx{alg}(\mc{E}^\tx{mid}_{\dot V}, G) \to H^1_\tx{alg}(\mc{E}^\tx{mid}_v,
G_{F_v}) \]
for any algebraic group $G$ over $F$. The vanishing of $\varprojlim_i H^1(\Gamma_v, \mb{T}^\tx{mid}_{E_i,\dot S_i})$
shown in Corollary \ref{cor:h1gmvan} implies that the map
$H^1_\tx{alg}(\Gamma_v,\mb{T}^\tx{mid}_{\dot V}) \to H^1(\Gamma_v,
\mb{T}^\tx{mid}_{E_i,\dot S_i})$ is zero for every $i \geq 0$.
Thus, even though the dotted arrow above is not unique, the localization map on
cohomology that it induces is unique.
To be more precise, this argument shows that for any central torus $Z \subset G$
we have a localization map
\[ Z^1(\mb{T}^\tx{mid}_{\dot V} \to \mc{E}^\tx{mid}_{\dot V}, Z \to G) \to
  Z^1(\mb{T}^\tx{mid}_v \to \mc{E}^\tx{mid}_v, Z \to G) \]
which is uniquely determined up to coboundaries taking values in $Z(\ol{F_v})$.

Similarly we have localization maps for "iso".
For "rig", see \cite[\S 3.6]{KalGRI}.
It is formal to check that the localization maps for "mid", "iso", and "rig" are
compatible, i.e.\ that the following diagram is commutative.
\[ \xymatrix{
    H^1_\tx{alg}(\mc{E}^\tx{iso}, G) \ar[d] & \ar[l]
    H^1_\tx{alg}(\mc{E}^\tx{mid}_{\dot V}, G) \ar[r] \ar[d] &
    H^1_\tx{alg}(\mc{E}^\tx{rig}_{\dot V}, G) \ar[d] \\
    H^1_\tx{alg}(\mc{E}^\tx{iso}_v, G) & \ar[l]
    H^1_\tx{alg}(\mc{E}^\tx{mid}_v, G) \ar[r] &
    H^1_\tx{alg}(\mc{E}^\tx{rig}_v, G)
} \]

\begin{lem} \label{lem:iso_unr_ae}
  Let $G$ be a connected reductive group over $F$ and $Z$ a central torus in
  $G$.
  Choose a model $\ul{G}$ of $G$ over $\mc{O}_F[1/N]$ for some integer $N>0$.
  For any $z \in Z^1(\mb{T}^\tx{iso} \to \mc{E}^\tx{iso}, Z \to G)$, there
  exists a finite Galois extension $E/F$ and a finite set $S$ of places of $F$
  containing all Archimedean places and all finite places dividing $N$ or
  ramifying in $E$, such that for all $v \in V \smallsetminus S$ the
  localization $\tx{loc}_v(z) \in Z^1(\mb{T}^\tx{iso}_v \to \mc{E}^\tx{iso}_v, Z
  \to G)$ is the product of an element inflated from $Z^1(\Gamma_{E_{\dot
  v}/F_v}, \ul{G}(\mc{O}_{E_{\dot v}}))$ with an element inflated from a
  co-boundary $\Gamma_v \to Z(\ol{F_v})$.
\end{lem}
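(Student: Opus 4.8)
The plan is to strip $z$ down to its ``Galois part'', a continuous map on $\Gamma$ with finite image, and then to choose $E$ and $S$ large enough that this map is visibly unramified and integral at every place outside $S$.

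First I would use the definition of $Z^1(\mb{T}^\tx{iso} \to \mc{E}^\tx{iso}, Z \to G)$: the restriction of $z$ to $\mb{T}^\tx{iso}$ factors through the projection $\mb{T}^\tx{iso} \to \mb{T}^\tx{iso}_{E_0,S_0}$ for some finite level $(E_0,S_0)$, followed by a homomorphism $h : \mb{T}^\tx{iso}_{E_0,S_0} \to Z$ over $F$. As $z$ is trivial on $\ker(\mb{T}^\tx{iso} \to \mb{T}^\tx{iso}_{E_0,S_0})$, it descends to a cocycle on the push-out extension of $\Gamma$ by $\mb{T}^\tx{iso}_{E_0,S_0}(\ol{F})$. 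That extension has a continuous section, since its class $\xi^\tx{iso}_{E_0,S_0}$ is inflated from the finite group $\Gamma_{E_0/F}$; composing $z$ with such a section yields a continuous $\beta : \Gamma \to G(\ol{F})$, and the pair $(h,\beta)$ recovers $z$. Since $\Gamma$ is compact and $G(\ol{F})$ discrete, $\beta$ has finite image; enlarging $E_0$ to a finite Galois extension $E/F$ through which $\beta$ factors, we may write $\beta(\Gamma) = \{g_1,\dots,g_r\} \subset G(E)$. Fix once and for all a closed immersion $\ul{G} \hookrightarrow GL_{m,\mc{O}_F[1/N]}$.

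Next I would let $S$ consist of all Archimedean places, all places dividing $N$, all places ramifying in $E$, all places in $S_0$, and all places $v$ such that some $g_i$ or $g_i^{-1}$ has an entry outside $\mc{O}_{E_w}$ for some place $w$ of $E$ above $v$. Fix $v \in V \smallsetminus S$; then $v$ is finite, $v \nmid N$, and $E_{\dot v}/F_v$ is unramified. Because $v \notin S_0$, unwinding the inflation and localization maps on the modules $M^\tx{iso}$ shows that the composite $\mb{T}^\tx{iso}_v \to \mb{T}^\tx{iso} \to \mb{T}^\tx{iso}_{E_0,S_0}$ is trivial: the relevant localization $f \mapsto f(\dot v|_{E_0})$ kills functions inflated from $S_0$, as $\dot v|_{E_0} \notin S_{0,E_0}$. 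Hence $\tx{loc}_v(z)$ has trivial restriction to $\mb{T}^\tx{iso}_v$, so for any continuous section $\Gamma_v \to \mc{E}^\tx{iso}_v$ the composite $\gamma_v := \tx{loc}_v(z) \circ (\text{section})$ is an honest cocycle in $Z^1(\Gamma_v, G(\ol{F_v}))$ which does not depend on the section and satisfies $\tx{loc}_v(z) = \tx{inf}_{\Gamma_v}^{\mc{E}^\tx{iso}_v}(\gamma_v)$; the compatibility of the localization maps for the gerbes $\mc{E}^\tx{iso}$ together with $h|_{\mb{T}^\tx{iso}_v} = 0$ identifies $\gamma_v$ with the restriction $\beta|_{\Gamma_v}$.

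Finally I would analyse $\beta|_{\Gamma_v}$. Since $\beta$ factors through $\Gamma_{E/F}$, its restriction to $\Gamma_v$ factors through the decomposition group, which as $v$ is unramified in $E$ is the cyclic group $\Gamma_{E_{\dot v}/F_v} = \langle \tx{Frob}_v \rangle$; as $\beta$ takes the value $1$ on $\ker(\Gamma_v \to \Gamma_{E_{\dot v}/F_v})$ it is inflated from a cocycle $\bar\gamma_v \in Z^1(\Gamma_{E_{\dot v}/F_v}, G(E_{\dot v}))$, determined by $\bar\gamma_v(\tx{Frob}_v) = \beta(\tx{Frob}_v) = g \in \{g_1,\dots,g_r\}$. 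By the choice of $S$ the entries of $g$ and $g^{-1}$ lie in $\mc{O}_{E_{\dot v}}$, so $g \in GL_m(\mc{O}_{E_{\dot v}}) \cap \ul{G}(E_{\dot v}) = \ul{G}(\mc{O}_{E_{\dot v}})$, whence $\bar\gamma_v \in Z^1(\Gamma_{E_{\dot v}/F_v}, \ul{G}(\mc{O}_{E_{\dot v}}))$ and $\tx{loc}_v(z) = \tx{inf}_{\Gamma_{E_{\dot v}/F_v}}^{\mc{E}^\tx{iso}_v}(\bar\gamma_v)$ for this choice of localization map. Since the map $Z^1(\mb{T}^\tx{iso} \to \mc{E}^\tx{iso}, Z \to G) \to Z^1(\mb{T}^\tx{iso}_v \to \mc{E}^\tx{iso}_v, Z \to G)$ is only well-defined up to a coboundary valued in $Z(\ol{F_v})$, this is precisely the asserted form of $\tx{loc}_v(z)$. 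The main obstacle is the bookkeeping in the third paragraph: verifying that $\tx{loc}_v(z)$ is trivial on $\mb{T}^\tx{iso}_v$ and that the resulting Galois cocycle is genuinely $\beta|_{\Gamma_v}$ requires tracing through the localization diagram of \S\ref{sub:locmap} and the compatibilities recorded there; the reduction to $(h,\beta)$, the finiteness of $\beta(\Gamma)$, and the integrality argument at the end are all routine.
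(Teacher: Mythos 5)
The skeleton of your argument — reduce $z$ to a pair $(h,\beta)$ with $\beta$ locally constant of finite image, observe that for $v \notin S_0$ the composite $\mb{T}^\tx{iso}_v \to \mb{T}^\tx{iso} \to \mb{T}^\tx{iso}_{E_0,S_0}$ vanishes so that $\tx{loc}_v(z)$ descends to an honest cocycle $\gamma_v \in Z^1(\Gamma_v, G(\ol{F_v}))$ — is correct up to that point. The gap is the sentence claiming that this "identifies $\gamma_v$ with the restriction $\beta|_{\Gamma_v}$". It does not: $\beta|_{\Gamma_v}$ is not even a cocycle. Writing $c$ for your chosen $2$-cocycle representing $\xi^\tx{iso}_{E_0,S_0}$ and $s$ for the continuous section, one has $\beta(\sigma)\sigma(\beta(\tau)) = h(c(\sigma,\tau))\,\beta(\sigma\tau)$, and $h_*(c)|_{\Gamma_v}$ is cohomologically trivial but not trivial as a cochain. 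What actually happens is that the composite $\mc{E}^\tx{iso}_v \to \mc{E}^\tx{iso} \to \mc{E}_{E_0,S_0}$ kills $\mb{T}^\tx{iso}_v$ and hence yields a homomorphic splitting $j_v$ of the pushout over $\Gamma_v$; writing $j_v(\sigma) = t(\sigma)s(\sigma)$ gives $\gamma_v = (h\circ t)\cdot \beta|_{\Gamma_v}$, where $h\circ t : \Gamma_v \to Z(\ol{F_v})$ is a $1$-cochain whose differential is $h_*(c)|_{\Gamma_v}^{-1}$ — a trivialization of a nonzero cochain, hence not a coboundary. So your final decomposition is not of the form required by the lemma, and the integrality of $\gamma_v$'s values does not follow from that of $\beta$'s, since the factors $h(t(\sigma)) \in Z(\ol{F_v})$ are uncontrolled.

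To repair this you must exhibit a trivialization $t_0$ of $c|_{\Gamma_v}$ that is \emph{integral and unramified}, i.e.\ inflated from a cochain on $\Gamma_{E_{\dot v}/F_v}$ valued in $\mb{T}^\tx{iso}_{E_0,S_0}(\mc{O}_{E_{\dot v}})$: then $h(t_0)\cdot\beta|_{\Gamma_v}$ is an integral cocycle inflated from $\Gamma_{E_{\dot v}/F_v}$, and $h\circ t$ differs from $h\circ t_0$ by a coboundary valued in $Z(\ol{F_v})$ because $H^1(\Gamma_v,\mb{T}^\tx{iso}_{E_0,S_0})=0$. The existence of $t_0$ requires (i) choosing the representative $c$ with values in $\mb{T}^\tx{iso}_{E_0,S_0}(\mc{O}_{E_0,S_0})$, which Tate's construction permits, and (ii) the vanishing $H^2(\Gamma_{E_{\dot v}/F_v}, \mb{T}^\tx{iso}_{E_0,S_0}(\mc{O}_{E_{\dot v}}))=0$ for the unramified extension $E_{\dot v}/F_v$ (Lang's theorem). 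This integral $H^2$- (and $H^1$-) vanishing is precisely the technical heart of the paper's proof, which for this reason works throughout with cocycles valued in $\mc{O}_{E',S'}$-points of integral models before localizing; it is entirely absent from your argument, where the decisive step is dismissed as bookkeeping.
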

\begin{proof}
  The restriction of $z$ to $\mb{T}^\tx{iso}$ is defined over $F$ since $Z$ is
  central, and factors through $\mb{T}^\tx{iso}_{E,S}$ for some pair $(E,S)$
  where $E$ is a finite Galois extension of $F$ and $S$ satisfies the conditions
  of \cite{Tate66}.
  Up to enlarging $S$ we can assume that $Z \to G$ comes from a closed embedding
  $\ul{Z} \to \ul{G}$ where $\ul{Z}$ is the canonical model over $\mc{O}_{F,S}$
  of the torus $Z$, and that the restriction of $z: \mb{T}^\tx{iso}_{E,S} \to Z$
  comes from a (uniquely determined) morphism $\ul{\mb{T}^\tx{iso}_{E,S}} \to
  \ul{Z}$.
  Let $\xi^\tx{iso}_{E,S} \in Z^2(\Gamma_{E/F},
  \mb{T}^\tx{iso}_{E,S}(\mc{O}_{E,S}))$ be a representative of the canonical
  class.
  Consider a finite Galois extension $E'/F$ containing $E$ and $S' \subset V$
  finite, containing $S$ and satisfying Tate's conditions.
  Let $\mc{E}$ be the extension of $\Gamma_{E'/F}$ by
  $\ul{\mb{T}^\tx{iso}_{E,S}}(\mc{O}_{E',S'})$ built using $\xi^\tx{iso}_{E,S}$.
  We have a well-defined map
  \[ \frac{Z^1(\ul{\mb{T}^\tx{iso}_{E,S}}(\mc{O}_{E',S'}) \to \mc{E},
      \ul{Z}(\mc{O}_{E',S'}) \to \ul{G}(\mc{O}_{E',S'})) }{ B^1(\Gamma_{E'/F},
      \ul{Z}(\mc{O}_{E',S'})) } \longrightarrow Z^1(\mb{T}^\tx{iso} \to
    \mc{E}^\tx{iso}, Z \to G) / B^1(\Gamma, Z(\ol{F})) \]
  which exists because the images of $\xi^\tx{iso}_{E,S}$ (by
  inflation) and of $\xi^\tx{iso}$ (by $\mb{T}^\tx{iso} \to
  \mb{T}^\tx{iso}_{E,S}$) in $H^2(\Gamma, \mb{T}^\tx{iso}_{E,S})$ coincide, and
  is uniquely determined because $H^1(\Gamma, \mb{T}^\tx{iso}_{E,S}) = 0$.
  By continuity of $z$, up to enlarging $E'$ and $S'$ the class of $z$ modulo
  $B^1(\Gamma, Z(\ol{F}))$ belongs to the image of this map.

  Now for any $v \in V \smallsetminus S'$ we have a commutative diagram
  \[ \xymatrix{
    \frac{Z^1(\mb{T}^\tx{iso}_{E,S}(\mc{O}_{E',S'}) \to \mc{E},
      \ul{Z}(\mc{O}_{E',S'}) \to \ul{G}(\mc{O}_{E',S'})) }{ B^1(\Gamma_{E'/F},
      \ul{Z}(\mc{O}_{E',S'})) } \ar[r] \ar[d] &
    Z^1(\Gamma_{E'_{\dot v}/F_v}, \ul{G}(\mc{O}_{E'_{\dot v}})) /
      B^1(\Gamma_{E'_{\dot v}/F_v}, \ul{Z}(\mc{O}_{E'_{\dot v}})) \ar[d] \\
    Z^1(\mb{T}^\tx{iso} \to \mc{E}^\tx{iso}, Z \to G) / B^1(\Gamma, Z(\ol{F}))
      \ar[r] &
    Z^1(\mb{T}^\tx{iso}_v \to \mc{E}^\tx{iso}_{\dot V}, Z \to G) /
      B^1(\Gamma_v, Z(\ol{F_v}))
  }
  \]
  The top horizontal map exists because $H^2(\Gamma_{E'_{\dot v}/F_v},
  \mb{T}^\tx{iso}_{E,S}(\mc{O}_{E'_{\dot v}})) = 0$ and is uniquely determined
  because $H^1(\Gamma_{E'_{\dot v}/F_v}, \mb{T}^\tx{iso}_{E,S}(\mc{O}_{E'_{\dot
  v}})) = 0$.
  Commutativity follows from $H^1(\Gamma_v, \mb{T}^\tx{iso}_{E,S}) = 0$.
\end{proof}

\begin{rem}
  This is much easier than \cite[Proposition 6.1.1]{TaibGRI} (see also \cite[\S
  3.9]{KalGRI}) thanks to vanishing of $H^1$ at finite level.
  A similar ramification property could be proved for "mid" using Proposition
  \ref{pro:h1gmvan}, with an extra step.
\end{rem}

\subsection{The cohomology of $\mc{E}^\tx{iso}$ and $B(G)$}
\label{sub:BG_iso_infty}

Let $F$ be a local or global field.
For $E$ a finite Galois extension of $F$, Kottwitz introduced in \cite{KotBG} an
extension $\mc{E}^\tx{iso}_E$ (simply denoted $\mc{E}(E/F)$ loc.\ cit.) of
$\Gamma_{E/F}$ by $\mb{T}^\tx{iso}_E(E)$, where $\mb{T}^\tx{iso}_E$ is the
protorus $\varprojlim_S \mb{T}^\tx{iso}_{E,S}$.
Note that the transition maps are surjective morphisms having connected kernel
between tori split by $E$, so that they induce surjective maps between groups of
$E$-points.

For a linear algebraic group $G$ defined over $F$, he defined the pointed set
$B(G) := \varinjlim_E H^1_\tx{alg}(\mc{E}^\tx{iso}_E, G(E))$ in \S 10 loc.\ cit.
The transition maps exist thanks to the compatibility of canonical classes with
inflation maps $\mb{T}^\tx{iso}_K \to \mb{T}^\tx{iso}_E$, and are well-defined
thanks to the vanishing of $H^1(\Gamma_{K/F}, \mb{T}^\tx{iso}_E(K))$.
Define $\ol{Z}^1_\tx{alg}(\mc{E}^\tx{iso}_E, G(E))$ as the quotient of
$Z^1_\tx{alg}(\mc{E}^\tx{iso}_E, G(E))$ by the following equivalence relation:
$z \sim z'$ if and only if there exists $t \in \mb{T}^\tx{iso}_E(E)$ such that
$z'(w) = z(twt^{-1})$ for all $w \in \mc{E}^\tx{iso}_E$.
Note that $z(twt^{-1}) = z(t) z(w) \sigma_w(z(t))^{-1}$ where $\sigma_w \in
\Gamma_{E/F}$ is the image of $w$, so that we have a surjective map
$\ol{Z}^1_\tx{alg}(\mc{E}^\tx{iso}_E, G(E)) \to H^1_\tx{alg}(\mc{E}^\tx{iso}_E,
G(E))$.
The inflation maps are well-defined at the level of $\ol{Z}^1$ and letting
$\widetilde{B}(G) := \varinjlim_E \ol{Z}^1_\tx{alg}(\mc{E}^\tx{iso}_E, G(E))$,
we obtain a pointed set mapping onto $B(G)$.

\begin{lem} \label{lem:BG_iso_infty}
  The natural map $\widetilde{B}(G) \to \ol{Z}^1_\tx{alg}(\mc{E}^\tx{iso}, G)$,
  defined similarly to the inflation maps, is an isomorphism.
  In particular we have a natural isomorphism $B(G) \to
  H^1_\tx{alg}(\mc{E}^\tx{iso}, G)$.
\end{lem}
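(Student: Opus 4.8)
The plan is to realise the map of the statement, at each finite level $E$, as precomposition with a morphism of extensions $\rho_E\colon\mc{E}^\tx{iso}\to\mc{E}^\tx{iso}_E$ followed by the inclusion $G(E)\hookrightarrow G(\bar F)$ — the evident analogue of the inflation maps. Such a $\rho_E$, lying over the quotient $\Gamma\to\Gamma_{E/F}$ and over the natural projection $\mb{T}^\tx{iso}(\bar F)\to\mb{T}^\tx{iso}_E(E)$, exists because, by the compatibility of canonical classes recorded above (the discussion of global canonical classes at finite and infinite level, together with \cite[(8.18)]{KotBG}), the push-out of $\xi^\tx{iso}$ along $\mb{T}^\tx{iso}(\bar F)\to\mb{T}^\tx{iso}_E(E)$ is the inflation to $\Gamma$ of the class defining $\mc{E}^\tx{iso}_E$; it is unique up to $\mb{T}^\tx{iso}_E(E)$-conjugacy because $H^1(\Gamma,\mb{T}^\tx{iso}_E(E))=0$, each $H^1(\Gamma_{E/F},\mb{T}^\tx{iso}_{E,S}(E))$ vanishing by \cite[Lemma 6.5]{KotBG} and Hilbert's Theorem 90 and the $R^1\varprojlim$ over $S$ vanishing since the transition maps are surjective. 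The $\rho_E$ are compatible, up to conjugacy, with Kottwitz's transition maps $\mc{E}^\tx{iso}_K\to\mc{E}^\tx{iso}_E$ (again by uniqueness), so precomposition descends to a well-defined map $\iota_E\colon\ol{Z}^1_\tx{alg}(\mc{E}^\tx{iso}_E,G(E))\to\ol{Z}^1_\tx{alg}(\mc{E}^\tx{iso},G)$ — the $\sim$-relations match because any $t\in\mb{T}^\tx{iso}_E(E)$ lifts to $\mb{T}^\tx{iso}(\bar F)$, see below — and passes to the colimit to give the asserted map $\widetilde{B}(G)\to\ol{Z}^1_\tx{alg}(\mc{E}^\tx{iso},G)$.

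The structural point is that $\rho_E$ is \emph{surjective}: it is surjective on the quotients, and on the kernels $\mb{T}^\tx{iso}\to\mb{T}^\tx{iso}_E$ is a surjection of affine $F$-group schemes with pro-torus kernel, hence surjective on $\bar F$-points. Injectivity of $\widetilde{B}(G)\to\ol{Z}^1_\tx{alg}(\mc{E}^\tx{iso},G)$ is then immediate: if $z,z'\in\ol{Z}^1_\tx{alg}(\mc{E}^\tx{iso}_E,G(E))$, taken at a common level $E$, have $z\circ\rho_E$ and $z'\circ\rho_E$ conjugate under some $t\in\mb{T}^\tx{iso}(\bar F)$, then with $t_E:=\rho_E(t)\in\mb{T}^\tx{iso}_E(E)$ surjectivity of $\rho_E$ gives $z(t_Ewt_E^{-1})=z'(w)$ for all $w\in\mc{E}^\tx{iso}_E$, so $z$ and $z'$ already agree in $\ol{Z}^1_\tx{alg}(\mc{E}^\tx{iso}_E,G(E))$.

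Surjectivity of $\widetilde{B}(G)\to\ol{Z}^1_\tx{alg}(\mc{E}^\tx{iso},G)$ is the main step: every $z\in Z^1_\tx{alg}(\mc{E}^\tx{iso},G(\bar F))$ is, up to $\mb{T}^\tx{iso}(\bar F)$-conjugacy, inflated from a finite level, and I would prove this by a two-stage descent. First, in the $\mb{T}$-direction: by definition of $Z^1_\tx{alg}$, $z|_{\mb{T}^\tx{iso}}$ factors through the projection to some $\mb{T}^\tx{iso}_{E_0}$ followed by an algebraic homomorphism $\phi$; since the $G(\bar F)$-conjugacy class of $\phi$ is $\Gamma$-stable we may, after enlarging $E_0$ to a finite Galois extension of $F$, assume $\phi$ is defined over $E_0$, so $z$ is trivial on $\ker(\mb{T}^\tx{iso}(\bar F)\to\mb{T}^\tx{iso}_{E_0}(E_0))$, a normal subgroup of $\mc{E}^\tx{iso}$ as $\mb{T}^\tx{iso}_{E_0}$ is defined over $F$, and $z$ factors through the push-out extension $\mc{E}^{\tx{iso},(E_0)}$ of $\Gamma$ by $\mb{T}^\tx{iso}_{E_0}(E_0)$. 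Second, in the $\Gamma$-direction: the class of $\mc{E}^{\tx{iso},(E_0)}$ is inflated from $\Gamma_{E_0/F}$, so over $\Gamma_{\bar F/E_0}$ it splits as a direct product — the $\Gamma$-action on $\mb{T}^\tx{iso}_{E_0}$ factoring through $\Gamma_{E_0/F}$ — with the complementary copy of $\Gamma_{\bar F/E_0}$ normal in $\mc{E}^{\tx{iso},(E_0)}$ and quotient $\mc{E}^\tx{iso}_{E_0}$ (a standard computation with a normalized inflated cocycle); the restriction of $z$ to this copy is a continuous cochain into the discrete group $G(\bar F)$, hence locally constant, hence trivial on $\Gamma_{\bar F/E_1}$ for a suitable finite Galois $E_1\supseteq E_0$, whose copy inside $\mc{E}^{\tx{iso},(E_0)}$ is again normal. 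Thus $z$ factors through the finite-level extension $\mc{E}^{\tx{iso},(E_0)}/\Gamma_{\bar F/E_1}$ of $\Gamma_{E_1/F}$ by $\mb{T}^\tx{iso}_{E_0}(E_0)$, which by the canonical-class compatibility receives a map from $\mc{E}^\tx{iso}_{E_1}$ (the push-out along $\mb{T}^\tx{iso}_{E_1}\to\mb{T}^\tx{iso}_{E_0}$); pulling back along this map, $z$ is inflated from a cocycle $z_{E_1}$ on $\mc{E}^\tx{iso}_{E_1}$ valued in $G(\bar F)$. Finally $\mc{E}^\tx{iso}_{E_1}$ is generated by $\mb{T}^\tx{iso}_{E_1}(E_1)$, on which $z_{E_1}$ takes values in $G(E_0)$ via $\phi$, together with finitely many lifts of $\Gamma_{E_1/F}$, on which $z_{E_1}$ takes finitely many values; enlarging to a finite Galois $E_2$ containing all of these and pulling back to $\mc{E}^\tx{iso}_{E_2}$ yields $z_{E_2}\in Z^1_\tx{alg}(\mc{E}^\tx{iso}_{E_2},G(E_2))$ with $\iota_{E_2}(z_{E_2})$ equal to the class of $z$, the last equality holding because the two morphisms $\mc{E}^\tx{iso}\to\mc{E}^\tx{iso}_{E_2}\to\mc{E}^{\tx{iso},(E_0)}/\Gamma_{\bar F/E_1}$ built from the two sides agree up to conjugacy, once more by the vanishing of $H^1(\Gamma,\mb{T}^\tx{iso}_{E_0}(E_0))$.

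The ``in particular'' then follows formally: $H^1_\tx{alg}(\mc{E}^\tx{iso}_E,G(E))$ is the quotient of $\ol{Z}^1_\tx{alg}(\mc{E}^\tx{iso}_E,G(E))$ by the action of $G(E)$ by coboundaries — the $\mb{T}^\tx{iso}_E(E)$-conjugacy built into $\ol{Z}^1$ being the special case $g=z(t)$ — and likewise for $\mc{E}^\tx{iso}$, so, filtered colimits being exact, $B(G)=\varinjlim_E\ol{Z}^1_\tx{alg}(\mc{E}^\tx{iso}_E,G(E))/G(E)=\widetilde{B}(G)/G(\bar F)\cong\ol{Z}^1_\tx{alg}(\mc{E}^\tx{iso},G)/G(\bar F)=H^1_\tx{alg}(\mc{E}^\tx{iso},G)$. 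I expect the main obstacle to be the bookkeeping in the $\Gamma$-direction descent: keeping straight the intermediate extensions ($\mc{E}^{\tx{iso},(E_0)}$, its quotient by $\Gamma_{\bar F/E_1}$, and Kottwitz's $\mc{E}^\tx{iso}_{E_1}$) and checking that their canonical classes are matched by inflation and by the torus transition maps, so that the final composite really recovers $z$ up to conjugacy; the continuity input itself (a continuous cochain into a discrete group is locally constant) and all the $H^1$-vanishing facts are routine or already contained in the excerpt.
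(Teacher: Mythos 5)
Your overall two-stage strategy for surjectivity (algebraicity of $z|_{\mb{T}^\tx{iso}}$, then continuity in the Galois direction) is the right idea and is in spirit what the paper does — its proof consists of the remark that surjectivity is the first part of the proof of Lemma \ref{lem:iso_unr_ae} and that injectivity is clear. But the formal apparatus you build around it is broken at two points, both stemming from conflating $E$-points with $\bar F$-points of the band. First, the morphism of extensions $\rho_E : \mc{E}^\tx{iso} \to \mc{E}^\tx{iso}_E$ does not exist: $\mc{E}^\tx{iso}_E$ is an extension of $\Gamma_{E/F}$ by $\mb{T}^\tx{iso}_E(E)$, and there is no natural projection $\mb{T}^\tx{iso}(\bar F) \to \mb{T}^\tx{iso}_E(E)$ — the natural projection lands in $\mb{T}^\tx{iso}_E(\bar F)$. (The remark following the lemma in the paper, that $\xi^\tx{iso}$ need have no representative whose image lies in $Z^2(\Gamma_{E/F},\mb{T}^\tx{iso}_E(E))$, is a warning against exactly this.) This undermines both your definition of the comparison map as precomposition with $\rho_E$ and your injectivity argument, which uses surjectivity of $\rho_E$. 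Second, in the surjectivity argument the pushout $\mc{E}^{\tx{iso},(E_0)}$ is an extension of $\Gamma$ by $\mb{T}^\tx{iso}_{E_0}(\bar F)$, and over $\Gamma_{\bar F/E_0}$ it is only a semidirect product, since $\Gamma_{\bar F/E_0}$ acts nontrivially on $\mb{T}^\tx{iso}_{E_0}(\bar F)$ (only the action on the character module factors through $\Gamma_{E_0/F}$). The complementary copy of $\Gamma_{\bar F/E_1}$ is therefore not normal — conjugating $(1,\sigma)$ by $(a,1)$ yields $(a\cdot{}^\sigma a^{-1},\sigma)$ — so the finite-level quotient extension through which you want $z$ to factor does not exist.

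The repair is the route the paper actually takes, with all maps pointing from finite level to infinite level and defined on explicit cochains: fix a $2$-cocycle $\xi^\tx{iso}_E \in Z^2(\Gamma_{E/F},\mb{T}^\tx{iso}_E(E))$ in the canonical class, inflate a $1$-cocycle on $\mc{E}^\tx{iso}_E$ at the cochain level to the extension of $\Gamma$ by $\mb{T}^\tx{iso}_E(\bar F)$ built from the inflated $2$-cocycle, and pull back along a morphism from $\mc{E}^\tx{iso}$ to that extension; such a morphism exists because the pushout of $\xi^\tx{iso}$ along $\mb{T}^\tx{iso}\to\mb{T}^\tx{iso}_E$ and the inflation of $\xi^\tx{iso}_E$ agree in $H^2(\Gamma,\mb{T}^\tx{iso}_E(\bar F))$, and it is unique up to conjugacy because $H^1(\Gamma,\mb{T}^\tx{iso}_E)=0$. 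Once the maps are set up this way, your continuity argument (a continuous crossed homomorphism into a discrete group is trivial on an open subgroup, and the restriction of $z$ to the band factors algebraically through a finite level) does yield surjectivity, exactly as in the first part of the proof of Lemma \ref{lem:iso_unr_ae}; injectivity must then also be argued without $\rho_E$, using that the conjugating element of $\mb{T}^\tx{iso}(\bar F)$ acts on the cocycle only through the algebraic homomorphism $z|_{\mb{T}^\tx{iso}}$, which already factors through $\mb{T}^\tx{iso}_E$.
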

\begin{proof}
  Surjectivity is essentially the first part of the proof of Lemma
  \ref{lem:iso_unr_ae}.
  Injectivity is clear.
\end{proof}

Note that since the inflation maps $\mb{T}^\tx{iso}_K \to \mb{T}^\tx{iso}_E$ do
not have connected kernel, there is no reason why there should exists
$\xi^\tx{iso} \in Z^2(\Gamma, \mb{T}^\tx{iso}(\ol{F}))$ such that for some $E
\neq F$ its image in $Z^2(\Gamma, \mb{T}^\tx{iso}(\ol{F}))$ belongs to
$Z^2(\Gamma_{E/F}, \mb{T}^\tx{iso}_E(E))$.

It is not difficult to check that the isomorphisms in Lemma
\ref{lem:BG_iso_infty} are compatible with localization.
In fact this is the second step of the proof of Lemma \ref{lem:iso_unr_ae}.

% -------------------------------------------------
% Subsection
% -------------------------------------------------

\subsection{A Tate-Nakayama description of $H^1_\tx{alg}(\mc{E}^\tx{mid}_{\dot
V}, T)$}

The main goal of this subsection is to describe the failure of commutativity of
\eqref{eq:locnoncom}.
For this, we shall give a linear algebraic description of the group
$H^1_\tx{alg}(\mc{E}^\tx{mid}_{\dot V}, T)$ for an algebraic torus $T$, both in
the local and in the global case.
The local description will be used to give a precise formula for the failure of
commutativity of \eqref{eq:locnoncom}.
The global description will be used to show that this failure of commutativity
satisfies a product formula.

We first begin with the local case. Let $F$ be local. Let $T$ be an algebraic torus defined over $F$ and write $Y=X_*(T)$. In all three cases we have the inflation-restriction exact sequence of Fact \ref{fct:infres}
\[ 1 \to H^1(\Gamma,T) \to H^1_\tx{alg}(\mc{E},T) \to \tx{Hom}_F(\mb{D},T) \to ...\]
with $\mb{D}$ being one of $\mb{T}^\tx{mid}$, $\mb{T}^\tx{iso}$, or
$P^\tx{rig}$. We have the Tate-Nakayama isomorphism $Y_{\Gamma,\tx{tor}} \to H^1(\Gamma,T)$ and the isomorphism $(Y \otimes X^*(\mb{D}))^\Gamma \to \tx{Hom}_F(\mb{D},T)$ of Fact \ref{fct:iso_hom_D}. Compatible with these two isomorphisms is a third isomorphism whose target is
$H^1_\tx{alg}(\mc{E},T)$.
In the case of $\mc{E}^\tx{iso}$ its source is $Y_\Gamma$ according to
\cite[(13.2)]{KotBG}.
In the case of $\mc{E}^\tx{rig}$ its source is the torsion subgroup of
$Y\otimes\Q/IY$ according to \cite[\S4]{KalRI}.
We shall write $Y^\tx{iso}:=Y_\Gamma$ and $Y^\tx{rig}:=(Y\otimes\Q/IY)[\tx{tor}]$ and see both of these as functors from the category of tori to the category of $\Gamma$-modules.

Define
\[ Y^\tx{mid} := Y^\tx{mid}(T) := \{ (\lambda,\mu)| \lambda \in Y_\Gamma, \mu \in Y \otimes \Q, N^\natural(\lambda)=N^\natural(\mu)\}, \]
where $N^\natural$ is the normalized norm map, i.e. $N^\natural =
[E:F]^{-1}\sum_{\sigma \in \Gamma_{E/F}} \sigma$ for any finite Galois extension
$E/F$ splitting $T$.
We have a natural map $Y^\tx{mid} \to Y^\tx{iso}, (\lambda, \mu) \mapsto
\lambda$.

\begin{fct} \label{fct:tncartloc}
The right square in the commutative diagram
\[ \xymatrix{
  0 \ar[r] & Y_{\Gamma,\tx{tor}} \ar[r] \ar@{=}[d] & Y^\tx{mid} \ar[r] \ar[d] &
    (Y \otimes M^\tx{mid})^\Gamma \ar[d] \\
  0 \ar[r] & Y_{\Gamma,\tx{tor}} \ar[r] & Y^\tx{iso} \ar[r] & (Y \otimes
    M^\tx{iso})^\Gamma
}
\]
is Cartesian.
Here $Y^\tx{mid} \to (Y \otimes M^\tx{mid})^\Gamma$ maps $(\lambda,\mu)$ to
$\sum_\sigma \sigma(\mu) \otimes \sigma \in Y \otimes \Q[\Gamma]$, $Y^\tx{iso}
\to (Y \otimes M^\tx{iso})^\Gamma$ is given by $N^\natural$ via the natural
embedding $M^\tx{iso} \subset \Q$, and $(Y \otimes M^\tx{mid})^\Gamma \to (Y
\otimes M^\tx{iso})^\Gamma$ is $\tx{id}_Y \otimes c^\tx{iso}$,
i.e.\ via the isomorphisms \eqref{eq:iso_hom_D} it is the ``pre-composition by
$c_\tx{iso}$'' map $\tx{Hom}(\mb{T}^\tx{mid}, T) \to \tx{Hom}(\mb{T}^\tx{iso},
T)$.
\end{fct}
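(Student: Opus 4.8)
The plan is to verify that the square is Cartesian by directly checking that the natural map from $Y^\tx{mid}$ to the fiber product of $Y^\tx{iso}$ and $(Y\otimes M^\tx{mid})^\Gamma$ over $(Y\otimes M^\tx{iso})^\Gamma$ is a bijection. Since the left vertical map is the identity, it suffices to show that for a compatible pair $(\lambda, \psi)$ with $\lambda\in Y^\tx{iso}=Y_\Gamma$ and $\psi\in(Y\otimes M^\tx{mid})^\Gamma$ satisfying $(\tx{id}_Y\otimes c^\tx{iso})(\psi)=N^\natural(\lambda)$, there is a unique $\mu\in Y\otimes\Q$ with $(\lambda,\mu)\in Y^\tx{mid}$ and $\sum_\sigma\sigma(\mu)\otimes\sigma=\psi$. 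First I would make explicit the identification $(Y\otimes M^\tx{mid})^\Gamma\cong\{$functions $\Gamma\to Y\otimes\Q$ that are $\Gamma$-equivariant, i.e.\ $\sigma\cdot(\text{value at }\tau)=\text{value at }\sigma\tau$, subject to the integrality constraint inherited from $M^\tx{mid}\}$, so that such a $\psi$ is determined by a single value $\mu:=\psi(1)\in Y\otimes\Q$ (the value at the identity), and conversely any $\mu$ arises this way provided the integrality condition $\sum_\sigma\sigma(\mu)\in Y$ holds. Under this dictionary, $\tx{id}_Y\otimes c^\tx{iso}$ sends $\psi$ to $\sum_\sigma\psi(\sigma)=\sum_\sigma\sigma(\mu)=N_{E/F}(\mu)\in Y\otimes\Q$, viewed in $(Y\otimes M^\tx{iso})^\Gamma$; here I am using that $c^\tx{iso}$ is "integrate over $\Gamma$", i.e.\ $f\mapsto\sum_\sigma f(\sigma)$, as in \eqref{eq:loccompmap}.

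Next I would identify the map $Y^\tx{iso}\to(Y\otimes M^\tx{iso})^\Gamma$. By the statement it is $N^\natural$ via $M^\tx{iso}\subset\Q$; concretely, $Y_\Gamma\to (Y\otimes\Q)^\Gamma=Y^\Gamma\otimes\Q$ sends the class of $\lambda$ to $N^\natural(\lambda)=[E:F]^{-1}N_{E/F}(\lambda)$, which is well-defined on coinvariants because $N_{E/F}$ kills $IY$. So the compatibility condition on the pair $(\lambda,\psi)$ reads $N_{E/F}(\mu)=N^\natural(\lambda)=[E:F]^{-1}N_{E/F}(\lambda)$ inside $Y\otimes\Q$, i.e.\ exactly $N^\natural(\mu)=N^\natural(\lambda)$ after dividing by $[E:F]$ — which is precisely the defining relation of $Y^\tx{mid}$. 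Thus given such a pair, the element $\mu=\psi(1)$ automatically satisfies $(\lambda,\mu)\in Y^\tx{mid}$; and it is the unique preimage because $\psi$ is recovered from $\mu$ (and $\lambda$ from itself). Conversely, starting from $(\lambda,\mu)\in Y^\tx{mid}$, the associated $\psi$ defined by $\psi(\sigma)=\sigma(\mu)$ is $\Gamma$-equivariant by construction, lands in $(Y\otimes M^\tx{mid})^\Gamma$ because the integrality condition $\sum_\sigma\sigma(\mu)=N_{E/F}(\mu)=N_{E/F}(\lambda)\in Y$ holds (as $\lambda\in Y$ and $N_{E/F}$ preserves $Y$), and maps to $\lambda$ under $Y^\tx{mid}\to Y^\tx{iso}$. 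This establishes the bijection, hence the Cartesian property.

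Finally I would note commutativity of the whole diagram: the left square commutes trivially (both verticals on $Y_{\Gamma,\tx{tor}}$ are the identity, and $Y_{\Gamma,\tx{tor}}\hookrightarrow Y^\tx{mid}$ is $\lambda\mapsto(\lambda,0)$ since a torsion class in $Y_\Gamma$ has $N^\natural(\lambda)=0$, consistent with $N^\natural(0)=0$), and the right square commutes by the computation above. The main obstacle — really the only point requiring care — is bookkeeping the integrality constraints: one must check that the condition "$\sum_\sigma f(\sigma)\in Y$" cutting out $M^\tx{mid}\subset N^{-1}\Z[\Gamma]$ matches up on the nose with the membership $\lambda\in Y$ (equivalently $N_{E/F}(\mu)\in Y$) built into $Y^\tx{mid}$ via the clause $N^\natural(\lambda)=N^\natural(\mu)$ with $\lambda$ integral, and that the normalization factor $[E:F]^{-1}$ in $N^\natural$ versus the unnormalized $N_{E/F}=\sum_\sigma\sigma$ appearing in $c^\tx{iso}$ is tracked consistently across all four corners. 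Everything else is a formal unwinding of the isomorphism $(Y\otimes M^\tx{mid})^\Gamma\cong\{\mu\in Y\otimes\Q\mid N_{E/F}(\mu)\in Y\}$ and of Fact \ref{fct:iso_hom_D}.
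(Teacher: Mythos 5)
Your proof is correct and is exactly the direct verification the paper leaves implicit (the statement is a \texttt{fct} with no written proof): identify a $\Gamma$-invariant element of $Y\otimes M^\tx{mid}$ with its value $\mu$ at $1\in\Gamma$, observe that the fiber-product condition becomes $N^\natural(\lambda)=N^\natural(\mu)$, and note that the integrality constraint cutting out $M^\tx{mid}$ is automatic from that relation since $N_{E/F}(\lambda)\in Y$. The only blemish is the intermediate line ``$N_{E/F}(\mu)=N^\natural(\lambda)$'', which mixes the unnormalized finite-level $c^\tx{iso}$ with the normalized colimit embedding $M^\tx{iso}\subset\Q$; you flag and resolve this yourself, and the conclusion $N^\natural(\mu)=N^\natural(\lambda)$ is the right one.
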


\begin{rem}
  Note that the $\Gamma$-invariant map $Y \otimes M^\tx{mid} \to Y \otimes
  M^\tx{iso}$ given by $\tx{id}_Y \otimes c^\tx{iso}$ has the
  $\Gamma$-equivariant splitting $\tx{id}_Y \otimes s^\tx{iso}$.
  This gives a canonical isomorphism between $Y^\tx{mid}$ and $Y^\tx{iso} \oplus
  \tx{ker}(\tx{id}_Y \otimes c^\tx{iso})^\Gamma$.
\end{rem}

\begin{pro} \label{pro:tnmidloc}
  There exists a unique isomorphism $Y^\tx{mid} \to
  H^1_\tx{alg}(\mc{E}^\tx{mid},T)$ that is functorial in $T$ and fits into
  the commutative diagrams
  \[ \xymatrix{
    Y_{\Gamma,\tx{tor}} \ar[r] \ar[d]_\wr & Y^\tx{mid} \ar[r] \ar[d]_\wr & (Y
      \otimes \Q[\Gamma])^\Gamma \ar[d]_\wr \\
    H^1(\Gamma,T) \ar[r] & H^1_\tx{alg}(\mc{E}^\tx{mid},T) \ar[r] &
      \tx{Hom}_F(\mb{T}^\tx{mid},T) \\
  } \]
  and
  \[ \xymatrix{
    Y^\tx{mid} \ar[r] \ar[d]_\wr & Y^\tx{iso} \ar[d]_\wr \\
    H^1_\tx{alg}(\mc{E}^\tx{mid},T) \ar[r] & H^1(\mc{E}^\tx{iso},T)
  } \]
\end{pro}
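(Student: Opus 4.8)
The plan is to exhibit both $Y^\tx{mid}$ and $H^1_\tx{alg}(\mc{E}^\tx{mid},T)$ as fibre products over $\tx{iso}$-data and to transport one onto the other. On the linear-algebraic side, Fact \ref{fct:tncartloc} says precisely that $Y^\tx{mid}$ is the fibre product of $Y^\tx{iso}\to(Y\otimes M^\tx{iso})^\Gamma\leftarrow (Y\otimes M^\tx{mid})^\Gamma$ (first map $N^\natural$, second map $\tx{id}_Y\otimes c^\tx{iso}$), its two projections being $(\lambda,\mu)\mapsto\lambda$ and $(\lambda,\mu)\mapsto\sum_\sigma\sigma(\mu)\otimes\sigma$. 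On the cohomological side I would apply Corollary \ref{cor:cart} with $G=T$: since $T$ is a torus, $Z(T)=T$ and $H^1(\mb{T}^\tx{mid}\to\mc{E}^\tx{mid},T\to T)=H^1_\tx{alg}(\mc{E}^\tx{mid},T)$ (and likewise for $\tx{iso}$), so the corollary identifies $H^1_\tx{alg}(\mc{E}^\tx{mid},T)$ with the fibre product of $H^1_\tx{alg}(\mc{E}^\tx{iso},T)\to\tx{Hom}_F(\mb{T}^\tx{iso},T)\leftarrow\tx{Hom}_F(\mb{T}^\tx{mid},T)$, the first arrow being restriction to $\mb{T}^\tx{iso}$ and the second precomposition with $c_\tx{iso}$.

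Next I would match these two cospans. For the right-hand legs, Fact \ref{fct:iso_hom_D} supplies functorial isomorphisms $(Y\otimes M^\tx{iso})^\Gamma\cong\tx{Hom}_F(\mb{T}^\tx{iso},T)$ and $(Y\otimes M^\tx{mid})^\Gamma\cong\tx{Hom}_F(\mb{T}^\tx{mid},T)$, and since $c_\tx{iso}:\mb{T}^\tx{iso}\to\mb{T}^\tx{mid}$ is by definition Cartier dual to $c^\tx{iso}:M^\tx{mid}\to M^\tx{iso}$, precomposition with $c_\tx{iso}$ corresponds under these isomorphisms to $\tx{id}_Y\otimes c^\tx{iso}$; so that square commutes tautologically. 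For the left-hand legs I would use that $\theta^\tx{iso}:Y^\tx{iso}=Y_\Gamma\to H^1_\tx{alg}(\mc{E}^\tx{iso},T)$ is the isomorphism of \cite[(13.2)]{KotBG}, and that the restriction map $H^1_\tx{alg}(\mc{E}^\tx{iso},T)\to\tx{Hom}_F(\mb{T}^\tx{iso},T)$ is, by the very construction of $Z^1_\tx{alg}(\mc{E}^\tx{iso},-)$, the Newton (slope) homomorphism of Kottwitz; the standard compatibility between the Kottwitz homomorphism and the Newton homomorphism then says exactly that under $\theta^\tx{iso}$ and Fact \ref{fct:iso_hom_D} this restriction map becomes $N^\natural:Y_\Gamma\to(Y\otimes M^\tx{iso})^\Gamma$. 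Pinning down this identification of the $\tx{iso}$-cospan with the algebraic one, consistently with Kottwitz's normalisations and including the archimedean cases where $M^\tx{iso}$ is $\tfrac12\Z$ or $\Z$ rather than $\Q$, is the step I expect to be the real work.

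Granting this, the universal property of fibre products yields a unique isomorphism $\theta^\tx{mid}:Y^\tx{mid}\to H^1_\tx{alg}(\mc{E}^\tx{mid},T)$ compatible with both projections, and it is automatically functorial in $T$ because all ingredients are. Compatibility with the projection to the $\tx{iso}$-factor is the second displayed diagram, and compatibility with the projection to the $(Y\otimes M^\tx{mid})^\Gamma$-factor, followed by Fact \ref{fct:iso_hom_D}, is the right-hand square of the first displayed diagram. For the left-hand square of the first diagram I would compare kernels of the horizontal maps: by the exact top row of Fact \ref{fct:tncartloc} the kernel of $Y^\tx{mid}\to(Y\otimes M^\tx{mid})^\Gamma$ is $Y_{\Gamma,\tx{tor}}$, while by Fact \ref{fct:infres} (and injectivity of inflation) the kernel of $H^1_\tx{alg}(\mc{E}^\tx{mid},T)\to\tx{Hom}_F(\mb{T}^\tx{mid},T)$ is the image of $H^1(\Gamma,T)$; hence $\theta^\tx{mid}$ restricts to an isomorphism $Y_{\Gamma,\tx{tor}}\to H^1(\Gamma,T)$, and composing with the injective inflation $H^1(\Gamma,T)\hookrightarrow H^1_\tx{alg}(\mc{E}^\tx{iso},T)$ and invoking the second diagram together with the known fact that $\theta^\tx{iso}$ restricts to Tate--Nakayama on $Y_{\Gamma,\tx{tor}}$, one concludes that $\theta^\tx{mid}$ restricts to Tate--Nakayama, giving the left square. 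Finally, uniqueness is immediate: any isomorphism fitting both displayed diagrams has the same composites with the two projections of the embedding $H^1_\tx{alg}(\mc{E}^\tx{mid},T)\hookrightarrow H^1_\tx{alg}(\mc{E}^\tx{iso},T)\times\tx{Hom}_F(\mb{T}^\tx{mid},T)$, hence coincides with $\theta^\tx{mid}$.
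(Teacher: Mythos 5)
Your proposal is correct and follows essentially the same route as the paper: both sides are realized as fibre products via Fact \ref{fct:tncartloc} and Corollary \ref{cor:cart}, the two cospans are matched using the functorial isomorphisms $Y^\tx{iso}\to H^1_\tx{alg}(\mc{E}^\tx{iso},T)$ and those of Fact \ref{fct:iso_hom_D}, and the isomorphism with its uniqueness then drops out of the universal property. The compatibility you single out as ``the real work'' (that restriction to $\mb{T}^\tx{iso}$ corresponds to $N^\natural$ under Kottwitz's isomorphism) is exactly the input the paper's terse proof also relies on implicitly.
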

\begin{proof}
  This follows from Fact \ref{fct:tncartloc} and Corollary \ref{cor:cart} which
  realize $Y^\tx{mid}$ and $H^1_\tx{alg}(\mc{E}^\tx{mid}, T)$ as fiber products,
  and the functoriality of the isomorphisms $Y^\tx{iso} \to
  H^1_\tx{alg}(\mc{E}^\tx{iso},T)$, $\tx{Hom}_F(\mb{T}^\tx{iso},T) \to (Y
  \otimes \Q)^\Gamma$ and $\tx{Hom}_F(\mb{T}^\tx{mid},T) \to (Y \otimes
  \Q[\Gamma])^\Gamma$.
\end{proof}

\begin{cor} \label{cor:tnmidlocrel}
Let $Z \subset T$ be a subtorus defined over $F$. The isomorphism $Y^\tx{mid}(T) \to H^1_\tx{alg}(\mc{E}^\tx{mid},T)$ identifies
\[ Y^\tx{mid}(Z \to T) := \{ (\lambda,\mu)| \lambda \in Y_\Gamma(T), \mu \in Y(Z) \otimes \Q, N^\natural(\lambda)=N^\natural(\mu)\} \subset Y^\tx{mid}(T) \]
with $H^1(\mb{T}^\tx{mid} \to \mc{E}^\tx{mid},Z \to T)$.
\end{cor}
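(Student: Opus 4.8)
The plan is to realise both $Y^\tx{mid}(Z \to T)$ and $H^1(\mb{T}^\tx{mid} \to \mc{E}^\tx{mid}, Z \to T)$ as the same fibre of a restriction map, and then to transport everything through the functorial isomorphism of Proposition \ref{pro:tnmidloc}.

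First I would observe that $H^1(\mb{T}^\tx{mid} \to \mc{E}^\tx{mid}, Z \to T)$ is precisely the preimage of $\tx{Hom}_F(\mb{T}^\tx{mid}, Z) \subseteq \tx{Hom}_F(\mb{T}^\tx{mid}, T)$ under the restriction map $H^1_\tx{alg}(\mc{E}^\tx{mid}, T) \to \tx{Hom}_F(\mb{T}^\tx{mid}, T)$ of Fact \ref{fct:infres}. Indeed, since $T$ is abelian, the restriction of a cocycle $z \in Z^1_\tx{alg}(\mc{E}^\tx{mid}, T)$ to $\mb{T}^\tx{mid}$ is an $F$-homomorphism $\mb{T}^\tx{mid} \to T$ that is unchanged under $T(\bar F)$-coboundaries; hence it depends only on the class of $z$, it is exactly its image in $\tx{Hom}_F(\mb{T}^\tx{mid}, T)$, and by definition the class of $z$ lies in $H^1(\mb{T}^\tx{mid} \to \mc{E}^\tx{mid}, Z \to T)$ if and only if this homomorphism factors through $Z$. (Alternatively this can be read off from comparing the four-term exact sequences of Fact \ref{fct:infres} for the pairs $Z \subseteq T$ and $T \subseteq T$, which form a ladder whose outer vertical maps are the identity on $H^1(\Gamma, T)$ and on $H^2(\Gamma, T)$.)

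Next I would transport this description through the vertical isomorphisms of Proposition \ref{pro:tnmidloc}. Its first commutative diagram, together with the formulas in Fact \ref{fct:tncartloc}, shows that under $Y^\tx{mid}(T) \xrightarrow{\sim} H^1_\tx{alg}(\mc{E}^\tx{mid}, T)$ and the isomorphism $\tx{Hom}_F(\mb{T}^\tx{mid}, T) \xrightarrow{\sim} (Y \otimes M^\tx{mid})^\Gamma$ of Fact \ref{fct:iso_hom_D}, the restriction map corresponds to $(\lambda, \mu) \mapsto \sum_\sigma \sigma(\mu) \otimes \sigma$. By the evident functoriality of Fact \ref{fct:iso_hom_D} in the torus, the inclusion $\tx{Hom}_F(\mb{T}^\tx{mid}, Z) \hookrightarrow \tx{Hom}_F(\mb{T}^\tx{mid}, T)$ corresponds to the inclusion $(Y(Z) \otimes M^\tx{mid})^\Gamma \hookrightarrow (Y \otimes M^\tx{mid})^\Gamma$ induced by $Y(Z) \subseteq Y$. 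Combining this with the previous paragraph, under Proposition \ref{pro:tnmidloc} the subgroup $H^1(\mb{T}^\tx{mid} \to \mc{E}^\tx{mid}, Z \to T)$ corresponds to
\[ \bigl\{\, (\lambda, \mu) \in Y^\tx{mid}(T) \ :\ \sum_{\sigma} \sigma(\mu) \otimes \sigma \in Y(Z) \otimes M^\tx{mid} \,\bigr\}. \]

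Finally I would check that, for $(\lambda, \mu) \in Y^\tx{mid}(T)$, the condition $\sum_\sigma \sigma(\mu) \otimes \sigma \in Y(Z) \otimes M^\tx{mid}$ is equivalent to $\mu \in Y(Z) \otimes \Q$, which identifies the displayed set with $Y^\tx{mid}(Z \to T)$ and finishes the proof. For the forward implication, apply the map $Y \otimes \Q[\Gamma] \to Y \otimes \Q$ of abelian groups induced by evaluation of functions at $1 \in \Gamma$: it carries $Y(Z) \otimes M^\tx{mid}$ into $Y(Z) \otimes \Q$ and carries $\sum_\sigma \sigma(\mu) \otimes \sigma$ to $\mu$. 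For the converse, if $\mu \in Y(Z) \otimes \Q$ then $\sigma(\mu) \in Y(Z) \otimes \Q$ for every $\sigma$ because $Y(Z)$ is $\Gamma$-stable, so $\sum_\sigma \sigma(\mu) \otimes \sigma$ lies in $Y(Z) \otimes \Q[\Gamma]$; since by Fact \ref{fct:tncartloc} it also lies in $Y \otimes M^\tx{mid}$, it lies in the intersection $(Y(Z) \otimes \Q[\Gamma]) \cap (Y \otimes M^\tx{mid})$ taken inside $Y \otimes \Q[\Gamma]$, and this intersection equals $Y(Z) \otimes M^\tx{mid}$ because $Y/Y(Z) = X_*(T/Z)$ is a free abelian group (so tensoring the inclusion $M^\tx{mid} \hookrightarrow \Q[\Gamma]$ by it stays injective). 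I expect the only step requiring genuine care to be this last lattice identity, which is nonetheless elementary; everything else is a diagram chase resting on Proposition \ref{pro:tnmidloc} and Fact \ref{fct:infres}.
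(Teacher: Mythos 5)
Your argument is correct, and it is exactly the filling-in of details that the paper intends: the corollary is stated without proof as an immediate consequence of Proposition \ref{pro:tnmidloc}, whose proof already realizes both sides as fibre products over the restriction-to-the-band map, so identifying each subobject as the preimage of the $Z$-valued homomorphisms is the intended route. Your final lattice identity $(Y(Z)\otimes\Q[\Gamma])\cap(Y\otimes M^\tx{mid})=Y(Z)\otimes M^\tx{mid}$, justified by the freeness of $Y/Y(Z)=X_*(T/Z)$, is the one point genuinely needing verification and you handle it correctly.
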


\begin{pro} \label{pro:tnisomid}
  For any torus $T$ over $F$, the composition
  \[ Y^\tx{iso} \simeq H^1_\tx{alg}(\mc{E}^\tx{iso},T) \to
    H^1(\mc{E}^\tx{mid},T) \simeq Y^\tx{mid} \]
  where the middle map is pullback along $s_\tx{iso} : \mc{E}^\tx{mid} \to
  \mc{E}^\tx{iso}$, maps $\lambda \in Y^\tx{iso}$ to $(\lambda,
  N^\natural(\lambda))$.
\end{pro}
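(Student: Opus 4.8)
The plan is to pin down $s_\tx{iso}^*(\lambda)$ by its two projections in a Cartesian square. By Corollary \ref{cor:cart} the square with corners $H^1_\tx{alg}(\mc{E}^\tx{mid},T)$, $\tx{Hom}_F(\mb{T}^\tx{mid},T)$, $H^1(\mc{E}^\tx{iso},T)$, $\tx{Hom}_F(\mb{T}^\tx{iso},T)$ is Cartesian, and by Fact \ref{fct:tncartloc} so is the corresponding square with corners $Y^\tx{mid}$, $(Y\otimes M^\tx{mid})^\Gamma$, $Y^\tx{iso}$, $(Y\otimes M^\tx{iso})^\Gamma$; Proposition \ref{pro:tnmidloc} says the Tate--Nakayama isomorphisms identify these two squares. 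Hence it suffices to compute the image of $s_\tx{iso}^*(\lambda)$ under pullback along $c_\tx{iso}\colon\mc{E}^\tx{iso}\to\mc{E}^\tx{mid}$ together with its restriction to $\mb{T}^\tx{mid}$, and then to read off the corresponding pair $(\lambda',\mu)\in Y^\tx{mid}$ from the formulas in Fact \ref{fct:tncartloc}.

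For the first projection: equation \eqref{eq:siso} presents $s^\tx{iso}$ as a section of $c^\tx{iso}$, so dually $s_\tx{iso}\circ c_\tx{iso}=\tx{id}_{\mb{T}^\tx{iso}}$, and the composite $\mc{E}^\tx{iso}\to\mc{E}^\tx{mid}\to\mc{E}^\tx{iso}$ is a morphism of extensions inducing the identity on $\mb{T}^\tx{iso}$ and on $\Gamma$. Since $H^1(\Gamma,\mb{T}^\tx{iso}_E)=0$ for all $E$ by Hilbert's Theorem 90, such a self-morphism acts trivially on $H^1_\tx{alg}(\mc{E}^\tx{iso},T)$ (cf.\ \S\ref{sub:prelim}); hence $c_\tx{iso}^*\circ s_\tx{iso}^*=\tx{id}$, so the pullback of $s_\tx{iso}^*(\lambda)$ along $c_\tx{iso}$ is $\lambda$. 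By the second diagram of Proposition \ref{pro:tnmidloc}, this means the $Y^\tx{iso}$-component of $s_\tx{iso}^*(\lambda)$ is $\lambda$.

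For the second projection: if $w\in Z^1_\tx{alg}(\mc{E}^\tx{iso},T)$ represents $\lambda$, then a representative of $s_\tx{iso}^*(\lambda)$ restricts on $\mb{T}^\tx{mid}$ to $(w|_{\mb{T}^\tx{iso}})\circ(s_\tx{iso}|_{\mb{T}^\tx{mid}})$. By Fact \ref{fct:tncartloc} the homomorphism $w|_{\mb{T}^\tx{iso}}$ corresponds under \eqref{eq:iso_hom_D} to $N^\natural(\lambda)\in(Y\otimes M^\tx{iso})^\Gamma$, and, via Cartier duality (which identifies $s_\tx{iso}$ with $s^\tx{iso}$) together with functoriality of \eqref{eq:iso_hom_D} in $D$, precomposition with $s_\tx{iso}$ corresponds to $\tx{id}_Y\otimes s^\tx{iso}\colon(Y\otimes M^\tx{iso})^\Gamma\to(Y\otimes M^\tx{mid})^\Gamma$. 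Since at infinite level $s^\tx{iso}$ sends a value $q$ to the constant function $\sigma\mapsto q$, the restriction of $s_\tx{iso}^*(\lambda)$ to $\mb{T}^\tx{mid}$ corresponds to $\sum_\sigma N^\natural(\lambda)\otimes\sigma$. On the other hand Fact \ref{fct:tncartloc} says the $(Y\otimes M^\tx{mid})^\Gamma$-component of $(\lambda,\mu)\in Y^\tx{mid}$ is $\sum_\sigma\sigma(\mu)\otimes\sigma$. Comparing the two expressions forces $\sigma(\mu)=N^\natural(\lambda)$ for every $\sigma$, hence $\mu=N^\natural(\lambda)$.

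Combining the two projections and using that $Y^\tx{mid}$, being a fiber product, injects into the product of these projections, we obtain $s_\tx{iso}^*(\lambda)=(\lambda,N^\natural(\lambda))$; this indeed lies in $Y^\tx{mid}$ because $N^\natural$ is idempotent on $Y\otimes\Q$, so $N^\natural(N^\natural(\lambda))=N^\natural(\lambda)$. Equivalently, by Remark \ref{rem:splitting_H1} the map $s_\tx{iso}^*$ is precisely the inclusion of the $H^1(\mc{E}^\tx{iso},T)$-factor in the splitting $H^1(\mc{E}^\tx{mid},T)\simeq H^1(\mc{E}^\tx{iso},T)\times\tx{Hom}(\mb{T}^\tx{mid}/\mb{T}^\tx{iso},T)$ induced by \eqref{eq:siso}, so $s_\tx{iso}^*(\lambda)$ has trivial second factor, which unwinds to $\mu=N^\natural(\lambda)$. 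I expect the only genuinely delicate point to be keeping the normalizations straight: reconciling the factor $[E:F]^{-1}$ in \eqref{eq:siso} with the renormalized norm $N^\natural$, the identification $M^\tx{iso}\otimes\Q=\Q$ at infinite level, and the Archimedean cases where $M^\tx{iso}$ is $\frac{1}{2}\Z$ (for $\R$) or $\Z$ (for $\C$); the structural part of the argument is formal given Proposition \ref{pro:tnmidloc}, Corollary \ref{cor:cart}, and Fact \ref{fct:tncartloc}.
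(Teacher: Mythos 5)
Your proposal is correct and follows essentially the same route as the paper: the paper likewise gets the first component from $s_\tx{iso}\circ c_\tx{iso}=\tx{id}_{\mc{E}^\tx{iso}}$ up to $Z^1(\Gamma,\mb{T}^\tx{iso})$, and computes $\mu$ as the image of $\lambda$ under $N^\natural$ followed by $\tx{id}_Y\otimes s_\tx{iso}$ and evaluation at $1\in\Gamma$, which is exactly your comparison of $\sum_\sigma\sigma(\mu)\otimes\sigma$ with the constant function $N^\natural(\lambda)$. Your extra verifications (vanishing of $H^1(\Gamma,\mb{T}^\tx{iso}_E)$, idempotence of $N^\natural$) are correct but left implicit in the paper.
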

\begin{proof}
  The image of $\lambda$ is of the form $(\lambda, \mu)$ since $s_\tx{iso} \circ
  c_\tx{iso} = \tx{id}_{\mc{E}^\tx{iso}}$ up to $Z^1(\Gamma, \mb{T}^\tx{iso})$.
  We can compute $\mu \in Y \otimes \Q$ as the image of $\lambda$ by the
  composition
  \[ Y^\tx{iso} \to (Y \otimes \Q)^\Gamma \xrightarrow{\tx{id}_Y \otimes
    s_\tx{iso}} (Y \otimes \Q[\Gamma])^\Gamma \simeq Y \otimes \Q \]
  where the last map is evaluation at $1 \in \Gamma$.
\end{proof}

\begin{pro} \label{pro:tnmidrigloc}
The composition
\[ Y^\tx{mid} \xrightarrow{\sim} H^1_\tx{alg}(\mc{E}^\tx{mid},T) \to
H^1(P^\tx{rig},T) \xrightarrow{\sim} Y^\tx{rig}, \]
where the middle map is induced by a map of gerbes $c_\tx{rig}$ as in
\eqref{eq:maps_gerbes_loc}, is given by $(\lambda,\mu) \mapsto \lambda-\mu$.
\end{pro}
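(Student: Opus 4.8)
The plan is to evaluate the composition separately on the two natural summands of $Y^\tx{mid}$ coming from the canonical splitting, and then add. Using Proposition \ref{pro:tnisomid} and the remark following Fact \ref{fct:tncartloc}, $Y^\tx{mid}$ is the internal direct sum of $s_\tx{iso}^*(Y^\tx{iso}) = \{(\lambda, N^\natural(\lambda)) : \lambda \in Y_\Gamma\}$ and of $W := \{(0,\nu) : \nu \in Y\otimes\Q,\ N^\natural(\nu)=0\}$: one writes $(\lambda,\mu) = (\lambda, N^\natural(\lambda)) + (0,\mu-N^\natural(\lambda))$, and $N^\natural(\mu-N^\natural(\lambda))=0$ since $N^\natural(\lambda)=N^\natural(\mu)$ by definition of $Y^\tx{mid}$. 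As the composition in question is additive (pullback along a homomorphism of gerbes, through cohomology with abelian coefficients, is a homomorphism), it suffices to compute it on each summand.

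On $s_\tx{iso}^*(Y^\tx{iso})$ this follows from facts already in hand: $c_\tx{rig}^*(\lambda, N^\natural(\lambda)) = c_\tx{rig}^*(s_\tx{iso}^*(\lambda)) = (s_\tx{iso}\circ c_\tx{rig})^*\lambda$, and $s_\tx{iso}\circ c_\tx{rig} : \mc{E}^\tx{rig} \to \mc{E}^\tx{iso}$ is the local comparison map of \cite{KalRIBG} (which is how $\phi$ is defined in the Archimedean case, cf.\ \S\ref{sub:locinf} and \S\ref{sub:loc_gerbes}); on the linear-algebraic side the latter is $Y^\tx{iso} \ni \lambda \mapsto \lambda - N^\natural(\lambda) \in Y^\tx{rig}$, as recalled in the introduction. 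This equals $\lambda - \mu$ for $\mu = N^\natural(\lambda)$.

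For the summand $W$ I would first determine $c_\tx{rig}^*(0,\nu)$ modulo the torsion subgroup $Y_{\Gamma,\tx{tor}}$ and then remove the ambiguity. By Fact \ref{fct:infres} (with $G = Z = T$) the restriction map $\beta : H^1(\mc{E}^\tx{rig},T) \to \tx{Hom}_F(P^\tx{rig},T)$ has kernel the image of the inflation of $H^1(\Gamma,T) = Y_{\Gamma,\tx{tor}}$, which under the isomorphism $Y^\tx{rig} \cong H^1(\mc{E}^\tx{rig},T)$ of \cite[\S4]{KalRI} is the canonical copy of $Y_{\Gamma,\tx{tor}}$ inside $Y^\tx{rig}$. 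Functoriality of restriction gives $\beta(c_\tx{rig}^*(0,\nu)) = (\tx{id}_Y \otimes c^\tx{rig})(h)$, where $h \in \tx{Hom}_F(\mb{T}^\tx{mid},T)$ is the restriction of $(0,\nu)$ to $\mb{T}^\tx{mid}$, namely $h = \sum_\sigma \sigma(\nu)\otimes\sigma$ by Fact \ref{fct:tncartloc}. Using the formula $c^\tx{rig}(f) = -f \bmod \Z$ from \S\ref{sub:locmod} together with the explicit description of $\beta$ afforded by the construction of the Tate--Nakayama isomorphism for $\mc{E}^\tx{rig}$ in \cite[\S4]{KalRI}, a direct computation yields $(\tx{id}_Y \otimes c^\tx{rig})(h) = \beta(-\nu)$, so $c_\tx{rig}^*(0,\nu) \equiv -\nu \pmod{Y_{\Gamma,\tx{tor}}}$. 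Finally, $\nu \mapsto c_\tx{rig}^*(0,\nu) + \nu$ is an additive map from the $\Q$-vector space $W$ to the finite group $Y_{\Gamma,\tx{tor}}$; its image, being a quotient of a divisible group and a subgroup of a finite group, is trivial, so $c_\tx{rig}^*(0,\nu) = -\nu$ on the nose. Adding the two contributions gives $c_\tx{rig}^*(\lambda,\mu) = (\lambda - N^\natural(\lambda)) - (\mu - N^\natural(\lambda)) = \lambda - \mu$.

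The step I expect to be the main obstacle is the identity $(\tx{id}_Y \otimes c^\tx{rig})(h) = \beta(-\nu)$: this is where the explicit Tate--Nakayama cocycle for $\mc{E}^\tx{rig}$ from \cite{KalRI} must be matched, with correct signs and normalizations, against the ``negate and reduce mod $\Z$'' description of $c^\tx{rig}$ and the ``$\sigma(\cdot)\otimes\sigma$'' description of the restriction map of Fact \ref{fct:tncartloc}. Once the conventions are aligned this is routine, and all the remaining ingredients --- the splitting of $Y^\tx{mid}$, additivity, and the vanishing of homomorphisms from a $\Q$-vector space to a finite group --- are formal.
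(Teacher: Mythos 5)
Your argument is a genuinely different route from the paper's. The paper observes that both the composition and the map $(\lambda,\mu)\mapsto\lambda-\mu$ are functorial homomorphisms fitting into the same commutative diagram with exact rows $0\to Y_{\Gamma,\tx{tor}}\to Y^{\tx{mid}}\to Y\otimes\Q$ and $0\to Y_{\Gamma,\tx{tor}}\to Y^{\tx{rig}}\to\frac{Y\otimes\Q}{Y}$, so that their difference is a natural transformation into $Y_{\Gamma,\tx{tor}}$; it then kills this difference by reduction to induced tori (where $Y_{\Gamma,\tx{tor}}=0$), using functoriality and the surjectivity of $\widetilde{Y}^{\tx{mid}}\to Y^{\tx{mid}}$ for a presentation of $Y$ by an induced module. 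You instead split $Y^{\tx{mid}}=s_{\tx{iso}}^*(Y^{\tx{iso}})\oplus W$ via Proposition \ref{pro:tnisomid}, import the formula $\lambda\mapsto\lambda-N^\natural(\lambda)$ for $(s_{\tx{iso}}\circ c_{\tx{rig}})^*$ from \cite{KalRIBG} on the first summand, and use divisibility of $W$ to remove the $Y_{\Gamma,\tx{tor}}$-ambiguity on the second. Both proofs rest on the same unglamorous verification — the compatibility of the restriction-to-the-band maps with the Tate--Nakayama isomorphism of \cite[\S4]{KalRI} and the minus sign in $c^{\tx{rig}}$ — which you flag as your main obstacle and which the paper likewise asserts without detail (it is the commutativity of the right square of its diagram). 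Your divisibility trick on $W$ is clean; what the paper's reduction to induced tori buys is uniformity: it needs no external input and treats the non-divisible part of $Y^{\tx{mid}}$ at the same stroke.

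There is, however, a genuine gap in the Archimedean case. The proposition is stated for an arbitrary local field of characteristic zero, and the real case is needed for the global applications. Your treatment of the summand $s_{\tx{iso}}^*(Y^{\tx{iso}})$ invokes the cohomological formula $\lambda\mapsto\lambda-N^\natural(\lambda)$ for the comparison map $\mc{E}^{\tx{rig}}\to\mc{E}^{\tx{iso}}$, i.e.\ \cite[Proposition 3.2]{KalRIBG}; but that reference only treats finite extensions of $\Q_p$, and the present paper explicitly obtains its Archimedean analogue as a \emph{consequence} of Propositions \ref{pro:tnisomid} and \ref{pro:tnmidrigloc} (see \S\ref{sub:iso_LLC_archi}). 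Your parenthetical about $\phi$ being \emph{defined} by $c^{\tx{rig}}\circ s^{\tx{iso}}$ in the Archimedean case fixes the map of gerbes but not the formula it induces on $H^1$, so for $F=\R$ your argument is circular as written. It is repairable — either prove the real analogue of \cite[Proposition 3.2]{KalRIBG} directly, or apply the paper's functoriality-plus-induced-tori device to the difference map restricted to $s_{\tx{iso}}^*(Y^{\tx{iso}})$ — but as it stands the real case is not covered.
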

\begin{proof}
  This composition, as well as the map $(\lambda,\mu) \mapsto \lambda-\mu$, are
  functorial homomorphisms that fit into the commutative diagram with exact rows
  \[ \xymatrix{
    0\ar[r] & Y_{\Gamma,\tx{tor}}\ar[r]\ar@{=}[d] & Y^\tx{mid}\ar[r]\ar[d] &
      Y\otimes \Q\ar[d]\\
    0\ar[r] & Y_{\Gamma,\tx{tor}}\ar[r] & Y^\tx{rig}\ar[r] &
      \frac{Y\otimes\Q}{Y}
  } \]
  where the right vertical map is $\mu \mapsto - \mu + Y$.
  If $T$ is induced, that is if $T \simeq \tx{Res}_{A/F} \tx{GL}_1$ for some
  finite \'etale $F$-algebra $A$, then $Y_{\Gamma, \tx{tor}} \simeq H^1(\Gamma,
  T) = 0$ by Shapiro's lemma and Hilbert's theorem 90 and so our two maps
  $Y^\tx{mid} \to Y^\tx{rig}$ are equal in this case.
  In general we realize $T$ as a quotient of an induced torus $\widetilde{T}$,
  by realizing $Y$ as a quotient of an induced $\Z[\Gamma_{E/F}]$-module
  $\widetilde{Y}$ for some finite Galois extension $E/F$.
  Let $K = \ker (\widetilde{Y} \to Y)$.
  To conclude it is enough to show that $\widetilde{Y}^\tx{mid} \to Y^\tx{mid}$
  is surjective.
  Let $(\lambda, \mu) \in Y^\tx{mid}$, choose $\widetilde{\lambda} \in
  \widetilde{Y}_\Gamma$ lifting $\lambda$ and $\widetilde{\mu}_0 \in
  \widetilde{Y} \otimes \Q$ lifting $\mu$.
  Then $\epsilon := N^\natural(\widetilde{\lambda}) -
  N^\natural(\widetilde{\mu}_0) \in (K \otimes \Q)^\Gamma$ and thus
  $N^\natural(\epsilon) = \epsilon$.
  Setting $\widetilde{\mu} = \widetilde{\mu}_0 + \epsilon$, we obtain that
  $(\widetilde{\lambda}, \widetilde{\mu}) \in \widetilde{Y}^\tx{mid}$ lifts
  $(\lambda, \mu)$.
\end{proof}

Recall that in the non-Archimedean case, the morphism of extensions $s_\tx{iso}
\circ c_\tx{rig} : \mc{E}^\tx{rig} \to \mc{E}^\tx{iso}$ equals
\cite[(3.13)]{KalRIBG} up to $Z^1(\Gamma, \mb{T}^\tx{iso})$.
Note that Proposition 3.2 loc.\ cit.\ (as well as its Archimedean analogue)
follows from Propositions \ref{pro:tnisomid} and \ref{pro:tnmidrigloc} above.
This is not surprising since the proof of Proposition \ref{pro:tnmidrigloc} is
very similar to that of Proposition 3.2 loc.\ cit.
For later use in Section \ref{sec:mult_formula} we also record the following
consequence.

\begin{cor} \label{cor:tnlcononcomm}
Consider the two homomorphisms $H^1_\tx{alg}(\mc{E}^\tx{mid},T) \to H^1(\mc{E}^\tx{rig},T)$ obtained by pulling back along
\begin{enumerate}
	\item A homomorphism $c_\tx{rig} : \mc{E}^\tx{rig} \to \mc{E}^\tx{mid}$,
  \item The composition of a morphism $s_\tx{iso} \circ c_\tx{rig} :
    \mc{E}^\tx{rig} \to \mc{E}^\tx{iso}$ with $c_\tx{iso} :
    \mc{E}^\tx{iso} \to \mc{E}^\tx{mid}$.
\end{enumerate}
Their difference, when pre-composed with the Tate-Nakayama isomorphism
$Y^\tx{mid} \to H^1_\tx{alg}(\mc{E}^\tx{mid},T)$ and post-composed with the
inverse of the Tate-Nakayama isomorphism $Y^\tx{rig} \to
H^1(\mc{E}^\tx{rig},T)$, is given by the map
\[ Y^\tx{mid} \to Y^\tx{rig},\qquad (\lambda,\mu) \mapsto \mu-N^\natural(\mu). \]
\end{cor}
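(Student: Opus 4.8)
The plan is to decompose each of the two homomorphisms $H^1_\tx{alg}(\mc{E}^\tx{mid},T) \to H^1(\mc{E}^\tx{rig},T)$ into pieces already computed in Propositions \ref{pro:tnmidrigloc}, \ref{pro:tnmidloc} and \ref{pro:tnisomid}, and then to subtract. Throughout one uses that the maps of gerbes involved ($c_\tx{rig}$, $c_\tx{iso}$, $s_\tx{iso}$) are unique only up to the relevant coboundary ambiguity, but that by the discussion of \S\ref{sub:prelim} together with Lemma \ref{lem:h1lmvan} the induced maps on $H^1_\tx{alg}$ are canonical, so both homomorphisms in the statement (hence their difference) are well-defined.

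The first homomorphism is pullback along $c_\tx{rig} : \mc{E}^\tx{rig} \to \mc{E}^\tx{mid}$, which is exactly the composition computed in Proposition \ref{pro:tnmidrigloc}: under the Tate-Nakayama identifications $Y^\tx{mid} \xrightarrow{\sim} H^1_\tx{alg}(\mc{E}^\tx{mid},T)$ and $Y^\tx{rig} \xrightarrow{\sim} H^1(\mc{E}^\tx{rig},T)$ it is $(\lambda,\mu) \mapsto \lambda - \mu$.

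For the second homomorphism I would observe that the map of gerbes defining it is the composite $\mc{E}^\tx{rig} \xrightarrow{c_\tx{rig}} \mc{E}^\tx{mid} \xrightarrow{s_\tx{iso}} \mc{E}^\tx{iso} \xrightarrow{c_\tx{iso}} \mc{E}^\tx{mid}$, so on cohomology it is $c_\tx{rig}^* \circ s_\tx{iso}^* \circ c_\tx{iso}^*$, and then evaluate the three factors in turn. By the second diagram of Proposition \ref{pro:tnmidloc}, $c_\tx{iso}^* : H^1_\tx{alg}(\mc{E}^\tx{mid},T) \to H^1(\mc{E}^\tx{iso},T)$ is the natural map $Y^\tx{mid} \to Y^\tx{iso}$, $(\lambda,\mu) \mapsto \lambda$. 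By Proposition \ref{pro:tnisomid}, $s_\tx{iso}^* : H^1(\mc{E}^\tx{iso},T) \to H^1_\tx{alg}(\mc{E}^\tx{mid},T)$ is $\lambda \mapsto (\lambda, N^\natural(\lambda))$. Finally $c_\tx{rig}^*$ is again the map $(\lambda,\mu) \mapsto \lambda - \mu$ of Proposition \ref{pro:tnmidrigloc}. Composing, the second homomorphism carries $(\lambda,\mu)$ to $\lambda - N^\natural(\lambda)$.

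Subtracting the first homomorphism from the second, the difference sends $(\lambda,\mu)$ to $(\lambda - N^\natural(\lambda)) - (\lambda - \mu) = \mu - N^\natural(\lambda)$; since by definition of $Y^\tx{mid}$ one has $N^\natural(\lambda) = N^\natural(\mu)$, this equals $\mu - N^\natural(\mu)$, as asserted. I do not expect any serious obstacle: the corollary is a formal consequence of the three cited propositions, and the only points requiring care are bookkeeping — matching each gerbe morphism with the correct direction of pullback, fixing a consistent sign convention for ``difference'', invoking the vanishing results of \S\ref{sub:prelim} so the non-canonical choices wash out — together with the already-established fact that $Y^\tx{mid}$ is cut out inside $Y_\Gamma \times (Y \otimes \Q)$ by precisely the relation $N^\natural(\lambda) = N^\natural(\mu)$ used in the last step.
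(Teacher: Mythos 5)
Your proposal is correct and is essentially the paper's own argument: the paper's proof is the one-line observation that the corollary follows from Propositions \ref{pro:tnisomid} and \ref{pro:tnmidrigloc} together with the relation $N^\natural(\lambda)=N^\natural(\mu)$, which is exactly the computation you spell out (your additional appeal to the second diagram of Proposition \ref{pro:tnmidloc} to identify $c_\tx{iso}^*$ with $(\lambda,\mu)\mapsto\lambda$ is an implicit ingredient of the same argument). Your sign convention (second map minus first) is the one that yields $\mu-N^\natural(\mu)$ rather than its negative, matching the statement.
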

\begin{proof}
  This follows immediately from Propositions \ref{pro:tnisomid} and
  \ref{pro:tnmidrigloc} and the equality $N^\natural(\mu) =
  N^\natural(\lambda)$.
\end{proof}

Now turn to the global case: Let $F$ be global. Let $T$ be a torus defined over $F$ and $Z \subset T$ a subtorus defined over $F$. As before we write $Y=X_*(T)$.
Denote $Y^\tx{iso}_{E,S} := (Y[S_E]_0)_\Gamma$ and $Y^\tx{iso}_E = \varinjlim_S
Y^\tx{iso}_{E,S}$.
For $K/F$ a finite Galois extension containing $E$ define $j : Y^\tx{iso}_K \to
Y^\tx{iso}_E$ by $j(f)(v) = \sum_{\substack{w \in V_K \\ w \mapsto v}} f(w)$ for
$v \in V_E$.
It turns out that this map is an isomorphism.
Choose a section $s$ of $V_K \to V_E$ whose image contains $\dot V_K$, then by
\cite[Lemma 3.1.7]{KalGRI} the unique right inverse $s_! : Y[V_E]_0 \to
Y[s(V_E)]_0 \subset Y[V_K]_0$ to $Y[V_K]_0 \to Y[V_E]_0$ induces a well-defined
map $! : Y^\tx{iso}_E \to Y^\tx{iso}_K$ which does not depend on the choice of
$s$ (in fact $\dot V$ is irrelevant here) and of course is injective.
It is also surjective thanks to Lemma \ref{lem:repr_Sdot_supp}, and so $j$ is
an isomorphism with inverse $!$.
Denote $Y^\tx{iso} = \varinjlim_E Y^\tx{iso}_E = \varprojlim_E Y^\tx{iso}_E$.
By \cite[Lemma 4.1]{KotBG} we have a Tate-Nakayama isomorphism $Y^\tx{iso}
\simeq H^1_\tx{alg}(\mc{E}^\tx{iso},T)$.
Note that \cite[Lemma 8.4]{KotBG} also shows a posteriori that the maps $j$ are
isomorphisms.
For any $(E,S)$ such that $E$ splits $T$ and $S$ satisfies Tate's axioms the
following diagram is commutative:
\[ \xymatrix{
    0 \ar[r] & (Y[S_E]_0)_{\Gamma,\tx{tor}} \ar[r] \ar[d] & Y^\tx{iso}_{E, S}
      \ar[r]^{N_{E/F}} \ar[d] & (Y[S_E]_0)^\Gamma \ar[d] \\
    0 \ar[r] & H^1(F, T) \ar[r] & H^1_\tx{alg}(\mc{E}^\tx{iso}, T) \ar[r] &
    \tx{Hom}(\mb{T}^\tx{iso}, T)
} \]
where the left vertical arrow is the usual Tate-Nakayama map and the right
vertical arrow is the obvious map.
For a subtorus $Z$ of $T$ defined over $F$ denote $Y_T = X_*(T)$ and $Y_Z =
X_*(Z)$ and let
\begin{eqnarray*}
  Y^\tx{mid}(Z \to T)_{E,\dot S_E} & := & \Bigg\{(\lambda,\mu) \Bigg| \lambda
    \in Y^\tx{iso}_{E,S}(T), \mu \in (M^\tx{mid}_{E,\dot S_E}\otimes
    Y_Z)^\Gamma, \\
  && \sum_{\sigma \in \Gamma_{E/F}} \sigma(\lambda(\sigma^{-1}w)) =
    \sum_\sigma\mu(\sigma,w) \Bigg\}.\\
  Y^\tx{mid}_{\dot V}(Z \to T)&:=&\varinjlim_{E,S} Y^\tx{mid}(Z \to T)_{E,\dot
    S_E}
\end{eqnarray*}
where the transition maps are given by $! = j^{-1}$ on $\lambda$ and using the
inflation maps defined in Section \ref{sub:globinf} on $\mu$.
It is easy to see that one could also define $Y^\tx{mid}_{\dot V}(Z \to T)$ as
the set of pairs $(\lambda, \mu)$ with $\lambda \in Y^\tx{iso}(T)$ and $\mu \in
(M^\tx{mid}_{\dot V} \otimes Y_Z)^\Gamma$ satisfying the above relation at any
level $E$.
More concretely, using the description of $M^\tx{mid}_{\dot V}$ given in Section
\ref{sub:def_pro_glob} we may also see $\mu$ as a function $V \to \Q \otimes
Y_Z$ with finite support such that $\sum_{v \in V} \mu(v) = 0$ and for any
Archimedean place $v$ of $F$, $N_{\ol{F_v} / F_v}(\mu(v)) \in Y_Z$.

\begin{fct} \label{fct:tncartglob}
The right square below is Cartesian
\[ \xymatrix{
  0 \ar[r] & Y^\tx{iso}_\tx{tor}(T) \ar[r] \ar@{=}[d] & Y^\tx{mid}_{\dot V}(Z
    \to T) \ar[r] \ar[d] & (M^\tx{mid}_{\dot V} \otimes Y_Z)^\Gamma \ar[d] \\
  0 \ar[r] & Y^\tx{iso}_\tx{tor}(T) \ar[r] & Y^\tx{iso}(Z \to T) \ar[r] &
    (M^\tx{iso} \otimes Y_Z)^\Gamma
} \]
\end{fct}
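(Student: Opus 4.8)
The plan is to unwind the definitions until the square becomes, essentially tautologically, a fibre-product square, reducing to finite level for concreteness and then taking a colimit. First I would record what the lower row is. Applying Fact~\ref{fct:infres} to $\mb{T}^\tx{iso}\to\mc{E}^\tx{iso}$ and transporting the inflation--restriction sequence through the Tate--Nakayama isomorphism (via Fact~\ref{fct:iso_hom_D} and the commutative square relating $Y^\tx{iso}(T)$ to $H^1_\tx{alg}(\mc{E}^\tx{iso},T)$ recalled above), one identifies $Y^\tx{iso}(Z\to T)$ with the subgroup of $Y^\tx{iso}(T)$ of classes whose norm in $(M^\tx{iso}\otimes X_*(T))^\Gamma$ lies in $(M^\tx{iso}\otimes Y_Z)^\Gamma$, the map to the latter being that norm and its kernel the torsion subgroup $Y^\tx{iso}_\tx{tor}(T)$ (which equals the kernel of the norm, as $(M^\tx{iso}\otimes X_*(T))^\Gamma$ is torsion free). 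Concretely, at a finite level $(E,S)$ splitting $T$ this is $Y^\tx{iso}(Z\to T)_{E,\dot S_E}=\{\lambda\in(X_*(T)[S_E]_0)_\Gamma\,|\,N_{E/F}(\lambda)\in Y_Z[S_E]_0\}$, with the map to $(M^\tx{iso}_{E,S}\otimes Y_Z)^\Gamma=(Y_Z[S_E]_0)^\Gamma$ given by $\lambda\mapsto N_{E/F}(\lambda)$.

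Next I would identify the remaining maps of the finite-level analogue of the square in the statement: the right vertical arrow $(M^\tx{mid}_{E,\dot S_E}\otimes Y_Z)^\Gamma\to(M^\tx{iso}_{E,S}\otimes Y_Z)^\Gamma$ is $\tx{id}_{Y_Z}\otimes c^\tx{iso}$, that is $f\mapsto(w\mapsto\sum_\sigma f(\sigma,w))$ by \eqref{eq:globcompmap}; the left vertical arrow is $(\lambda,\mu)\mapsto\lambda$; and the top horizontal arrow is $(\lambda,\mu)\mapsto\mu$. Commutativity of the square is then precisely the defining relation $\sum_\sigma\sigma(\lambda(\sigma^{-1}w))=\sum_\sigma\mu(\sigma,w)$ of $Y^\tx{mid}(Z\to T)_{E,\dot S_E}$, read as $N_{E/F}(\lambda)=(\tx{id}_{Y_Z}\otimes c^\tx{iso})(\mu)$. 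With the maps so pinned down, the square is Cartesian for a purely formal reason: a point of the fibre product is a pair $(\lambda,\mu)$ with $\mu\in(M^\tx{mid}_{E,\dot S_E}\otimes Y_Z)^\Gamma$, $\lambda\in Y^\tx{iso}(Z\to T)_{E,\dot S_E}$ and $N_{E/F}(\lambda)=(\tx{id}\otimes c^\tx{iso})(\mu)$, but the last equality already places $N_{E/F}(\lambda)$ in $Y_Z[S_E]_0$, so the constraint $\lambda\in Y^\tx{iso}(Z\to T)_{E,\dot S_E}$ is redundant and the fibre product coincides with $Y^\tx{mid}(Z\to T)_{E,\dot S_E}$, together with its two projections. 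Exactness of the rows then follows: the lower one from the previous paragraph, the upper one because $\lambda_0\mapsto(\lambda_0,0)$ embeds $(X_*(T)[S_E]_0)_{\Gamma,\tx{tor}}$ as the kernel of $(\lambda,\mu)\mapsto\mu$, which is forced by the Cartesian square and the identity left vertical map.

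Finally I would pass to the colimit over all $(E,S)$, with transition maps $!=j^{-1}$ on the $\lambda$-component and inflation on the $\mu$-component --- precisely the maps defining $Y^\tx{mid}_{\dot V}(Z\to T)$, $Y^\tx{iso}(Z\to T)$, and the two $M$-terms. Filtered colimits are exact and commute with finite limits, so the colimit of these Cartesian squares with exact rows is again such; for the two $M$-columns one uses that $\Gamma$ acts through a finite quotient on each term, so that forming $\Gamma$-invariants commutes with the colimit. One may equally argue directly at infinite level, where $Y^\tx{mid}_{\dot V}(Z\to T)$ is the set of pairs $(\lambda,\mu)$ with $\lambda\in Y^\tx{iso}(T)$, $\mu\in(M^\tx{mid}_{\dot V}\otimes Y_Z)^\Gamma$ and $N(\lambda)=(\tx{id}\otimes c^\tx{iso})(\mu)$, so that the same redundancy remark identifies it with the fibre product. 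I do not expect a genuine obstacle: the one ingredient that is not a formal manipulation is the identification of $Y^\tx{iso}(Z\to T)$ as the norm-preimage subgroup in the first paragraph, and this is supplied by the Tate--Nakayama diagram \eqref{eq:tnzdiagglob} for $\mc{E}^\tx{iso}$ together with Fact~\ref{fct:infres}.
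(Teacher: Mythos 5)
Your argument is correct and is exactly the unwinding the paper has in mind: its entire proof is ``This follows directly from the definition,'' and your observation that the compatibility condition $N_{E/F}(\lambda)=(\tx{id}\otimes c^\tx{iso})(\mu)$ automatically forces $N_{E/F}(\lambda)$ into $(M^\tx{iso}\otimes Y_Z)^\Gamma$, making the fibre product tautologically equal to $Y^\tx{mid}_{\dot V}(Z\to T)$, is the whole content. The only thing you do beyond the paper is re-derive the description of $Y^\tx{iso}(Z\to T)$ from the cohomological side, which is harmless (the ``iso'' theory is prior input here) but could be taken as the definition outright.
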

\begin{proof}
  This follows directly from the definition.
\end{proof}

\begin{rem}
  Using the same argument as in the proof of Proposition
  \ref{pro:ciso_splits_infty} one can show that the natural transformation
  $Y^\tx{mid}_{\dot V} \to Y^\tx{iso}$ admits a splitting, but as we already
  observed in Remark \ref{rem:no_can_rig_iso} this splitting is not canonical.
\end{rem}

As in the local case we simply write $Y^\tx{mid}_{\dot V}(T)$ for
$Y^\tx{mid}_{\dot V}(T \to T)$.

\begin{pro} \label{pro:tnmidglob}
There is a unique functorial isomorphism
\[  Y^\tx{mid}_{\dot V}(T) \to H^1_\tx{alg}(\mc{E}^\tx{mid}_{\dot V},T) \]
that fits into the commutative diagrams
\[ \xymatrix{
  Y^\tx{iso}_\tx{tor} \ar[r]\ar[d] & Y^\tx{mid}_{\dot V} \ar[r]\ar[d] &
  (M^\tx{mid}_{\dot V} \otimes Y)^\Gamma\ar[d]\\
  H^1(\Gamma,T)\ar[r] & H^1(\mc{E}^\tx{mid}_{\dot V}, T) \ar[r] &
  \tx{Hom}_F(\mb{T}^\tx{mid}_{\dot V},T)
} \]
and
\[ \xymatrix{
  Y^\tx{mid}_{\dot V}\ar[r]\ar[d]&Y^\tx{iso}\ar[d]\\
  H^1(\mc{E}^\tx{mid}_{\dot V},T)\ar[r]&H^1(\mc{E}^\tx{iso},T).
} \]
\end{pro}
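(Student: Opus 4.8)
The plan is to mimic the proof of Proposition \ref{pro:tnmidloc}, realizing both $Y^\tx{mid}_{\dot V}(T)$ and $H^1_\tx{alg}(\mc{E}^\tx{mid}_{\dot V},T)$ as the fibre product of one and the same cospan. On the cohomological side, Corollary \ref{cor:cart} applied with $G=T$ (and the central torus there taken to be $T$ itself) identifies $H^1_\tx{alg}(\mc{E}^\tx{mid}_{\dot V},T)=H^1(\mb{T}^\tx{mid}_{\dot V}\to\mc{E}^\tx{mid}_{\dot V},T\to T)$ with the fibre product of
\[ H^1_\tx{alg}(\mc{E}^\tx{iso},T)\rw\tx{Hom}_F(\mb{T}^\tx{iso},T)\lw\tx{Hom}_F(\mb{T}^\tx{mid}_{\dot V},T). \]
On the linear-algebra side, Fact \ref{fct:tncartglob} with $Z=T$ identifies $Y^\tx{mid}_{\dot V}(T)$ with the fibre product of
\[ Y^\tx{iso}\rw(M^\tx{iso}\otimes Y)^\Gamma\lw(M^\tx{mid}_{\dot V}\otimes Y)^\Gamma. \]
To glue these two descriptions I would use the three comparison isomorphisms on the remaining corners: the Tate--Nakayama isomorphism $Y^\tx{iso}\simeq H^1_\tx{alg}(\mc{E}^\tx{iso},T)$ of \cite[Lemma 4.1]{KotBG} recalled above, together with the two instances $(M^\tx{iso}\otimes Y)^\Gamma\simeq\tx{Hom}_F(\mb{T}^\tx{iso},T)$ and $(M^\tx{mid}_{\dot V}\otimes Y)^\Gamma\simeq\tx{Hom}_F(\mb{T}^\tx{mid}_{\dot V},T)$ of Fact \ref{fct:iso_hom_D}.

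Next I would verify that these three isomorphisms intertwine the two maps into the common vertex. The arrow $(M^\tx{mid}_{\dot V}\otimes Y)^\Gamma\to(M^\tx{iso}\otimes Y)^\Gamma$ is $c^\tx{iso}\otimes\tx{id}_Y$, which under Fact \ref{fct:iso_hom_D} corresponds to precomposition with $c_\tx{iso}:\mb{T}^\tx{iso}\to\mb{T}^\tx{mid}_{\dot V}$ by the naturality of \eqref{eq:iso_hom_D}. The arrow $Y^\tx{iso}\to(M^\tx{iso}\otimes Y)^\Gamma$ is, at each finite level $(E,S)$ splitting $T$ and satisfying Tate's axioms, the norm map $Y^\tx{iso}_{E,S}\to(Y[S_E]_0)^\Gamma$, $\lambda\mapsto(w\mapsto\sum_\sigma\sigma(\lambda(\sigma^{-1}w)))$ —- this is exactly the relation entering the definition of $Y^\tx{mid}(T)_{E,\dot S_E}$ —- and by the commutative diagram recalled just before Fact \ref{fct:tncartglob} this is precisely the restriction map $H^1_\tx{alg}(\mc{E}^\tx{iso},T)\to\tx{Hom}_F(\mb{T}^\tx{iso},T)$; passing to the colimit over $(E,S)$ is harmless by Lemma \ref{lem:BG_iso_infty}. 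Granting this compatibility, the universal property of fibre products yields a unique isomorphism $Y^\tx{mid}_{\dot V}(T)\to H^1_\tx{alg}(\mc{E}^\tx{mid}_{\dot V},T)$ whose two projections are the prescribed natural transformations; this is, by construction, the second square of the statement together with the right-hand square of the first.

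It remains to check the left-hand square of the first diagram, which I would obtain by passing to kernels along the horizontal ``restriction'' maps. By Fact \ref{fct:infres} the kernel of $H^1_\tx{alg}(\mc{E}^\tx{mid}_{\dot V},T)\to\tx{Hom}_F(\mb{T}^\tx{mid}_{\dot V},T)$ is $H^1(\Gamma,T)$, mapping isomorphically to the corresponding kernel for $\mc{E}^\tx{iso}$, while the kernel of $Y^\tx{mid}_{\dot V}\to(M^\tx{mid}_{\dot V}\otimes Y)^\Gamma$ is $Y^\tx{iso}_\tx{tor}$, mapping isomorphically to its counterpart for $Y^\tx{iso}$; since $Y^\tx{iso}_\tx{tor}\simeq H^1(\Gamma,T)$ is the Tate--Nakayama map by the left-hand square of that same recalled diagram, the induced isomorphism on kernels is Tate--Nakayama, as required. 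Functoriality in $T$ and uniqueness need no further work: every ingredient is functorial, and the Cartesian square embeds $H^1_\tx{alg}(\mc{E}^\tx{mid}_{\dot V},T)$ into the product of its two factors, so the isomorphism is pinned down by its two projections. I expect no real obstacle here, only bookkeeping: the one point demanding care is the verification that the restriction map $H^1_\tx{alg}(\mc{E}^\tx{iso},T)\to\tx{Hom}_F(\mb{T}^\tx{iso},T)$ is the norm map entering the definition of $Y^\tx{mid}_{\dot V}$ —- i.e.\ that the finite-level and infinite-level descriptions of $Y^\tx{mid}_{\dot V}$ around Fact \ref{fct:tncartglob} agree and are compatible with Kottwitz's identifications —- which is essentially already contained in the discussion preceding Fact \ref{fct:tncartglob} and in \cite[Lemma 8.4]{KotBG}.
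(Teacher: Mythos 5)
Your proposal is correct and follows the same route as the paper: the paper's proof of Proposition \ref{pro:tnmidglob} simply says it is analogous to Proposition \ref{pro:tnmidloc}, whose proof realizes both sides as fibre products (via Fact \ref{fct:tncartglob} and Corollary \ref{cor:cart}) and invokes the functoriality and compatibility of the three corner isomorphisms, exactly as you do. Your write-up merely makes explicit the compatibility checks (the norm map matching the restriction map, and the identification of kernels giving the left-hand square) that the paper leaves implicit.
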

\begin{proof}
Analogous to the proof of Proposition \ref{pro:tnmidloc}, but with Fact \ref{fct:tncartloc} now replaced by its global analog Fact \ref{fct:tncartglob}.
\end{proof}

\begin{cor} \label{cor:tnmidglobrel}
  Let $Z \subset T$ be a subtorus defined over $F$.
  The isomorphism $Y^\tx{mid}_{\dot V}(T) \to H^1_\tx{alg}(\mc{E}^\tx{mid}_{\dot
  V},T)$ identifies $Y^\tx{mid}_{\dot V}(Z \to T)$ with
  $H^1_\tx{alg}(\mc{E}^\tx{mid}_{\dot V},Z \to T)$.
\end{cor}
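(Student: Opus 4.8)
The plan is to realize both $Y^\tx{mid}_{\dot V}(Z\to T)$ and $H^1_\tx{alg}(\mc{E}^\tx{mid}_{\dot V},Z\to T)$ as preimages of a single subobject under parallel ``restriction to $\mb{T}^\tx{mid}_{\dot V}$'' maps, and then transport this identification through the isomorphism of Proposition \ref{pro:tnmidglob}. The first point to record is that, because $T$ is commutative, a coboundary leaves unchanged the restriction of a cocycle to the kernel $\mb{T}^\tx{mid}_{\dot V}(\bar F)$ of $\mc{E}^\tx{mid}_{\dot V}\to\Gamma$: for such an element $d$ one has $\sigma_d=1$, so $g^{-1}z(d)\sigma_d(g)=g^{-1}z(d)g=z(d)$. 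Hence each class $c\in H^1_\tx{alg}(\mc{E}^\tx{mid}_{\dot V},T)$ has a well-defined image $r(c)\in\tx{Hom}_F(\mb{T}^\tx{mid}_{\dot V},T)$ — the second map in the inflation--restriction sequence of Fact \ref{fct:infres} — and, straight from the definition in Section \ref{sub:prelim} of the subset of classes whose restriction to $\mb{T}^\tx{mid}_{\dot V}$ has image in $Z$, one gets $H^1_\tx{alg}(\mc{E}^\tx{mid}_{\dot V},Z\to T)=r^{-1}\bigl(\tx{Hom}_F(\mb{T}^\tx{mid}_{\dot V},Z)\bigr)$. Dually, it is immediate from the definition of $Y^\tx{mid}_{\dot V}(Z\to T)$ that this subgroup is the preimage of $(M^\tx{mid}_{\dot V}\otimes Y_Z)^\Gamma$ inside $Y^\tx{mid}_{\dot V}(T)$ under the projection $(\lambda,\mu)\mapsto\mu$ to $(M^\tx{mid}_{\dot V}\otimes Y_T)^\Gamma$.

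Next I would verify that the functorial isomorphism $(M^\tx{mid}_{\dot V}\otimes Y_T)^\Gamma\xrightarrow{\ \sim\ }\tx{Hom}_F(\mb{T}^\tx{mid}_{\dot V},T)$ of Fact \ref{fct:iso_hom_D} carries $(M^\tx{mid}_{\dot V}\otimes Y_Z)^\Gamma$ onto $\tx{Hom}_F(\mb{T}^\tx{mid}_{\dot V},Z)$. Since $Z\subset T$ is a subtorus, $Y_T/Y_Z$ is torsion-free, so $0\to M^\tx{mid}_{\dot V}\otimes Y_Z\to M^\tx{mid}_{\dot V}\otimes Y_T\to M^\tx{mid}_{\dot V}\otimes(Y_T/Y_Z)\to 0$ is exact; taking $\Gamma$-invariants identifies $(M^\tx{mid}_{\dot V}\otimes Y_Z)^\Gamma$ with the kernel of $(M^\tx{mid}_{\dot V}\otimes Y_T)^\Gamma\to(M^\tx{mid}_{\dot V}\otimes(Y_T/Y_Z))^\Gamma$, which under Fact \ref{fct:iso_hom_D}, applied functorially along $Z\hookrightarrow T\twoheadrightarrow T/Z$, is exactly the subgroup of homomorphisms $\mb{T}^\tx{mid}_{\dot V}\to T$ that die in $T/Z$, i.e.\ that factor through $Z$.

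Finally, the first commutative square of Proposition \ref{pro:tnmidglob} says precisely that the isomorphism $Y^\tx{mid}_{\dot V}(T)\xrightarrow{\ \sim\ }H^1_\tx{alg}(\mc{E}^\tx{mid}_{\dot V},T)$ intertwines $(\lambda,\mu)\mapsto\mu$, followed by the isomorphism of Fact \ref{fct:iso_hom_D}, with $r$. Combining this with the two preimage descriptions and the matching of subgroups just established shows that the isomorphism restricts to a bijection $Y^\tx{mid}_{\dot V}(Z\to T)\xrightarrow{\ \sim\ }H^1_\tx{alg}(\mc{E}^\tx{mid}_{\dot V},Z\to T)$, which is the assertion. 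I expect the only step requiring genuine (though entirely routine) care to be the compatibility check between Fact \ref{fct:iso_hom_D} and the two notions of ``$Z$-valued'' homomorphism in the middle paragraph; everything else is a formal diagram chase that runs in parallel to the proof of the local statement, Corollary \ref{cor:tnmidlocrel}.
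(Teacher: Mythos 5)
Your proof is correct, and it is exactly the argument the paper leaves implicit: the corollary is stated without proof as an immediate consequence of Proposition \ref{pro:tnmidglob}, since both $Y^\tx{mid}_{\dot V}(Z \to T)$ and $H^1(\mb{T}^\tx{mid}_{\dot V} \to \mc{E}^\tx{mid}_{\dot V}, Z \to T)$ are by definition the preimages of the ``$Z$-valued'' subgroups under the two restriction maps in the right-hand commutative square. Your middle paragraph (matching $(M^\tx{mid}_{\dot V}\otimes Y_Z)^\Gamma$ with $\tx{Hom}_F(\mb{T}^\tx{mid}_{\dot V},Z)$ via freeness of $Y_T/Y_Z$ and left-exactness of invariants) is the one point genuinely worth writing down, and you have it right.
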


\begin{cor} \label{cor:surj_H1_mid_iso}
  For $Z \to T$ an injective map between tori over $F$ both maps
  \[ H^1(\mb{T}^\tx{mid}_{\dot V} \to \mc{E}^\tx{mid}_{\dot V}, Z \to T) \to
  H^1(\mb{T}^\tx{iso} \to \mc{E}^\tx{iso}, Z \to T) \to H^1(\Gamma, T/Z) \]
  are surjective.
\end{cor}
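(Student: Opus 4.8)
The statement to prove is Corollary \ref{cor:surj_H1_mid_iso}: for an injective map $Z \to T$ of tori over $F$, both maps in the composition
\[ H^1(\mb{T}^\tx{mid}_{\dot V} \to \mc{E}^\tx{mid}_{\dot V}, Z \to T) \to H^1(\mb{T}^\tx{iso} \to \mc{E}^\tx{iso}, Z \to T) \to H^1(\Gamma, T/Z) \]
are surjective. I would break this into the two arrows separately.

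The first arrow's surjectivity should follow from the fiber-product descriptions already established. By Corollary \ref{cor:tnmidglobrel} and the Tate--Nakayama isomorphism for $\mc{E}^\tx{iso}$ (the commutative diagram in Proposition \ref{pro:tnmidglob}), the first map is identified with the natural transformation $Y^\tx{mid}_{\dot V}(Z \to T) \to Y^\tx{iso}(Z \to T)$, $(\lambda,\mu) \mapsto \lambda$. So it suffices to show this is surjective, i.e.\ that given $\lambda \in Y^\tx{iso}(Z \to T)$ — meaning $\lambda \in Y^\tx{iso}(T)$ with $N_{E/F}(\lambda) \in (M^\tx{iso} \otimes Y_Z)^\Gamma$, equivalently $N^\natural(\lambda) \in Y_Z \otimes \Q$ — one can find $\mu \in (M^\tx{mid}_{\dot V} \otimes Y_Z)^\Gamma$ with the matching relation $\sum_\sigma \sigma(\lambda(\sigma^{-1}w)) = \sum_\sigma \mu(\sigma,w)$. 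Here one works at a finite level $(E,S)$ with $E$ splitting $T$ and satisfying the required conditions, and invokes the splitting of $c^\tx{iso}$ in Proposition \ref{pro:globcisosplit} (or Proposition \ref{pro:ciso_splits_infty} at infinite level): the splitting gives a $\Gamma$-equivariant lift of the $Y_Z$-valued $\lambda$-datum to an $(M^\tx{mid}_{E,\dot S_E}\otimes Y_Z)^\Gamma$-datum, which is exactly the required $\mu$. Concretely, one applies $\tx{id}_{Y_Z} \otimes s^\tx{iso}$ after using Lemma \ref{lem:repr_Sdot_supp} to write $N^\natural(\lambda)$ as $[E:F]^{-1}N_{E/F}(\dot f)$ with $\dot f$ supported on $\dot S_E$, exactly as in the proof of Proposition \ref{pro:globcisosplit}. (Alternatively, Remark \ref{rem:splitting_H1} already records that the first square in Corollary \ref{cor:cart} splits, which gives this surjectivity immediately.)

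For the second arrow, surjectivity of $H^1(\mb{T}^\tx{iso} \to \mc{E}^\tx{iso}, Z \to T) \to H^1(\Gamma, T/Z)$, I would use the long exact sequence from Fact \ref{fct:infres} together with the identification of $H^1_\tx{alg}(\mc{E}^\tx{iso},-)$ with $B(-)$ via Lemma \ref{lem:BG_iso_infty}. Consider the exact sequence $1 \to Z \to T \to T/Z \to 1$. On $H^1(\Gamma,-)$ we have the boundary map $H^1(\Gamma, T/Z) \to H^2(\Gamma, Z)$. A class in $H^1(\Gamma, T/Z)$ lifts to $H^1(\mb{T}^\tx{iso} \to \mc{E}^\tx{iso}, Z \to T)$ precisely when its image in $H^2(\Gamma, Z)$ becomes trivial after pushing to $H^2(\Gamma, \mb{T}^\tx{iso})$ along the canonical class — equivalently, when it lies in the image of $H^1(\mb{T}^\tx{iso}\to\mc{E}^\tx{iso},Z\to T)\to H^1(\Gamma,T/Z)$; this is the content of the inflation-restriction sequence. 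So I must show the composite $H^1(\Gamma, T/Z) \to H^2(\Gamma, Z) \to H^2(\Gamma, \mb{T}^\tx{iso})$ is zero. This is a statement purely about Galois cohomology of diagonalizable groups and the canonical class $\xi^\tx{iso}$: the map $Z \to \mb{T}^\tx{iso}$ sits in the abstract setup, and one uses that $\mb{T}^\tx{iso}$ and the canonical class $\xi^\tx{iso}$ are, at each finite level, Tate's class, which is characterized by the vanishing of all localizations or by the fact that $H^1(\mb{T}^\tx{iso}\to\mc{E}^\tx{iso},Z\to T)$ surjects onto $\tx{Hom}_F(\mb{T}^\tx{iso},Z)$ (Proposition \ref{pro:surj}) — more precisely I would argue via the Tate--Nakayama description: by Proposition \ref{pro:tnmidglob}/Lemma \ref{lem:BG_iso_infty} the group $H^1(\mb{T}^\tx{iso}\to\mc{E}^\tx{iso},Z\to T)$ is $Y^\tx{iso}(Z\to T) = \{\lambda \in Y^\tx{iso}(T) : N^\natural(\lambda) \in Y_Z\otimes\Q\}$ and its map to $H^1(\Gamma,T/Z) \cong (Y_T/Y_Z)_{\Gamma,\tx{tor}}$ (more accurately the relevant Tate--Nakayama group for $T/Z$) is $\lambda \mapsto \bar\lambda$; surjectivity then reduces to: every torsion element of $(Y_T/Y_Z)_\Gamma$ lifts to a $\lambda \in (Y_T[S_E]_0)_\Gamma$ with $N^\natural(\lambda)\in Y_Z\otimes\Q$. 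Since $Y_T \to Y_T/Y_Z$ is surjective with kernel $Y_Z$ which is a lattice, one lifts an integral representative $y$ of a torsion class, and since the class is torsion, $N_{E/F}(y)$ becomes a multiple of something in $Y_Z$ after clearing denominators — one then corrects $y$ by an element of $I_{E/F}(Y_T[S_E]_0)$ to arrange $N^\natural \in Y_Z\otimes\Q$, which is possible because the torsion condition says precisely $N^\natural(\bar y)=0$ in $(Y_T/Y_Z)\otimes\Q$.

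**Main obstacle.** The routine part is the first arrow — it is essentially bookkeeping with the splitting of $c^\tx{iso}$. The real work is the second arrow: one must show the lifting obstruction in $H^2(\Gamma, Z)$ dies in $H^2(\Gamma, \mb{T}^\tx{iso})$, or equivalently manipulate the Tate--Nakayama side to produce a $\lambda$ with $N^\natural(\lambda) \in Y_Z \otimes \Q$ lifting a given torsion class in $(Y_T/Y_Z)_\Gamma$. The subtlety is that $(Y_T/Y_Z)_{\Gamma,\tx{tor}}$ need not be the same as the image of $(Y_T)_{\Gamma,\tx{tor}}$, so one genuinely needs the extra room provided by allowing $N^\natural(\lambda)$ to be a nonintegral element of $Y_Z \otimes \Q$ — this is exactly what the "mid"/"iso" framework buys, and it is why the corollary is stated for the composite rather than for $H^1(\mc{E}^\tx{iso},T)\to H^1(\Gamma,T/Z)$ alone (which is in general \emph{not} surjective). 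I would phrase the argument so that the surjectivity onto $H^1(\Gamma, T/Z)$ is deduced directly from the vanishing of $H^1(\Gamma, \mb{T}^\tx{iso}_{E,S})$ at all finite levels together with the compatibility of $\xi^\tx{iso}$ with $Z \to T$, via the four-term exact sequence of Fact \ref{fct:infres} for $\mc{E}^\tx{iso}$ applied to $Z \to T$ with target torus $T/Z$ — making the last map $\tx{Hom}_F(\mb{T}^\tx{iso}, Z) \to H^2(\Gamma, Z)$ whose image we must avoid, and this is controlled by the same Tate's-class input.
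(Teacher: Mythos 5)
Your proposal is correct and follows essentially the same route as the paper: the first arrow's surjectivity is exactly Corollary \ref{cor:cart} (equivalently the splitting of $c^\tx{iso}$), and the second arrow is handled, as in the paper, by identifying it with $Y^\tx{iso}(Z \to T) \to Y^\tx{iso}(T/Z)_\tx{tor} \simeq H^1(\Gamma, T/Z)$ and lifting a norm-zero representative $\bar y$ of a torsion class to $y \in Y_T[S_E]_0$, whence $N_{E/F}(y) \in Y_Z[S_E]_0$ automatically. The only difference is that the paper dispatches the second step as ``clear,'' whereas you spell it out (your correction by $I_{E/F}(Y_T[S_E]_0)$ is not actually needed, and the inflation-restriction/obstruction framing you start with is a detour you rightly abandon).
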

\begin{proof}
  By Corollary \ref{cor:cart} the map $H^1(\mb{T}^\tx{mid}_{\dot V} \to
  \mc{E}^\tx{mid}_{\dot V}, Z \to T) \to H^1(\mb{T}^\tx{iso} \to
  \mc{E}^\tx{iso}, Z \to T)$ is surjective, and it is clear that the map
  \[ H^1(\mb{T}^\tx{iso} \to \mc{E}^\tx{iso}, Z \to T) \simeq Y^\tx{iso}(Z \to
  T) \to Y^\tx{iso}(T/Z)_\tx{tor} \simeq H^1(\Gamma, T/Z) \]
  is surjective.
\end{proof}

\begin{cor} \label{cor:surj_H1_mid_iso_nonabelian}
  For $G$ a connected reductive group over $F$ and $Z$ a central torus in $G$
  both maps
  \[ H^1(\mb{T}^\tx{mid}_{\dot V} \to \mc{E}^\tx{mid}_{\dot V}, Z \to G) \to
  H^1(\mb{T}^\tx{iso} \to \mc{E}^\tx{iso}, Z \to G) \to H^1(\Gamma, G/Z) \]
  are surjective.
\end{cor}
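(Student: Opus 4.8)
The plan is to bootstrap from the torus case, Corollary \ref{cor:surj_H1_mid_iso}, together with the Cartesian‑square statement Corollary \ref{cor:cart}. The surjectivity of the first map
$H^1(\mb{T}^\tx{mid}_{\dot V} \to \mc{E}^\tx{mid}_{\dot V}, Z \to G) \to H^1(\mb{T}^\tx{iso} \to \mc{E}^\tx{iso}, Z \to G)$
is immediate: it is precisely the left vertical arrow of the first square in Corollary \ref{cor:cart}, applied with the central torus $Z\subset G$ in place of $T$, and that arrow is asserted there to be surjective. So the only real content is the surjectivity of the second map $H^1(\mb{T}^\tx{iso} \to \mc{E}^\tx{iso}, Z \to G) \to H^1(\Gamma, G/Z)$.

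For that I would pass to a maximal torus. Write $\bar G = G/Z$ with quotient map $q\colon G \to \bar G$, and fix $\bar c \in H^1(\Gamma,\bar G)$. Recall the classical fact that every Galois cohomology class of a connected reductive group over a local or global field of characteristic zero comes from a maximal torus: thus there is a maximal torus $\bar T \subset \bar G$ over $F$ and a class $\bar c_T \in H^1(\Gamma,\bar T)$ mapping to $\bar c$. Let $T = q^{-1}(\bar T)$. Since $Z = \ker q$ is a connected central torus it is contained in $T$, and because $\bar T$ is \emph{maximal} the subgroup $T$ is exactly the maximal torus of $G$ it contains; moreover $T$ is a torus over $F$ with $Z \hookrightarrow T$ and $T/Z \cong \bar T$ via $q$. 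Now apply Corollary \ref{cor:surj_H1_mid_iso} to the injection $Z \hookrightarrow T$: the composition $H^1(\mb{T}^\tx{iso} \to \mc{E}^\tx{iso}, Z \to T) \to H^1(\Gamma, T/Z) = H^1(\Gamma,\bar T)$ is surjective, so choose $c_T$ mapping to $\bar c_T$. Pushing $c_T$ forward along $T \hookrightarrow G$ yields $c_G \in H^1(\mb{T}^\tx{iso} \to \mc{E}^\tx{iso}, Z \to G)$. By the evident functoriality of the construction $H^1(\mb{T}^\tx{iso}\to\mc{E}^\tx{iso},Z\to-)$ in the target group — the square with these rows and with columns induced by $T\hookrightarrow G$ and $\bar T\hookrightarrow\bar G$ commutes — the image of $c_G$ in $H^1(\Gamma,\bar G)$ is the image of $\bar c_T$, namely $\bar c$. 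Hence the second map is surjective, and therefore so is the composition.

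The one external input, and the place a reader might demand care, is the reduction‑to‑a‑maximal‑torus fact for $\bar G$; it is standard but not proved in this paper, so I would cite it. If one prefers to avoid quoting it, an alternative is to first replace $\bar G$ by a $z$‑extension $\bar G_1 \to \bar G$ with $\bar G_{1,\tx{der}}$ simply connected, using that $H^1(\Gamma,\bar G_1)\to H^1(\Gamma,\bar G)$ is surjective, and run the torus argument on $\bar G_1$, where the torus‑lifting step is more elementary. Everything else in the argument is formal and relies only on Corollaries \ref{cor:cart} and \ref{cor:surj_H1_mid_iso}.
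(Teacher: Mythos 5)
Your proposal is correct and follows essentially the same route as the paper: the paper's entire proof is the one-line citation ``Lemma A.1 in \cite{KalGRI} reduces this to the previous Corollary,'' and that lemma is precisely the reduction-to-a-maximal-torus fact you invoke (valid for connected reductive groups over local and global fields of characteristic zero, and in the gerbe-cohomology setting as well). So the only refinement to make is to replace your appeal to ``the classical fact'' by a citation of \cite[Lemma A.1]{KalGRI}; the rest of your argument is the intended one.
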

\begin{proof}
  Lemma A.1 in \cite{KalGRI} reduces this to the previous Corollary.
\end{proof}

\begin{pro} \label{pro:tnmidglobloc}
  Given $w \in \dot S \subset \dot V$, the composition of the localization map
  $\tx{loc}_w : H^1(\mc{E}^\tx{mid}_{\dot V},T) \to H^1(\mc{E}^\tx{mid}_w,T)$
  with $Y^\tx{mid}_{E,\dot S_E} \to H^1(\mc{E}^\tx{mid}_{\dot V},T)$ and
  $H^1(\mc{E}^\tx{mid}_w,T) \to Y^\tx{mid}_w$ sends $(\lambda,\mu) \in
  Y^\tx{mid}_{E,\dot S_E}$ to $(\lambda_w,\mu_w) \in Y^\tx{mid}_w$ determined by
  \[ \lambda_w = \sum_{\Gamma_{E_w/F_v} \lmod \Gamma_{E/F}}
  \sigma\lambda(\sigma^{-1}w),\qquad \mu_w=\mu(1,w). \]
\end{pro}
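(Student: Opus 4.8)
The plan is to reduce the statement, via the fiber-product descriptions of the cohomology groups involved, to two independent ``coordinate'' computations -- one for $\lambda$ and one for $\mu$ -- and to carry out each by unwinding definitions. By Proposition~\ref{pro:tnmidglob} (and, for the local side, Proposition~\ref{pro:tnmidloc}), together with Facts~\ref{fct:tncartglob} and~\ref{fct:tncartloc} and Corollary~\ref{cor:cart}, the Tate--Nakayama isomorphisms exhibit $H^1_\tx{alg}(\mc{E}^\tx{mid}_{\dot V},T)$ and $H^1_\tx{alg}(\mc{E}^\tx{mid}_w,T)$ as fiber products of $H^1_\tx{alg}(\mc{E}^\tx{iso}_?,T)$ with $\tx{Hom}(\mb{T}^\tx{mid}_?,T)$ over $\tx{Hom}(\mb{T}^\tx{iso}_?,T)$, matching the presentations of $Y^\tx{mid}_?$ as fiber products of $Y^\tx{iso}_?$ with $(M^\tx{mid}_?\otimes Y)^\Gamma$ over $(M^\tx{iso}_?\otimes Y)^\Gamma$. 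The localization map $\tx{loc}_w$ is a morphism of these two fiber-product diagrams: its compatibility with the maps to $H^1_\tx{alg}(\mc{E}^\tx{iso}_?,T)$ is the commutativity of the ``iso''/``mid'' localization square established above, and its compatibility with the maps to $\tx{Hom}(\mb{T}^\tx{mid}_?,T)$ is immediate from the construction of the localization of gerbes, the induced map on bound groups being exactly the map of pro-tori $\mb{T}^\tx{mid}_w\to(\mb{T}^\tx{mid}_{\dot V})_{F_v}$. It therefore suffices to verify the formulas $\lambda\mapsto\lambda_w$ and $\mu\mapsto\mu(1,w)$ on the two legs separately.

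For the $\mu$-leg, the localization map on $\tx{Hom}$ is precomposition with $\mb{T}^\tx{mid}_w\to(\mb{T}^\tx{mid}_{\dot V})_{F_v}$, which dualizes on character modules to the localization map $\tx{loc}_w\colon M^\tx{mid}_{E,\dot S_E}\to M^\tx{mid}_{E_w,[E:F]}$, $f\mapsto(\sigma\mapsto f(\sigma,w))$, of Section~\ref{sub:locmap}. Under the isomorphism of Fact~\ref{fct:iso_hom_D} this carries $\mu\in(M^\tx{mid}_{E,\dot S_E}\otimes Y)^\Gamma$ to the function $\sigma\mapsto\mu(\sigma,w)$ in $(M^\tx{mid}_{E_w,[E:F]}\otimes Y)^{\Gamma_v}$. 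By the normalization fixed in Fact~\ref{fct:tncartloc}, the second coordinate $\mu_w\in Y\otimes\Q$ of a class in $Y^\tx{mid}_w$ is the value at $\sigma=1$ of the associated element of $M^\tx{mid}_{E_w,[E:F]}\otimes Y$; this yields $\mu_w=\mu(1,w)$.

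For the $\lambda$-leg we must identify the ``iso'' localization map $H^1_\tx{alg}(\mc{E}^\tx{iso},T)\to H^1_\tx{alg}(\mc{E}^\tx{iso}_w,T_{F_v})$, i.e.\ $Y^\tx{iso}(T)\to Y_{\Gamma_v}$, with $\lambda\mapsto\sum_{\Gamma_{E_w/F_v}\lmod\Gamma_{E/F}}\sigma\lambda(\sigma^{-1}w)$. That this assignment is well defined on $(Y[S_E]_0)_\Gamma$ is a quick check: changing a coset representative $\sigma$ into $\tau\sigma$ with $\tau\in\Gamma_{E_w/F_v}$ multiplies the corresponding summand by $\tau$ and so leaves the sum unchanged in $Y_{\Gamma_v}$, while replacing $\lambda$ by $\lambda+(\rho-1)g$ merely permutes the set of cosets by right translation by $\rho$ and again leaves the sum unchanged. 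That this is the localization map is the compatibility of Tate's global class with the local fundamental classes: at a finite level $(E,S)$ one uses the decomposition $\Z[S_E]=\bigoplus_{v\in S}\tx{Ind}_{\Gamma_{E_{\dot v}/F_v}}^{\Gamma_{E/F}}\Z$, under which localization at $v$ is projection to the $v$-summand followed by the Shapiro isomorphism in Tate degree $-1$, the latter being exactly $[\lambda]\mapsto[\,\sum_{\Gamma_{E_{\dot v}/F_v}\lmod\Gamma_{E/F}}\sigma\lambda(\sigma^{-1}\dot v)\,]$; cf.\ \cite[\S 8]{KotBG} and \cite[Corollary~3.3.8]{KalGRI}. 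Since this is compatible with the transition maps on $Y^\tx{iso}$ (the isomorphisms $j$, equivalently $!$, discussed above), it computes the localization at infinite level as well, and restricting to the image of the $(E,\dot S_E)$-level gives the stated formula for $\lambda_w$.

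I expect the $\lambda$-leg to be the main obstacle. Pinning down the exact normalization of the ``iso'' localization map -- ruling out stray factors $[E:F]^{\pm1}$ or sign discrepancies coming from the cup-product conventions in Tate--Nakayama -- and matching the Shapiro description with the explicit coset sum requires care, as does tracking the passage through the isomorphisms $j$ in the infinite-level description of $Y^\tx{iso}$. By contrast, the $\mu$-leg and the fiber-product bookkeeping are routine once the conventions are in place.
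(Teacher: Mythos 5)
Your proposal is correct and follows essentially the same route as the paper: both split $(\lambda_w,\mu_w)$ into its two coordinates via the commutativity of localization with the maps to the ``iso'' cohomology and to $\tx{Hom}(\mb{T}^\tx{mid},T)$, compute the $\mu$-coordinate by dualizing to the character-module localization followed by evaluation at $1$, and compute the $\lambda$-coordinate from the known formula for the ``iso'' localization (which the paper simply cites from \cite[\S 7.7]{KotBG}, whereas you rederive it via the Shapiro isomorphism in degree $-1$).
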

\begin{proof}
The element $\lambda_w \in Y_{\Gamma_w}$ is the image of the pair $(\lambda_w,\mu_w)$ under the natural map $Y^\tx{mid}_w \to Y_{\Gamma_w}$. Therefore the formula for $\lambda_w$ follows from the commutativity of
\[ \xymatrix{
  Y^\tx{mid}_{E,\dot S_E} \ar[r] \ar[d] & H^1_\tx{alg}(\mc{E}^\tx{mid}_{\dot
    V}, T) \ar[r]^{\tx{loc}_w} \ar[d] & H^1_\tx{alg}(\mc{E}^\tx{mid}_w,T) \ar[r]
    \ar[d] & Y^\tx{mid}_w \ar[d] \\
  Y^\tx{iso}_{E,S} \ar[r] & H^1_\tx{alg}(\mc{E}^\tx{iso},T) \ar[r]^{\tx{loc}_w}
    & H^1_\tx{alg}(\mc{E}^\tx{iso}_w,T)\ar[r]&Y^\tx{iso}_w
} \]
and the formula for the bottom horizontal map described in \cite[\S7.7]{KotBG}.

Analogously, the element $\mu_w \in Y \otimes \Q$ is the image of the pair $(\lambda_w,\mu_w)$ under the natural map $Y^\tx{mid}_w(T) \to Y \otimes \Q$. The formula for $\mu_w$ follows from the commutativity of
\[ \xymatrix{
  Y^\tx{mid}_{E,\dot S_E} \ar[r] \ar[d] & H^1_\tx{alg}(\mc{E}^\tx{mid}_{\dot V},
    T) \ar[r]^{\tx{loc}_w} \ar[d] & H^1_\tx{alg}(\mc{E}^\tx{mid}_w,T) \ar[r]
    \ar[d] & Y^\tx{mid}_w \ar[d] \\
  (M^\tx{mid}_{E,\dot S_E} \otimes Y)^\Gamma \ar[r] &
    \tx{Hom}_F(\mb{T}^\tx{mid}_{\dot V}, T) \ar[r]^{\tx{loc}_w} &
    \tx{Hom}_F(\mb{T}^\tx{mid}_w,T) \ar[r] & Y \otimes \Q
} \]
and the formula for $\mu_w$ follows from the formula for the bottom horizontal map obtained by composing the localization map $M^\tx{mid}_{E,\dot S_E} \to M_{E_w,[E:F]}^\tx{mid}$ described in \S\ref{sub:locmap} with the evaluation at $1$ map $M_{E_w,[E:F]}^\tx{mid} \to \frac{1}{[E:F]}\Z \hrw \Q$.
\end{proof}

Although we will not need it in the paper, there is a global analogue of
Proposition \ref{pro:tnmidrigloc}.

\begin{pro} \label{pro:tnmidrigglob}
  For any torus $T$ defined over $F$, the composition
  \[ Y^\tx{mid}_{\dot V} \xrightarrow{\sim} H^1_\tx{alg}(\mc{E}^\tx{mid}_{\dot
    V},T) \to H^1(P^\tx{rig}_{\dot V},T) \xrightarrow{\sim} Y^\tx{rig}_{\dot
    V}, \]
  where the middle map is induced by a map of gerbes $c_\tx{rig}$ as discussed
  in Section \ref{sub:glob_gerbes}, is given by $(\lambda,\mu) \mapsto
  \lambda-\mu$.
\end{pro}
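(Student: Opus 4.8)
The plan is to transcribe the proof of Proposition~\ref{pro:tnmidrigloc} to the global setting. Write $\Phi_1$ for the displayed composition and $\Phi_2\colon(\lambda,\mu)\mapsto\lambda-\mu$. Both are homomorphisms $Y^\tx{mid}_{\dot V}(T)\to Y^\tx{rig}_{\dot V}(T)$, functorial in $T$, and each fits into a commutative diagram with exact rows
\[ \xymatrix{
0\ar[r] & Y^\tx{iso}_\tx{tor}(T)\ar[r]\ar@{=}[d] & Y^\tx{mid}_{\dot V}(T)\ar[r]\ar[d] & (M^\tx{mid}_{\dot V}\otimes Y)^\Gamma\ar[d]\\
0\ar[r] & Y^\tx{iso}_\tx{tor}(T)\ar[r] & Y^\tx{rig}_{\dot V}(T)\ar[r] & (M^\tx{rig}_{\dot V}\otimes Y)^\Gamma,
} \]
in which the top row is the one of Fact~\ref{fct:tncartglob} (transported by Proposition~\ref{pro:tnmidglob}), the bottom row is the Tate--Nakayama translation of the inflation--restriction sequence of Fact~\ref{fct:infres} for $\mc{E}^\tx{rig}_{\dot V}$, the right vertical arrow is $\tx{id}_Y\otimes c^\tx{rig}$ in both cases (for $\Phi_2$ by the defining formula $\mu\mapsto-\mu\bmod\Z$; for $\Phi_1$ because the morphism of gerbes $c_\tx{rig}\colon\mc{E}^\tx{rig}_{\dot V}\to\mc{E}^\tx{mid}_{\dot V}$ restricts to $c_\tx{rig}\colon P^\tx{rig}_{\dot V}\to\mb{T}^\tx{mid}_{\dot V}$), and the left vertical arrow is the identity because both $\Phi_1$ and $\Phi_2$ restrict on the subgroup $H^1(\Gamma,T)\cong Y^\tx{iso}_\tx{tor}(T)$ to the canonical inclusion $H^1(\Gamma,T)\hookrightarrow H^1(\mc{E}^\tx{rig}_{\dot V},T)$. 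The verification of these commutativities is an elementary computation with the explicit formulas, exactly as in Proposition~\ref{pro:tnmidrigloc}.

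Next I would dispose of the case that $T$ is induced, say $T\simeq\tx{Res}_{A/F}\tx{GL}_1$ for a finite \'etale $F$-algebra $A$: then $H^1(\Gamma,T)=0$ by Shapiro and Hilbert~90, so $Y^\tx{iso}_\tx{tor}(T)=0$, the right vertical arrow above is injective, and, as $\Phi_1$ and $\Phi_2$ have the same component into $(M^\tx{rig}_{\dot V}\otimes Y)^\Gamma$, they agree. For general $T$ I would realize $Y=X_*(T)$ as a quotient of an induced $\Z[\Gamma_{E/F}]$-module $\widetilde Y$ (with $E$ a splitting field of $T$), giving a surjection of tori $\widetilde T\twoheadrightarrow T$ with $\widetilde T$ induced and torus kernel $S$, $X_*(S)=K=\ker(\widetilde Y\to Y)$. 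Functoriality of each $\Phi_i$ produces a commutative square
\[ \xymatrix{
Y^\tx{mid}_{\dot V}(\widetilde T)\ar[r]^{\Phi_i}\ar[d] & Y^\tx{rig}_{\dot V}(\widetilde T)\ar[d]\\
Y^\tx{mid}_{\dot V}(T)\ar[r]^{\Phi_i} & Y^\tx{rig}_{\dot V}(T),
} \]
so that, once the left vertical arrow is known to be surjective, the equality $\Phi_1=\Phi_2$ on $\widetilde T$ propagates to $T$.

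The surjectivity of $Y^\tx{mid}_{\dot V}(\widetilde T)\to Y^\tx{mid}_{\dot V}(T)$ is the crux, and I expect it to be the only non-formal part. Using the fibre-product description $Y^\tx{mid}_{\dot V}(T)=Y^\tx{iso}(T)\times_{(M^\tx{iso}\otimes Y)^\Gamma}(M^\tx{mid}_{\dot V}\otimes Y)^\Gamma$ of Fact~\ref{fct:tncartglob}, it suffices, given $(\lambda,\mu)$, to lift the $\lambda$-coordinate and then the $\mu$-coordinate compatibly along $c^\tx{iso}$. The $\lambda$-coordinate lifts since $Y^\tx{iso}(\widetilde T)\to Y^\tx{iso}(T)$ is surjective by right-exactness of $(-)_\Gamma$, exactly as in the proof of Proposition~\ref{pro:tnmidglob}. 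The $\mu$-coordinate is the delicate one: one must lift an element of $(M^\tx{mid}_{\dot V}\otimes Y)^\Gamma$ --- concretely a finitely supported $\mu\colon V\to\Q\otimes Y$ with $\sum_v\mu(v)=0$ and $N_{\ol{F_v}/F_v}(\mu(v))\in Y$ for Archimedean $v$ --- to one with coefficients in $\widetilde Y$ matching the chosen lift of $\lambda$. In the local case the analogous step is trivial because the $\mu$-coordinate lives in $Y\otimes\Q$ with no constraint, whereas here the obstruction involves the cohomology of $\ker(c^\tx{iso})\otimes K$ together with the finitely many Archimedean conditions, each governed only by $\Gamma_v$ of order at most $2$. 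I would treat it as in the proof of Proposition~\ref{pro:surj}: the submodule $\ker(c^\tx{rig})\otimes K$ of $M^\tx{mid}_{\dot V}\otimes K$ is induced by Fact~\ref{fct:globcrigind} (tensoring an induced module by $K$ remains induced, by the untwisting isomorphism), which should allow one to push the obstruction into a cohomology group that is annihilated after passing far enough up the tower $(E_i)$, once the splitting field of $\widetilde T$ is chosen so as to absorb the finitely many Archimedean constraints. As an independent check one can verify the identity after applying every localization $\tx{loc}_v$, combining Propositions~\ref{pro:tnmidglobloc} and~\ref{pro:tnmidrigloc} with the compatibility of localization with $c_\tx{rig}$, and then invoking the local--global description of $H^1(\mc{E}^\tx{rig}_{\dot V},T)$ from \cite{KalGRI}.
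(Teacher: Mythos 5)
Your overall architecture is the paper's: reduce by functoriality to an induced torus $\widetilde T\twoheadrightarrow T$, where $Y^\tx{iso}_\tx{tor}(\widetilde T)=0$ forces the two maps to agree, and then propagate via surjectivity of $Y^\tx{mid}_{\dot V}(\widetilde T)\to Y^\tx{mid}_{\dot V}(T)$. You also correctly identify the crux — that the global $\mu$-coordinate, unlike the local one, must be lifted subject to the support condition over $\dot V$ and the compatibility $c^\tx{iso}(\widetilde\mu)=N(\widetilde\lambda)$ — which is precisely the one point where the paper says the local proof must be modified.

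However, your resolution of that crux is not right as stated. The compatible lifting is along $c^\tx{iso}$, not $c^\tx{rig}$: having chosen $\widetilde\lambda$ and an arbitrary lift $\widetilde\mu_0$, the discrepancy $\epsilon=N(\widetilde\lambda)-c^\tx{iso}(\widetilde\mu_0)$ lies in $(K\otimes\Q)[V_E]_0^\Gamma$, and one must absorb it by an element of $(M^\tx{mid}_{\dot V}\otimes K)^\Gamma$ mapping to it under $c^\tx{iso}$. Fact \ref{fct:globcrigind} (inducedness of $\ker c^\tx{rig}$) is therefore the wrong tool; the kernel of $c^\tx{iso}$ is the module $M^\circ$ of Proposition \ref{pro:h1gmvan}, which is \emph{not} induced. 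The paper's fix is direct and at finite level: by Lemma \ref{lem:repr_Sdot_supp} one finds $\epsilon'\in(K\otimes\Q)[\dot V_E]_0$ with $N^\natural(\epsilon')=\epsilon$, and the explicit formula from the proof of Proposition \ref{pro:globcisosplit} turns $\epsilon'$ into the required correction of $\widetilde\mu_0$ supported over $\dot V$ (this is the global replacement for the local step $\widetilde\mu=\widetilde\mu_0+\epsilon$). Your fallback of killing an obstruction class "up the tower" could in principle be salvaged — the obstruction lives in $H^1(\Gamma,M^\circ\otimes K)\cong H^1(\Gamma_{E/F},M^\circ\otimes K)$, which is killed by $[E:F]$, and inflation factors through multiplication by $[K:E]$ as in Proposition \ref{pro:h1gmvan} — but you neither identify the correct module nor supply this argument, so as written the key step is a gap.
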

\begin{proof}
  The proof is similar to the local case (Proposition \ref{pro:tnmidrigloc}),
  except that we cannot take $\widetilde{\mu} = \widetilde{\mu}_0 + \epsilon$
  since $\epsilon$ is not supported on $\dot V$, but thanks to Lemma
  \ref{lem:repr_Sdot_supp} we may find $\epsilon' \in (K \otimes \Q)[\dot V]_0$
  such that $N^\natural(\epsilon') = \epsilon$ and set $\widetilde{\mu} =
  \widetilde{\mu}_0 + \epsilon'$.
  Details are left to the reader.
\end{proof}

% =================================================
% Section
% =================================================

\section{The global multiplicity formula}
\label{sec:mult_formula}

% -------------------------------------------------
% Subsection
% -------------------------------------------------

\subsection{An obstruction} \label{sub:obs}

Let $G^*$ be a quasi-split connected reductive group over a global field $F$,
$\psi : G^* \to G$ an inner twist.
We consider a strongly regular semi-simple element $\delta \in G(\A)$ with the
property that there is an element of $G^*(F)$ stably conjugate to $\delta$.
In this situation, Langlands has defined a cohomological obstruction to the existence of an $F$-point in the $G(\A)$-conjugacy class of $\delta$. We shall now review its definition and properties, following material from \cite{LS87} and \cite{Kot86}. We will then reinterpret this obstruction in terms of the global gerbes.

We shall first assume that $G$ satisfies the Hasse principle, as the obstruction takes a more transparent form in that case.

The condition on $\delta$ is that there exists $\delta^* \in G^*(F)$ and $g \in G^*(\bar \A)$ so that $\delta=\psi(g\delta^*g^{-1})$. Let $T^*$ be the centralizer of $\delta^*$. Let $u \in C^1(\Gamma,G^*(\bar F))$ be a lift of the element of $Z^1(\Gamma,G^*_\tx{ad}(\bar F))$ corresponding to $\psi$. For $\sigma \in \Gamma$ the element $g^{-1}u(\sigma)\sigma(g)$ lies in $T^*(\bar\A)$.
Its image in $T^*(\bar\A)/T^*(\bar F)$ is independent of the choice of $u$ and
this gives a 1-cocycle $\Gamma \to T^*(\bar\A)/T^*(\bar F)$.
Its cohomology class is independent of the choice of $g$ and will be denoted by $\tx{obs}(\delta) \in H^1(\Gamma,T^*(\bar\A)/T^*(\bar F))$.

Let $\delta' \in G(\A)$ be stably conjugate to $\delta$.
That is, there exists $g \in G(\bar\A)$ s.t. $\delta'=g\delta g^{-1}$.
We can define $\tx{inv}(\delta,\delta') \in H^1(\Gamma,T(\bar\A))$ just as in
the local case, namely as the class of $\sigma \mapsto g^{-1}\sigma(g)$, and
$\tx{inv}(\delta, \delta') = \tx{inv}(\delta, \delta'')$ if $\delta'$ and
$\delta''$ are conjugate in $G(\A)$.
Note that centralizer $T$ of $\delta$ in $G_\A$ is only defined over $\A$.
Under the isomorphism $H^1(\Gamma,T(\bar\A)) = \bigoplus_v H^1(\Gamma_v,T(\bar
F_v))$ the class $\tx{inv}(\delta,\delta')$ corresponds to $\sum_v
\tx{inv}(\delta_v,\delta'_v)$.

\begin{lem} \label{lem:obs1}
\begin{enumerate}
	\item The class $\tx{obs}(\delta)$ depends only on the $G(\A)$-conjugacy class of $\delta$.
	\item The class $\tx{obs}(\delta)$ is independent of the choice of $\delta^*$ in the following sense: If $\delta^{**} \in G^*(\bar F)$ is another choice then the unique admissible isomorphism $\varphi_{\delta^{**},\delta^*} : T^{**} \to T^*$ sending $\delta^{**}$ to $\delta^*$ identifies the two versions of $\tx{obs}(\delta)$ obtained from $\delta^{**}$ and $\delta^*$, respectively. 
	\item If $\delta' \in G(\A)$ is stably conjugate to $\delta \in G(\A)$ then
\[ \tx{obs}(\delta') = \tx{obs}(\delta) \cdot \varphi_{\delta^*,\delta}^{-1}(\tx{inv}(\delta,\delta')) \]
where $\varphi_{\delta^*, \delta} : T^*_{\A} \simeq T$ is the admissible
isomorphism mapping $\delta^*$ to $\delta$.
\end{enumerate}
\end{lem}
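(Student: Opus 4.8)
The plan is to prove all three statements by a direct manipulation of the defining $1$-cocycle. Unwinding the definition, $\tx{obs}(\delta)$ is the class in $H^1(\Gamma, T^*(\bar\A)/T^*(\bar F))$ of the cocycle $c_\sigma := g^{-1}u(\sigma)\sigma(g) \in T^*(\bar\A)$, and each part reduces to tracking how $c$ changes under the relevant modification. Two facts will be used throughout. First, up to a coboundary valued in $T^*(\bar\A)$ the cocycle $c$ is independent of the auxiliary choice of $g$: if $\tilde g$ also satisfies $\delta = \psi(\tilde g\delta^*\tilde g^{-1})$ then $\tilde g^{-1}g \in T^*(\bar\A)$, and replacing $g$ by $\tilde g$ multiplies $c_\sigma$ by the coboundary of that element. (Independence of the lift $u$ is already recorded in the text.) Second, with the normalization that $\tx{Int}(u(\sigma))$ equals $\psi^{-1}\circ\sigma(\psi)$ as an automorphism of $G^*_{\bar F}$, one has for every $y \in G(\bar\A)$ the identity
\[ u(\sigma)\cdot\sigma(\psi)^{-1}(y)\cdot u(\sigma)^{-1} = \psi^{-1}(y), \]
which is the mechanism that makes the factors $u(\sigma)$ cancel in every computation below.

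For part (1), let $h \in G(\A)$. Then $\psi^{-1}(h)g \in G^*(\bar\A)$ is an admissible choice of ``$g$'' for $h\delta h^{-1}$ relative to the \emph{same} $\delta^*$ (apply $\psi$ to check). Using $\sigma(h) = h$ together with the displayed identity, a short computation gives $(\psi^{-1}(h)g)^{-1}u(\sigma)\sigma(\psi^{-1}(h)g) = g^{-1}u(\sigma)\sigma(g) = c_\sigma$, so the cocycle itself — not merely its class — is unchanged, whence $\tx{obs}(h\delta h^{-1}) = \tx{obs}(\delta)$ in $H^1(\Gamma, T^*(\bar\A)/T^*(\bar F))$.

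For part (2), write $\delta^{**} = h\delta^* h^{-1}$ with $h \in G^*(\bar F)$. Strong regularity forces the admissible isomorphism to be $\varphi_{\delta^{**},\delta^*} = \tx{Int}(h^{-1})$, and when $\delta^{**}$ is again $F$-rational this map is defined over $F$, since $\sigma(h)^{-1}h$ centralizes $\delta^*$, i.e.\ lies in $T^*(\bar F)$, for every $\sigma$. Using $gh^{-1}$ as the choice of ``$g$'' for $\delta^{**}$, the associated cocycle is $\sigma \mapsto h\,c_\sigma\,\sigma(h)^{-1}$; applying $\varphi_{\delta^{**},\delta^*} = \tx{Int}(h^{-1})$ turns it into $\sigma \mapsto c_\sigma\cdot(h^{-1}\sigma(h))^{-1}$, which coincides with $c_\sigma$ modulo $T^*(\bar F)$. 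This is exactly the asserted identification; combined with part (1) it shows that $\tx{obs}(\delta)$ is a genuine invariant of the $G(\A)$-conjugacy class of $\delta$, independent of the $F$-rational representative.

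For part (3), write $\delta' = g_0\delta g_0^{-1}$ with $g_0 \in G(\bar\A)$, so that $\sigma \mapsto g_0^{-1}\sigma(g_0) \in T(\bar\A)$ represents $\tx{inv}(\delta,\delta')$; since $\delta'$ is stably conjugate to $\delta^*$ we may keep $\delta^*$ as its $G^*(F)$-representative, and $g' := \psi^{-1}(g_0)g$ is then an admissible choice of ``$g$'' for $\delta'$. Running the same cancellation, now with $y = \sigma(g_0)$ in the displayed identity, gives
\[ (g')^{-1}u(\sigma)\sigma(g') = t_\sigma\,c_\sigma,\qquad t_\sigma := g^{-1}\psi^{-1}\!\left(g_0^{-1}\sigma(g_0)\right)g \in T^*(\bar\A). \]
Since $\psi^{-1}$ carries the centralizer $T(\bar\A)$ of $\delta$ onto $gT^*(\bar\A)g^{-1}$ compatibly with $\varphi_{\delta^*,\delta} = \psi\circ\tx{Int}(g)|_{T^*}$, the cochain $t$ equals $\sigma \mapsto \varphi_{\delta^*,\delta}^{-1}(g_0^{-1}\sigma(g_0))$ and hence represents $\varphi_{\delta^*,\delta}^{-1}(\tx{inv}(\delta,\delta'))$. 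As $T^*(\bar\A)$ is abelian, passing to classes in $H^1(\Gamma, T^*(\bar\A)/T^*(\bar F))$ yields $\tx{obs}(\delta') = \tx{obs}(\delta)\cdot\varphi_{\delta^*,\delta}^{-1}(\tx{inv}(\delta,\delta'))$. The only point requiring real care is the bookkeeping with the inner-twist conventions: pinning down the exact identity satisfied by $u$, and checking that $\psi^{-1}$ intertwines $T$ with $gT^*g^{-1}$ in the precise way compatible with $\varphi_{\delta^*,\delta}$. Everything else is an elementary manipulation of $1$-cochains.
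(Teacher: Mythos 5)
Your proof is correct and takes essentially the same route as the paper's, which is just a terser version of the same direct cocycle manipulation: the paper declares part (1) ``immediate'', writes out only the part (2) computation (with $h$ normalized by $h\delta^{**}h^{-1}=\delta^*$, i.e.\ the inverse of your $h$), and calls part (3) ``a similar direct computation''. Your choices of auxiliary element ($\psi^{-1}(h)g$, $gh^{-1}$, $\psi^{-1}(g_0)g$) and the cancellation identity $u(\sigma)\,\sigma(\psi)^{-1}(y)\,u(\sigma)^{-1}=\psi^{-1}(y)$ all check out against the paper's normalization $\tx{Ad}(u(\sigma))=\psi^{-1}\circ\sigma(\psi)$.
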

\begin{proof}
The first claim is immediate. For the second, let $h \in G^*(\bar F)$ be s.t. $h\delta^{**}h^{-1}=\delta^*$. Then $\tx{Ad}(h) : T^{**} \to T^*$ is the admissible isomorphism sending $\delta^{**}$ to $\delta^*$. The version of the obstruction obtained from $\delta^{**}$ is represented by the 1-cocycle 
\[ h^{-1}g^{-1}u(\sigma)\sigma(gh) = \tx{Ad}(h^{-1})(g^{-1}u(\sigma)\sigma(g)) \cdot h^{-1}\sigma(h), \]
and $h^{-1}\sigma(h)$ lies in $Z^1(\Gamma,T^{**}(\bar F))$.
The third claim follows from a similar direct computation.
\end{proof}

\begin{pro} \label{pro:obs1}
The class $\tx{obs}(\delta)$ vanishes if and only if the $G(\A)$-conjugacy class of $\delta$ contains an $F$-point.
\end{pro}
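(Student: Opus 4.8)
My plan is to work throughout with the defining $1$-cochain $\sigma\mapsto g^{-1}u(\sigma)\sigma(g)$ of $\tx{obs}(\delta)$, the whole point being the passage between representatives of $\tx{obs}(\delta)$ by cochains valued in $T^*(\bar\A)$ and by cochains valued in $T^*(\bar F)$. First note that, writing $z := \partial u \in Z^2(\Gamma, Z(G^*)(\bar F))$, a direct computation gives $\partial(\sigma\mapsto g^{-1}u(\sigma)\sigma(g)) = z$, so this cochain becomes an honest $1$-cocycle after reduction modulo $T^*(\bar F)$.

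For the implication ``the $G(\A)$-conjugacy class of $\delta$ contains an $F$-point $\Rightarrow\tx{obs}(\delta)=1$'' I would first apply Lemma \ref{lem:obs1}(1) to replace $\delta$ by that $F$-point, reducing to $\delta\in G(F)$. Then $\psi^{-1}(\delta)$ and $\delta^*$ are strongly regular semisimple elements of $G^*(\bar F)$ which are $G^*(\bar\A)$-conjugate; their transporter is a torsor under $T^*$ over the algebraically closed field $\bar F$ which acquires a point over some $\bar F_v\supseteq\bar F$, hence is non-empty. So one may take the element $g$ in the construction of $\tx{obs}(\delta)$ to lie in $G^*(\bar F)$, and then a short computation from $\sigma(\delta)=\delta$ shows $g^{-1}u(\sigma)\sigma(g)\in Z_{G^*}(\delta^*)(\bar F)=T^*(\bar F)$ for all $\sigma$. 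The cocycle defining $\tx{obs}(\delta)$ is therefore identically trivial modulo $T^*(\bar F)$.

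For the converse, assume $\tx{obs}(\delta)=1$. Writing the defining cocycle as $\sigma\mapsto\bar t^{-1}\sigma(\bar t)$ with $\bar t\in T^*(\bar\A)/T^*(\bar F)$, I would lift $\bar t$ to $t\in T^*(\bar\A)$ and replace $g$ by $gt^{-1}$, which leaves $\delta=\psi(g\delta^*g^{-1})$ unchanged because $t$ centralizes $\delta^*$; this arranges that $s(\sigma):=g^{-1}u(\sigma)\sigma(g)\in T^*(\bar F)$ for all $\sigma$. Since $\partial s=\partial u=z$ is central, the map $\sigma\mapsto s(\sigma)u(\sigma)^{-1}$ is a genuine $1$-cocycle valued in $G(\bar F)=G^*(\bar F)$ equipped with its $u$-twisted Galois action, and it defines a class $\xi_s\in H^1(\Gamma,G)$ (the cochain $u$ itself giving the base point). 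At each place $v$ the localization of $\xi_s$ is represented by $\sigma\mapsto g_v^{-1}u(\sigma)\sigma(g_v)u(\sigma)^{-1}$, a coboundary, so $\xi_s$ is everywhere locally trivial; invoking the Hasse principle for $G$ gives $\xi_s=1$, hence there is $g_0\in G^*(\bar F)$ with $g_0^{-1}u(\sigma)\sigma(g_0)=s(\sigma)$ for all $\sigma$. To finish, set $\delta_0:=\psi(g_0\delta^*g_0^{-1})$: the relation $g_0^{-1}u(\sigma)\sigma(g_0)\in T^*$ forces $\sigma(\delta_0)=\delta_0$, so $\delta_0\in G(F)$; while $x:=gg_0^{-1}$ satisfies $x^{-1}u(\sigma)\sigma(x)=g_0\,s(\sigma)\,\sigma(g_0)^{-1}=u(\sigma)$, i.e.\ $\sigma(x)=u(\sigma)^{-1}xu(\sigma)$, which is precisely the condition that $h:=\psi(x)$ lie in $G(\A)$; and $h\delta_0h^{-1}=\psi(g\delta^*g^{-1})=\delta$. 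Thus $\delta_0$ is an $F$-point in the $G(\A)$-conjugacy class of $\delta$.

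The only ingredient beyond formal cochain manipulation is the appeal to the Hasse principle for $G$ in order to descend the adelically constructed $g$ to the global element $g_0\in G^*(\bar F)$; this is where I expect the real content to sit, and it is precisely the step that fails without that hypothesis, consistent with the remark preceding the proposition.
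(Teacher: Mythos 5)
Your proof is correct and follows essentially the same route as the paper's: the forward direction replaces $\delta$ by the $F$-point via Lemma \ref{lem:obs1} and takes $g \in G^*(\bar F)$, and the converse adjusts $g$ by $t \in T^*(\bar\A)$ so that $s(\sigma)=g^{-1}u(\sigma)\sigma(g)\in T^*(\bar F)$, forms the $1$-cocycle $s(\sigma)u(\sigma)^{-1}$ valued in the twisted form $G(\bar F)$, observes it is adelically the coboundary of $\psi(g)$, and invokes the Hasse principle for $G$ to descend to $g_0\in G^*(\bar F)$ producing the $F$-point $\psi(g_0\delta^*g_0^{-1})$. The only (harmless) additions are your explicit torsor justification for choosing $g\in G^*(\bar F)$ in the forward direction and the verification that $\partial$ of the defining cochain is central.
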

\begin{proof}
If the $G(\A)$-conjugacy class of $\delta$ contains an $F$-point, Lemma \ref{lem:obs1} allows us to replace $\delta$ by that $F$-point without changing $\tx{obs}(\delta)$. Then $g$ can be chosen in $G^*(\bar F)$ and so $g^{-1}u(\sigma)\sigma(g) \in T^*(\bar F)$, showing that $\tx{obs}(\delta)$ vanishes.

Conversely, if the class of $g^{-1}u(\sigma)\sigma(g)$ in $H^1(\Gamma,T^*(\bar\A)/T^*(\bar F))$ is trivial there exists $t \in T^*(\bar \A)$ s.t. $(gt)^{-1}u(\sigma)\sigma(gt) \in T^*(\bar F)$ for all $\sigma \in \Gamma$. We may replace $g$ by $gt$ and drop $t$ from the notation. Now $z(\sigma) := \psi(g^{-1}u(\sigma)\sigma(g)u(\sigma)^{-1})$ is an element of $Z^1(\Gamma,G(\bar F))$ whose image in $Z^1(\Gamma,G(\bar \A))$ is cohomologically trivial, namely the coboundary of $\psi(g)$. By the Hasse principle for $G$ there exists $h \in G^*(\bar F)$ s.t. $\psi((gh^{-1})^{-1}u(\sigma)\sigma(gh^{-1})u(\sigma))=1$. This means $\psi(gh^{-1}) \in G(\A)$. Therefore $\delta'=\psi(gh^{-1})^{-1}\delta\psi(gh^{-1})$ lies in the $G(\A)$-conjugacy class of $\delta$. At the same time, $\delta'=\psi(h\delta^*h^{-1}) \in G(F)$.
\end{proof}

We now drop the condition that $G$ satisfies the Hasse principle. Then it turns out that $H^1(\Gamma,T^*(\bar\A)/T^*(\bar F))$ is not a suitable home for the obstruction any more. By work of Kneser, Harder, and Chernousov, $G_\tx{sc}$ does satisfy the Hasse principle. This will lead to a slight modification of $H^1(\Gamma,T_\tx{sc}^*(\bar\A)/T_\tx{sc}^*(\bar F))$ that will serve as a replacement for $H^1(\Gamma,T^*(\bar\A)/T^*(\bar F))$.

\begin{lem} \label{lem:adelic_sc_conj}
Let $\delta^* \in G^*(F)$. There exists $g \in G^*(\bar\A)$ s.t. $\delta=\psi(g\delta^*g^{-1})$ if and only if there exists $g_\tx{sc} \in G_\tx{sc}^*(\bar\A)$ s.t. $\delta=\psi(g_\tx{sc}\delta^*g_\tx{sc}^{-1})$.
\end{lem}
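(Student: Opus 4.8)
The plan is to establish the nontrivial ``only if'' direction by reducing, place by place, to a local statement, and there to use that the centralizer of a strongly regular semisimple element is a torus together with Hilbert's theorem 90. The ``if'' direction is immediate: given $g_\tx{sc} \in G_\tx{sc}^*(\bar\A)$ with $\psi(g_\tx{sc}\delta^*g_\tx{sc}^{-1}) = \delta$, let $g \in G^*(\bar\A)$ denote its image under the canonical map $G_\tx{sc}^* \to G^*$; since the conjugation action of $G_\tx{sc}^*$ on $G^*$ is by definition inflated along $G_\tx{sc}^* \to G^*$, we have $g\delta^*g^{-1} = g_\tx{sc}\delta^*g_\tx{sc}^{-1}$ and hence $\psi(g\delta^*g^{-1}) = \delta$.

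For the converse, suppose $g \in G^*(\bar\A)$ satisfies $\psi(g\delta^*g^{-1}) = \delta$. Let $T^* = Z_{G^*}(\delta^*)$, a maximal torus of $G^*$ over $F$ since $\delta^*$ is strongly regular semisimple, let $T^*_\tx{sc} = Z_{G_\tx{sc}^*}(\delta^*)$ be the corresponding maximal torus of $G_\tx{sc}^*$, and fix a finite Galois extension $E/F$ that splits $T^*$ (hence also $T^*_\tx{sc}$) and is large enough that $g \in G^*(\A_E)$. Put $\gamma^* := g\delta^*g^{-1}$; although $\psi$ is defined only over $\bar F$, one has $\gamma^* = \psi^{-1}(\delta) \in G^*(\A_E)$ because $g$ and $\delta^*$ already lie in $G^*(\A_E)$. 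Let $Z$ be the $\A_E$-scheme of those $h_\tx{sc} \in G_\tx{sc}^*$ whose image in $G^*$ conjugates $\delta^*$ to $\gamma^*$, i.e.\ the fiber over $\gamma^*$ of the $F$-morphism $G_\tx{sc}^* \to G^*$, $h \mapsto h\delta^*h^{-1}$; the torus $T^*_\tx{sc}$ acts simply transitively on $Z$ by right translation. Since $\bar\A = \varinjlim_K \A_K$ and $Z$ is of finite type, it suffices to prove $Z(\A_E) \neq \emptyset$.

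To this end I would produce local points $h_{\tx{sc},w} \in Z(E_w)$ at every place $w$ of $E$, integral for almost all $w$. At each $w$, the scheme $Z \times_{\A_E} E_w$ is geometrically nonempty: over $\bar E_w$ the map $G_\tx{sc}^* \to G^*_\tx{ad}$ is surjective on points and the $G^*_\tx{ad}(\bar E_w)$-conjugacy orbit of $\delta^*$ coincides with its full $G^*(\bar E_w)$-conjugacy class, which contains $\gamma^*_w$. Hence $Z \times_{\A_E} E_w$ is a torsor under $T^*_\tx{sc}$, and since $T^*_\tx{sc}$ is split over $E_w$, Hilbert's theorem 90 gives $H^1(E_w,T^*_\tx{sc}) = 0$, so this torsor is trivial and $Z(E_w) \neq \emptyset$. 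For almost all $w$ one spreads out the data --- reductive models of $G^*$ and $G_\tx{sc}^*$, the integral torus model $\ul{T^*_\tx{sc}}$, and the integrality of $\delta^*$ and of $g_w$ --- over $\mc{O}_{E_w}$, so that $Z$ acquires a smooth $\mc{O}_{E_w}$-model whose special fiber is a torsor under the connected group $\ul{T^*_\tx{sc}}$ over the residue field; that torsor has a rational point by Lang's theorem, and it lifts to $\mc{O}_{E_w}$ by smoothness and Hensel's lemma. Assembling these local points yields $g_\tx{sc} \in G_\tx{sc}^*(\bar\A)$ with $g_\tx{sc}\delta^*g_\tx{sc}^{-1} = g\delta^*g^{-1}$, whence $\psi(g_\tx{sc}\delta^*g_\tx{sc}^{-1}) = \delta$.

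The only place the strong regularity of $\delta$ is used is to ensure that the centralizer is a torus, so that $Z$ is a torsor under a torus; granting geometric nonemptiness, local triviality is then automatic from Hilbert's theorem 90. I do not anticipate a genuine obstacle: the two points requiring care are that $\psi$ is not defined over $F$ (circumvented by working with $\gamma^* = \psi^{-1}(\delta)$, an honest $\A_E$-point of $G^*$) and the routine spreading-out that controls the restricted-product condition at almost all places.
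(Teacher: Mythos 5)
Your argument is correct. It is, however, packaged differently from the paper's, which is a two-line affair: the paper observes that $G^*_\tx{sc}(\bar\A)\cdot T^*(\bar\A)=G^*(\bar\A)$ (citing Kottwitz, \emph{Shimura varieties and twisted orbital integrals}, (3.3.4), for the integral statement $G^*_\tx{sc}(O_{E_w})\cdot T^*(O_{E_w})=G^*(O_{E_w})$ at almost all places), writes $g=g_\tx{sc}t$ with $t\in T^*(\bar\A)$, and notes that $t$ centralizes $\delta^*$, so $g_\tx{sc}$ does the job. Your proof instead works directly with the transporter $Z=\{h_\tx{sc}\in G^*_\tx{sc}: h_\tx{sc}\delta^*h_\tx{sc}^{-1}=\gamma^*\}$ as a $T^*_\tx{sc}$-torsor, trivializing it place by place via Hilbert 90 for the split torus and handling almost all places by spreading out and Lang's theorem. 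The two arguments are really the same mechanism in different clothes --- $Z$ is precisely the fiber of $G^*_\tx{sc}\to G^*_\tx{sc}/T^*_\tx{sc}\cong G^*/T^*$ over the image of $g$, and the coset decomposition used by the paper is equivalent to the triviality of this torsor adelically --- but yours is self-contained where the paper outsources the key integral input, at the cost of being considerably longer. One cosmetic point: your special fiber at a good place $w$ is a priori only a pseudo-torsor; you should note (as your generic-fiber argument already shows, applied to the reductions $\bar g_w$, $\bar\delta^*$, $\bar\gamma^*_w$ over $\bar k_w$) that it is geometrically nonempty before invoking Lang's theorem.
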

\begin{proof}
This is not immediate because the map $G^*_\tx{sc}(\bar \A) \to G^*(\bar \A)$ need not be surjective. However, letting $T^*$ be the centralizer of $\delta^*$, we have $G^*_\tx{sc}(\bar \A)\cdot T^*(\bar \A) = G^*(\bar\A)$. Indeed, letting $E$ be a sufficiently large finite Galois extension of $F$, for almost all places $w$ of $E$ we have $G^*_\tx{sc}(O_{E_w}) \cdot T^*(O_{E_w}) = G^*(O_{E_w})$ by \cite[(3.3.4)]{Kot84S}.
\end{proof}

Let $g_\tx{sc} \in G_\tx{sc}^*(\bar \A)$ be so that $\delta=\psi(g_\tx{sc}\delta^*g_\tx{sc}^{-1})$. Let $u_\tx{sc} \in C^1(\Gamma,G^*_\tx{sc}(\bar F))$ be a lift of the element of $Z^1(\Gamma,G^*_\tx{ad}(\bar F))$. For $\sigma \in \Gamma$ the element $g_\tx{sc}^{-1}u_\tx{sc}(\sigma)\sigma(g_\tx{sc})$ lies in $T_\tx{sc}^*(\bar\A)$. Its image in $T_\tx{sc}^*(\bar\A)/T_\tx{sc}^*(\bar F)$ is independent of the choice of $u_\tx{sc}$ and is a 1-cocycle. Its cohomology class is independent of the choice of $g_\tx{sc}$ and will be denoted by $\tx{obs}_\tx{sc}(\delta) \in H^1(\Gamma,T_\tx{sc}^*(\bar\A)/T_\tx{sc}^*(\bar F))$. We can also refine the invariant of two stably conjugate $\delta,\delta' \in G(\A)$ to $\tx{inv}_\tx{sc}(\delta,\delta') \in H^1(\Gamma,T_\tx{sc}(\bar\A))$ in the obvious way. Then we have

\begin{lem} \label{lem:obs2}
\begin{enumerate}
	\item The class $\tx{obs}_\tx{sc}(\delta)$ depends only on the $G_\tx{sc}(\A)$-conjugacy class of $\delta$.
	\item The class $\tx{obs}_\tx{sc}(\delta)$ is independent of the choice of $\delta^*$ in the following sense: If $\delta^{**} \in G^*(\bar F)$ is another choice then the admissible isomorphism $\varphi_{\delta^{**},\delta^*} : T_\tx{sc}^{**} \to T_\tx{sc}^*$ identifies the two versions of $\tx{obs}_\tx{sc}(\delta)$ obtained from $\delta^{**}$ and $\delta^*$, respectively. 
	\item If $\delta' \in G(\A)$ is stably conjugate to $\delta \in G(\A)$ then
\[ \tx{obs}_\tx{sc}(\delta') = \tx{obs}_\tx{sc}(\delta) \cdot \varphi_{\delta^*,\delta}^{-1}(\tx{inv}_\tx{sc}(\delta,\delta')). \]
\end{enumerate}
\end{lem}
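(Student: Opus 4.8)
The plan is to reuse the cocycle computations from the proof of Lemma~\ref{lem:obs1}, replacing $G^*$ by $G^*_\tx{sc}$, $u$ by $u_\tx{sc}$, and $T^*$ by $T^*_\tx{sc}$ throughout, and to invoke Lemma~\ref{lem:adelic_sc_conj} (together with the decomposition $G^*_\tx{sc}(\bar\A)\cdot T^*(\bar\A)=G^*(\bar\A)$ used in its proof, and its $G$-analogue) whenever an adelic conjugating element must be taken inside a simply connected cover. The only structural novelty relative to Lemma~\ref{lem:obs1} is that $u_\tx{sc}$ is merely a $1$-cochain: its coboundary lies in $Z(G^*_\tx{sc})(\bar F)\subseteq T^*_\tx{sc}(\bar F)$. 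Since we quotient by $T^*_\tx{sc}(\bar F)$, the image of $\sigma\mapsto g_\tx{sc}^{-1}u_\tx{sc}(\sigma)\sigma(g_\tx{sc})$ in $T^*_\tx{sc}(\bar\A)/T^*_\tx{sc}(\bar F)$ is a genuine $1$-cocycle, and all identities below need only hold modulo $T^*_\tx{sc}(\bar F)$, so this central ambiguity — and every correction term appearing along the way — is harmless.

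For (1) I would argue exactly as for Lemma~\ref{lem:obs1}(1). Let $\psi_\tx{sc}\colon G^*_\tx{sc}\to G_\tx{sc}$ be the $\bar F$-isomorphism induced by $\psi$, compatible with $\psi$ via the canonical maps $G^*_\tx{sc}\to G^*$ and $G_\tx{sc}\to G$. Given $h\in G_\tx{sc}(\A)$ pick $h^*\in G^*_\tx{sc}(\bar\A)$ with $\psi_\tx{sc}(h^*)=h$; for $h\delta h^{-1}$ one may keep $\delta^*$ and replace $g_\tx{sc}$ by $h^*g_\tx{sc}$, and then $\sigma(h)=h$ forces $(h^*)^{-1}u_\tx{sc}(\sigma)\sigma(h^*)\in u_\tx{sc}(\sigma)\cdot Z(G^*_\tx{sc})(\bar F)$, leaving the representing cocycle unchanged in the quotient. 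Independence of the choice of $g_\tx{sc}$ (already implicit in the definition) holds because two admissible choices differ by right translation by an element of $Z_{G^*_\tx{sc}}(\delta^*)(\bar\A)=T^*_\tx{sc}(\bar\A)$, which alters the cocycle by a $T^*_\tx{sc}(\bar\A)$-valued coboundary. For (2) and (3) I would transcribe the direct cocycle computation of the proof of Lemma~\ref{lem:obs1}. In (2), choose $h\in G^*_\tx{sc}(\bar F)$ with $\tx{Ad}(h)\delta^{**}=\delta^*$; such $h$ exists because $\delta^*,\delta^{**}$ are strongly regular semisimple elements of $G^*(F)$ in a common stable class, hence $G^*(\bar F)$-conjugate, and $\tx{Ad}$ factors through $G^*_\tx{ad}(\bar F)$, onto which $G^*_\tx{sc}(\bar F)$ surjects. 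Then $\tx{Ad}(h)\colon T^{**}_\tx{sc}\to T^*_\tx{sc}$ is $\varphi_{\delta^{**},\delta^*}$, and replacing $g_\tx{sc}$ by $g_\tx{sc}h$ the cocycle for $\delta^{**}$ becomes $\tx{Ad}(h^{-1})\bigl(g_\tx{sc}^{-1}u_\tx{sc}(\sigma)\sigma(g_\tx{sc})\bigr)\cdot h^{-1}\sigma(h)$, with $\sigma\mapsto h^{-1}\sigma(h)$ valued in $T^{**}_\tx{sc}(\bar F)$ and hence trivial in the quotient. In (3), write $\delta'=g'_\tx{sc}\delta(g'_\tx{sc})^{-1}$ with $g'_\tx{sc}\in G_\tx{sc}(\bar\A)$ (possible by the $G$-analogue of Lemma~\ref{lem:adelic_sc_conj}), set $g'_\tx{sc}=\psi_\tx{sc}(h^*)$, note that $\varphi_{\delta^*,\delta}=\psi_\tx{sc}\circ\tx{Ad}(g_\tx{sc})$ as a map $T^*_\tx{sc}\to T_\tx{sc}$ (with $g_\tx{sc}$ the adelic element chosen for $\delta$), and use the one-line conjugation identity expressing the cocycle $\sigma\mapsto(h^*g_\tx{sc})^{-1}u_\tx{sc}(\sigma)\sigma(h^*g_\tx{sc})$ for $\delta'$ as $\varphi_{\delta^*,\delta}^{-1}$ of a representative of $\tx{inv}_\tx{sc}(\delta,\delta')$ times the cocycle for $\delta$; as $T^*_\tx{sc}(\bar\A)$ is abelian this yields the asserted product formula.

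The main obstacle is not the algebra, which is essentially a mechanical copy of the proof of Lemma~\ref{lem:obs1}, but the two bookkeeping points proper to the simply connected setting: first, that every conjugating element can be taken in $G_\tx{sc}$ (resp.\ $G^*_\tx{sc}$) — adelically this is Lemma~\ref{lem:adelic_sc_conj} and its variants, while $\bar F$-rationally it is surjectivity of $G^*_\tx{sc}(\bar F)\to G^*_\tx{ad}(\bar F)$ combined with strong regularity; and second, that the failure of $u_\tx{sc}$ to be a cocycle, together with all correction terms ($h^{-1}\sigma(h)$ in (2), the ambiguities in (1)), lands in $T^*_\tx{sc}(\bar F)$ and therefore dies in $H^1\bigl(\Gamma,T^*_\tx{sc}(\bar\A)/T^*_\tx{sc}(\bar F)\bigr)$.
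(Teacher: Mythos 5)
Your proposal is correct and takes essentially the same route as the paper, whose proof of this lemma is literally ``the same as for Lemma \ref{lem:obs1}'', i.e.\ the direct cocycle computation you transcribe. The two bookkeeping points you single out --- that conjugators can be taken in the simply connected covers (Lemma \ref{lem:adelic_sc_conj} adelically, surjectivity of $G^*_\tx{sc}(\bar F)\to G^*_\tx{ad}(\bar F)$ rationally), and that the coboundary of $u_\tx{sc}$ together with all correction terms is central, hence lands in $T^*_\tx{sc}(\bar F)$ and dies in the quotient --- are exactly what makes that transcription legitimate.
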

\begin{proof}
The same as for Lemma \ref{lem:obs1}.
\end{proof}

\begin{pro} \label{pro:obs2}
The class $\tx{obs}_\tx{sc}(\delta)$ vanishes if and only if the $G_\tx{sc}(\A)$-conjugacy class of $\delta$ contains an $F$-point.
\end{pro}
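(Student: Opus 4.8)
The plan is to run the argument of Proposition~\ref{pro:obs1} essentially verbatim, but with the simply connected group $G_\tx{sc}$ in place of $G$, so that the Hasse principle for $G$ invoked there gets replaced by the Hasse principle for semisimple simply connected groups (Kneser, Harder, Chernousov), which requires no hypothesis. Throughout I will use that $\psi$ restricts to an isomorphism $G^*_\tx{sc} \to G_\tx{sc}$ over $\bar F$, so that elements and cocycles may be transported freely between the two groups.

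For the ``only if'' direction I would first use Lemma~\ref{lem:obs2}(1) to replace $\delta$ by an $F$-point in its $G_\tx{sc}(\A)$-conjugacy class, so that $\delta \in G(F)$; this does not change $\tx{obs}_\tx{sc}(\delta)$. Since $\delta$ remains strongly regular and stably conjugate to $\delta^*$, there is $g \in G^*(\bar F)$ with $\delta = \psi(g\delta^*g^{-1})$: the scheme of conjugating elements is nonempty (it has a $\bar\A$-point) and is a torsor under the centralizer torus over the algebraically closed field $\bar F$, hence has a $\bar F$-point. Decomposing $g$ via $G^*(\bar F) = G^*_\tx{sc}(\bar F)\cdot T^*(\bar F)$ and discarding the $T^*$-part, which centralizes $\delta^*$, I may take $g \in G^*_\tx{sc}(\bar F)$. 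Then $g^{-1}u_\tx{sc}(\sigma)\sigma(g) \in T^*_\tx{sc}(\bar F)$ for all $\sigma$, so the defining cocycle of $\tx{obs}_\tx{sc}(\delta)$ is already a coboundary in $T^*_\tx{sc}(\bar F)$, i.e.\ $\tx{obs}_\tx{sc}(\delta)=0$.

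For the ``if'' direction, assuming $\tx{obs}_\tx{sc}(\delta)=0$, I would use Lemma~\ref{lem:adelic_sc_conj} to choose $g_\tx{sc} \in G^*_\tx{sc}(\bar\A)$ with $\delta = \psi(g_\tx{sc}\delta^*g_\tx{sc}^{-1})$, together with a lift $u_\tx{sc} \in C^1(\Gamma,G^*_\tx{sc}(\bar F))$ of the cocycle attached to $\psi$. Vanishing of $\tx{obs}_\tx{sc}(\delta)$ lets me modify $g_\tx{sc}$ on the right by an element of $T^*_\tx{sc}(\bar\A)$ (here the abelianness of $T^*_\tx{sc}$ is used) so that $g_\tx{sc}^{-1}u_\tx{sc}(\sigma)\sigma(g_\tx{sc}) \in T^*_\tx{sc}(\bar F)$ for all $\sigma$. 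Then $z(\sigma):=\psi(g_\tx{sc}^{-1}u_\tx{sc}(\sigma)\sigma(g_\tx{sc})u_\tx{sc}(\sigma)^{-1})$ is a $1$-cocycle of $\Gamma$ valued in $G_\tx{sc}(\bar F)$ --- and this is the crucial improvement over Proposition~\ref{pro:obs1}, where the analogous cocycle lay only in $G(\bar F)$ --- whose image in $Z^1(\Gamma,G_\tx{sc}(\bar\A))$ is the coboundary of $\psi(g_\tx{sc})$. The Hasse principle for $G_\tx{sc}$ then forces $[z]=1$ in $H^1(\Gamma,G_\tx{sc}(\bar F))$, producing $h \in G^*_\tx{sc}(\bar F)$ with $(g_\tx{sc}h^{-1})^{-1}u_\tx{sc}(\sigma)\sigma(g_\tx{sc}h^{-1})=u_\tx{sc}(\sigma)$ for all $\sigma$, which says exactly that $\psi(g_\tx{sc}h^{-1}) \in G_\tx{sc}(\A)$. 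Setting $\delta':=\psi(g_\tx{sc}h^{-1})^{-1}\delta\,\psi(g_\tx{sc}h^{-1})$, one sees that $\delta'$ lies in the $G_\tx{sc}(\A)$-conjugacy class of $\delta$, while at the same time $\delta'=\psi(h\delta^*h^{-1}) \in G(\bar F)$, so $\delta' \in G(\bar F)\cap G(\A)=G(F)$.

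I do not expect a genuine obstacle, since the argument is structurally identical to that of Proposition~\ref{pro:obs1}. The one point requiring care --- and the reason the Hasse principle for $G$ can be dropped --- is arranging that the obstruction cocycle $z$ takes values in $G_\tx{sc}(\bar F)$ rather than $G(\bar F)$. This is precisely what Lemma~\ref{lem:adelic_sc_conj} (keeping $g_\tx{sc}$ inside $G^*_\tx{sc}$), the choice of $u_\tx{sc}$ in $G^*_\tx{sc}(\bar F)$, and the abelianness of $T^*_\tx{sc}$ (for the adjustment step) are designed to provide; once $z$ lives in $G_\tx{sc}(\bar F)$, the classical Hasse principle for semisimple simply connected groups finishes the proof with no assumption on $G$.
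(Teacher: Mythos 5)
Your proof is correct and is exactly the paper's argument: the paper's proof of this proposition simply says to repeat the proof of Proposition \ref{pro:obs1} with $G_\tx{sc}$ in place of $G$, the Hasse principle now being automatic by Kneser--Harder--Chernousov. Your write-up just makes explicit the details (keeping $g$, $u$, and the obstruction cocycle inside the simply connected group) that the paper leaves implicit.
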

\begin{proof}
The same as for Proposition \ref{pro:obs1}, but now the assumption that $G_\tx{sc}$ satisfies the Hasse principle is automatically satisfied by the work of Kneser, Harder, and Chernousov, \cite{Kne62}, \cite{Har65}, \cite{Har66}, \cite{Cher89}.
\end{proof}

We now define 
\begin{enumerate}
	\item $\Delta$ to be the image under $H^1(\Gamma,T^*_\tx{sc}(\bar\A)) \to H^1(\Gamma,T^*_\tx{sc}(\bar\A)/T^*_\tx{sc}(\bar F))$ of 
	\[ \tx{ker}(H^1(\Gamma,T^*_\tx{sc}(\bar\A)) \to H^1(\Gamma,T^*(\bar\A))) \cap \tx{ker}(H^1(\Gamma,T^*_\tx{sc}(\bar\A)) \to H^1(\Gamma,G_\tx{sc}^*(\bar\A))) \]
	
	\item $\mf{K}(T/F)^D$ to be the quotient of $H^1(\Gamma,T^*_\tx{sc}(\bar\A)/T^*_\tx{sc}(\bar F))$ by $\Delta$
	\item $\tx{obs}(\delta) = \tx{obs}(\delta)_\tx{sc}+\Delta \subset H^1(\Gamma,T^*_\tx{sc}(\bar\A)/T^*_\tx{sc}(\bar F))$, or equivalently the image of $\tx{obs}(\delta)_\tx{sc}$ in $\mf{K}(T/F)^D$.
\end{enumerate}

\begin{rem} Note that under the map 
\[ H^1(\Gamma,T^*_\tx{sc}(\bar\A)/T^*_\tx{sc}(\bar F)) \to H^1(\Gamma,T^*(\bar\A)/T^*(\bar F)) \] 
the subgroup $\Delta$ goes to $0$ and $\tx{obs}_\tx{sc}(\delta)$ maps to the element that we denoted by $\tx{obs}(\delta)$ when the Hasse principle holds. Thus, the new definition of $\tx{obs}(\delta)$ is a refinement of the old definition.
\end{rem}

\begin{pro} \label{pro:obs3}
The element $\tx{obs}(\delta)$ depends only on the $G(\A)$-conjugacy class of $\delta$. It is independent of the choice of $\delta^*$. It vanishes if and only if the $G(\A)$-conjugacy class of $\delta$ contains an $F$-point.
\end{pro}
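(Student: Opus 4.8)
The plan is to reduce Proposition~\ref{pro:obs3} to the simply connected statements Lemma~\ref{lem:obs2} and Proposition~\ref{pro:obs2} together with the defining properties of $\Delta$. Throughout one uses that the admissible isomorphism $\varphi_{\delta^*,\delta} : T^*_\A \to T$ and its restriction $T^*_\tx{sc}\to T_\tx{sc}$ are defined over $\A$ and Galois-equivariant (since both $\delta^*$ and $\delta$ are fixed by $\Gamma$), so they identify the cohomology of $T^*_\tx{sc}$ with that of $T_\tx{sc}$; the point will be that they also match the subgroups $\Delta$ and, more subtly, the kernels occurring in the definition of $\Delta$.

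First I would treat the two invariance statements. For independence of $\delta^*$: a second choice $\delta^{**}$ yields an admissible isomorphism $\varphi_{\delta^{**},\delta^*}$ which is defined over $F$ and is induced by conjugation by an element of $G^*(\bar F)$; it therefore commutes both with the projections $T_\tx{sc}\to T$ and with the inclusions $T_\tx{sc}\hookrightarrow G^*_\tx{sc}$ (for each of the two base points), hence carries the subgroup $\Delta$ attached to $\delta^{**}$ to the one attached to $\delta^*$. Combined with Lemma~\ref{lem:obs2}(2) this gives the first claim. For dependence only on the $G(\A)$-conjugacy class: if $\delta'=g\delta g^{-1}$ with $g\in G(\A)$, write $g=g_\tx{sc}t$ with $g_\tx{sc}\in G^*_\tx{sc}(\bar\A)$ (by the argument of Lemma~\ref{lem:adelic_sc_conj}) and $t$ in the centralizer of $\delta$; then $\tx{inv}_\tx{sc}(\delta,\delta')$ is represented by $\sigma\mapsto g_\tx{sc}^{-1}\sigma(g_\tx{sc})$, which is a coboundary in $G_\tx{sc}(\bar\A)$ and whose image in $H^1(\Gamma,T(\bar\A))$ is $\tx{inv}(\delta,\delta')=0$. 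Hence $\varphi_{\delta^*,\delta}^{-1}(\tx{inv}_\tx{sc}(\delta,\delta'))$ lies in the intersection of the two kernels defining $\Delta$, and Lemma~\ref{lem:obs2}(3) shows $\tx{obs}_\tx{sc}(\delta')\equiv\tx{obs}_\tx{sc}(\delta)\bmod\Delta$.

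For the vanishing criterion, the ``only if'' direction is short: if an $F$-point $\delta'$ lies in the $G(\A)$-conjugacy class of $\delta$, then $\tx{obs}_\tx{sc}(\delta')=0$ by Proposition~\ref{pro:obs2}, since $\delta'$ already lies in its own $G_\tx{sc}(\A)$-conjugacy class, and the previous paragraph then gives $\tx{obs}_\tx{sc}(\delta)\in\Delta$, i.e. $\tx{obs}(\delta)=0$. For the converse, $\tx{obs}(\delta)=0$ means $\tx{obs}_\tx{sc}(\delta)$ is the image of a class $c$ in $\ker(H^1(\Gamma,T^*_\tx{sc}(\bar\A))\to H^1(\Gamma,T^*(\bar\A)))\cap\ker(H^1(\Gamma,T^*_\tx{sc}(\bar\A))\to H^1(\Gamma,G^*_\tx{sc}(\bar\A)))$. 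Transporting $c^{-1}$ to $H^1(\Gamma,T_\tx{sc}(\bar\A))$ via $\varphi_{\delta^*,\delta}$, one writes it as $\sigma\mapsto x^{-1}\sigma(x)$ with $x\in G_\tx{sc}(\bar\A)$ (using that it dies in $H^1(\Gamma,G_\tx{sc}(\bar\A))$), and sets $\delta'=x\delta x^{-1}$; since the transported class dies in $H^1(\Gamma,T(\bar\A))$, the image of $x$ in $G(\bar\A)$ differs from an element of $G(\A)$ by an element of the centralizer of $\delta$, so $\delta'$ is $G(\A)$-conjugate to $\delta$ and $\tx{inv}_\tx{sc}(\delta,\delta')$ transports to $c^{-1}$. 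Lemma~\ref{lem:obs2}(3) then gives $\tx{obs}_\tx{sc}(\delta')=0$ in $H^1(\Gamma,T^*_\tx{sc}(\bar\A)/T^*_\tx{sc}(\bar F))$, so by Proposition~\ref{pro:obs2} the $G_\tx{sc}(\A)$-conjugacy class of $\delta'$ — hence the $G(\A)$-conjugacy class of $\delta$ — contains an $F$-point.

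The step I expect to be the main obstacle is verifying that $\varphi_{\delta^*,\delta}$ really does intertwine the ``$G^*_\tx{sc}$-kernel'' appearing in the definition of $\Delta$ with the ``$G_\tx{sc}$-kernel'' that occurs naturally through $\tx{inv}_\tx{sc}$: because the inner twist $\psi$ is only defined over $\bar F$, the identification $H^1(\Gamma,G^*_\tx{sc}(\bar\A))\leftrightarrow H^1(\Gamma,G_\tx{sc}(\bar\A))$ is a twisting bijection rather than a basepoint-preserving isomorphism, so ``$\tx{inv}_\tx{sc}$ dies in $H^1$ of the inner form'' must be transferred carefully to ``its transport dies in $H^1$ of the quasi-split form''. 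The way to do this is to use the decomposition $H^1(\Gamma,-(\bar\A))=\bigoplus_v H^1(\Gamma_v,-(\bar F_v))$: at the non-Archimedean places $H^1(\Gamma_v,G_{\tx{sc},v})$ vanishes for both the quasi-split form and its inner form by Kneser's theorem (\cite{Kne62}), so these kernels are everything there, and at the finitely many Archimedean places one compares them using Kottwitz's description of such kernels in terms of the root datum — this is exactly the framework in which $\mathfrak K(T/F)$ and the pairing into it were set up in \cite{Kot86}, and it depends only on $T$ and the inner class of $G$. A pervasive but routine secondary point is keeping the distinction between $\bar\A$-points and $\A$-points straight in all the cocycle manipulations above.
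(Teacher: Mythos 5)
Your architecture is the same as the paper's: independence of $\delta^*$ via Lemma \ref{lem:obs2}(2), and the remaining two statements by combining Lemma \ref{lem:obs2}(3) and Proposition \ref{pro:obs2} with a parametrization of the $G_\tx{sc}(\A)$-conjugacy classes inside the $G(\A)$-class of $\delta$ by classes in $H^1(\Gamma,T^*_\tx{sc}(\bar\A))$. You have also correctly isolated the one delicate point, which the paper's own proof passes over in silence: $\tx{inv}_\tx{sc}(\delta,\delta')$ naturally dies in $H^1(\Gamma,G_\tx{sc}(\bar\A))$ (cohomology of the inner form), while $\Delta$ is defined through the kernel of the map to $H^1(\Gamma,G^*_\tx{sc}(\bar\A))$ (the quasi-split form).

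Your proposed resolution of that point, however, fails at the Archimedean places. The subgroup $\ker(H^1(\Gamma_v,T_\tx{sc})\to H^1(\Gamma_v,G_\tx{sc}))$ there genuinely depends on the inner form, not only on $T$ and the inner class. Concretely, take $T^*$ the compact maximal torus of $G^*=\tx{PGL}_2$ over $\R$ and $G$ its compact inner form $\tx{SO}(3)$: since $H^1(\R,\tx{SL}_2)=1$, the quasi-split kernel is all of $H^1(\R,T^*_\tx{sc})\cong\Z/2$ (and this is already contained in $\ker(H^1(\R,T^*_\tx{sc})\to H^1(\R,T^*))$, so intersecting does not help), whereas the cocycle $\sigma\mapsto -I$ is not a coboundary in $\tx{SU}(2)$, so the inner-form kernel is trivial --- matching the fact that a regular elliptic $\tx{PGL}_2(\R)$-class splits into two $\tx{SL}_2(\R)$-classes while an $\tx{SO}(3)(\R)$-class is a single $\tx{SU}(2)(\R)$-class. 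Hence in your step ``one writes it as $\sigma\mapsto x^{-1}\sigma(x)$ with $x\in G_\tx{sc}(\bar\A)$ (using that it dies in $H^1(\Gamma,G_\tx{sc}(\bar\A))$)'' you only know that $c$ dies in $H^1(\Gamma,G^*_\tx{sc}(\bar\A))$, and its transport need not die in $H^1(\Gamma,G_\tx{sc}(\bar\A))$. What is true, and what the argument actually requires, is that the two intersections of kernels have the same image in $H^1(\Gamma,T^*_\tx{sc}(\bar\A)/T^*_\tx{sc}(\bar F))$: the Archimedean discrepancy is absorbed by a global class in $H^1(\Gamma,T^*_\tx{sc}(\bar F))$ whose finite components still lie in the local kernels (which agree at finite places by Kneser). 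In the example above such a class is furnished by a global non-norm like $-1$ for the relevant imaginary quadratic norm-one torus. So before invoking Lemma \ref{lem:obs2}(3) you must replace $c$ by a representative of the same class modulo $T^*_\tx{sc}(\bar F)$ whose transport does become a coboundary in $G_\tx{sc}(\bar\A)$; this is a genuinely global step (this is where the Hasse principle and the duality with $\mf{K}(T/F)$ from \cite{Kot86} enter), not a place-by-place identification of kernels.
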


\begin{proof}
The independence of $\delta^*$ is immediate from the second point in Lemma \ref{lem:obs2}. For the other two statements we note that all elements in the $G(\A)$-conjugacy class of $\delta$ are $G_\tx{sc}(\bar \A)$-conjugate to each other by Lemma \ref{lem:adelic_sc_conj}.
Therefore the set of $G_\tx{sc}(\A)$-conjugacy classes inside of the
$G(\A)$-conjugacy class of $\delta$ is in bijection with 
\[ \ker \left(
H^1(\Gamma,T^*_\tx{sc}(\bar\A)) \to H^1(\Gamma, G^*_\tx{sc}(\bar\A)) \right) \cap \ker \left(
H^1(\Gamma,T^*_\tx{sc}(\bar\A)) \to H^1(\Gamma,T^*(\bar\A)) \right), \] 
namely via
$\delta' \leftrightarrow
\varphi_{\delta^*,\delta}^{-1}(\tx{inv}_\tx{sc}(\delta,\delta'))$.
With this, the outstanding two statements of this proposition follow from the third point in Lemma \ref{lem:obs2} and Proposition \ref{pro:obs2}.
\end{proof}

We now assume again that $G$ satisfies the Hasse principle. It is clear from the definitions that if $G=G^*$ then $\tx{obs}(\delta)$ is simply the image of $\tx{inv}(\delta^*,\delta)$ under $H^1(\Gamma,T^*(\bar\A)) \to H^1(\Gamma,T^*(\bar\A)/T^*(\bar F))$. When $G \neq G^*$ then $\tx{inv}(\delta^*,\delta)$ doesn't make sense. However, using the cohomology of the global gerbe $\mc{E}^\tx{iso}$ we can make sense of it. More precisely, we need the versions of $\mc{E}^\tx{iso}$ with $\A$-coefficients and $\A/F$-coefficients. These are denoted by $\mc{E}_2(K/F)$ and $\mc{E}_1(K/F)$ respectively in \cite[\S1.5]{KotBG}. Here $K/F$ is any sufficiently large finite Galois extension. We shall write $H^1_\tx{alg}(\mc{E}^\tx{iso}_2,T(\bar\A))$ for $\varinjlim_K H^1_\tx{alg}(\mc{E}_2^\tx{iso}(K/F),T^*(\A_K))$ and $H^1_\tx{alg}(\mc{E}^\tx{iso}_1,T^*(\bar\A)/T^*(\bar F))$ for $\varinjlim_K H^1_\tx{alg}(\mc{E}_1^\tx{iso}(K/F),T^*(\A_K)/T^*(K))$.

Assume that the element of $Z^1(\Gamma,G^*_\tx{ad})$ corresponding to $\psi$ has a lift $z^\tx{iso} \in Z^1_\tx{bas}(\mc{E}^\tx{iso},G^*)$. Define $\tx{inv}[z^\tx{iso}](\delta^*,\delta)$ to be the class in $H^1_\tx{alg}(\mc{E}_2^\tx{iso},T^*(\bar\A))$ of the 1-cocycle
\[ e \mapsto g^{-1}z^\tx{iso}(e) \sigma_e(g). \]

\begin{fct} \label{fct:obsiso}
The image in $H^1_\tx{alg}(\mc{E}_1^\tx{iso},T^*(\bar\A)/T^*(\bar F))$ of $\tx{inv}[z^\tx{iso}](\delta^*,\delta)$ lies in the subgroup $H^1(\Gamma,T^*(\bar\A)/T^*(\bar F))$ and equals $\tx{obs}(\delta)$.	
\end{fct}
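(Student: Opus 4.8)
The plan is to unwind the definitions of both sides and to observe that $z^\tx{iso}$ differs from the cochain $u$ used in the definition of $\tx{obs}(\delta)$ only by a central cochain, which disappears upon reduction modulo $T^*(\bar F)$. Concretely, I would first fix $u \in C^1(\Gamma, G^*(\bar F))$ exactly as in the definition of $\tx{obs}(\delta)$, i.e.\ a lift of the element of $Z^1(\Gamma, G^*_\tx{ad}(\bar F))$ attached to $\psi$. Since by hypothesis $z^\tx{iso} \in Z^1_\tx{bas}(\mc{E}^\tx{iso}, G^*)$ is also a lift of that element (pulled back along $\mc{E}^\tx{iso} \to \Gamma$), the function $c(e) := z^\tx{iso}(e)\,u(\sigma_e)^{-1}$, where $\sigma_e \in \Gamma$ is the image of $e$, takes values in $Z(G^*)(\bar F) \subset T^*(\bar F)$. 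Note $c$ is merely a cochain, not a cocycle, but only its pointwise values matter below.

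Next I would carry out the one-line computation: as $c(e)$ is central in $G^*$ it commutes with $g \in G^*(\bar\A)$, so $g^{-1} z^\tx{iso}(e)\,\sigma_e(g) = c(e) \cdot g^{-1} u(\sigma_e)\,\sigma_e(g)$. Reducing modulo $T^*(\bar F)$ kills $c(e)$, so the image of $\tx{inv}[z^\tx{iso}](\delta^*,\delta)$ is represented by the cocycle $e \mapsto g^{-1} u(\sigma_e)\,\sigma_e(g) \bmod T^*(\bar F)$, which depends on $e$ only through $\sigma_e$. Hence it is inflated from the $1$-cocycle $\sigma \mapsto g^{-1} u(\sigma)\,\sigma(g) \bmod T^*(\bar F)$ of $\Gamma$ in $T^*(\bar\A)/T^*(\bar F)$ — which, as recalled in the construction of $\tx{obs}(\delta)$, is indeed a cocycle and represents $\tx{obs}(\delta)$. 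This simultaneously gives both assertions: the image in $H^1_\tx{alg}(\mc{E}_1^\tx{iso}, T^*(\bar\A)/T^*(\bar F))$ lies in the subgroup $H^1(\Gamma, T^*(\bar\A)/T^*(\bar F))$, and there it equals $\tx{obs}(\delta)$.

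The one place I expect to need a little care is the bookkeeping for the passage from $\mc{E}_2^\tx{iso}$ to $\mc{E}_1^\tx{iso}$: one has to check that the chosen representative, restricted to the binding subgroup $\mb{T}^\tx{iso}(\bar F) \subset \mc{E}_2^\tx{iso}$, already takes values in $T^*(\bar F)$ — there it is $t \mapsto z^\tx{iso}(t) \in Z(G^*)(\bar F)$ — so that it descends to an $\mc{E}_1^\tx{iso}$-cocycle valued in $T^*(\bar\A)/T^*(\bar F)$, and that a class inflated from the structure map $\mc{E}_1^\tx{iso} \to \Gamma$ is precisely what it means for a class to lie in the subgroup $H^1(\Gamma, -)$. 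The computation above produces exactly such a representative, so no genuine difficulty arises; the rest is routine unwinding of the definitions of $\tx{inv}[z^\tx{iso}]$ and of the gerbes $\mc{E}_1^\tx{iso}, \mc{E}_2^\tx{iso}$.
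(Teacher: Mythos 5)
Your proof is correct and is exactly the paper's argument, merely spelled out: the paper's one-line proof ("immediate from the fact that $z^\tx{iso}$ takes values in $T^*(\bar F)$") is precisely your observation that the discrepancy $z^\tx{iso}(e)u(\sigma_e)^{-1}$ is a central, $\bar F$-valued element, hence dies in $T^*(\bar\A)/T^*(\bar F)$, leaving a cocycle inflated from $\Gamma$ that represents $\tx{obs}(\delta)$.
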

\begin{proof}
Immediate from the fact that $z^\tx{iso}$ takes values in $T
^*(\bar F)$.
\end{proof}

This image can be computed in terms of Tate-Nakayama isomorphisms.

\begin{pro} \label{pro:obs_iso}
  Denote $Y = X_*(T^*)$.
  The compositions
  \[ H^1_\tx{alg}(\mc{E}_2^\tx{iso}, T^*(\ol{\A})) \to \bigoplus_{v \in V}
    H^1_\tx{alg}(\mc{E}^\tx{iso}_v, T^*) \simeq \bigoplus_{v \in V} Y/I_v(Y) \to
    Y/IY, \]
  where the isomorphism is Kottwitz' local Tate-Nakayama isomorphism and the
  last map is ``sum over all places'', and
  \[ H^1_\tx{alg}(\mc{E}_2^\tx{iso}, T^*(\ol{\A})) \to H^1(\Gamma, T^*(\ol{\A}) /
  T^*(\ol{F})) \simeq (Y/IY)[\tx{tor}] \subset Y/IY \]
  where the isomorphism is the Tate-Nakayama isomorphism defined in
  \cite{Tate66}, are equal.
\end{pro}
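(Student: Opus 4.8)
The plan is to reduce the statement to the local comparison of two descriptions of the Tate-Nakayama isomorphism and to Tate's global reciprocity, by unwinding what the two composite maps do on the level of the linear-algebra model $Y^\tx{iso}$. First I would use Lemma \ref{lem:BG_iso_infty} (and its compatibility with localization, noted right after that lemma) to represent a class in $H^1_\tx{alg}(\mc{E}_2^\tx{iso}, T^*(\ol{\A}))$ by an element of $\widetilde B(T^*)$ at some finite level $(E,S)$ with $E$ splitting $T^*$ and $S$ satisfying Tate's axioms; concretely, by \cite[Lemma 4.1]{KotBG} and the commutative diagram recalled in \S\ref{sub:tniso} (the global diagram \eqref{eq:tnzdiagglob} and \cite[Lemma 3.1.9]{KalGRI}), such a class corresponds to an element $f \in Y^\tx{iso}_{E,S}(\ol\A) = (Y[S_E]_0 \otimes \ol\A\text{-data})_\Gamma$, i.e.\ a family $(\lambda_v)_{v\in S}$ with $\lambda_v \in Y/I_v(Y)$ and $\lambda_v = 0$ for almost all $v$. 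I would then identify the first composite as $\sum_v \lambda_v \bmod IY$, which is immediate from Kottwitz' description of the local Tate-Nakayama map \cite[\S 7.7]{KotBG} and the definition of the map $j$ (summing over places above a given place) from \S(the subsection on the Tate-Nakayama description of $H^1_\tx{alg}(\mc{E}^\tx{mid}_{\dot V},T)$).

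Next I would compute the second composite. The boundary map $H^1_\tx{alg}(\mc{E}_2^\tx{iso}, T^*(\ol\A)) \to H^1(\Gamma, T^*(\ol\A)/T^*(\ol F))$ is, under the finite-level description, induced by the connecting map of the exact sequence $1 \to T^*(O_{E,S}) \to T^*(\A_{E,S}) \to T^*(\A_{E,S})/T^*(O_{E,S}) \to 1$, and on the linear-algebra side the key point is the commutative diagram \eqref{eq:tnzdiagglob}: it intertwines this connecting map with the natural map $\hat H^{-1}(\Gamma_{E/F}, Y[S_E]_0) \to \hat H^0(\Gamma_{E/F}, Y[S_E]_0)$, i.e.\ with the $\Gamma_{E/F}$-norm map composed with the identification of $\hat H^{-1}$ as (ker of norm)/(augmentation), and then with the Tate-Nakayama isomorphism of \cite{Tate66}. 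Concretely, lifting $f$ to a cochain in $Y[S_E]_0 \otimes \Q$ supported appropriately, its $\Gamma_{E/F}$-norm lands in $Y[S_E]_0$, and under the map $Y[S_E]_0 \to Y$, $g \mapsto \sum_w g(w)$, this is exactly $\sum_v \lambda_v$ viewed in $Y/IY$. What makes the image land in the torsion subgroup $(Y/IY)[\tx{tor}]$ is precisely that $f$ came from a class over $\ol F$-points after localization, equivalently that $\sum_v \lambda_v$ is a norm from $\hat H^0$, hence killed by $N_{E/F}$ modulo $IY$ up to bounded denominators — this is the content of the global reciprocity/Tate duality input already packaged in \cite[Lemma 4.1]{KotBG}.

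The main obstacle I expect is bookkeeping: making sure the two identifications of the local Tate-Nakayama isomorphism — Kottwitz' one via $\mc{E}^\tx{iso}_v$ and the "sum over places" one coming from Tate's global class — are normalized the same way (signs and the factor distinguishing $\tx{TN}$ from $-\tx{TN}$ in the diagrams of \S\ref{sub:tniso}), and that the localization maps $H^1_\tx{alg}(\mc{E}_2^\tx{iso},T^*(\ol\A)) \to \bigoplus_v H^1_\tx{alg}(\mc{E}^\tx{iso}_v,T^*)$ used in the first composite agree with the restriction-to-$\Gamma_v$ maps implicit in the second. Once the diagram \eqref{eq:tnzdiagglob} is invoked at the relevant finite level, both composites are literally computed by the same arrow $Y[S_E]_0 \to Y$, $g \mapsto \sum_w g(w)$, applied to the same representative, so the equality is forced; the only real work is checking these normalizations and the almost-everywhere-unramified behaviour from Lemma \ref{lem:iso_unr_ae} (which is what lets one pass from $\bar\A$ to a finite $S$). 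I would therefore organize the proof as: (i) reduce to finite level $(E,S)$ via Lemma \ref{lem:BG_iso_infty} and Lemma \ref{lem:iso_unr_ae}; (ii) recall \eqref{eq:tnzdiagglob} and \cite[Lemma 4.1]{KotBG}; (iii) compute each composite on a representative $f \in Y^\tx{iso}_{E,S}$ and observe they agree; (iv) check the torsion-subgroup assertion via Tate's reciprocity.
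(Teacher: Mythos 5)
Your overall strategy (pass to a finite level, describe both composites on the linear--algebra model $Y[S_E]_0$, and observe that each is computed by $g \mapsto \sum_w g(w)$) is the right shape, but the paper's actual proof is a one--line citation: Kottwitz already proved exactly this local--global compatibility as (6.6) of \cite{KotBG}, namely that localizing $H^1_\tx{alg}(\mc{E}_2^\tx{iso},T^*(\ol\A))$ and summing the local Tate--Nakayama invariants agrees with passing to $H^1_\tx{alg}(\mc{E}_1^\tx{iso},T^*(\ol\A)/T^*(\ol F)) \simeq Y/IY$; the Proposition is the restriction of that statement to classes landing in ordinary Galois cohomology, where Kottwitz's isomorphism restricts to Tate's $(Y/IY)[\tx{tor}]$. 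So the intended argument is not a recomputation but an appeal to this established compatibility.

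The genuine gap in your proposal is in step (ii). The diagram \eqref{eq:tnzdiagglob} concerns the exact sequence $1 \to Z \to T \to \bar T \to 1$ with $Z$ a \emph{finite multiplicative group}, and the vertical maps there are the connecting maps of that sequence; it says nothing about the sequence $1 \to T^*(O_{E,S}) \to T^*(\A_{E,S}) \to T^*(\A_{E,S})/T^*(O_{E,S}) \to 1$, so it cannot be used to intertwine the map to $H^1(\Gamma, T^*(\ol\A)/T^*(\ol F))$ with a norm map on $\hat H^{-1}/\hat H^0$. The input you actually need --- and which you relegate to ``bookkeeping'' and to the torsion assertion in step (iv) --- is Tate's comparison of the three canonical classes $\alpha_1, \alpha_2, \alpha_3$ in \cite{Tate66}: the local Tate--Nakayama maps are cup products with local fundamental classes, the global one is cup product with $\alpha_3$, and the equality of the two composites is precisely the statement that these classes are compatible under the maps between $\tx{Hom}(\Z[S_E]_0, O_{E,S}^\times)$, $\A_E^\times$--type modules, and $E^\times$. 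That compatibility is the mathematical content of the Proposition (packaged in \cite{KotBG}, \S 6), not a normalization check; without it the claim that both composites are ``literally computed by the same arrow'' is not forced.
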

\begin{proof}
  This is a special case of a more general compatibility.
  In \cite{KotBG} Kottwitz defined a generalization $H^1(\mc{E}^\tx{iso}_1,
  T^*(\ol{\A}) / T^*(\ol{F})) \simeq Y/IY$ of Tate's isomorphism, and proved
  local-global compatibility ((6.6) loc.\ cit.): the compositions
  \[ H^1_\tx{alg}(\mc{E}^\tx{iso}_2, T^*(\ol{\A})) \to \bigoplus_{v \in V}
  H^1_\tx{alg}(\mc{E}^\tx{iso}_v, T^*) \simeq \bigoplus_{v \in V} Y/I_v(Y) \to
  Y/IY  \]
  and
  \[ H^1_\tx{alg}(\mc{E}^\tx{iso}_2, T^*(\ol{\A})) \to
  H^1_\tx{alg}(\mc{E}^\tx{iso}_1, T^*(\ol{\A}) / T^*(\ol{F})) \simeq I/IY \]
  are equal.
\end{proof}

% -------------------------------------------------
% Subsection
% -------------------------------------------------

\subsection{Global transfer factors} \label{sub:adelic_tf}

We continue with $\psi : G^* \to G$ from the previous subsection.
Let $(H,\mc{H},s,\eta)$ be an endoscopic datum for $G$ and $(H_1,\eta_1)$ a
$z$-pair as in \cite[\S2.2]{KS99}.
We assume that for every place $v$ of $F$ there exists a pair of related
strongly $G$-regular elements $\gamma_{0,v}^H \in H(F_v)$ and $\delta_{0,v} \in
G(F_v)$.

\begin{lem} \label{lem:loc_glob_tf}
  Under this assumption there exists a pair of related strongly $G$-regular
  elements $\gamma_0^H \in H(F)$ and $\delta_0 \in G(\A)$.
\end{lem}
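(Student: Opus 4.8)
The plan is to use the quasi-split form $G^*$ together with its norm correspondence with $H$ to reduce the statement to the existence of a suitable maximal torus of $H$ over $F$: one whose localizations at the finitely many ``bad'' places of $F$ are conjugate to the centralizers of the given local elements $\gamma_{0,v}^H$. Once such a torus is available, a strongly $G$-regular rational point on it produces $\gamma_0^H \in H(F)$, and the vanishing of local obstructions — automatic outside a finite set, and forced at the bad places by the hypothesis — produces the adelic partner $\delta_0 \in G(\A)$.

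First I would fix a finite set $S$ of places of $F$ containing all Archimedean places, all places where $G$ fails to be quasi-split, and all places where $H$, $G^*$, or the endoscopic datum is ramified; then $G_{F_v} \simeq G^*_{F_v}$ for $v \notin S$ and all relevant data are unramified there. For each $v \in S$ the hypothesis provides related strongly $G$-regular elements $\gamma_{0,v}^H \in H(F_v)$ and $\delta_{0,v} \in G(F_v)$; let $T_v^H$ be the centralizer of $\gamma_{0,v}^H$ in $H$, a maximal torus of $H$ over $F_v$. The relatedness of $\gamma_{0,v}^H$ and $\delta_{0,v}$ says precisely that the local obstruction to transferring the stable conjugacy class of $\gamma_{0,v}^H$ from $G^*$ to the inner form $G$ over $F_v$ (in the sense of \cite{LS87}, \cite{Kot86}) vanishes; by the local analogue of Lemma \ref{lem:obs1}(2) this obstruction depends only on the $G^*(F_v)$-conjugacy class of the maximal torus of $G^*$ attached to $T_v^H$ by the norm correspondence.

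Next I would invoke a rational-torus-approximation statement: there exists a maximal torus $T^H \subset H$ defined over $F$ such that $T^H_{F_v}$ is $H(F_v)$-conjugate to $T_v^H$ for every $v \in S$. Since $G^*$ is quasi-split, the norm correspondence transfers $T^H$ to a maximal torus $T^* \subset G^*$ defined over $F$, equipped with an admissible isomorphism $T^H \to T^*$ over $F$, and $T^*_{F_v}$ is $G^*(F_v)$-conjugate to the torus attached to $T_v^H$ for each $v \in S$. Moreover every $F$-rational semisimple stable conjugacy class of the quasi-split group $G^*$ contains an $F$-point (Steinberg), so every $\gamma_0^H \in T^H(F)$ is related to an $F$-point $\delta_0^* \in T^*(F) \subset G^*(F)$. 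I would choose $\gamma_0^H$ strongly $G$-regular; this is possible because strongly $G$-regular elements form a non-empty Zariski-open subset of $T^H$, which is $F$-rational and hence has Zariski-dense set of $F$-points.

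It remains to transfer the stable class of $\delta_0^*$ to $G(\A)$. At $v \notin S$ the isomorphism $G_{F_v} \simeq G^*_{F_v}$ yields $\delta_{0,v} \in G(F_v)$ related to $\gamma_0^H$, lying in a hyperspecial maximal compact for almost all $v$ since $\delta_0^*$ is integral away from a finite set. At $v \in S$, $T^*_{F_v}$ is $G^*(F_v)$-conjugate to the torus attached to $T_v^H$, so by the previous paragraphs the local obstruction for $\delta_0^*$ vanishes, again yielding $\delta_{0,v} \in G(F_v)$ related to $\gamma_0^H$. Then $\delta_0 := (\delta_{0,v})_v \in G(\A)$ is related to $\gamma_0^H \in H(F)$ and is strongly $G$-regular because $\gamma_0^H$ is. I expect the main obstacle to be the rational-torus-approximation step — producing a single maximal torus of $H$ over $F$ realizing the prescribed family of local conjugacy classes, together with a transfer to $G^*$ coherent with the local relatedness data; once this is in place, the vanishing of the local obstructions and the assembly of $\delta_0$ are routine.
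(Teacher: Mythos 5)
Your strategy is the same as the paper's, and the overall structure (reduce to finding a global maximal torus of $H$ with prescribed local conjugacy classes at the finitely many non-quasi-split places, then transfer via admissible embeddings and handle integrality at almost all places) is exactly how the proof in the text goes. The one step you explicitly leave open --- the ``rational-torus-approximation statement'' --- is precisely where the paper does its work, and it is not hard once you know the right input: by \cite[Theorem 7.9]{BorelSpringer_rat2} the variety $X$ of maximal tori of $H$ is a \emph{rational} variety, hence satisfies weak approximation; moreover for each bad place $v$ the $H(F_v)$-conjugacy class of $T^H_v$ is open in $X(F_v)$ for the $v$-adic topology. Weak approximation then produces a point of $X(F)$, i.e.\ a maximal torus $T^H \subset H$ over $F$, whose localization at each $v \in S$ lands in the prescribed open conjugacy class. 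With that supplied, the rest of your argument matches the paper's. Two minor points: (i) your justification that $\gamma_0^H$ can be chosen strongly $G$-regular appeals to $T^H$ being ``$F$-rational'' --- a torus need not be a rational variety, but it is unirational, so $T^H(F)$ is still Zariski dense and the conclusion stands; (ii) the ``integral away from a finite set'' claim for the places outside $S$ deserves the slightly more careful treatment the paper gives (choosing models over $\mc{O}_F[1/N]$ and using that the inner-twist cocycle is unramified and integrally trivializable at almost all places), though this is routine.
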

\begin{proof}
  The assumption is equivalent to the following one: for every place $v$ of $F$,
  there exists a maximal torus $T_{H,v}$ of $H_{F_v}$ and an admissible
  embedding $T_{H,v} \hookrightarrow G_{F_v}$, i.e.\ an isomorphism of $T_{H,v}$
  with a maximal torus of $G_{F_v}$ as in \cite[Lemma 3.3.B]{KS99}.
  Note that this assumption is automatically satisfied at every place $v$ such
  that $G_{F_v}$ is quasi-split, by essentially the same argument as in the
  proof of this lemma.
  By \cite[Theorem 7.9]{BorelSpringer_rat2} the variety $X$ of maximal tori of
  $H$ is rational, in particular it satisfies weak approximation.
  Let $S$ be the finite set of places $v$ such that $G_{F_v}$ is not
  quasi-split.
  For any $v \in S$ the $H(F_v)$-conjugacy class of $T_{H,v}$ is a neighbourhood
  of $T_{H,v}$ in $X(F_v)$ (for the natural topology).
  Therefore there exists a maximal torus $T_H$ of $H$, that is an element of
  $X(F)$, such that for every $v \in S$ the maximal tori $(T_H)_{F_v}$ and
  $T_{H,v}$ of $H_{F_v}$ are conjugate under $H(F_v)$.

  Take any $G$-regular semisimple $\gamma_0^H \in T_H(F)$. Since $G^*$ is quasi-split over $F$ there exists a strongly regular $\delta_0^* \in G^*(F)$ that is related to $\gamma^H$. We have arranged that for any place $v$ of $F$, there exists a
  strongly regular element $\delta_{0,v}$ of $G(F_v)$ that is stably conjugate to $\delta_{0,v}^*$, and we are left to show
  that we may take $\delta_{0,v} \in \ul{G}(\mc{O}_{F_v})$ for almost all places
  $v$, where $\ul{G}$ is any model of $G$ over $\mc{O}_F[1/N]$ for some integer
  $N>0$.

  We can find $N>0$ and models $\ul{G^*}$ and $\ul{G}$ over $\mc{O}_F[1/N]$ s.t. $\psi$ extends to a map over $\mc{O}[1/N]$, where $\mc{O}$ is the ring of integers of the maximal extension of $F$ unramified at all places prime to $N$. For a place $v$ prime to $N$ the element of $Z^1(\Gamma_v,G^*)$ corresponding to $\psi$ lies in $Z^1(\Gamma_v/I_v,G^*(O_v))$, so there is an element $g_v \in G^*(O_v)$ s.t. $\psi\circ\tx{Ad}(g_v)$ is an isomorphism defined over $O_{F_v}$. We let $\delta_{0,v}=\psi(g_v\delta_0^*g_v^{-1}) \in \ul{G}(O_{F_v})$. For the finitely many places $v$ that are not prime to $N$ we choose $\delta_{0,v} \in G(F_v)$ to be stably conjugate to $\psi(\delta_0) \in G(\bar F)$. That is, there is $g_v \in G^*(\bar F_v)$ s.t. $\delta_v=\psi(g_v\delta_0g_v^{-1})$. The resulting collection $\delta_0$ of local elements lies in $G(\A)$.
\end{proof}

Under that assumption an adelic transfer factor $\Delta_\A'(\gamma^{H_1},
\delta)$ is defined for all pairs of strongly $G$-regular elements $\gamma^{H_1}
\in H_1(\A)$ and $\delta \in G(\A)$.
It is defined in \cite[\S6.3]{LS87} and \cite[\S7.3]{KS99} without the prime
decoration, and in discussed in \cite[\S5.4]{KS12} with the prime decoration.

By the construction in \cite[\S7.3]{KS99} the factor $\Delta_\A'(\gamma^{H_1},\delta)$ is a product of local transfer factors over all places. Since at each place the local transfer factor is canonical up to a scalar multiple, their product is also. What makes the global transfer factor completely canonical is the following property: If $\gamma^H \in H(F)$ and $\delta \in G(\A)$ are related, then
\begin{equation} \label{eq:tfobs} \Delta'_\A(\gamma^{H_1},\delta) = \<\tx{obs}(\delta),\hat\varphi^{-1}_{\gamma^H,\delta^*}(s)\>. \end{equation}
To explain the notation, note first that the condition $\gamma^H \in H(F)$ implies the existence of $\delta^* \in G^*(F)$ in the $G^*(\bar\A)$ conjugacy class of $\psi^{-1}(\delta)$, which was assumed in the definition of $\tx{obs}(\delta)$ that was reviewed in \S\ref{sub:obs}. Let $T^* \subset G^*$ be the centralizer of $\delta^*$ and let $T^H \subset H$ be the centralizer of $\gamma^H$. There is a unique admissible isomorphism $\varphi_{\gamma^H,\delta^*} : T^H \to T^*$ sending $\gamma^H$ to $\delta^*$. It induces an isomorphism $\hat T^H \to \hat T$ via which we transport $s \in [Z(\hat H)/Z(\hat G)]^\Gamma$ to an element of $[\hat T/Z(\hat G)]^\Gamma$. Then we use the Tate-Nakayama pairing $\<-,-\>$ between $H^1(\Gamma,T_\tx{sc}^*(\bar\A)/T_\tx{sc}^*(\bar F))$ and $\pi_0([\hat T^*/Z(\hat G)]^\Gamma)$. Even though the obstruction $\tx{obs}(\delta)$ was not just an element of $H^1(\Gamma,T_\tx{sc}^*(\bar\A)/T_\tx{sc}^*(\bar F))$, but rather a set of elements there, the pairing is well-defined, because $s$ has the property that its image under the connecting homomorphism $[Z(\hat H)/Z(\hat G)]^\Gamma \to H^1(\Gamma,Z(\hat G))$ is everywhere locally trivial. The subgroup of $[Z(\hat H)/Z(\hat G)]^\Gamma$ of all elements with this property is denoted by $\mf{K}(T^*/F)$ and is dual to the quotient $\mf{K}(T^*/F)^D$ recalled in \S\ref{sub:obs}.

\begin{rem} \label{rem:inverses}
We need to be careful with the normalizations of the various pairings we are using. Equation \eqref{eq:tfobs} appears optically compatible with \cite[Corollary 7.3.B]{KS99}. However, the latter is stated for the transfer factor $\Delta_\A$ constructed as in \cite{LS87}, rather than the factor $\Delta_\A'$ that we are using here, whose construction differs from $\Delta_\A$ by inverting the element $s$, see \cite[\S5.1]{KS12}. The reason for preferring $\Delta'$ over $\Delta$ is that $\Delta$ itself doesn't properly generalize to the twisted setting, see \cite{KS12}. 

Thus now \eqref{eq:tfobs} would seem optically at odds with the $\Delta'$-version of \cite[Corollary 7.3.B]{KS99}, which is the last displayed equation in \cite{KS12} before \S5.5 there. This discrepancy stems from yet another clash of conventions between \cite{LS87} and \cite{KS99}. First we recall that in the twisted setting $\tx{obs}(\delta)$ is an element of a hypercohomology group $H^1(\A/F,T^*_\tx{sc} \to T^*)$, while the element $\kappa$ lies in $H^1(W_F,\hat T^* \to \hat T^*/Z(\hat G))$. The maps in the two complexes are $1-\theta^*$ and $1-\hat\theta^*$ respectively, where $\theta^*$ is the twisting automorphism. In the untwisted setting $\theta^*=1$ and the two hypercohomology groups each break up into direct products $H^1(\A/F,T^*_\tx{sc} \to T^*)=H^1(\A/F,T^*_\tx{sc}) \oplus H^0(\A/F,T^*)$ and $H^1(W_F,\hat T^* \to \hat T^*/Z(\hat G))=H^1(W_F,\hat T^*) \oplus H^0(W_F,\hat T^*/Z(\hat G))$. The pairing between the two hypercohomology groups becomes the product of the standard normalization of the Tate-Nakayama pairing between $H^1(\A/F,T^*_\tx{sc})$ and $H^0(W_F,\hat T^*/Z(\hat G))$ and the \emph{negative} of the standard normalization of the Langlands pairing between $H^0(\A/F,T^*)$ and $H^1(W_F,\hat T^*)$. The occurrence of this negative is forced by the anti-commutativity of the cup product.

The reason why the $\Delta'$-version of \cite[Corollary 7.3.B]{KS99} is compatible with \eqref{eq:tfobs} is that the projection of the element $\tx{obs}(\delta) \in H^1(\A/F,T^*_\tx{sc} \to T^*)$ constructed in \cite{KS99} onto the direct factor $H^1(\A/F,T^*_\tx{sc})$ is the \emph{inverse} of the element $\tx{obs}(\delta)$ constructed in \S\ref{sub:obs} above. For this we direct the reader to \cite[pp.82-83]{KS99} and point out that the element $v(\sigma)$ constructed there is the 1-cocycle that represents our element $\tx{obs}(\delta)$ here (despite the element $g$ there being the inverse of our element $g$ here), yet the class $\tx{obs}(\delta)$ constructed there contains the inverse of $v(\sigma)$. The converntions we have used here are those used in \cite[(3.4)]{LS87} in the quasi-split case, which are the opposite of those used in \cite[\S5.3]{KS99} in the quasi-split case.

Finally, we remark that the same issue occurs with the local and adelic invariant $\tx{inv}(\delta,\delta')$ used in this paper -- it follows the quasi-split convention in \cite{LS87}, and is the opposite of the quasi-split convention in \cite{KS99}, so any equation involving $\tx{inv}(\delta,\delta')$ in this paper will contain an inverse when compared to the $\Delta'$-version of the corresponding equation of \cite{KS12}, and would thus appear optically identical to the corresponding equation in \cite{KS99} for the factor $\Delta$ in place of $\Delta'$.
\end{rem}

% -------------------------------------------------
% Subsection
% -------------------------------------------------

\subsection{Global transfer factors in terms of isocrystals} \label{sub:gtf_iso}

In this subsection we assume that $Z(G)$ is connected and $G$ satisfies the Hasse principle. We will show how the canonical adelic transfer factor reviewed in \S\ref{sub:adelic_tf} can be written as a product of local transfer factors that are normalized using $\widetilde{B}(G)$.

Let again $(H,\mc{H},s,\eta)$ be an endoscopic datum for $G$ and $(H_1,\eta_1)$ a $z$-pair as in \cite[\S2.2]{KS99}. By assumption $s \in [Z(\hat H)/Z(\hat G)]^\Gamma$ and the image of $s$ in $Z^1(\Gamma,Z(\hat G))$ under the connecting homomorphism has cohomologically trivial localization at each place of $F$. The Hasse principle for $G$, reinterpreted as \cite[Lemma 11.2.2]{Kot84}, implies that the image of $s$ in $Z^1(\Gamma,Z(\hat G))$ is already cohomologically trivial and thus $s$ lifts to an element $s^\natural \in Z(\hat H)^\Gamma$. We shall refer to $(H,\mc{H},s^\natural,\eta)$ as an \emph{isocrystal-refined} endoscopic datum. An isomorphism of such data is required to identify the two elements $s^\natural$, and not simply their images $s$.

By Corollary \ref{cor:surj_H1_mid_iso_nonabelian} we can choose $z^\tx{iso} \in Z^1_\tx{bas}(\mc{E}^\tx{iso},G^*)$ such that $\psi^{-1}\sigma(\psi)=\tx{Ad}(\bar z_\sigma)$. Let $\mf{w}$ be a global Whittaker datum for $G^*$. At each place $v$ of $F$ we now have the normalized local transfer factor $\Delta[\mf{w}_v,z^\tx{iso}_v] : H_1(F_v)_\tx{sr} \times G(F_v)_\tx{sr} \to \C$, defined as
\begin{equation} \label{eq:tf_iso}
\Delta[\mf{w}_v,z^\tx{iso}_v](\gamma^{H_1},\delta) = \Delta[\mf{w}_v](\gamma^{H_1},\delta^*) \cdot \<\tx{inv}[z^\tx{iso}_v](\delta^*,\delta),\hat\varphi_{\delta^*,\gamma^H}(s^\natural)\>. \end{equation}
We need to explain the notation.
On the left we have $\gamma^{H_1} \in H_1(F_v)_\tx{sr}$ and $\delta \in
G(F_v)_\tx{sr}$.
We choose arbitrarily $\delta^* \in G^*(F_v)$ that is conjugate in $G^*(\bar F_v)$ to $\psi^{-1}(\delta)$.
For $\Delta[\mf{w}_v](\gamma^{H_1},\delta^*)$ to be non-zero it is necessary
that $\gamma_1$ be a norm of $\delta$, so we make this assumption.
Let $T^* \subset G^*$ be the centralizer of $\delta^*$, a maximal torus of $G^*$. Given any $g \in G^*(\bar F)$ with $\delta=\psi(g \delta^*g^{-1})$, the element $g^{-1}z^\tx{iso}(e)\sigma_e(g)$ of $G^*(\bar F_v)$ belongs to $T^*(\bar F_v)$, for all $e \in \mc{E}^\tx{iso}_v$, where $\sigma_e \in \Gamma_v$ denotes the image of $e$ under $\mc{E}^\tx{iso}_v \to \Gamma_v$. For formal reasons the map $e \mapsto g^{-1}z^\tx{iso}(e)\sigma_e(g)$ is an element of $Z^1_\tx{alg}(\mc{E}^\tx{iso}_v,T^*)$ and its cohomology class is independent of the choice of $g$. We denote by $\tx{inv}[z^\tx{iso}_v](\delta^*,\delta)$ this cohomology class.

The element $\gamma^H \in H(F_v)_\tx{sr}$ is the image of $\gamma^{H_1}$ under $H_1 \to H$. Letting $T^H \subset H$ be its centralizer, a maximal torus of $H$, there is a unique admissible isomorphism $\varphi_{\delta^*,\gamma^H} : T^* \to T^H$ mapping $\delta^*$ to $\gamma^H$. Its dual, when composed with the canonical embedding $Z(\hat H) \to \hat T^H$, transports $s^\natural \in Z(\hat H)^{\Gamma_v}$ into $\hat T^{*,\Gamma_v}$. The pairing $\<-,-\> : H^1_\tx{alg}(\mc{E}^\tx{iso}_v,T^*) \times \hat T^{*,\Gamma_v}$ is given by \cite[Lemma 8.1]{KotBG}.

The transfer factor $\Delta[\mf{w}_v] : H_1(F_v)_\tx{sr} \times G^*(F_v)_\tx{sr}$ is the Whittaker normalization of the factor $\Delta'$, as defined in \cite[(5.5.2)]{KS12}.

\begin{pro} \label{pro:local_tf_iso}
The function $\Delta[\mf{w}_v,z^\tx{iso}_v]$ is an absolute transfer factor.
\end{pro}
\begin{proof}
This is proved in \cite[Proposition 2.2.1]{KalIso}, under the assumption that the $z$-pair is trivial, and with the inverse of the pairing used here. Nonetheless, the proof given there carries over to this situation with trivial modifications.
\end{proof}

\begin{pro} \label{pro:adelic_tf_iso}
Let $\gamma^{H_1} \in H_1(\A)_\tx{sr}$ and $\delta \in G(\A)_\tx{sr}$. For almost all places $v$ the factor $\Delta[\mf{w}_v,z^\tx{iso}_v](\gamma^{H_1}_v,\delta_v)$ is equal to $1$ and the product
\[ \prod_{v \in V} \Delta[\mf{w}_v,z^\tx{iso}_v](\gamma^{H_1}_v,\delta_v) \]
is equal to the canonical adelic transfer factor $\Delta_\A'(\gamma^{H_1},\delta)$.
\end{pro}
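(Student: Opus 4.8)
The plan is to exhibit both sides as absolute adelic transfer factors for the datum, so that they differ by a single global scalar, and then to pin this scalar down to $1$ by evaluating at one well-chosen pair with the help of the obstruction formula \eqref{eq:tfobs}.

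I would first treat the almost-everywhere vanishing, which also makes the infinite product meaningful. Fix models $\ul{G}$ of $G$ and $\ul{G^*}$ of $G^*$ over $\mc{O}_F[1/N]$. By Lemma \ref{lem:iso_unr_ae} there is a finite set $S$ of places outside of which $\tx{loc}_v(z^\tx{iso})$ is, up to a coboundary valued in $Z(\bar F_v)$ (which does not affect the local transfer factor, as it only alters $z^\tx{iso}_v$ by a coboundary), inflated from $Z^1(\Gamma_{E_{\dot v}/F_v},\ul{G}(\mc{O}_{E_{\dot v}}))$. Enlarging $S$, for $v\notin S$ the Whittaker datum $\mf{w}_v$, the endoscopic datum, and the elements $\gamma^{H_1}_v,\delta_v$ are all unramified and in good relative position, and one may pick $\delta^*_v\in\ul{G^*}(\mc{O}_v)$ stably conjugate to $\delta_v$ over $\mc{O}_v$. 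Then $(\mf{w}_v,z^\tx{iso}_v)$ is unramified, so $\Delta[\mf{w}_v,z^\tx{iso}_v]$ agrees with the canonical unramified transfer factor and hence equals $1$ on $(\gamma^{H_1}_v,\delta_v)$; this is the same computation as in the proofs of \cite[Proposition 4.4.1]{KalGRI} and \cite[Proposition 2.2.1]{KalIso}. Thus $\prod_v\Delta[\mf{w}_v,z^\tx{iso}_v]$ converges to an absolute adelic transfer factor for $(H,\mc{H},s,\eta),(H_1,\eta_1)$, as does $\Delta'_\A$ by its construction in \cite[\S7.3]{KS99}, so the two differ by a scalar $c\in\C^\times$.

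It remains to show $c=1$, and for this it suffices to evaluate both sides at one pair of related strongly $G$-regular elements. By Lemma \ref{lem:loc_glob_tf} choose such a pair $\gamma^H\in H(F)$, $\delta\in G(\A)$; lift $\gamma^H$ to $\gamma^{H_1}\in H_1(F)$ (possible since the kernel of the $z$-extension $H_1\to H$ is an induced torus) and, following that lemma's proof, fix $\delta^*\in G^*(F)$ stably conjugate to $\psi^{-1}(\delta)$ at every place. Multiplying \eqref{eq:tf_iso} over all $v$ gives
\[ \prod_v\Delta[\mf{w}_v,z^\tx{iso}_v](\gamma^{H_1}_v,\delta_v) = \left(\prod_v\Delta[\mf{w}_v](\gamma^{H_1}_v,\delta^*_v)\right)\cdot\prod_v\<\tx{inv}[z^\tx{iso}_v](\delta^*_v,\delta_v),\hat\varphi_{\delta^*,\gamma^H}(s^\natural)\>. \]
The first factor on the right is $1$: this is the classical product formula for the Whittaker-normalized transfer factor of the quasi-split group $G^*$ at a pair of related $F$-points (cf.\ \cite[\S6.4]{LS87}, \cite[\S5]{KS12}; it is also the case $\psi=\tx{id}$ of \eqref{eq:tfobs} together with $\tx{obs}(\delta^*)=0$, which holds because $\delta^*\in G^*(F)$, cf.\ the proof of Proposition \ref{pro:obs1}). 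For the second factor, $\tx{inv}[z^\tx{iso}_v](\delta^*_v,\delta_v)$ is the localization at $v$ of the global class $\tx{inv}[z^\tx{iso}](\delta^*,\delta)\in H^1_\tx{alg}(\mc{E}^\tx{iso}_2,T^*(\bar\A))$, whose image in $H^1(\Gamma,T^*(\bar\A)/T^*(\bar F))$ equals $\tx{obs}(\delta)$ by Fact \ref{fct:obsiso}; the local-global compatibility of Proposition \ref{pro:obs_iso} then identifies $\prod_v\<\tx{inv}[z^\tx{iso}_v](\delta^*_v,\delta_v),\hat\varphi_{\delta^*,\gamma^H}(s^\natural)\>$ with $\<\tx{obs}(\delta),\hat\varphi^{-1}_{\gamma^H,\delta^*}(s)\>$, which by \eqref{eq:tfobs} is $\Delta'_\A(\gamma^{H_1},\delta)$. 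Hence $c=1$.

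I expect the main obstacle to be the bookkeeping of the various normalizations: one must check that the local pairing $\<-,-\>$ of \cite[Lemma 8.1]{KotBG} occurring in \eqref{eq:tf_iso} matches, after the identifications of \S\ref{sub:BG_iso_infty} and the isomorphisms of Proposition \ref{pro:obs_iso}, the Tate-Nakayama pairing entering \eqref{eq:tfobs} --- including the inverse conventions recorded in Remark \ref{rem:inverses} and the passage between $T^*$ and $T^*_\tx{sc}$ (which is unproblematic here because $G$ satisfies the Hasse principle, so by Fact \ref{fct:obsiso} the obstruction already lives in $H^1(\Gamma,T^*(\bar\A)/T^*(\bar F))$ and $s$ lifts to $s^\natural\in Z(\hat H)^\Gamma$). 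The almost-everywhere vanishing in the first step relies on the same normalizations being the unramified ones.
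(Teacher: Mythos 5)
Your proposal is correct and follows essentially the same route as the paper: both arguments observe that the two expressions are products of local transfer factors and hence differ by a scalar, evaluate at a pair with $\gamma^{H_1}\in H_1(F)$ and $\delta^*\in G^*(F)$, kill the product $\prod_v\Delta[\mf{w}_v](\gamma^{H_1},\delta^*)$ via the Langlands--Shelstad product formula for the quasi-split form, and identify the remaining product of pairings with $\<\tx{obs}(\delta),\hat\varphi^{-1}_{\gamma^H,\delta^*}(s)\>$ using Fact \ref{fct:obsiso} and Proposition \ref{pro:obs_iso}, then conclude by \eqref{eq:tfobs}. Your more explicit treatment of the almost-everywhere vanishing via Lemma \ref{lem:iso_unr_ae} is a slightly fuller account of a point the paper leaves implicit, but it is not a different method.
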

\begin{proof}
The adelic transfer factor was reviewed in \S\ref{sub:adelic_tf}. By construction it is a product of local transfer factors, and so is the product in this proposition. Therefore one is a scalar multiple of the other. It is enough to show that they give the same value on one pair $(\gamma^{H_1},\delta)$ of related elements. We choose the pair so that $\gamma^{H_1} \in H_1(F)$. The existence of such an element is assumed in the definition of the global transfer factor. Then we can choose $\delta^* \in G(F)$ related to $\gamma^{H_1}$.

We first claim that almost all factors in the product $\prod_v \Delta[\mf{w}_v](\gamma^{H_1},\delta^*)$ are equal to $1$ and the product itself is equal to $1$. For this we recall that this factor is the product of terms $\epsilon_v \cdot \Delta_I \cdot \Delta_{II} \cdot \Delta_{III_1} \cdot \Delta_{III_2} \cdot \Delta_{IV}$. The terms $\epsilon_v$ are the local components of a global root number of an orthogonal Artin representation of degree $0$. Therefore almost all terms are equal to $1$ and their product is equal to $1$. For the remaining terms we apply Theorem 6.4.A and Corollary 6.4.B of \cite{LS87}.

This reduces to showing that almost all factors in the product
\[ \prod_v \<\tx{inv}[z^\tx{iso}_v](\delta^*,\delta),\hat\varphi_{\delta^*,\gamma^H}(s^\natural)\> \]
are equal to $1$ and this product equals $\Delta_\A'(\gamma^{H_1},\delta)$. This follows from Fact \ref{fct:obsiso} and the equality
\begin{eqnarray*}
\prod_v \<\tx{inv}[z^\tx{iso}_v](\delta^*,\delta),\hat\varphi_{\delta^*,\gamma^H}(s^\natural)\>&=&\<\sum_v \tx{inv}[z^\tx{iso}_v](\delta^*,\delta),\hat\varphi_{\delta^*,\gamma^H}(s^\natural)\>\\
&=&\<\tx{inv}[z^\tx{iso}](\delta^*,\delta),\hat\varphi_{\delta^*,\gamma^H}(s^\natural)\>\\
\end{eqnarray*}
where the pairings in the first term are the local pairings $H^1_\tx{alg}(\mc{E}^\tx{iso}_v,T^*(\bar F_v)) \times X^*(\hat T^{*,\Gamma_v}) \to \C$, the pairing in second term is the pairing $H^1_\tx{alg}(\mc{E}^\tx{iso}_2,T^*(\bar\A)) \times (X^*(\hat T^*)\otimes \Z[V])_\Gamma \to \C$, and that in the third term is $H^1_\tx{alg}(\mc{E}^\tx{iso}_1,T^*(\bar\A)/T^*(\bar F)) \times X^*(\hat T^{*,\Gamma}) \to \C$.
The last equality follows from Proposition \ref{pro:obs_iso}.
\end{proof}

\begin{rem} \label{rem:adelic_transfer_can}
  Every $f \in \mc{C}^\infty_c(G(\A))$ has a \emph{canonical} transfer $f^{H_1}
  \in \mc{C}^\infty_c(H_1(\A))$, up to functions having identically vanishing
  stable orbital integrals.
Namely, choose an arbitrary decomposition $\Delta'_\A = \prod_v \Delta'_v$ of the canonical adelic transfer factor as a product of local transfer factors. Assume without loss of generality that $f$ is a decomposable function $f=\otimes_v f_v$. Let $f^{H_1}_v \in \mc{C}^\infty_c(H_1(F_v))$ be the transfer of $f_v$ relative to the factor $\Delta'_v$ and let $f^{H_1}=\otimes_v f^{H_1}_v$. Each individual local component $f^{H_1}_v$ depends on the choice of $\Delta'_v$, which is well-defined up to a complex scalar, but the formula $\Delta'_\A = \prod_v \Delta'_v$ implies that $f^{H_1}$ is independent of these choices.

In particular, Proposition \ref{pro:adelic_tf_iso} implies that if we take $f^{H_1}_v$ to be the transfer relative to $\Delta[\mf{w}_v,z^\tx{iso}_v]$, then $\otimes_v f^{H_1}_v$ will be the canonical adelic transfer. Analogously, \cite[Proposition 4.4.1]{KalGRI} implies the same statement with $\Delta[\mf{w}_v,z^\tx{iso}_v]$ replaced by the transfer factor normalized using $\mc{E}^\tx{rig}$ in place of $\mc{E}^\tx{iso}$.
\end{rem}

\begin{rem}
  According to \cite[\S 2.4]{LS90} the product formula proved in Proposition
  \ref{pro:adelic_tf_iso} generalizes from the case of strongly regular pairs
  $(\delta, \gamma^{H_1})$ to the case of $(G,H)$-regular pairs, i.e.\ the
  assumption of \cite[\S 6.10]{Kot86} holds.
\end{rem}

\subsection{Global transfer factors in the rigid setting}
\label{sub:global_transfer_rig}

In this section we do not assume that the connected reductive group $G$ over $F$
satisfies the Hasse principle, nor that $Z(G)$ is connected.
The analogue of Proposition \ref{pro:adelic_tf_iso} in the rigid setting was
proved in \cite[\S 4.4]{KalGRI}, under the assumption that there exists a pair of related elements $\gamma^H \in H(F)$ and $\delta \in G(F)$. Note that this is stronger than the assumption on the existence of pair of related elements $\gamma^H \in H(F)$ and $\delta \in G(\A)$. The reason this stronger assumption was made in \cite[\S 4.4]{KalGRI} is that it is also made on p.\ 268 of \cite{LS87}, where the authors declare that the
global adelic transfer factors should vanish if this stronger assumption is not satisfied.

On the other hand, the constructions in \cite{KS99} operate under the weaker
assumption on the existence of related elements $\gamma^H \in H(F)$ and $\delta
\in G(\A)$. It is clear to us that if the weaker assumption is not satisfied
then the endoscopic datum is not needed for the stabilization of the trace
formula and its transfer factors can be declared zero, see Lemma
\ref{lem:loc_glob_tf}. However, it is not clear to us that this is true if the
stronger assumption is not satisfied. It could be that a Hasse principle for
this assumption holds, but we do not know if it holds, let alone of any
published proof.
It may be possible to approach this problem using the classification of Dynkin
diagrams (similar to \cite{Lan79}), after reducing to the case where $G$ is
absolutely simple and simply connected and the endoscopic datum is elliptic.

Instead, we have decided to generalize here the proof of \cite[Proposition 4.4.1]{KalGRI} by dropping the stronger assumption and only keeping the weaker assumption. To replicate the proof of Proposition \ref{pro:adelic_tf_iso} in the rigid
setting, we want an analogue of Proposition \ref{pro:obs_iso}.
The natural strategy would be to introduce extensions of $\Gamma$ bound by
$\ol{\A}$- and ``$\ol{\A} / \ol{F}$-points'' of certain projective limits of
finite multiplicative groups over $F$ (analogous to the gerbes $\mc{E}^1$ and
$\mc{E}^2$ of \cite{KotBG}), prove Tate-Nakayama isomorphisms and local-global
compatibility (analogous to \cite[(6.6)]{KotBG}). This is certainly possible and was in fact mostly done in the preparations to \cite{KalGRI}. However, it takes a fair amount of pages to set up. We have thus chosen an alternative approach here, which uses the result in the ``iso'' setting (Proposition
\ref{pro:obs_iso}) and the comparison with ``rig'' via ``mid'' to deduce
the result in the rigid setting (Proposition \ref{pro:obs_rig}).

\begin{lem} \label{lem:cart_rig_semi_adelic}
  Let $T$ be a torus over $F$, $Z \subset T$ a finite multiplicative subgroup.
  Then the diagram of Abelian groups
  \[ \xymatrix{
    H^1(P^\tx{rig}_{\dot V} \to \mc{E}^\tx{rig}_{\dot V}, Z(\ol{F}) \to
    T(\ol{\A})) \ar[r] \ar[d] & \bigoplus_{v \in V} H^1(P^\tx{rig}_v \to
    \mc{E}^\tx{rig}_v, Z \to T) \ar[d] \\
    \tx{Hom}(P^\tx{rig}_{\dot V}, Z) \ar[r] & \bigoplus_{v \in V}
    \tx{Hom}(P^\tx{rig}_v, Z)
  } \]
  is Cartesian.
\end{lem}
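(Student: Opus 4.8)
The plan is to deduce this from a comparison of inflation--restriction sequences, as an abstract cohomological substitute for Kottwitz's local--global compatibility \cite[(6.6)]{KotBG} that was used for ``iso'' in Proposition \ref{pro:obs_iso}. Write $A$, $B$, $C$, $D$ for the top-left, top-right, bottom-left and bottom-right corners of the square. Fact \ref{fct:infres} --- whose proof is nothing but the five term inflation--restriction sequence for $1 \to P^\tx{rig}_{\dot V}(\ol{F}) \to \mc{E}^\tx{rig}_{\dot V} \to \Gamma \to 1$ and applies verbatim with $\ol{\A}$-coefficients in place of $\ol{F}$-coefficients --- gives an exact sequence $1 \to H^1(\Gamma, T(\ol{\A})) \to A \to C \to H^2(\Gamma, T(\ol{\A}))$ whose last map is $\phi \mapsto \phi_* \xi^\tx{rig}_{\dot V}$. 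Taking the direct sum over all places $v$ of the corresponding sequences for the local gerbes $\mc{E}^\tx{rig}_v$ gives an exact sequence $1 \to \bigoplus_v H^1(\Gamma_v, T(\ol{F_v})) \to B \to D \to \bigoplus_v H^2(\Gamma_v, T(\ol{F_v}))$ whose last map is $(\phi_v)_v \mapsto (\phi_{v,*}\xi^\tx{rig}_v)_v$, and the localization maps give a morphism from the first sequence to the second.

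The two inputs I would feed in are, first, that for a torus $T$ over $F$ the natural maps $H^1(\Gamma, T(\ol{\A})) \to \bigoplus_v H^1(\Gamma_v, T(\ol{F_v}))$ and $H^2(\Gamma, T(\ol{\A})) \to \bigoplus_v H^2(\Gamma_v, T(\ol{F_v}))$ are an isomorphism and an injection respectively --- both classical, following from Shapiro's lemma after writing $\ol{\A} = \varinjlim_E \A_E$ and decomposing each $T(\A_E)$ into induced $\Gamma_{E/F}$-modules (the case $i=1$ is exactly the identification used in \S\ref{sub:obs}) --- and, second, that the localization maps $A \to B$ and $C \to D$ intertwine the two connecting homomorphisms above. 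By functoriality of $\phi \mapsto \phi_*$ this second point reduces to the assertion that $\tx{loc}_v$ carries the local canonical class $\xi^\tx{rig}_v$ to the restriction along $\tx{loc}_v : P^\tx{rig}_v \to (P^\tx{rig}_{\dot V})_{F_v}$ of the global canonical class $\xi^\tx{rig}_{\dot V}$, which is the compatibility of canonical classes with localization for ``rig'' recorded in \cite[\S 3.6]{KalGRI} (see also \cite[Corollary 3.3.8]{KalGRI}).

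With these in place the statement follows by a diagram chase. Given $(b,c) \in B \times_D C$, the common image $d$ of $b$ and $c$ in $D$ satisfies $\partial_\tx{loc}(d)=0$ by exactness of the local sequence, so the image of $\phi_*\xi^\tx{rig}_{\dot V}$ (for $\phi = c$) in $\bigoplus_v H^2(\Gamma_v,T(\ol{F_v}))$ vanishes, hence $\phi_*\xi^\tx{rig}_{\dot V}=0$ by injectivity on $H^2$; therefore $c$ lifts to some $a_0 \in A$. Its image $b_0 \in B$ agrees with $b$ after mapping to $D$, so $b-b_0$ lies in $\bigoplus_v H^1(\Gamma_v,T(\ol{F_v}))$, which by the isomorphism on $H^1$ is the image of $H^1(\Gamma,T(\ol{\A})) \hookrightarrow A$; correcting $a_0$ by the corresponding class yields $a \in A$ mapping to $(b,c)$. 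Uniqueness follows from injectivity of $H^1(\Gamma,T(\ol{\A})) \to A$ and of $H^1(\Gamma,T(\ol{\A})) \to B$ together with exactness at $A$. The only genuinely non-formal step I expect is the localization-compatibility of the connecting maps, which ultimately rests on the Shapiro-map construction of the ``rig'' localization maps in \cite[Appendix B]{KalGRI}; extending Fact \ref{fct:infres} to $\ol{\A}$-coefficients is routine and everything downstream is homological algebra. One could also view the transposed square as an adelic incarnation of Fact \ref{fct:cart}, but the inflation--restriction route avoids having to encode all places at once by a single morphism of extensions over $F$.
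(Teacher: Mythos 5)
Your proposal is correct, and it takes a genuinely different route from the paper's. The paper proves injectivity exactly as you do, via $H^1(\Gamma,T(\ol{\A}))\simeq\bigoplus_v H^1(\Gamma_v,T(\ol{F_v}))$; but for surjectivity it works entirely at the cochain level: starting from $\mu$ and representatives $z_v$, it builds explicit cochains $\tilde z_v\in C^1(\Gamma_{E/F},T(E\otimes_F F_v))$ with differential $\mu_*(\xi^\tx{rig}_{\dot V})$ using the Shapiro corestrictions $\dot S^1$ of \cite[Appendix B]{KalGRI} and the correction cochains $\eta_v$ comparing the localized global canonical class with the local one, and at the places outside $S$ it invokes the injectivity of inflation on $H^2$ for finite multiplicative groups (the local analogue of \cite[Lemma 3.2.7]{KalGRI}) to produce $\tilde z_v$ valued in $Z(E\otimes_F F_v)$, so that the collection lands in the restricted product. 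You replace all of this by the exactness of the inflation--restriction sequence with $\ol{\A}$-coefficients together with the classical computation $H^i(\Gamma,T(\ol{\A}))\simeq\bigoplus_v H^i(\Gamma_v,T(\ol{F_v}))$ for $i=1,2$, feeding in the same non-formal input (compatibility of the local and global canonical classes under $\tx{loc}_v$, \cite[Corollary 3.3.8 and \S3.6]{KalGRI}) only through the commutativity of the square of connecting maps. The restricted-product bookkeeping that the paper handles by hand is hidden in your appeal to the vanishing of $H^i(\Gamma_{E_{\dot v}/F_v},T(O_{E_{\dot v}}))$ at unramified places, which is where the direct-sum decomposition of $H^i(\Gamma,T(\ol{\A}))$ comes from; this is legitimate and classical. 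Your argument is shorter and transfers verbatim to the ``iso'' and ``mid'' analogues that the paper asserts in the proof of Proposition \ref{pro:obs_rig} ``with similar proofs''; the paper's version has the marginal advantage of exhibiting explicit finite-level representatives, though these are not used downstream. Two small points worth making explicit if you write this up: you should record that the localization map out of the top-left corner really lands in the direct sum (trivial local classes at almost all places), which is needed for $b-b_0$ to make sense, and that the extension of Fact \ref{fct:infres} to $\ol{\A}$-coefficients includes exactness at $\tx{Hom}_F(P^\tx{rig}_{\dot V},Z)$ with the lifted cocycle restricting to $\phi$ with values in $Z(\ol{F})$ (not merely $Z(\ol{\A})$), which the standard section-and-cochain construction does give.
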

\begin{proof}
  We have to prove that the map from $H^1(P^\tx{rig}_{\dot V} \to
  \mc{E}^\tx{rig}_{\dot V}, Z(\ol{F}) \to T(\ol{\A}))$ to the fiber product is
  injective and surjective.
  Injectivity follows easily from the well-known isomorphism
  \[ H^1(\Gamma, T(\ol{\A})) \simeq \bigoplus_{v \in V} H^1(\Gamma_v,
  T(\ol{F_v})). \]

  To prove surjectivity, let $\mu \in \tx{Hom}(P^\tx{rig}_{\dot V}, Z)$ and
  $([z_v]) \in \bigoplus_{v \in V} H^1(P^\tx{rig}_v \to \mc{E}^\tx{rig}_{\dot
  v}, Z \to T)$ be such that for any $v \in V$, the restriction of $z_v$ to
  $P^\tx{rig}_v$ equals $\mu$ composed with $\tx{loc}_v : P^\tx{rig}_v \to
  P^\tx{rig}_{\dot V}$.
  Recall that for any $v \in V$ there is $\eta_v \in C^1(\Gamma_v,
  P^\tx{rig}_{\dot V})$ unique up to $Z^1(\Gamma_v, P^\tx{rig}_{\dot V}) =
  B^1(\Gamma_v, P^\tx{rig}_{\dot V})$ such that $\xi^\tx{rig}_{\dot
  V}|_{\Gamma_v^2} = (\tx{loc}_v \circ \xi^\tx{rig}_v ) \times \tx{d}(\eta_v)$.
  Let $S$ be a finite set of places of $F$ containing all Archimedean places and
  such that $[z_v]=1$ for $v \in V \smallsetminus S$, so that in particular $\mu
  \circ \tx{loc}_v$ is trivial.
  Recall that for any $v \in V$ we can represent $z_v$ as an element of
  $C^1(\Gamma_v, T(\ol{F_v}))$ such that $\tx{d}(z_v) = \mu \circ
  \tx{loc}_v \circ \xi^\tx{rig}_v$.
  There exists a finite Galois extension $E/F$ such that the action of
  $\Gamma$ on $X^*(Z)$ factors through $\Gamma_{E/F}$ and $E$ contains all roots
  of unity of order dividing the exponent of $Z$ (in particular
  $Z(E)=Z(\ol{F})$) and for all $v \in S$ $z_v$ is inflated from an element of
  $C^1(\Gamma_{E_{\dot v}/F_v}, T(E_{\dot v}))$ and $\mu \circ \eta_v$ is
  inflated from an element of $C^1(\Gamma_{E_{\dot v}/F_v}, Z(E_{\dot v}))$.
  For $v \in S$ let $\tilde{z}^{(0)}_v = \dot{S}^1( z_v \times (\mu \circ
  \eta_v)) \in C^1(\Gamma_{E/F}, T(E \otimes_F F_v))$ where $\dot{S}^1$ is a
  Shapiro corestriction map associated to a section of $\Gamma_{E/F} \to
  \Gamma_{E_{\dot v}/F_v} \backslash \Gamma_{E/F}$ as explained in
  \cite[Appendix B]{KalGRI}.
  We have $[\tx{d}(\tilde{z}^{(0)}_v)] = \mu_* [\xi^\tx{rig}_{\dot V}]$ in
  $H^2(\Gamma_{E/F}, Z(E \otimes_F F_v)) \simeq H^2(\Gamma_{E_{\dot v}/F_v},
  Z(E_{\dot v}))$ (Shapiro isomorphism), therefore there exists $a_v \in
  C^1(\Gamma_{E/F}, Z(E \otimes_F F_v))$ such that $\tilde{z}_v :=
  \tilde{z}^{(0)}_v \times \tx{d}(a_v)$ satisfies $\tx{d}(\tilde{z}_v) =
  \mu_*(\xi^\tx{rig}_{\dot V})$ (via the embedding $Z(E) \subset Z(E \otimes_F
  F_v)$).
  For $v \in V \smallsetminus S$ we have $\mu_*([\xi^\tx{rig}_{\dot V}]) = 1$ in
  $H^2(\Gamma_{E/F}, Z(E \otimes_F F_v))$, thanks to
  \begin{itemize}
    \item the Shapiro isomorphism $H^2(\Gamma_{E/F}, Z(E \otimes_F F_v)) \simeq
      H^2(\Gamma_{E_{\dot v}/F_v}, Z(E_{\dot v}))$,
    \item the compatibility between the local and global canonical classes which
      implies that the image of $\mu_*([\xi^\tx{rig}_{\dot V}])$ in
      $H^2(\Gamma_v, Z)$ equals $(\mu \circ
      \tx{loc}_v)_*(\xi^\tx{rig}_v)$,
    \item the fact that $\mu \circ \tx{loc}_v : P^\tx{rig}_v \to Z$ is trivial
      because $[z_v]=1$,
    \item the local analogue of \cite[Lemma 3.2.7]{KalGRI} (similar proof) which
      says that the inflation map $H^2(\Gamma_{E_{\dot v}/F_v}, Z(E_{\dot v}))
      \to H^2(\Gamma_v, Z)$ is injective.
  \end{itemize}
  So for $v \in V \smallsetminus S$ we can find $\tilde{z}_v \in
  C^1(\Gamma_{E/F}, Z(E \otimes_F F_v))$ satisfying $\tx{d}(\tilde{z}_v) =
  \mu_*(\xi^\tx{rig}_{\dot V})$.
  Now $(\tilde{z}_v)_{v \in V}$ represents an element of $H^1(P^\tx{rig}_{\dot
  V} \to \mc{E}^\tx{rig}_{\dot V}, Z(\ol{F}) \to T(\ol{\A}))$ whose restriction
  to $P^\tx{rig}_{\dot V}$ is $\mu$ and mapping to $([z_v])_v$.
\end{proof}

\begin{cor} \label{cor:TN_rig_semi_adelic}
  In the same setting, denote $Y = X_*(T)$ and $\ol{Y} = X_*(T/Z)$.
  Then there is a unique Tate-Nakayama isomorphism between
  \[ Y^\tx{rig}_\tx{sa}(Z \to T) := \left\{ (\lambda_v)_v \in \bigoplus_{v \in
    V} (\ol{Y}/I_{\dot v}(Y))[\tx{tor}] \middle | \sum_{v \in V} (\lambda_v +
    IY) \in (Y/IY)[\tx{tor}] \right\} \]
  and $H^1(P^\tx{rig}_{\dot V} \to \mc{E}^\tx{rig}_{\dot V}, Z(\ol{F}) \to
  T(\ol{\A}))$ which is compatible with the local Tate-Nakayama isomorphisms and
  the identification
  \[ \tx{Hom}(P^\tx{rig}_{\dot V}, Z) \simeq \varinjlim_E
  ((\ol{Y}/Y)[\dot{V}_E]_0)^{N_{E/F}=0}. \]
\end{cor}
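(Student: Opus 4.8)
The plan is to deduce the isomorphism from the Cartesian square of Lemma \ref{lem:cart_rig_semi_adelic} by applying the known local Tate-Nakayama isomorphisms to all four corners. First I would identify each corner of that square with a concrete linear-algebraic object. For the bottom row, Fact \ref{fct:iso_hom_D} (and the description of $X^*(P^\tx{rig}_v)$ recalled in \S\ref{sub:def_pro_glob} via the local modules $M^\tx{rig}_{E_w,[E:F]}$) gives $\tx{Hom}(P^\tx{rig}_v,Z)\simeq \varinjlim_E ((\ol Y/Y)[\Gamma_{E_{\dot v}/F_v}])^{N=0}$ and $\tx{Hom}(P^\tx{rig}_{\dot V},Z)\simeq \varinjlim_E((\ol Y/Y)[\dot V_E]_0)^{N_{E/F}=0}$, using the global module $M^\tx{rig}_{E,\dot S_E}$ from \S\ref{sub:globmod}; the map between them is the obvious ``localization'' coming from $\tx{loc}_v$. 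For the top-right corner, the local Tate-Nakayama isomorphism for $\mc{E}^\tx{rig}_v$ of \cite[\S4]{KalRI} identifies $H^1(P^\tx{rig}_v\to\mc{E}^\tx{rig}_v,Z\to T)$ with $(\ol Y/I_{\dot v}(Y))[\tx{tor}]$, and for the top-left corner this is precisely what we must produce.

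Next I would check that under these identifications the right-hand vertical map of Lemma \ref{lem:cart_rig_semi_adelic} becomes the natural map $(\ol Y/I_{\dot v}(Y))[\tx{tor}]\to \varinjlim_E((\ol Y/Y)[\Gamma_{E_{\dot v}/F_v}])^{N=0}$ appearing in the local inflation-restriction sequence of Fact \ref{fct:infres}, and that the bottom map is the localization described above. Forming the fiber product of
\[ \bigoplus_{v\in V}(\ol Y/I_{\dot v}(Y))[\tx{tor}] \longrightarrow \bigoplus_{v\in V}\tx{Hom}(P^\tx{rig}_v,Z) \longleftarrow \tx{Hom}(P^\tx{rig}_{\dot V},Z) \]
then yields exactly $Y^\tx{rig}_\tx{sa}(Z\to T)$: an element of the fiber product is a family $(\lambda_v)$ together with a global $\mu\in\tx{Hom}(P^\tx{rig}_{\dot V},Z)$ restricting to the image of each $\lambda_v$, and the condition that such a $\mu$ exists — given that $\mu$ must be $\dot V$-supported with vanishing norm and sum zero — translates, after the change of variables in Fact \ref{fct:globcrigind} and Lemma \ref{lem:repr_Sdot_supp}, into the single congruence $\sum_v(\lambda_v+IY)\in(Y/IY)[\tx{tor}]$. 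The existence and uniqueness of the Tate-Nakayama isomorphism for the top-left corner is then immediate from the universal property of fiber products, since the other three vertical comparison isomorphisms (at the top-right, bottom-left, bottom-right corners) are already in place and are functorial in $T$; functoriality of the resulting isomorphism follows from functoriality of all the pieces.

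The main obstacle will be the bookkeeping in the fiber-product identification of the top-left corner: one must carefully match the ``sum over $v$ lands in $(Y/IY)[\tx{tor}]$'' condition with the adelic/global structure, i.e.\ verify that a compatible family of local classes lifts to a genuine class in $H^1(P^\tx{rig}_{\dot V}\to\mc{E}^\tx{rig}_{\dot V},Z(\ol F)\to T(\ol\A))$ precisely when this congruence holds. This is where the surjectivity argument in the proof of Lemma \ref{lem:cart_rig_semi_adelic} (building a global $2$-cochain from local ones via Shapiro corestriction maps) does the real work; on the linear-algebraic side it amounts to the observation that $\tx{Hom}(P^\tx{rig}_{\dot V},Z)\to\bigoplus_v\tx{Hom}(P^\tx{rig}_v,Z)$ has image exactly the families that are almost everywhere zero and whose ``total norm'' vanishes, which is a direct consequence of Lemma \ref{lem:repr_Sdot_supp} together with the defining conditions on $M^\tx{rig}_{E,\dot S_E}$. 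Everything else is a routine, if lengthy, diagram chase that I would only sketch rather than carry out in full.
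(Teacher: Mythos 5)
Your proposal is correct and follows essentially the same route as the paper: the paper's proof simply exhibits the "obvious" Cartesian square with $Y^\tx{rig}_\tx{sa}(Z\to T)$, $\bigoplus_v(\ol Y/I_{\dot v}(Y))[\tx{tor}]$, $\varinjlim_E((\ol Y/Y)[\dot V_E]_0)^{N_{E/F}=0}$ and $\bigoplus_v(\ol Y/Y)^{N_{\dot v}=0}$ at the four corners and matches it against the Cartesian square of Lemma \ref{lem:cart_rig_semi_adelic} via the local Tate--Nakayama isomorphisms, exactly as you describe. Your extra verification that the fiber-product condition reduces to $\sum_v(\lambda_v+IY)\in(Y/IY)[\tx{tor}]$ (the norm-vanishing being automatic from local torsion and the $\dot V$-support) is precisely what the paper leaves implicit.
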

\begin{proof}
  We have an obvious Cartesian diagram
  \[ \xymatrix{
    Y^\tx{rig}_\tx{sa}(Z \to T) \ar[r] \ar[d] & \bigoplus_{v \in V}
    (\ol{Y}/I_{\dot v}(Y))[\tx{tor}] \ar[d] \\
    \varinjlim_E ((\ol{Y}/Y)[\dot{V}_E]_0)^{N_{E/F}=0} \ar[r] & \bigoplus_{v \in
    V} (\ol{Y}/Y)^{N_{\dot v}=0}
  } \]
  and comparing with the Cartesian diagram in the previous lemma gives the
  sought isomorphism.
\end{proof}

\begin{pro} \label{pro:obs_rig}
  We have a commutative diagram
  \[ \xymatrix{
      Y^\tx{rig}_\tx{sa}(Z \to T) \ar[r]^-\sim \ar[d] & H^1(P^\tx{rig}_{\dot V}
      \to \mc{E}^\tx{rig}_{\dot V}, Z(\ol{F}) \to T(\ol{\A})) \ar[d] \\
      (Y/IY)[\tx{tor}] \ar[r]^-\sim & H^1(\Gamma, T(\ol{\A}) / T(\ol{F}))
  } \]
  where the top horizontal isomorphism was defined in the previous corollary,
  the bottom isomorphism was defined in \cite{Tate66}, the left vertical map is
  $(\lambda_v)_v \mapsto \sum_{v \in V} (\lambda_v + IY)$ and the right vertical
  map is the obvious one.
\end{pro}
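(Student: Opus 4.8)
The plan is to reduce this statement to the already-established compatibility in the ``iso'' setting, Proposition \ref{pro:obs_iso}, by using the ``mid'' gerbe as an intermediary, exactly as the introduction and \S\ref{sub:global_transfer_rig} advertise. First I would set up the semi-adelic analogue of $\mc{E}^\tx{mid}_{\dot V}$, namely an extension $\mc{E}^\tx{mid}_{\dot V}$ acting on $T(\ol{\A})$ and $T(\ol{\A})/T(\ol{F})$, together with the analogue of Lemma \ref{lem:cart_rig_semi_adelic} and Corollary \ref{cor:TN_rig_semi_adelic}: a Cartesian diagram identifying $H^1(\mb{T}^\tx{mid}_{\dot V} \to \mc{E}^\tx{mid}_{\dot V}, Z(\ol{F}) \to T(\ol{\A}))$ with a fiber product of local $H^1(\mc{E}^\tx{mid}_v, Z \to T)$ groups over a product condition, and with a linear-algebra model $Y^\tx{mid}_\tx{sa}(Z \to T)$ built from the local $Y^\tx{mid}_v$'s. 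The comparison maps $c_\tx{iso}$ and $c_\tx{rig}$ of gerbes then give maps among the three semi-adelic $H^1$'s, compatible with localization by the last commutative diagram of \S3.13 (Localization).

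Next I would establish the ``mid'' version of the desired square: the diagram
\[ \xymatrix{
  Y^\tx{mid}_\tx{sa}(Z \to T) \ar[r]^-\sim \ar[d] &
    H^1(\mb{T}^\tx{mid}_{\dot V} \to \mc{E}^\tx{mid}_{\dot V}, Z(\ol{F}) \to
    T(\ol{\A})) \ar[d] \\
  (Y/IY)[\tx{tor}] \ar[r]^-\sim & H^1(\Gamma, T(\ol{\A})/T(\ol{F}))
} \]
commutes, where the left vertical map records the ``$\mu$-part'' only, i.e.\ sends $((\lambda_v,\mu_v))_v$ to $\sum_v (\mu_v + IY)$. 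This follows from Proposition \ref{pro:obs_iso} together with Proposition \ref{pro:tnisomid} and Proposition \ref{pro:tnmidglobloc}: via the canonical splitting $s_\tx{iso} : \mc{E}^\tx{mid}_{\dot V} \to \mc{E}^\tx{iso}$ (Proposition \ref{pro:ciso_splits_infty}) the ``iso'' square pulls back to a ``mid'' square whose bottom row is unchanged and whose left vertical map, by the formula $\lambda \mapsto (\lambda, N^\natural(\lambda))$ of Proposition \ref{pro:tnisomid}, is compatible with the $\mu$-part description; one then checks that for the semi-adelic groups the map to $H^1(\Gamma, T(\ol\A)/T(\ol F))$ depends only on the $\mu_v$'s and their sum, which is precisely what the ``iso'' computation gives after transport along $s_\tx{iso}$.

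Finally I would deduce the ``rig'' statement from the ``mid'' statement via $c_\tx{rig}$. By the semi-adelic analogue of Corollary \ref{cor:cart}/\ref{cor:tnmidglobrel}, the map $H^1(\mb{T}^\tx{mid}_{\dot V} \to \mc{E}^\tx{mid}_{\dot V}, Z(\ol F) \to T(\ol\A)) \to H^1(P^\tx{rig}_{\dot V} \to \mc{E}^\tx{rig}_{\dot V}, Z(\ol F) \to T(\ol\A))$ is surjective (Proposition \ref{pro:surj} gives surjectivity of $\tx{Hom}_F(\mb{T}^\tx{mid}_{\dot V},Z) \to \tx{Hom}_F(P^\tx{rig}_{\dot V},Z)$, and the Cartesian squares propagate it), and on the linear-algebra side it is $((\lambda_v,\mu_v))_v \mapsto (\lambda_v - \mu_v)_v$ by the global-and-local analogue of Proposition \ref{pro:tnmidrigglob}/\ref{pro:tnmidrigloc}. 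Since the right vertical map $H^1(\mb{T}^\tx{mid}_{\dot V}\to\mc{E}^\tx{mid}_{\dot V},\dots) \to H^1(\Gamma, T(\ol\A)/T(\ol F))$ factors through $c_\tx{rig}$ (both being induced by $T(\ol\A) \to T(\ol\A)/T(\ol F)$ and the restriction-to-$D$ killing the $Z(\ol F)$-part), chasing an element of $Y^\tx{rig}_\tx{sa}(Z\to T)$ to a preimage $((\lambda_v,\mu_v))_v$ with $\lambda_v - \mu_v = $ the given class in $(\ol Y/I_{\dot v}Y)[\tx{tor}]$, the ``mid'' square gives that its image in $H^1(\Gamma, T(\ol\A)/T(\ol F))$ corresponds to $\sum_v (\mu_v + IY)$; but $N^\natural(\lambda_v) = N^\natural(\mu_v)$ forces $\lambda_v - \mu_v \equiv -(\mu_v - N^\natural(\mu_v)) \pmod{IY}$ up to the torsion ambiguity, and since we land in the torsion subgroup $(Y/IY)[\tx{tor}]$ where $N^\natural$ acts as a projection onto the part killed, $\sum_v(\mu_v + IY) = \sum_v(\lambda_v + IY)$ in $(Y/IY)[\tx{tor}]$. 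That is exactly the asserted left vertical map $(\lambda_v)_v \mapsto \sum_v(\lambda_v + IY)$.

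The main obstacle I anticipate is bookkeeping rather than conceptual: one must carefully set up the semi-adelic ``mid'' gerbe and its two Tate--Nakayama descriptions (analogues of Lemma \ref{lem:cart_rig_semi_adelic} and Corollary \ref{cor:TN_rig_semi_adelic}), and track the precise relation between $\lambda_v$, $\mu_v$, the normalized norm $N^\natural$, and the passage from $\ol Y/I_{\dot v}Y$ to $Y/IY$ modulo torsion. The subtle point is that the equality $\sum_v(\lambda_v + IY) = \sum_v(\mu_v + IY)$ in $(Y/IY)[\tx{tor}]$ uses both the local relations $N^\natural(\lambda_v) = N^\natural(\mu_v)$ and the fact that on the torsion subgroup of $Y/IY$ the difference $\mu - N^\natural(\mu)$ vanishes after summing globally — essentially the same mechanism as in Corollary \ref{cor:tnlcononcomm}, applied place by place and then summed. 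Everything else is a diagram chase through the localization-compatible Cartesian squares already in hand.
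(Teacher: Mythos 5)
Your overall architecture is the same as the paper's: introduce semi-adelic ``iso'' and ``mid'' analogues of Lemma \ref{lem:cart_rig_semi_adelic} and Corollary \ref{cor:TN_rig_semi_adelic}, and transport the known identity of Proposition \ref{pro:obs_iso} from ``iso'' to ``rig'' through ``mid''. However, the middle of your argument contains a genuine error. In the semi-adelic group $Y^\tx{mid}_\tx{sa}$ the tuple $(\mu_v)_v$ is the collection of localizations of a single global homomorphism $\mb{T}^\tx{mid}_{\dot V} \to Z$, i.e.\ of an element of $(M^\tx{mid}_{\dot V}\otimes Y_Z)^\Gamma$, and the defining condition $\sum_{v}\phi(\sigma,v)=0$ on $M^\tx{mid}_{\dot V}$ forces $\sum_v \mu_v = 0$ \emph{on the nose} in $\Q Y$. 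Hence your proposed ``mid'' left vertical map $((\lambda_v,\mu_v))_v \mapsto \sum_v(\mu_v+IY)$ is identically zero, and your closing identity $\sum_v(\mu_v+IY)=\sum_v(\lambda_v+IY)$ would force the ``rig'' left vertical map to vanish as well, contradicting the surjectivity of $\iota^\tx{iso}$. The correct left vertical map in the ``mid'' square is the $\lambda$-part sum $\sum_v(\lambda_v+IY)$, and the compatibility with ``rig'' is then immediate and elementary: the map $Y^\tx{mid}_\tx{sa}\to Y^\tx{rig}_\tx{sa}$ is $((\lambda_v,\mu_v))_v\mapsto(\lambda_v-\mu_v)_v$, and $\sum_v(\lambda_v-\mu_v)=\sum_v\lambda_v$ precisely because $\sum_v\mu_v=0$ --- no appeal to $N^\natural(\lambda_v)=N^\natural(\mu_v)$, to torsion in $Y/IY$, or to the mechanism of Corollary \ref{cor:tnlcononcomm} is needed or correct here.

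A second, related problem is your derivation of the ``mid'' square from the ``iso'' one via the splitting $s_\tx{iso}$ and the local formula $\lambda\mapsto(\lambda,N^\natural(\lambda))$ of Proposition \ref{pro:tnisomid}. Applying that formula place by place to $(\lambda_v)_v\in Y^\tx{iso}_\tx{sa}$ produces $(\mu_v)_v=(N^\natural_v(\lambda_v))_v$, which in general does \emph{not} satisfy $\sum_v\mu_v=0$ (the condition in $Y^\tx{iso}_\tx{sa}$ is $N_{E/F}(\sum_v\lambda_v)=0$, not $\sum_v N^\natural_v(\lambda_v)=0$), so this does not land in $Y^\tx{mid}_\tx{sa}$; moreover the global splitting of $c_\tx{iso}$ is non-canonical and not given by the local formula. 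The paper sidesteps this entirely: it observes that $Y^\tx{mid}_\tx{sa}\to Y^\tx{iso}_\tx{sa}$ is surjective, hence the map of cokernels $C^\tx{mid}\to C^\tx{iso}$ is bijective, so the identity statement for $\iota^\tx{iso}$ (Proposition \ref{pro:obs_iso}) transfers to $\iota^\tx{mid}$ and then, by compatibility and injectivity, to $\iota^\tx{rig}$. If you replace your $\mu$-part bookkeeping by the $\lambda$-part map and use surjectivity onto the ``iso'' side rather than a splitting, your argument becomes the paper's.
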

\begin{proof}
  The kernel of the right vertical map is the image of $H^1(P^\tx{rig}_{\dot V}
  \to \mc{E}^\tx{rig}_{\dot V}, Z \to T)$ and by the local and global
  Tate-Nakayama isomorphisms for ``rig'' and their compatibility, we have an
  embedding $\iota^\tx{rig}$ from the cokernel $C^\tx{rig}$ of
  \[ H^1(P^\tx{rig}_{\dot V} \to \mc{E}^\tx{rig}_{\dot V}, Z \to T) \to
    H^1(P^\tx{rig}_{\dot V} \to \mc{E}^\tx{rig}_{\dot V}, Z(\ol{F}) \to
  T(\ol{\A})) \]
  into $(Y/IY)[\tx{tor}]$, mapping $(\lambda_v)_v$ to $\sum_{v \in V} (\lambda_v
  + IY)$.
  For the rest of this proof we take the inductive limit over all finite
  multiplicative subgroups $Z$ of $T$, as we may since all morphisms in sight
  are compatible with the transition maps induced by any inclusion $Z \subset
  Z'$.
  This has the effect of replacing $\ol{Y}$ by $\Q Y$.
  Using Lemma \ref{lem:repr_Sdot_supp} it is easy to check that in the limit
  $\iota^\tx{rig}$ is also surjective, but this will also be a consequence of
  the rest of the proof.
  It is however \emph{not} obvious that the map $C^\tx{rig} \to
  (Y/IY)[\tx{tor}]$ induced by the top horizontal, right vertical and bottom
  horizontal maps is the identity.

  Lemma \ref{lem:cart_rig_semi_adelic} and Corollary
  \ref{cor:TN_rig_semi_adelic} admit ``iso'' and ``mid'' analogues, with similar
  proofs and with
  \[ Y^\tx{iso}_\tx{sa} = \left\{ (\lambda_v)_v \in \bigoplus_{v \in V}
    Y/I_{\dot v}(Y) \middle| N_{E/F} \left( \sum_{v \in V} \lambda_v \right) = 0
    \right\} \]
  and
  \begin{multline*}
    Y^\tx{mid}_\tx{sa} = \Big\{ ((\lambda_v)_v, (\mu_v)_v) \Big| (\lambda_v)_v
    \in Y^\tx{mid}_\tx{sa}, (\mu_v)_v \in \bigoplus_{v \in V} \Q Y, \sum_{v \in
    V} \mu_v = 0 \\
    \text{and } \forall v \in V, N_{E_{\dot v}/F_v}(\lambda_v - \mu_v) = 0
    \Big\} 
  \end{multline*}
  where $E/F$ is any finite Galois extension splitting $T$.
  We also have an embedding $\iota^\tx{iso}$ (resp.\ $\iota^\tx{mid}$) from
  $C^\tx{iso}$ (resp.\ $C^\tx{mid}$) into $(Y/IY)[\tx{tor}]$, mapping the class
  of $(\lambda_v)_v$ (resp.\ $((\lambda_v)_v, \mu)$) to $\sum_{v \in V}
  \lambda_v + IY$.
  We have natural maps between cokernels
  \[ C^\tx{iso} \longleftarrow C^\tx{mid} \longrightarrow C^\tx{rig} \]
  compatible with $\iota^\tx{iso}$, $\iota^\tx{mid}$, $\iota^\tx{rig}$ thanks to
  the compatibility of local Tate-Nakayama isomorphisms in the three settings.
  It follows from Proposition \ref{pro:surj} (or a direct argument) that the
  natural map $Y^\tx{mid}_\tx{sa} \to Y^\tx{iso}_\tx{sa}$ is surjective, thus
  $C^\tx{mid} \to C^\tx{iso}$ is bijective.
  It is obvious that $\iota^\tx{iso}$ is surjective, and by Proposition
  \ref{pro:obs_iso} it is the identity map.
  Therefore $\iota^\tx{mid}$ and $\iota^\tx{rig}$ are also bijective and
  identity maps.
\end{proof}

\begin{cor}
  Proposition 4.4.1 of \cite{KalGRI} holds without the assumption that there
  exists a pairs of related elements in $H_1(F)_\tx{sr} \times G(F)_\tx{sr}$.
\end{cor}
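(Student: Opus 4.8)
The plan is to run the argument of Proposition \ref{pro:adelic_tf_iso} in the rigid setting, replacing Proposition \ref{pro:obs_iso} by its rigid analogue Proposition \ref{pro:obs_rig} and using Lemma \ref{lem:loc_glob_tf} in place of the hypothesis being dropped. By Lemma \ref{lem:loc_glob_tf} the weak hypothesis of the statement already provides a pair of related strongly $G$-regular elements $\gamma^{H_1} \in H_1(F)$ and $\delta \in G(\A)$, and since $G^*$ is quasi-split we may choose $\delta^* \in G^*(F)$ related to $\gamma^{H_1}$; write $T^* \subset G^*$ for its centralizer, $Y = X_*(T^*)$, and fix a finite central $Z \subset T^*$ through which the inner twist of $\psi$ is lifted to a class $z^\tx{rig} \in Z^1_\tx{bas}(\mc{E}^\tx{rig}_{\dot V},G^*)$ (such a lift exists as in \cite[\S4.4]{KalGRI}, or via Corollary \ref{cor:surj_H1_mid_iso_nonabelian} applied to $\mc{E}^\tx{mid}_{\dot V}$ followed by $c_\tx{rig}$), with localizations $z^\tx{rig}_v$. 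The local transfer factors $\Delta[\mf{w}_v,z^\tx{rig}_v]$ are then defined as in \cite[\S4.4]{KalGRI}, almost all are $1$, and their product over $v$ is a product of local transfer factors, hence differs from the canonical $\Delta'_\A$ by a single global scalar; as in Proposition \ref{pro:adelic_tf_iso}, it suffices to check equality on the fixed related pair $(\gamma^{H_1},\delta)$, which we may take to have $\gamma^{H_1} \in H_1(F)$.

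Exactly as in the proof of Proposition \ref{pro:adelic_tf_iso}, the product $\prod_v \Delta[\mf{w}_v](\gamma^{H_1},\delta^*)$ of Whittaker-normalized factors for the quasi-split $G^*$ equals $1$ (almost all factors being $1$): the $\epsilon_v$ contribute the local components of the global root number of a degree-$0$ orthogonal Artin representation, and the remaining terms are handled by Theorem 6.4.A and Corollary 6.4.B of \cite{LS87}; this step does not use the dropped hypothesis. It therefore remains to prove
\[ \prod_{v \in V} \<\tx{inv}[z^\tx{rig}_v](\delta^*,\delta),\hat\varphi_{\delta^*,\gamma^H}(s)\> = \Delta_\A'(\gamma^{H_1},\delta). \]
Choosing $g \in G^*_\tx{sc}(\ol{\A})$ with $\delta = \psi(g\delta^* g^{-1})$, which is possible by Lemma \ref{lem:adelic_sc_conj}, the assignment $e \mapsto g^{-1}z^\tx{rig}(e)\sigma_e(g)$ is a cocycle in $Z^1(P^\tx{rig}_{\dot V} \to \mc{E}^\tx{rig}_{\dot V}, Z(\ol{F}) \to T^*(\ol{\A}))$ whose localization at each $v$ is $\tx{inv}[z^\tx{rig}_v](\delta^*,\delta)$; denote its class by $\tx{inv}[z^\tx{rig}](\delta^*,\delta)$. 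By Lemma \ref{lem:cart_rig_semi_adelic} the left-hand side above is the single pairing of this semi-adelic class against $\hat\varphi_{\delta^*,\gamma^H}(s)$.

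Now apply Corollary \ref{cor:TN_rig_semi_adelic}: the class $\tx{inv}[z^\tx{rig}](\delta^*,\delta)$ corresponds to a tuple $(\lambda_v)_v \in Y^\tx{rig}_\tx{sa}(Z \to T^*)$, and Proposition \ref{pro:obs_rig} identifies $\sum_{v \in V}(\lambda_v + IY) \in (Y/IY)[\tx{tor}]$ with the image of $\tx{inv}[z^\tx{rig}](\delta^*,\delta)$ in $H^1(\Gamma, T^*(\ol{\A})/T^*(\ol{F}))$. Since $z^\tx{rig}$ takes values in $G^*(\ol{F})$, this image is $\tx{obs}(\delta)$, by the rigid analogue of Fact \ref{fct:obsiso} together with Proposition \ref{pro:obs3} for the passage to $T^*_\tx{sc}$. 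Feeding this into \eqref{eq:tfobs} yields $\prod_v \<\cdots\> = \<\tx{obs}(\delta),\hat\varphi^{-1}_{\gamma^H,\delta^*}(s)\> = \Delta'_\A(\gamma^{H_1},\delta)$, as desired. I expect the only genuine work to be the bookkeeping: first, matching the normalization and inversion conventions of Remark \ref{rem:inverses} between the rigid Tate-Nakayama pairing used to define $\Delta[\mf{w}_v,z^\tx{rig}_v]$ and the pairing appearing in \eqref{eq:tfobs}; and second, tracking the interplay of $T^*$ with $T^*_\tx{sc}$ (and of $Z$ with $Z(G^*)$) inside the definition of $\tx{obs}(\delta)$. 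The conceptual content is entirely carried by Proposition \ref{pro:obs_rig}, which plays in the rigid setting the role that Kottwitz's local--global compatibility \cite[(6.6)]{KotBG} plays in the isocrystal case; beyond the conventions, I anticipate no further obstacle.
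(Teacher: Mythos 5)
Your proposal is correct and follows essentially the same route as the paper, which itself only says to repeat the proof of Proposition \ref{pro:adelic_tf_iso} with Proposition \ref{pro:obs_rig} applied to $T^*_\tx{sc}$ in place of Proposition \ref{pro:obs_iso} applied to $T^*$, leaving the details to the reader. Your use of Lemma \ref{lem:loc_glob_tf} to produce the adelic related pair, of Lemma \ref{lem:cart_rig_semi_adelic} and Corollary \ref{cor:TN_rig_semi_adelic} to globalize the local invariants, and of Proposition \ref{pro:obs_rig} together with \eqref{eq:tfobs} to conclude, is exactly the intended argument; the remaining $T^*$ versus $T^*_\tx{sc}$ and sign-convention issues you flag are indeed the only bookkeeping left.
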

\begin{proof}
  Similar to the proof of Proposition \ref{pro:adelic_tf_iso}, applying
  Proposition \ref{pro:obs_rig} to $T^*_\tx{sc}$ instead of Proposition
  \ref{pro:obs_iso} to $T^*$.
  Details are left to the reader.
\end{proof}

% -------------------------------------------------
% Subsection
% -------------------------------------------------

\subsection{Isocrystal local Langlands correspondence in the Archimedean case}
\label{sub:iso_LLC_archi}

In order to formulate the isocrystal version of the conjectural multiplicity formula for a connected reductive group over a number field we will need the isocrystal version of the refined local Langlands conjecture at all places. In the non-archimedean case this was formulated in \cite{KalRIBG}. Fortunately the
Archimedean case is similar, and we shall formulate it here, as well as the comparison between the isocrystal and rigid versions.

The complex case $F \simeq \C$ is very simple: the group $P^\tx{rig}$ is trivial
and so is $\mc{E}^\tx{rig}$.
For any connected reductive group $G$ over $F$ and any tempered parameter
$\varphi : F^\times \to \hat G$ the group $S_\varphi^\natural$ defined as in
\cite[\S 4.1]{KalRIBG} is canonically isomorphic to $\hat{G} / \hat{G}_\tx{der}
= \hat{C}$ where $C = Z(G)^0$.
Since $Z(\hat{G}) \to \hat{C}$ is an isogeny, if we fix $z^\tx{iso} \in
B(G)_\tx{bas} = X_*(C)$ then the set of characters of $S_\varphi^\natural$ whose
restriction to $Z(\hat{G})$ is $[z^\tx{iso}] = z^\tx{iso}|_{Z(\hat{G})}$ is just
$\{z^\tx{iso}\}$.
Thus the isocrystal version of the local Langlands correspondence for $G$ is
simply the usual correspondence with the extra datum of an element of $X_*(C)$.

We are left to consider the real case $F=\R$.
Recall that the analogue of the morphism of extensions $\mc{E}^\tx{rig} \to
\mc{E}^\tx{iso}$ of \cite[(3.13)]{KalRIBG} is the composition $s^\tx{iso} \circ
c^\tx{rig}$ as defined in Section \ref{sub:loc_gerbes}.
Note that the analogue of \cite[Proposition 3.2]{KalRIBG} is a direct
consequence of Propositions \ref{pro:tnisomid} and \ref{pro:tnmidrigloc}.
Recall that Kottwitz defined a map $\kappa_G : B(G)_\tx{bas} \to
X^*(Z(\hat{G})^\Gamma)$, whose image was characterized in \cite[Proposition
13.4]{KotBG} as the group of $\chi|_{Z(\hat{G})^\Gamma}$ (note that
$X^*(Z(\hat{G}))$ maps onto $X^*(Z(\hat{G})^\Gamma)$) such that
the element $N_{\C/\R}(\chi)$ of $X^*(Z(\hat{G}))$ belongs to $X^*(\hat{G})$.
The analogue of \cite[Proposition 3.3]{KalRIBG} is that the following diagram
commutes, although the horizontal maps are not bijective in general.
\[ \xymatrix{
    B(G)_\tx{bas} \ar[r]_{\kappa_G} \ar[d] & X^*(Z(\hat{G})^\Gamma) \ar[d] \\
    H^1_\tx{alg}(\mc{E}^\tx{rig}, G) \ar[r] & \tx{Hom}(\pi_0((Z(\hat{G}_\tx{sc})
    \times Z(\hat{G})^0_\infty)^+), \C^\times)
}\]
Here the left vertical map is induced by $s^\tx{iso} \circ
c^\tx{rig}:\mc{E}^\tx{rig} \to \mc{E}^\tx{iso}$,
the bottom horizontal map is obtained as the composition of \cite[Theorem 4.8
and Proposition 5.3]{KalRI} and the right vertical map is as in
\cite[Proposition 3.3]{KalRIBG}, i.e.\ it is dual to the map
\begin{align} \label{eq:pi0_to_Gammainv}
  \pi_0((Z(\hat{G}_\tx{sc}) \times Z(\hat{G})^0_\infty)^+) & \longrightarrow
  Z(\hat{G}^\Gamma) \\
  (a, (b_n)_{n>0}) & \longmapsto a b_1 N_{\C/\R}(b_2)^{-1} \nonumber
\end{align}
Note that this maps $N_{\C/\R}(\pi_0(Z(\hat{G}_\tx{sc}) \times
Z(\hat{G})^0_\infty))$ to $N_{\C/\R}(Z(\hat{G}_\tx{der}))$.
The proof of \cite[Proposition 3.3]{KalRIBG} applies almost verbatim, replacing
``elliptic torus'' by ``fundamental torus'' and using \cite[Lemma 13.2]{KotBG}
instead of \cite[Proposition 5.3]{Kot85}.
Note that the first argument of the proof, showing that $B(S)$ maps to
$B(G)_\tx{bas}$, does not hold in the real case but this is not necessary if one
uses $B(S)_{G-\tx{bas}} \simeq H^1(\mb{T}^\tx{iso} \to \mc{E}, C \to S)$ instead
of $B(S)$ as in \cite[\S 13.5]{KotBG}.

For $\varphi : W_F \to {}^L G$ a Langlands parameter denote by $S_\varphi$ its
centralizer in $\hat{G}$ and define its quotient $S_\varphi^\natural$ (a complex
reductive group) as in \cite[\S 4.1]{KalRIBG}.
We can define $\pi_0(S_\varphi^+) \to S_\varphi^\natural$ similarly to
\eqref{eq:pi0_to_Gammainv}.
The proof of \cite[Lemma 4.1]{KalRIBG} does not use anything specific to the
non-Archimedean case, so it still holds.

Now let $G^*$ be a quasi-split connected reductive group over $F$.
Fix a Whittaker datum $\mf{w}$.
Consider an inner form $(G, \psi)$ of $G^*$.
Let $z \in \widetilde{B}(G)_\tx{bas} \simeq \ol{Z}^1_\tx{alg}(\mb{T}^\tx{iso}
\to \mc{E}^\tx{iso}, C \to G^*)$ be a lift of the cocycle $\Gamma \to
G_\tx{ad}(\ol{F}), \sigma \mapsto \psi^{-1} \sigma(\psi)$.
Note that in general such a lift may not exist.

\begin{thm}
  Fix a connected reductive quasi-split group $G^*$ over $\R$ and a Whittaker
  datum $\mf{w}$.
  There is a unique bijection between isomorphism classes of quadruples $(G,
  \psi, z, \pi)$ and isomorphism classes of pairs $(\varphi, \rho)$ where
  $\varphi$ is a tempered Langlands parameter and $\rho$ is an algebraic
  irreducible representation of $S_\varphi^\natural$, such that
  \begin{itemize}
    \item for given $(G, \psi, z)$ and $\varphi$ the L-packet $\Pi_\varphi$ of
      isomorphism classes of $\pi$ such that $(G, \psi, z, \pi)$ corresponds to
      $(\varphi, \rho)$ for some $\rho$ equals the one defined by Langlands in
      \cite{Lan89}, and
    \item the endoscopic character relations \cite[(4.3)]{KalRIBG} hold with respect to the transfer factor \eqref{eq:tf_iso}.
  \end{itemize}
  This correspondence is compatible with the rigid version proved in
  \cite[\S 5.6]{KalRI}, in the same sense as in \cite[\S 4.2]{KalRIBG}.
\end{thm}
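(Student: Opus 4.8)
The plan is to transport the rigid version of the refined local Langlands correspondence over $\R$, established in \cite[\S 5.6]{KalRI}, along the comparison morphism of extensions $s_\tx{iso} \circ c_\tx{rig} : \mc{E}^\tx{rig} \to \mc{E}^\tx{iso}$ of Section \ref{sub:loc_gerbes}, following the non-Archimedean blueprint of \cite[\S 4.2]{KalRIBG}. First I would, starting from a quadruple $(G, \psi, z, \pi)$ with $z \in \widetilde{B}(G)_\tx{bas}$ a lift of $\sigma \mapsto \psi^{-1}\sigma(\psi)$, set $z^\tx{mid} := s_\tx{iso}^*(z)$ and $z^\tx{rig} := c_\tx{rig}^*(z^\tx{mid}) = (s_\tx{iso} \circ c_\tx{rig})^*(z)$. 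Since $s_\tx{iso} \circ c_\tx{iso} = \tx{id}_{\mc{E}^\tx{iso}}$, the cochain $z^\tx{mid}$ is again a lift of $\sigma \mapsto \psi^{-1}\sigma(\psi)$, so $z^\tx{rig}$ is a rigid inner twist cocycle for $(G,\psi)$. Applying \cite[\S 5.6]{KalRI} to $(G, \psi, z^\tx{rig})$ yields, for each tempered parameter $\varphi$, the packet $\Pi_\varphi$ together with a bijection $\pi \leftrightarrow \rho^+$ onto the algebraic irreducible representations $\rho^+$ of $\pi_0(S_\varphi^+)$ whose restriction to $\pi_0((Z(\hat G_\tx{sc}) \times Z(\hat G)^0_\infty)^+)$ is the character attached to $[z^\tx{rig}]$. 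I then declare $(G, \psi, z, \pi)$ to correspond to $(\varphi, \rho)$ precisely when $\rho^+$ is the pullback of $\rho$ along $\pi_0(S_\varphi^+) \to S_\varphi^\natural$.

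To see that this is a well-defined bijection onto the pairs $(\varphi, \rho)$ with $\rho$ an algebraic irreducible representation of $S_\varphi^\natural$, I would use that $\pi_0(S_\varphi^+) \to S_\varphi^\natural$ exists in the Archimedean case by the verbatim analogue of \cite[Lemma 4.1]{KalRIBG} recorded in Section \ref{sub:iso_LLC_archi}, and that pullback along it identifies algebraic irreps of $S_\varphi^\natural$ with exactly those irreps of $\pi_0(S_\varphi^+)$ whose central character factors through $S_\varphi^\natural$. By the commutative square of Section \ref{sub:iso_LLC_archi} relating $B(G)_\tx{bas}$, $\kappa_G$, $H^1_\tx{alg}(\mc{E}^\tx{rig}, G)$ and the character group of $\pi_0((Z(\hat G_\tx{sc}) \times Z(\hat G)^0_\infty)^+)$, the character attached to $[z^\tx{rig}]$ is the one determined by $\kappa_G([z])$; together with the characterization of the image of $\kappa_G$ in \cite[Proposition 13.4]{KotBG} (which shows that the central characters of algebraic irreps of $S_\varphi^\natural$ are exactly those occurring as $\kappa_G$ of some admissible $z$ on the appropriate inner form), this shows that ranging over all quadruples matches ranging over all pairs $(\varphi, \rho)$, each exactly once. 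The first listed property holds because $\Pi_\varphi$ is literally the rigid packet, which equals Langlands' packet of \cite{Lan89} by \cite[\S 5.6]{KalRI}.

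The substantive point is the endoscopic character relations \cite[(4.3)]{KalRIBG} for the transfer factor \eqref{eq:tf_iso}. These hold for the Whittaker-normalized rigid transfer factor attached to $z^\tx{rig}$ by \cite{KalRI}, so it is enough to show that, after matching the isocrystal-refined endoscopic datum with a rigid one, \eqref{eq:tf_iso} equals that rigid factor. For this I would combine the Archimedean analogue of \cite[Proposition 3.2]{KalRIBG}, which follows from Propositions \ref{pro:tnisomid} and \ref{pro:tnmidrigloc} (they compute the composite $Y^\tx{iso} \to Y^\tx{mid} \to Y^\tx{rig}$ to be $\lambda \mapsto \lambda - N^\natural(\lambda)$, so that $\tx{inv}[z^\tx{rig}](\delta^*,\delta)$ is the image of $\tx{inv}[z^\tx{iso}](\delta^*,\delta)$ under the comparison map), with the Archimedean analogue of \cite[Proposition 3.3]{KalRIBG}, whose proof applies almost verbatim once ``elliptic torus'' is replaced by ``fundamental torus'' and \cite[Lemma 13.2]{KotBG} is invoked, as explained in Section \ref{sub:iso_LLC_archi}. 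Together these identify the Tate--Nakayama pairing of $\tx{inv}[z^\tx{rig}](\delta^*,\delta)$ against the rigid $s$-datum with the pairing of $\tx{inv}[z^\tx{iso}](\delta^*,\delta)$ against $\hat\varphi_{\delta^*,\gamma^H}(s^\natural)$, so the two normalized transfer factors coincide and the rigid character relation \cite[(4.3)]{KalRIBG} transfers to \eqref{eq:tf_iso}. I expect this transfer-factor identification to be the main obstacle: carrying it out at the real place requires care with fundamental tori and with the normalization conventions relating \eqref{eq:tf_iso}, the $\Delta'$-normalization of \cite{KS12}, and the $s$ versus $s^\natural$ bookkeeping of Remark \ref{rem:inverses}.

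Finally, uniqueness of a bijection satisfying the two properties follows from uniqueness of the rigid correspondence together with the fact that stability of the packets and the relations \cite[(4.3)]{KalRIBG} for \eqref{eq:tf_iso} pin down the pairing $\pi \mapsto \langle \cdot, \pi \rangle$ on $S_\varphi^\natural$, exactly as in \cite{KalRI}; and the compatibility with the rigid version in the sense of \cite[\S 4.2]{KalRIBG} is immediate, since by construction the isocrystal correspondence is the pullback of the rigid one along $s_\tx{iso} \circ c_\tx{rig}$.
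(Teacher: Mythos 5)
Your proposal is correct and follows exactly the route the paper takes: the paper's proof is the single sentence that the theorem "is deduced from the rigid version of the local Langlands correspondence exactly as in \cite[\S 4.2]{KalRIBG}," relying on the Archimedean analogues of \cite[Propositions 3.2, 3.3 and Lemma 4.1]{KalRIBG} established in the paragraphs preceding the theorem. Your write-up simply makes that deduction explicit, using the same comparison morphism $s_\tx{iso}\circ c_\tx{rig}$, the same Tate--Nakayama computations (Propositions \ref{pro:tnisomid} and \ref{pro:tnmidrigloc}), and the same appeal to \cite[Proposition 13.4]{KotBG}.
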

\begin{proof}
  This is deduced from the rigid version of the local Langlands correspondence
  exactly as in \cite[\S 4.2]{KalRIBG}.
\end{proof}

If $\mf{w}$, $(G, \psi, z)$ and $\varphi$ are fixed we will denote, for $\pi \in
\Pi_{\varphi}$, $\< \pi, \cdot \>_{\psi, z, \mf{w}}$ for the character $\rho$ of
$S_\varphi^\natural$ corresponding to $\pi$.

% -------------------------------------------------
% Subsection
% -------------------------------------------------

\subsection{Multiplicity formula in the isocrystal setting}
\label{sub:mult_formula_BG}

In this section we assume that $G$ satisfies the Hasse principle.
We shall formulate a version of the formula \cite[(12.3)]{Kot84} for the
multiplicity of an irreducible admissible representation of $G(\A)$ in the
discrete automorphic spectrum of $G$ using Kottwitz's global set $B(G)$.

Assume the existence of a global Langlands group $L_F$, and consider a
continuous semi-simple global parameter $\varphi : L_F \to {^LG}$.
For simplicity we do not consider more general Arthur-Langlands parameters,
although they do not present additional difficulty for the following discussion,
only requiring a slightly more complicated formulation.
Recall from \cite[\S10.2]{Kot84} the group $S_\varphi$ of self-equivalences of
$\varphi$ is defined as
\begin{multline*}
  \Big\{ g \in \hat G \,\Big|\, \forall x
  \in L_F:\  g^{-1}\varphi(x)g\varphi(x)^{-1}=:z(x) \in Z(\hat G) \\
  \text{and } \forall v \in V, 0=[z_v] \in H^1(L_{F_v},Z(\hat G)) \Big\}.
\end{multline*}
Note that $z$ is an element of $Z^1(L_F,Z(\hat G))$ for formal reasons. The Hasse principle, reinterpreted as \cite[(4.2.2)]{Kot84}, together with \cite[Lemma 11.2.2]{Kot84}  imply $S_\varphi=C_\varphi \cdot Z(\hat G)$, where
\[ C_\varphi = \{g \in \hat G| g^{-1}\varphi(x)g\varphi(x)^{-1}=1\}. \]
We conclude that, if $\varphi$ is discrete, then the finite group
$\mc{S}_\varphi=\pi_0(S_\varphi/Z(\hat G))$ equals $C_\varphi/Z(\hat G)^\Gamma$.

Assume from now on that $\varphi$ is discrete.
For each place $v$ we assume the isocrystal version of the refined local Langlands conjecture, as stated in \cite[\S4.1]{KalRIBG} when $v$ is finite, and in \S\ref{sub:iso_LLC_archi} when $v$ is infinite. 
In particular at each place we have the L-packet $\Pi_{\varphi_v}$.
Choose a reductive model $\ul{G}$ of $G$ over $\mc{O}_F[1/N]$ for some integer
$N>0$.
For almost all finite places $v$ of $F$, the L-packet $\Pi_{\varphi_v}$ contains
a unique unramified representation (with respect to $\ul{G}(\mc{O}(F_v)$).
Given a collection of $\pi_v \in \Pi_{\varphi_v}(G)$, unramified for almost all
$v$ so that the restricted tensor product $\pi = \otimes'_v \pi_v$ is
well-defined, we now define the class-function $\<\pi,-\>$ on $\mc{S}_\varphi$
as follows.
Let $G^*$ be the quasi-split inner form of $G$ and let $\psi : G^* \to G$ be an
inner twist.
By Corollary \ref{cor:surj_H1_mid_iso_nonabelian} we can choose $z^\tx{iso} \in
Z^1_\tx{bas}(\mc{E}^\tx{iso},G^*)$ such that $\psi^{-1}\sigma(\psi)=\tx{Ad}(\bar
z_\sigma)$.
The choice of $\psi$ gives an identification $^L\psi : {^LG} \to {^LG^*}$ and
hence a parameter $\varphi^*={^L\psi}\circ\varphi$ for $G^*$.
Choose also a global Whittaker datum $\mf{w}$ for $G^*$.
For each place $v$ we have the complex number
$\<\pi_v,s^*\>_{\psi_v, z^\tx{iso}_v, \mf{w}_v}$ for $s^* \in C_{\varphi^*}$.
For almost all finite places $v$ this complex number equals one: this follows
from the endoscopic character relations \cite[(4.3)]{KalRIBG} and Lemma
\ref{lem:iso_unr_ae} by the same argument as in \cite[Lemma 4.5.1]{KalGRI}.
Given $s \in C_\varphi$ define
\[ \<\pi,s\>_\tx{iso} = \prod_v \<\pi_v,{^L\psi}(s)\>_{\psi_v, z^\tx{iso}_v,
\mf{w}_v}. \]

\begin{lem} \label{lem:can_pairing_iso}
  The complex number $\<\pi,s\>_\tx{iso}$ is independent of the choice of
  $\mf{w}$, $z^\tx{iso}$ and $\psi$ and the map
  \[ C_\varphi \to \C,\quad s \mapsto \<\pi,s\>_\tx{iso} \]
  is invariant under $Z(\hat G)^\Gamma$.
\end{lem}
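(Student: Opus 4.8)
The plan is to adapt the argument carried out for the rigid gerbe in \cite[\S4.5]{KalGRI}, substituting $\mc{E}^\tx{iso}$ for $\mc{E}^\tx{rig}$ and using Proposition \ref{pro:obs_iso} in place of its rigid counterpart. All four assertions have the same structure: altering one of the auxiliary data multiplies each local factor $\<\pi_v,{^L\psi}(s)\>_{\psi_v,z^\tx{iso}_v,\mf{w}_v}$ by an explicit local scalar, and one must check that the product of these scalars over all places of $F$ is $1$. When the scalars are the localizations of a global cohomology class this is exactly the local--global compatibility of Proposition \ref{pro:obs_iso}; when they come from a change of global Whittaker datum it follows from the canonicity of the adelic transfer factor, Proposition \ref{pro:adelic_tf_iso}.

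I would first treat the independence of $z^\tx{iso}$ and the $Z(\hat G)^\Gamma$-invariance together, since they are the two sides of a single computation. If $z^\tx{iso}$ and $\tilde z^\tx{iso}$ in $Z^1_\tx{bas}(\mc{E}^\tx{iso},G^*)$ both lift $\sigma\mapsto\psi^{-1}\sigma(\psi)$, then $c:=\tilde z^\tx{iso}\cdot(z^\tx{iso})^{-1}$ is a cocycle $\mc{E}^\tx{iso}\to Z(G^*)(\bar F)$ --- note $Z(G^*)$ is a torus since $Z(G)$ is connected --- defining a \emph{global} class $[c]\in H^1_\tx{alg}(\mc{E}^\tx{iso},Z(G^*))$. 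Using the normalization built into the refined local Langlands correspondence (that the restriction of $\<\pi_v,\cdot\>_{\psi_v,z^\tx{iso}_v,\mf{w}_v}$ to the image of $Z(\hat{G}^*)^{\Gamma_v}$ in $S_{\varphi^*_v}^\natural$ is the character attached to the class of $z^\tx{iso}_v$ via Kottwitz's local pairing) one obtains
\[ \<\pi_v,{^L\psi}(s)\>_{\psi_v,\tilde z^\tx{iso}_v,\mf{w}_v}
   = \<\tx{loc}_v[c],{^L\psi}(s)\>\cdot\<\pi_v,{^L\psi}(s)\>_{\psi_v,z^\tx{iso}_v,\mf{w}_v}, \]
where $\tx{loc}_v[c]\in H^1_\tx{alg}(\mc{E}^\tx{iso}_v,Z(G^*))$ is paired against the image of ${^L\psi}(s)$ in $\widehat{Z(G^*)}^{\Gamma_v}$; the latter is well defined because $s$ centralizes $\varphi$, so its image in $\widehat{Z(G^*)}$ is $\Gamma$-fixed. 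Since $c$ takes values in $Z(G^*)(\bar F)$, its image in the cohomology of $\mc{E}_1^\tx{iso}$ vanishes, and Proposition \ref{pro:obs_iso} applied to the torus $Z(G^*)$ gives $\prod_v\<\tx{loc}_v[c],{^L\psi}(s)\>=1$; hence $\<\pi,s\>_\tx{iso}$ is independent of the chosen lift. Exchanging the roles of $[c]$ and $[z^\tx{iso}]$ and replacing $s$ by $sz_0$ for $z_0\in Z(\hat G)^\Gamma$, the same computation gives $\<\pi,sz_0\>_\tx{iso}=\<\pi,s\>_\tx{iso}\cdot\prod_v\<\tx{loc}_v[z^\tx{iso}],{^L\psi}(z_0)\>=\<\pi,s\>_\tx{iso}$, again by Proposition \ref{pro:obs_iso}, since $z^\tx{iso}$ is global and the image of ${^L\psi}(z_0)$ in $\widehat{Z(G^*)}$ is $\Gamma$-fixed.

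Next, for the independence of $\mf{w}$: two global Whittaker data $\mf{w},\mf{w}'$ for $G^*$ give $\Delta[\mf{w}'_v,z^\tx{iso}_v]=c_v\cdot\Delta[\mf{w}_v,z^\tx{iso}_v]$ for local scalars $c_v$, and $\prod_vc_v=1$ because by Proposition \ref{pro:adelic_tf_iso} both $\prod_v\Delta[\mf{w}_v,z^\tx{iso}_v]$ and $\prod_v\Delta[\mf{w}'_v,z^\tx{iso}_v]$ equal the canonical adelic transfer factor $\Delta'_\A$, which does not involve a Whittaker datum. As the endoscopic character identities \cite[(4.3)]{KalRIBG} (and their Archimedean analogue from \S\ref{sub:iso_LLC_archi}) normalizing $\<\pi_v,\cdot\>_{\psi_v,z^\tx{iso}_v,\mf{w}_v}$ scale linearly in the transfer factor, one gets $\<\pi_v,{^L\psi}(s)\>_{\psi_v,z^\tx{iso}_v,\mf{w}'_v}=c_v\cdot\<\pi_v,{^L\psi}(s)\>_{\psi_v,z^\tx{iso}_v,\mf{w}_v}$, so the product over $v$ is unchanged. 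Finally, the $L$-group identification ${^L\psi}$, and hence $\varphi^*$, does not depend on $\psi$ within its inner class; a second inner twist differs from $\psi$ by $\tx{Ad}(g)$ with $g\in G^*_\tx{ad}(\bar F)$, which lifts to $\tilde g\in G^*(\bar F)$ since $Z(G^*)$ is connected, and then $\bigl(\psi\circ\tx{Ad}(\tilde g),\ e\mapsto\tilde g^{-1}z^\tx{iso}(e)\sigma_e(\tilde g)\bigr)$ is an admissible replacement for $(\psi,z^\tx{iso})$ which is equivalent to it at every place; by the $\hat{G}^*$-equivariance of the local correspondence each local factor is unchanged, and together with the independence of the lift this yields independence of $\psi$.

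The step I expect to be the main obstacle is establishing the displayed local identity (and its $\psi$-variant) rigorously: this requires tracking the normalization conventions of the refined local Langlands correspondence --- via \cite[\S4.1]{KalRIBG} in the non-Archimedean case and via \S\ref{sub:iso_LLC_archi} in the Archimedean case --- through the comparison with transfer factors, so as to see precisely that a change of $z^\tx{iso}_v$ twists the local pairing by Kottwitz's pairing with $\tx{loc}_v[c]$ and that a change of $\psi_v$ by an inner automorphism has no effect. Once these local statements are in hand, the vanishing of the relevant products is a formal consequence of Propositions \ref{pro:obs_iso} and \ref{pro:adelic_tf_iso}.
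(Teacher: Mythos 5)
Your treatment of the independence of $\mf{w}$ and $z^\tx{iso}$ and of the $Z(\hat G)^\Gamma$-invariance is a genuinely different route from the paper's. The paper proves all of these in one stroke: it forms the elliptic endoscopic datum attached to $(\varphi,s)$, takes the \emph{canonical} adelic transfer $f'$ of a test distribution $f$ (Remark \ref{rem:adelic_transfer_can}, resting on Proposition \ref{pro:adelic_tf_iso}), multiplies the local endoscopic character relations to get $\sum_{\pi\in\Pi_\varphi}\<\pi,s\>_\tx{iso}\tr\pi(f)=\sum_{\pi'}\tr\pi'(f')$, and observes that the right-hand side involves no choices; since such distributions separate the members of $\Pi_\varphi$, each coefficient $\<\pi,s\>_\tx{iso}$ is choice-free, and the $Z(\hat G)^\Gamma$-invariance follows from functoriality of the canonical global factor under isomorphisms of endoscopic data. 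Your argument instead computes the local twisting factors explicitly and kills their product by local--global compatibility (Proposition \ref{pro:obs_iso} for the change of $z^\tx{iso}$ and for translation of $s$, Proposition \ref{pro:adelic_tf_iso} for the change of $\mf{w}$). This is workable and in fact more informative place-by-place, but it puts the burden on the ``local identity'' you flag as the main obstacle, and your appeal to Proposition \ref{pro:obs_iso} for $\prod_v\<\tx{loc}_v[z^\tx{iso}],{}^L\psi(z_0)\>=1$ needs one more reduction, since $z^\tx{iso}$ is valued in $G^*$ rather than in a torus (one must first push $[z^\tx{iso}]$ into $H^1(\mb{T}^\tx{iso}\to\mc{E}^\tx{iso},Z(G^*)\to T^*)$ for a suitable maximal torus $T^*$ before the torus statement applies).

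There is, however, a genuine gap in your independence-of-$\psi$ step. You assert that a second inner twist $\psi':G^*\to G$ differs from $\psi$ by $\tx{Ad}(g)$ and that ${}^L\psi$ is therefore unchanged. This is false: $\theta=\psi^{-1}\circ\psi'$ is an automorphism of $G^*_{\bar F}$ whose Galois twists $\theta^{-1}\sigma(\theta)$ are inner, but $\theta$ itself may be outer (e.g.\ $G^*=G=\tx{GL}_2$, $\psi=\tx{id}$, $\psi'=g\mapsto{}^tg^{-1}$). After adjusting by an inner automorphism one can arrange that $\theta$ fixes an $F$-pinning and is hence defined over $F$, but it then genuinely changes ${}^L\psi$ to ${}^L\psi\circ{}^L\theta$, hence changes $\varphi^*$ and $s^*$. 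Your argument only covers the inner case; the outer case requires invoking the functoriality of the refined local Langlands correspondence under $F$-automorphisms of $G^*$ (as the paper does, citing \cite[Appendix A]{KalLLCD}), and this ingredient is missing from your proposal.
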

\begin{proof}
  The parameter $\varphi$ and $s$ induce an elliptic endoscopic datum $(H,
  \mc{H}, s, \eta)$ with $\eta(\widehat{H}) = \tx{Cent}(s, \widehat{G})^0)$ and
  $\eta(\mc{H}) = \varphi(L_F) \eta(\widehat{H})$.
  Choose a z-extension $H_1$ of $H$ and an L-embedding $\eta_1 : \mc{H} \to
  {^LH_1}$ as in \cite[Lemma 2.2.A]{KS99}.
  Denote $\varphi' = \eta_1 \eta^{-1} \varphi : L_F \to {^LH_1}$.
  Fix a maximal compact subgroup $K_\infty$ of $G(\R \otimes_\Q F)$.
  For $f(g) dg$ a smooth bi-$K_{\infty}$-finite compactly supported distribution
  on $G(\A)$, there exists a transfer $f'(h) dh$ on $H(\A)$ in the sense of
  \cite[(5.5)]{KS99}.
  Note that the global transfer factors for $(H, \mc{H}, s, H_1, \eta_1)$ are
  canonical, and functorial under isomorphisms of endoscopic data. As discussed in Remark \ref{rem:adelic_transfer_can}, the canonical adelic transfer $f'$ of $f$ satisfies the endoscopic character identities at each place $v$ with respect to the normalized transfer factor $\Delta[\mf{w}_v,z^\tx{iso}_v]$. Therefore, taking the product of the local endoscopic character relations
  \cite[(4.3)]{KalRIBG} at all places, we have
  \[ \sum_{\pi \in \Pi_\varphi} \langle \pi, s \rangle_\tx{iso} \tr \pi(f(g)dg)
  = \sum_{\pi' \in \Pi_{\varphi'}} \tr \pi(f'(h) dh) \]
  and since distributions $f(g) dg$ as above separate elements of $\Pi_\varphi$
  this shows that $\langle \pi, s \rangle_\tx{iso}$ does not depend on any
  choice, and by functoriality of global transfer factors invariant under
  translation of $s$ by $Z(\widehat{G})^\Gamma$.

  Independence of $\psi$: Note first that we can replace $\psi$ by
$\psi\circ\tx{Ad}(g)$ and $z^\tx{iso}$ by $g^{-1}z^\tx{iso}\sigma(g)$ for any $g
\in G^*(\bar F)$ without changing the numbers $\<\pi_v,s^*\>_{z_v^\tx{iso}}$.
Let $\psi' : G^* \to G$ be another inner twist.
Consider the automorphism $\theta = \psi^{-1}\circ\psi'$ of $G^*$.
Changing $\psi$ by $\psi\circ\tx{Ad}(g)$ if necessary we may assume that
$\theta$ fixes an $F$-pinning of $G^*$.
For any $\sigma \in \Gamma$ the automorphism $\theta^{-1}\sigma(\theta)$ is
inner and fixes an $F$-pinning, hence trivial, and we conclude that $\theta$ is
defined over $F$.
Thus replacing $\psi$ by $\psi'=\psi\circ\theta$ has the effect of replacing
$^L\psi$ by ${^L\psi}\circ{^L\theta}$, $\varphi^*$ by ${^L\theta}\circ\varphi^*$, and $s^*$ by ${^L\theta}(s^*)$. The functoriality of the refined local Langlands conjecture \cite[Appendix A]{KalLLCD} implies that $\<\pi_v,s^*\>_{z^\tx{iso}_v}$ remains unchanged.
\end{proof}

We thus obtain a function $\<\pi,-\>_\tx{iso}$ on $\mc{S}_\varphi=C_\varphi/Z(\hat G)^\Gamma$ that depends only on the Langlands parameter $\varphi$. The conjectural multiplicity formula \cite[(12.3)]{Kot84} can now be stated using that function.

% -------------------------------------------------
% Subsection
% -------------------------------------------------

\subsection{Comparison between the global pairings}

In the last subsection we defined the pairing $\<\pi,-\>_\tx{iso}$ under the
assumptions that $G$ has connected center and satisfies the Hasse principle,
using the cohomology of $\mc{E}^\tx{iso}$.
On the other hand, in \cite[\S4.5]{KalGRI} we defined a pairing $\<\pi,-\>$
without assumptions on $G$, using the cohomology of $\mc{E}^\tx{rig}$.
In \cite[\S4.5]{KalGRI} we did not discuss the independence of the pairing
defined in \cite[Proposition 4.5.2]{KalGRI} of the choice of $\psi$.
The proof of Lemma \ref{lem:can_pairing_iso} applies verbatim to the rigid
version, and the following proposition follows.

\begin{pro} \label{pro:paircomp}
  Assume that the connected reductive group $G$ has connected center.
  Then the pairings $\langle \pi, \cdot \rangle_\tx{iso}$ and $\langle \pi,
  \cdot \rangle$ are equal.
\end{pro}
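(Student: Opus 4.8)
The plan is to run the argument of Lemma~\ref{lem:can_pairing_iso} in the rigid setting as well, and then to observe that the two resulting characterizations of $\langle\pi,\cdot\rangle_\tx{iso}$ and $\langle\pi,\cdot\rangle$ coincide because both pairings are pinned down by one and the same canonical adelic transfer. First I would record the rigid counterpart of Section~\ref{sub:mult_formula_BG}: the proof of Lemma~\ref{lem:can_pairing_iso} applies verbatim with $\mc{E}^\tx{rig}$ in place of $\mc{E}^\tx{iso}$, using the rigid refined local Langlands correspondence at each place (the non-Archimedean case of \cite{KalRIBG}, the Archimedean case of \cite{KalRI} together with Section~\ref{sub:iso_LLC_archi}) and the generalization of \cite[Proposition~4.4.1]{KalGRI} established in Section~\ref{sub:global_transfer_rig} in place of Proposition~\ref{pro:adelic_tf_iso}. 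This shows that the pairing $\langle\pi,\cdot\rangle$ of \cite[Proposition~4.5.2]{KalGRI} is independent of the choices of $\mf{w}$, of $z^\tx{rig}$ and of the inner twist $\psi$, and that it is characterized as follows: for the elliptic endoscopic datum $(H,\mc{H},s,\eta)$ determined by $(\varphi,s)$, a z-extension $H_1$ with an L-embedding $\eta_1$, and $\varphi'=\eta_1\eta^{-1}\varphi$, one has for every $K_\infty$-finite distribution $f(g)dg$ on $G(\A)$
\[ \sum_{\pi\in\Pi_\varphi}\langle\pi,s\rangle\,\tr\pi(f(g)dg)=\sum_{\pi'\in\Pi_{\varphi'}}\tr\pi'(f'(h)dh), \]
where $f'(h)dh$ is a transfer of $f(g)dg$ with respect to the canonical adelic transfer factor $\Delta_\A'$; and the same identity with $\langle\pi,s\rangle$ replaced by $\langle\pi,s\rangle_\tx{iso}$ holds with $f'$ the transfer with respect to the decomposition $\Delta_\A'=\prod_v\Delta[\mf{w}_v,z^\tx{iso}_v]$ of Proposition~\ref{pro:adelic_tf_iso}.

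The decisive point is Remark~\ref{rem:adelic_transfer_can}: Proposition~\ref{pro:adelic_tf_iso} gives $\prod_v\Delta[\mf{w}_v,z^\tx{iso}_v]=\Delta_\A'$ and the generalization of \cite[Proposition~4.4.1]{KalGRI} from Section~\ref{sub:global_transfer_rig} gives $\prod_v\Delta[\mf{w}_v,z^\tx{rig}_v]=\Delta_\A'$, so both normalizations produce the \emph{same} canonical adelic transfer $f'(h)dh$ of a given $f(g)dg$ (up to functions with identically vanishing stable orbital integrals). Hence the right-hand sides of the two identities above coincide, and therefore
\[ \sum_{\pi\in\Pi_\varphi}\langle\pi,s\rangle_\tx{iso}\,\tr\pi(f(g)dg)=\sum_{\pi\in\Pi_\varphi}\langle\pi,s\rangle\,\tr\pi(f(g)dg) \]
for all $f(g)dg$ and all $s\in C_\varphi$; here I use that the local L-packets $\Pi_{\varphi_v}$, and hence $\Pi_\varphi$, are the same set of representations in the two settings, only their internal parametrization differing. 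Since the distributions $f(g)dg\mapsto\tr\pi(f(g)dg)$ for $\pi\in\Pi_\varphi$ are linearly independent --- the separation property already used in the proof of Lemma~\ref{lem:can_pairing_iso} --- this forces $\langle\pi,s\rangle_\tx{iso}=\langle\pi,s\rangle$ for every $\pi\in\Pi_\varphi$ and every $s$, which descends to the asserted equality of functions on $\mc{S}_\varphi$.

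The only genuine care needed is with normalizations: one must ensure that ``canonical adelic transfer factor'' denotes literally the same object in the iso and rig settings, i.e.\ that the factor $\Delta'$ (not $\Delta$) is used consistently and with the conventions for the inversion of $s$ and of the local and adelic invariants fixed in Remarks~\ref{rem:inverses} and~\ref{rem:adelic_transfer_can}; granted this, the argument is formal, and the substantive input --- the two transfer-factor product formulas --- has already been assembled, this being exactly where the cohomological comparison of the present paper enters, through Propositions~\ref{pro:obs_iso} and~\ref{pro:obs_rig}. For the finer, place-by-place statement advertised in the introduction one would instead fix a single cocycle $z^\tx{mid}\in Z^1_\tx{bas}(\mc{E}^\tx{mid}_{\dot V},G^*)$ lifting $\sigma\mapsto\psi^{-1}\sigma(\psi)$ (possible by Corollary~\ref{cor:surj_H1_mid_iso_nonabelian}), push it to compatible cocycles $z^\tx{iso}$ and $z^\tx{rig}$ with compatible localizations, and use Corollary~\ref{cor:tnlcononcomm}, Proposition~\ref{pro:tnmidglobloc} and the non-commutativity of~\eqref{eq:locnoncom} to express the ratio of the two normalized local pairings at $v$ as the pairing against a class of the form $\mu_v-N^\natural(\mu_v)\in Y^\tx{rig}$; the product formula for the refined local pairings then shows directly that the product of these ratios over all $v$ equals $1$, recovering the same conclusion.
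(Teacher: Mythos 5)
Your proposal is correct and follows essentially the same route as the paper: the paper's primary proof is precisely the observation that the argument of Lemma \ref{lem:can_pairing_iso} applies verbatim in the rigid setting, so that both pairings are characterized by the endoscopic character identity against the canonical adelic transfer (via Proposition \ref{pro:adelic_tf_iso} on the one hand and the product formula of \cite[Proposition 4.4.1]{KalGRI} on the other), whence they coincide by linear independence of the distributions $\tr\pi(f\,dg)$. Your closing paragraph moreover correctly sketches the paper's alternate, place-by-place proof via $z^\tx{mid}$, Corollary \ref{cor:tnlcononcomm} and Proposition \ref{pro:tnmidglobloc}.
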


Using the Galois gerbe $\mc{E}^\tx{rig}_{\dot V}$ introduced in this paper, we
can obtain a finer comparison result. Namely, we can see how the local pairings are related at each place $v$. This also implies Proposition \ref{pro:paircomp}, as follows.

\begin{proof}[Alternate proof of Proposition \ref{pro:paircomp}]
  By Corollary \ref{cor:surj_H1_mid_iso_nonabelian} (or Remark
  \ref{rem:splitting_H1}) we can choose $z^\tx{mid}
  \in Z^1(\mc{E}^\tx{mid}_{\dot V}, Z(G^*) \to G^*)$ lifting $z^\tx{iso}$.
  Up to pre-composing $\psi$ with an inner automorphism of $G^*_{\ol{F}}$ we can
  assume that there exists a maximal torus $T^*$ of $G^*$ such that $z^\tx{mid}
  \in Z^1(\mc{E}^\tx{mid}_{\dot V}, Z(G^*) \to T^*)$.
  Let $(\lambda,\mu) \in Y^\tx{mid}(Z(G^*) \to T^*)$ be its linear algebraic
  description given by Proposition \ref{pro:tnmidglob} and Corollary
  \ref{cor:tnmidglobrel}.
Thus there is a finite level $(E,S,\dot S_E)$ such that $\lambda \in Y[S_E]_0/IY[S_E]_0$ and $\mu \in (M^\tx{mid}_{E,\dot S_E}\otimes X_*(Z(G^*)))^\Gamma$ satisfy $\sum_{\sigma \in \Gamma_{E/F}}\sigma(\lambda(\sigma^{-1}w))=\sum_\sigma \mu(\sigma,w)$. Note that $\mu$ depends only on $z_\tx{mid}|_{\mb{T}^\tx{mid}} \in \tx{Hom}_F(\mb{T}^\tx{mid},Z(G^*))$ and is thus independent of the choice of $T^*$.

  Let $\bar s \in \mc{S}_\varphi$.
  Choose a semi-simple lift $s \in C_\varphi$ and furthermore a lift $\dot s \in
  S_\varphi^+$.
  We use the images $z^\tx{iso} \in Z^1_\tx{bas}(\mc{E}^\tx{iso},G^*)$ and
  $z^\tx{rig} \in Z^1(\mc{E}^\tx{rig},G^*)$ of $z^\tx{mid}$ and a global
  Whittaker datum $\mf{w}$ for $G^*$ to write as products over all places the
  complex numbers $\<\pi,\bar s\>_\tx{iso}=\prod_v \<\pi_v,s\>_{\psi_v,
  z^\tx{iso}}, \mf{w}_v$ and $\<\pi,\bar s\>_\tx{rig} = \prod_v \<\pi_v,\dot
  s\>_{\psi_v, z^\tx{rig}, \mf{w}_v}$.

By Lemma \ref{lem:locparcomp} below we have
\[  \<\pi,\bar s\>_\tx{iso} \cdot \<\pi,\bar s\>_\tx{rig}^{-1} = \prod_{v \in V} \<\mu_v,\dot s\>, \]
where for each place $v$ the localization $\mu_v \in X_*(Z(G^*)) \otimes \Q$ of $\mu$ is given by $\mu(1,\dot v)$ if $v \in S$ and equals $0$ if $v \notin S$, according to Proposition \ref{pro:tnmidglobloc}. We are thus pairing $\dot s$ with $\sum_{\dot v \in \dot S_E} \mu(1,\dot v)= \sum_{w \in S_E} \mu(1,w) = 0$.
\end{proof}

% -------------------------------------------------
% Subsection
% -------------------------------------------------

\subsection{Comparison between local pairings}

In this subsection we will compare the pairings between the local compound
$L$-packet of a tempered Langlands parameter and the centralizer of that
parameter that are guaranteed to exist by the isocrystal and rigid versions of
the refined local Langlands correspondence.
Let $G$ be a connected reductive group defined over a local
field $F$, $G^*$ its quasi-split inner form, $\psi : G^* \to G$ an inner twist,
$\varphi : L_F \to {^LG}$ a tempered Langlands parameter.
These local pairings are normalized by choices of elements $z^\tx{iso} \in
Z^1_\tx{bas}(\mc{E}^\tx{iso},G^*)$ and $z^\tx{rig} \in
Z^1_\tx{bas}(\mc{E}^\tx{rig},G^*)$ that lift the element $\psi^{-1}\sigma(\psi)
\in Z^1(\Gamma,G^*_\tx{ad})$, and of a Whittaker datum $\mf{w}$ for $G^*$.

When $z^\tx{iso} \mapsto z^\tx{rig}$ under the comparison homomorphism \cite[(3.14)]{KalRIBG} the two pairings were compared in \cite[\S4.2]{KalRIBG}.
The global setting of Proposition \ref{pro:paircomp} imposes a different
relationship between $z^\tx{iso}$ and $z^\tx{rig}$ -- they are the images of a
an element $z^\tx{mid}$ under the maps $c^\tx{iso}$ and $c^\tx{mid}$,
respectively.
Thus we will now combine the results of \cite[\S4.2]{KalRIBG} with the analysis of the non-commutativity of \eqref{eq:locnoncom} that was quantified in Corollary \ref{cor:tnlcononcomm}.

Choose an arbitrary maximal torus $S \subset G^*$ such that the class of
$z^\tx{mid}$ is in the image of $H^1(\mc{E}^\tx{mid},Z(G^*) \to S) \to
H^1(\mc{E}^\tx{mid},Z(G^*) \to G^*)$.
Let $(\lambda,\mu) \in Y^\tx{mid}$ denote the Tate-Nakayama element corresponding to $[z^\tx{mid}]$ under the isomorphism of Proposition \ref{pro:tnmidloc}. Thus $\lambda \in Y_\Gamma$, $\mu \in Y \otimes \Q$, and $N^\natural(\lambda)=N^\natural(\mu)$.
Since $z^\tx{mid}$ sends $\mb{T}^\tx{mid}$ into $C := Z(G^*)$, we have $\mu \in
X_*(C) \otimes\Q \subset Y \otimes \Q$, and this is independent of the
choice of $S$.

Denote by $p : \hat{G} \to \hat{C}$ the surjection dual to $C \subset G$.

\begin{lem} \label{lem:locparcomp}
  For any semi-simple element $\dot s \in S_\varphi^+$ and $\pi \in
  \Pi_\varphi(G)$ we have
  \[ \<\pi,s\>_{\psi, z^\tx{iso}, \mf{w}} = \<\mu, p(\dot s)\>\<\pi,\dot
  s\>_{\psi, z^\tx{rig}, \mf{w}}. \]
  where $p(\dot s)$ is the image of $\dot s$ in $[\hat{\bar C}]^+$.
\end{lem}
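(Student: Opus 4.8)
The plan is to reduce the comparison to the already-known local comparison of \cite[\S4.2]{KalRIBG} together with the quantification of the non-commutativity of \eqref{eq:locnoncom} provided by Corollary \ref{cor:tnlcononcomm}. The key observation is that there are two natural morphisms of gerbes $\mc{E}^\tx{rig} \to \mc{E}^\tx{iso}$ relevant here: the one appearing in \cite[(3.13)]{KalRIBG}, which by the discussion in \S\ref{sub:loc_gerbes} equals $s_\tx{iso}\circ c_\tx{rig}$, and (implicitly) the comparison route through $\mc{E}^\tx{mid}$ that governs the pair $(z^\tx{iso}, z^\tx{rig})$ arising from a common lift $z^\tx{mid}$. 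The element $z^\tx{iso}$ is the pullback of $z^\tx{mid}$ along $c_\tx{iso} : \mc{E}^\tx{iso} \to \mc{E}^\tx{mid}$, and $z^\tx{rig}$ is the pullback along $c_\tx{rig} : \mc{E}^\tx{rig} \to \mc{E}^\tx{mid}$. If instead we pulled $z^\tx{mid}$ back along the composite $c_\tx{iso}\circ(s_\tx{iso}\circ c_\tx{rig})^{-1}\dots$ — more precisely, if we let $z^{\tx{rig},\prime} \in Z^1_\tx{bas}(\mc{E}^\tx{rig},G^*)$ be the image of $z^\tx{iso}$ under \cite[(3.14)]{KalRIBG} — then by \cite[\S4.2]{KalRIBG} we would have $\<\pi,s\>_{\psi,z^\tx{iso},\mf{w}} = \<\pi,\dot s\>_{\psi,z^{\tx{rig},\prime},\mf{w}}$, up to the known explicit factor already recorded there (which is $1$ in the relevant normalization, or is absorbed into the statement). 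So the task becomes: compare $z^\tx{rig}$ and $z^{\tx{rig},\prime}$ as classes in $H^1(\mc{E}^\tx{rig},Z(G^*)\to S)$, and track how the local pairing changes under that difference.

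First I would restrict attention to the torus: since $[z^\tx{mid}]$ lies in the image of $H^1(\mc{E}^\tx{mid}, C \to S)$, both $[z^\tx{iso}]$ and $[z^\tx{rig}]$ (and $[z^{\tx{rig},\prime}]$) come from classes in $H^1(\mc{E}^?, C \to S)$, so I can compute everything with the torus $S$ and use the Tate-Nakayama descriptions of \S4.9. By Corollary \ref{cor:tnlcononcomm}, the two maps $H^1_\tx{alg}(\mc{E}^\tx{mid},S) \to H^1(\mc{E}^\tx{rig},S)$ — pullback along $c_\tx{rig}$ versus pullback along $c_\tx{iso}$ followed by $s_\tx{iso}\circ c_\tx{rig}$ — differ, on Tate-Nakayama coordinates, by $(\lambda,\mu)\mapsto \mu - N^\natural(\mu)$. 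Hence $[z^\tx{rig}]$ and $[z^{\tx{rig},\prime}]$ differ by the image of $\mu - N^\natural(\mu) \in Y^\tx{rig}$, and since $z^\tx{mid}$ carries $\mb{T}^\tx{mid}$ into $C$, this difference lies in $X_*(C)\otimes\Q / \dots$, i.e.\ it is a class in $H^1(\mc{E}^\tx{rig}, C)$ — a "purely central" twist. The general principle (the refined local Langlands correspondence's behaviour under twisting $z$ by a central cocycle, as in \cite[\S4.1, \S4.2]{KalRIBG}) is that twisting $z^\tx{rig}$ by a class in $H^1(\mc{E}^\tx{rig},C)$ multiplies $\<\pi,\dot s\>_{\psi,z^\tx{rig},\mf{w}}$ by the value of the pairing of that class with $p(\dot s) \in [\hat{\bar C}]^+$. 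I would then compute that this pairing value is exactly $\<\mu - N^\natural(\mu), p(\dot s)\>$ evaluated appropriately, and observe that $N^\natural(\mu)$ pairs trivially with $p(\dot s)$: indeed $p(\dot s)$, being the image of an element of $S_\varphi^+ \subset Z(\hat G_\tx{sc}) \times \dots$, lies in (a subgroup on which) the $\Gamma$-norm acts trivially so that $\<N^\natural(\mu),p(\dot s)\> = \<\mu, N^\natural p(\dot s)\> = \<\mu,p(\dot s)\>$ — wait, this needs care; rather the correct statement is that the pairing descends so that only $\mu$ mod something survives, and one extracts the clean factor $\<\mu,p(\dot s)\>$.

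The main obstacle I anticipate is exactly this last bookkeeping: getting the normalization of the pairing $\<\mu, p(\dot s)\>$ right — matching the Tate-Nakayama pairing conventions of \S\ref{sub:tniso}, the conventions for the pairing with $S_\varphi^+$ used in \cite{KalRIBG} and \S\ref{sub:iso_LLC_archi}, and the sign/inverse issues flagged in Remark \ref{rem:inverses} — and verifying that the contribution of $N^\natural(\mu)$ either cancels or is absorbed, so that the net factor is precisely $\<\mu,p(\dot s)\>$ with no extraneous $N^\natural$. Once the non-commutativity factor from Corollary \ref{cor:tnlcononcomm} is correctly identified as a central class and its effect on the pairing is pinned down via the functoriality of the refined correspondence under central twists, the lemma follows by combining this with the comparison of \cite[\S4.2]{KalRIBG} for the pair $(z^\tx{iso}, z^{\tx{rig},\prime})$.
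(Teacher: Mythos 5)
Your proposal follows the same overall route as the paper: reduce to the comparison of \cite[\S4.2]{KalRIBG} for the pair $(z^\tx{iso}, x^\tx{rig})$, where $x^\tx{rig}$ is the image of $z^\tx{iso}$ under \cite[(3.14)]{KalRIBG} (your $z^{\tx{rig},\prime}$), and then account for the difference between $x^\tx{rig}$ and $z^\tx{rig}$ via Corollary \ref{cor:tnlcononcomm} together with the behaviour of the local pairing under a central twist of the normalizing cocycle. However, there is a genuine gap at precisely the two points you leave open, and neither can be resolved in the way you suggest. First, the identity supplied by \cite[\S4.2]{KalRIBG} is $\<\pi,t\>_{z^\tx{iso}}=\<\pi,\dot s\>_{x^\tx{rig}}$, where $t$ is the image of $\dot s$ under the map $\pi_0(S_\varphi^+)\to S_\varphi$ of \cite[(4.7)]{KalRIBG}, and $t\neq s$ in general: writing $\dot s=(a_\tx{sc},(b_n)_n)$ one has $s=a_\tx{der}b_1$ while $t=a_\tx{der}b_1N_{E/F}(b_{[E:F]})^{-1}$, so that $\<\pi,s\>_{z^\tx{iso}}=\<\pi,t\>_{z^\tx{iso}}\cdot\<[z^\tx{iso}],N_{E/F}(b_{[E:F]})\>$. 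This correction factor is not $1$ and is not "absorbed into the statement"; dropping it changes the answer.

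Second, your fallback claim that $N^\natural(\mu)$ pairs trivially with $p(\dot s)$ is false, and the hedged alternative ("the pairing descends so that only $\mu$ mod something survives") is not a proof. The actual mechanism is a cancellation between the two nontrivial factors: using the defining relation $N^\natural(\lambda)=N^\natural(\mu)$ of $Y^\tx{mid}$ and the isogeny $p|_{Z(\hat G)^0}:Z(\hat G)^0\to\hat C$, one computes
\[ \<[z^\tx{iso}],N_{E/F}(b_{[E:F]})\>=\<N_{E/F}\lambda,b_{[E:F]}\>=\<mN_{E/F}\mu,p(b_{m[E:F]})\>=\<nN^\natural(\mu),p(b_n)\>, \]
which is exactly the contribution of the $N^\natural(\mu)$-part of the non-commutativity factor $\<\mu-N^\natural(\mu),p(\dot s)\>=\<n(\mu-N^\natural(\mu)),p(b_n)\>$ from Corollary \ref{cor:tnlcononcomm}. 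Only after this cancellation does the clean factor $\<\mu,p(\dot s)\>$ emerge. So the missing idea is not a normalization check but the observation that the $s$-versus-$t$ discrepancy in the isocrystal pairing and the $N^\natural(\mu)$ term in the gerbe comparison are the same quantity, linked by the relation $N^\natural(\lambda)=N^\natural(\mu)$ built into $Y^\tx{mid}$.
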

\begin{proof}
  In the proof we suppress $\psi$ and $\mf{w}$ from the notation since they are
  fixed.
The local pairings are conjectural. However, we know from \cite[\S4.2]{KalRIBG} that the following identity is implied by the assumed validity of the isocrystal and rigid versions of the refined local Langlands conjecture:
\[ \<\pi,t\>_{z^\tx{iso}} = \<\pi,\dot s\>_{x^\tx{rig}}, \]
where $t \in S_\varphi$ is equal to the image of $\dot s$ under the map $\pi_0(S_\varphi^+) \to S_\varphi$ of \cite[(4.7)]{KalRIBG} , and $x^\tx{rig}$ is the image of $z^\tx{iso}$ under the map $Z^1_\tx{bas}(\mc{E}^\tx{iso},G^*) \to Z^1_\tx{bas}(\mc{E}^\tx{rig},G^*)$ of \cite[(3.14)]{KalRIBG}. This reduces the question to comparing $\<\pi,t\>_{z^\tx{iso}}$ with $\<\pi,s\>_{z^\tx{iso}}$ and $\<\pi,\dot s\>_{x^\tx{rig}}$ with $\<\pi,\dot s\>_{z^\tx{rig}}$.

For the first comparison, we use the notation of \cite[\S4.2]{KalRIBG} and represent $\dot s$ as $(a_\tx{sc},(b_n)_n)$ with $a_\tx{sc} \in \hat G_\tx{sc}$ and $b_n \in Z(\hat G)^\circ$, $b_m^{m/n}=b_n$.
Then $s=a_\tx{der}b_1$ while $t=a_\tx{der}b_1N_{E/F}(b_{[E:F]})^{-1}$ for $E$ any
finite Galois extension of $F$ such that $\Gamma_{\ol{F}/E}$ acts trivially on
$Z(\hat G)$.
Thus $\<\pi,s\>_{z^\tx{iso}}=\<\pi,t\>_{z^\tx{iso}}\cdot \<\pi,N_{E/F}(b_{[E:F]}))_{z^\tx{iso}}$. Now $N_{E/F}(b_{[E:F]}) \in Z(\hat G)^{\circ,\Gamma}$, and the restriction of $\<\pi,-\>_{z^\tx{iso}}$ to $Z(\hat G)^\Gamma$ is the character $\<[z^\tx{iso}],-\>$, so we obtain
\[ \<\pi,s\>_{z^\tx{iso}}=\<\pi,t\>_{z^\tx{iso}}\cdot \<[{z^\tx{iso}}],N_{E/F}(b_{[E:F]})\>. \]

For the second comparison, we remind ourselves that $z^\tx{rig}$ is pulled back from $z^\tx{mid}$ via
\[ \xymatrix{
  1\ar[r]&\mb{T}^\tx{rig}\ar[r]\ar[d]&\mc{E}^\tx{rig}\ar[r]\ar[d]&\Gamma\ar[r]\ar@{=}[d]&1\\
  1\ar[r]&\mb{T}^\tx{mid}\ar[r]&\mc{E}^\tx{mid}\ar[r]&\Gamma\ar[r]&1
}
\]
while $x^\tx{rig}$ is pulled back from $z^\tx{mid}$ via
\[ \xymatrix{
  1\ar[r]&\mb{T}^\tx{rig}\ar[r]\ar[d]&\mc{E}^\tx{rig}\ar[r]\ar[d]&\Gamma\ar[r]\ar@{=}[d]&1\\
  1\ar[r]&\mb{T}^\tx{iso}\ar[r]\ar[d]&\mc{E}^\tx{iso}\ar[r]\ar[d]&\Gamma\ar[r]\ar@{=}[d]&1\\
  1\ar[r]&\mb{T}^\tx{mid}\ar[r]&\mc{E}^\tx{mid}\ar[r]&\Gamma\ar[r]&1
}
\]
Since the maps on $\Gamma$ are all the identity, both $z^\tx{rig}$ and $x^\tx{rig}$ map to the same element of $Z^1(\Gamma,G^*_\tx{ad})$, so their difference $x^\tx{rig}/z^\tx{rig}$ lies in $Z^1(\mc{E}^\tx{rig},Z(G^*))$, and in fact in $Z^1(\mc{E}^\tx{rig},Z)$ for some finite $Z \subset Z(G^*)$. Then we have
\[ \<\pi,\dot s\>_{x^\tx{rig}}=\<x^\tx{rig}/z^\tx{rig},(-d)\dot s\>\<\pi,\dot s\>_{z^\tx{rig}} = \<x^\tx{rig}/z^\tx{rig}, p(\dot s)\>\<\pi,\dot s\>_{z^\tx{rig}}. \]
The first equality is due to \cite[Lemma 6.2]{KalRIBG}, where $d : S_\varphi^+ \to Z^1(\Gamma,\hat Z)$ is the differential, and we are using the pairing $Z^1(\Gamma,\hat Z) \otimes H^1(\mc{E}^\tx{rig},Z) \to \C^\times$ of \cite[\S6.2]{KalRIBG}. The second equality is due to the commutative diagrams (6.1) and (6.2) in \cite{KalRIBG}, applied to the torus $C=Z(G^*)$, and we are using the pairing between $H^1(\mc{E}^\tx{rig},C)$ and $\pi_0([\hat{\bar C}]^+)$.

We now combine the comparisons and arrive at
\[ \<\pi,s\>_{z^\tx{iso}} = \<[z^\tx{iso}],N_{E/F}(b_{[E:F]})\>
\<x^\tx{rig}/z^\tx{rig}, p(\dot s)\> \<\pi,\dot s\>_{z^\tx{rig}}. \]
Recall the Tate-Nakayama element $(\lambda,\mu)$ corresponding to $z^\tx{mid}$.
We can see $\lambda \in Y_{\Gamma}$ as an element of $X^*(\hat{S}^\Gamma)$,
which via restriction to $Z(\hat G)^\Gamma$ maps to $[z^\tx{iso}]$.
For $m>0$ a sufficiently divisible integer we have that $N_{E/F} \lambda =
N_{E/F} \mu$ belongs to $m^{-1} X_*(C)$, and so
\[ \< [z^\tx{iso}], N_{E/F}(b_{[E:F]}) \> = \<N_{E/F}\lambda,b_{[E:F]}\> = \< m
N_{E/F}\mu, p(b_{m[E:F]})\> \]
where we have used the isogeny $p|_{Z(\hat G)^0} : Z(\hat G)^0 \to \hat C$.

Consider now the factor $\< x^\tx{rig} / z^\tx{rig}, p(\dot s) \>$.
According to Corollary \ref{cor:tnlcononcomm} it equals $\<\mu-N^\natural(\mu),
p(\dot s)\>$.
To evaluate this pairing, choose an integer $n>0$ divisible by $m[E:F]$ so that
$\mu \in n^{-1} X_*(C)$.
Note that $N^\natural(\mu) \in n^{-1} X_*(C)$ also.
Then $\< x^\tx{rig} / z^\tx{rig}, p(\dot s) \> = \<n(\mu-N^\natural(\mu)),
p(b_n)\>$.
We have
\[ \<nN^\natural(\mu), p(b_n)\> = \< mN_{E/F}(\mu), b_{m[E:F]}\>. \]
Thus
\[
\<\pi,s\>_{z^\tx{iso}} = \<\mu,\dot s\>\<\pi,\dot s\>_{z^\tx{rig}}\]
and the lemma is proved.
\end{proof}

\bibliographystyle{amsalpha}
\bibliography{bibliography}

\end{document}